\documentclass[a4paper]{article}

\usepackage{amsmath,amsfonts,amssymb,amscd,amsthm,amsbsy}
\usepackage[latin1]{inputenc}   
\usepackage{mathtools}
\usepackage[a4paper]{geometry}
\usepackage{tikz}

\usepackage{pgf,tikz}
\usetikzlibrary{arrows}

\def\ds{\displaystyle}
\def\rg{\rangle}
\def\lg{\langle} 
\def \to{\rightarrow}
\def\de{\delta}

\def\vep{\varepsilon}
\def \states{\mathbb{T}^d}
\def \T{\mathbb{T}}
\def \R {\mathbb{R}}
\def \N {\mathbb{N}}
\def \Z {\mathbb{Z}}
\def \mes{\mathcal{P}}
\def\Pk{\mes(\states)}

\def\dk{{\bf d}_1}

\def \ep{\varepsilon}

\def\inte{\int_{\T^d}}
\def\dive{{\rm div}}
\def\rife#1{(\ref{#1})}
\newcommand{\be}{\begin{equation}}
\newcommand{\ee}{\end{equation}}

\newtheorem{Theorem}{Theorem}[section]
\newtheorem{Definition}[Theorem]{Definition}
\newtheorem{Proposition}[Theorem]{Proposition}
\newtheorem{Lemma}[Theorem]{Lemma}
\newtheorem{Corollary}[Theorem]{Corollary}
\newtheorem{Remark}[Theorem]{Remark}

\title{Long time behavior of the master equation in mean-field game theory}
\author{Pierre Cardaliaguet\thanks{Universit\'e Paris-Dauphine, PSL Research University, CNRS, Ceremade, 75016 Paris, France.}, Alessio Porretta\thanks{Dipartimento di Matematica, Universit\`a di Roma ``Tor Vergata''} }

\begin{document}

\maketitle

\tableofcontents
\bigskip

\begin{abstract} Mean Field Game (MFG) systems describe equilibrium configurations in  games with infinitely many interacting controllers. We are interested in the behavior of this system as the horizon becomes large, or as the discount factor tends to $0$. We show that, in the two cases,  the asymptotic behavior of the Mean Field Game system is strongly related with  the long time behavior of the so-called master equation   and with the vanishing discount limit of the discounted master equation, respectively. Both equations are nonlinear transport equations in the space of measures. We prove the existence of a solution to an ergodic master equation, towards which the  time-dependent master equation converges as the horizon becomes large, and towards which the discounted master equation converges as the discount factor tends to $0$. The whole analysis is based on the obtention of new estimates for the exponential rates of convergence of the time-dependent MFG system and the discounted MFG system.
\end{abstract}

Given a terminal time $T$ and an initial measure $m_0$, we  consider the solution to the mean field game (MFG) system 
\be\label{intro.MFG}
\left\{\begin{array}{l}
-\partial_t u^T  -\Delta u^T +H(x,D   u^T) = F(x, m^T)\qquad {\rm in}\; (0,T)\times \T^d,\\
\partial_tm^T -\Delta  m^T -\dive( m^T H_p(x,Du^T))= 0\qquad {\rm in}\; (0,T)\times \T^d,\\
m^T(0,\cdot)=m_0, \; u^T(T,\cdot)= G(\cdot, m^T(T)) \qquad {\rm in}\;  \T^d,
\end{array}\right.
\ee
where $\T^d$ is the $d$-dimensional flat torus $\R^d /\mathbb Z^d$, $F,G$ are functions defined on $\T^d\times \Pk$ (the space of probability measures on $\T^d$) and $H$ is a function, defined on $\T^d\times \R^d$, which is convex in the second variable.

Let us recall that the following system appears in mean field games  theory (introduced by Lasry and Lions \cite{LL06cr1, LL06cr2, LL07mf}, and by Huang, Caines and Malham\'e \cite{HCMieeeAC06}). Mean field games are dynamic games with infinitely many players. The first equation in \eqref{intro.MFG} can be interpreted as the value function of a small player whose cost depends on the density $m(t)$ of the   players, while the second equation describes the evolution in time of the density of the players. Note that the first equation is backward in time (and with a terminal condition) while the second one is forward, with the initial condition $m(0)=m_0$, $m_0$ being the initial repartition of the players. 

The study of the long time average of the MFG system has been initiated in \cite{LLperso} and then discussed in several different contexts,   \cite{CLLP2012, C2013, CLLP2, GMS}. 

In \cite{CLLP2} the long time average of $u^T$ is investigated  when $H(x,p)=\frac12|p|^2$ and $F(x,m),G(x,m)$ satisfy suitable smoothing conditions with respect to the measure $m$. Then it is proved that there exists a constant $\bar \lambda\in \R$ such that the scaled function $(s,x)\to u^T(Ts,x)/T$ locally uniformly converges to the map $(s,x)\to -\bar \lambda s$ as $T\to \infty$ on $(0,1)\times \T^d$, while the rescaled measure $(s,x)\to m^T(sT,x)$ converges to a time invariant measure $\bar m$ in $L^1((0,1)\times \T^d)$. The constant $\bar \lambda$ and the measure $\bar m$ are characterized as solutions of  the ergodic MFG system; namely,  there exists a unique  triple $(\bar \lambda, \bar u, \bar m)$ which solves  
\be\label{e.MFGergo}
\left\{\begin{array}{l}
\ds \bar \lambda -\Delta \bar u +H(x,D  \bar u) = F(x,\bar m)\qquad {\rm in}\; \T^d\\\
\ds -\Delta \bar m -\dive(\bar m H_p(x,D\bar u))= 0\qquad {\rm in}\; \T^d\\
\ds \bar m\geq0, \; \inte\bar m=1 \,\,\; \inte\bar u=0\qquad {\rm in}\; \T^d\,.
\end{array}\right.
\ee
and $Du^T(sT, x)$ actually converges to $D\bar u(x)$.  
The result holds under a monotonicity condition on $F$ and $G$: 
$$
\inte (F(x,m)-F(x,m'))(m-m')(dx)\geq 0, \qquad \inte (G(x,m)-G(x,m'))(m-m')(dx)\geq 0,
$$
for any $m,m'\in \Pk$. Moreover it is proved in \cite{CLLP2} that the convergence holds with an exponential rate.  Precisely, under some additional condition on the smoothing properties of the coupling terms $F$ and $G$, one has
$$
\| m^T(t)-\bar m\|_{C^{2+\alpha}}  + \|Du^T(t)-D\bar u\|_{C^{2+\alpha}} \leq C (e^{-\omega t}+ e^{-\omega(T-t)}) 
$$ 
for some constants $C, \omega>0$ and $\alpha\in (0,1)$. 

This paper is devoted to the long time behavior of $u^T$, i.e., the convergence, as $T\to \infty$, of the map $(t,x)\to u^T(t,x)- \bar \lambda (T-t)$.  This question is  inspired by results of Fathi  \cite{Fathi1, Fathi2}, Roquejoffre  \cite{Ro98}, Namah-Roquejoffre \cite{NR99} and Barles-Souganidis  \cite{BaSo} for Hamilton-Jacobi equations. In that framework, it is known that if $u$ solves the (forward) Hamilton-Jacobi equation  
$$
\partial_t u  -\Delta u +H(x,D   u) = 0\qquad {\rm in}\; (0,+\infty)\times \T^d,
$$
with associated  ergodic constant $\bar \lambda$, then $u(t,x)-\bar \lambda t$ converges, as $t\to+\infty$, to a solution $\bar u$ of the associated ergodic problem. One may wonder what remains of this result  for the MFG system. 

The convergence of the difference $u^T(t,\cdot)- \bar \lambda (T-t)$,  as $T\to\infty$,  has been an open (and puzzling) question since \cite{CLLP2}. 
We prove in this paper that  the limit of $u^T(t,\cdot)- \bar \lambda (T-t)$ indeed exists, although it cannot be described just in terms of the $\bar u$ component of the MFG ergodic system \eqref{e.MFGergo}. In order to describe this long-time behavior, we have to keep track of the initial measure $m_0$. To do so, we rely on the master equation, which is the following (backward) transport equation in the space of measures: 
\be\label{eq:masterIntro}
\left\{\begin{array}{l}
\ds -\partial_t U -\Delta_x U +H(x,D_xU) -F(x,m)  -\inte \dive(D_mU(t,x,m,y))dm(y) \\
\ds +\inte D_mU(t,x,m,y). H_p(t,y,D_xU(t,y,m))dm(y) = 0 \;   {\rm in}\; (-\infty,0)\times \T^d\times \Pk \\
U(0,x,m)= G(x,m)\qquad {\rm in}\;  \T^d\times \Pk.
\end{array}\right.
\ee
In the above equation, the unknown $U=U(t,x,m)$ depends on time, space and measure on the space; moreover, the notation  $D_mU$ denotes a  suitable derivative with respect to probability measures, which will be described in  subsection \ref{subsec:derivative}. Note that, in contrast with the MFG system, the master equation is a classical evolution equation, so its long time behavior may be described in a usual form. We recall (see \cite{LLperso}, \cite{GS14-2}, \cite{CgCrDe} and \cite{CDLL}) that the master equation is well-posed under the monotonicity condition on $F$ and $G$ and that the MFG system \eqref{intro.MFG} plays the role of characteristics for this equation. Namely, if $(u^T,m^T)$ solves \eqref{intro.MFG}, then 
$$
U(-T,x,m_0)= u^T(0,x)\qquad \forall x\in \T^d.
$$

Our main result (Theorem \ref{thm.main}) states that $U(t, \cdot, \cdot)+\bar \lambda t$ has a limit $\chi=\chi(x,m)$ as $t\to-\infty$. This limit solves (in a weak sense) the ergodic master equation
\be\label{eq:ergomasterIntro}
\begin{array}{l}
\ds \bar \lambda -\Delta_x \chi(x,m) +H(x,D_x \chi(x,m)) -\inte \dive(D_m \chi(x,m,y))dm(y)\\
\ds \qquad \qquad  +\inte D_m \chi(x,m,y). H_p(y,D_x\chi(y,m))dm(y) = F(x,m) \qquad {\rm in }\; \T^d\times \Pk.
\end{array}
\ee
As a consequence, the limit $u^T(0, \cdot)-\bar \lambda T$ exists as $T\to\infty$ and is equal to $\chi(\cdot, m_0)$. Note that, in general, $u^T(0,\cdot)-\bar \lambda T$ does not converge  to $\bar u$, since it is not always true that  $\chi(\cdot, m_0)=\bar u$ (even up to an additive constant): this is however the case if $m_0=\bar m$. 

We are also interested in the infinite horizon MFG system 
\be\label{e.MFGih}
\left\{\begin{array}{l}
-\partial_t u^\delta +\delta u^\delta -\Delta u^\delta +H(x,Du^\delta) = F(x, m^\delta(t))\qquad {\rm in}\; (0,+\infty)\times \T^d,\\
\partial_t m^\delta-\Delta m^\delta -\dive(m^\delta H_p(x,D u^\delta))= 0\qquad {\rm in}\; (0,+\infty)\times \T^d,\\
m^\delta(0,\cdot)= m_0\; {\rm in}\; \times \T^d, \qquad u^\delta\; {\rm bounded.} 
\end{array}\right.
\ee
In the first order stationary Hamilton-Jacobi (HJ) setting, where the equation reads 
$$
\delta u^\delta +H(x,Du^\delta) = 0\qquad {\rm in}\;  \T^d,
$$
Davini, Fathi, Iturriaga and Zavidovique \cite{DFIZ} have proved the convergence of $u^\delta-\delta^{-1}\bar \lambda$ as $\delta$ tends to $0$ and characterized the limit. The result has been generalized to the second order HJ setting by Mitake and Tran \cite{MT14} (see also Mitake and Tran \cite{MT15} and Ishii, Mitake and Tran \cite{IM16}). In the viscous case, the result is that, if $u^\delta$ solves the infinite horizon problem
$$
\delta u^\delta -\Delta u^\delta +H(x,Du^\delta) = 0\qquad {\rm in}\;  \T^d,
$$
then $u^\delta-\delta^{-1}\bar \lambda$ converges as $\delta \to0$ to the unique solution $\bar u$ of the ergodic cell problem 
$$
 -\Delta \bar u +H(x,D \bar u) = 0\qquad {\rm in}\;  \T^d
$$
such that $\ds \inte \bar u \bar m=0$, where $\bar m$ solves 
$$
-\Delta \bar m-\dive\left(\bar mH_p(x,D\bar u)\right)=0\qquad {\rm in}\;  \T^d, \qquad \bar m\geq 0, \; \inte \bar m=1. 
$$

Here again, one may wonder if such a result remains true for the infinite horizon MFG system \eqref{e.MFGih} (which, in contrast with the Hamilton-Jacobi case, is time dependent). As for the time-evolution MFG problem, we rely on a master equation. Following \cite{CDLL}, this infinite horizon master equation takes the form:
\be\label{e.MasterDiscont}
\begin{array}{l}
\ds \delta U^\delta-\Delta_x U^\delta +H(x,D_xU^\delta) -\inte \dive(D_mU^\delta(x,m,y))dm(y) \\
\ds\qquad \qquad +\inte D_mU^\delta(x,m,y). H_p(y,D_xU^\delta(y,m))dm(y) = F(x,m)
\;{\rm in}\; \T^d\times \Pk.
\end{array}
\ee
Our second main result  (Theorem \ref{thmdisc}) is that $U^\delta- \delta^{-1}\bar \lambda $ converges to  the unique solution $\chi$ of the master ergodic problem  \eqref{eq:ergomasterIntro} satisfying $\chi(x,\bar m)= \bar u$, where $\bar u$ is the unique solution of   the ergodic MFG system \eqref{e.MFGergo} for which  the following (new) linearized ergodic MFG system has a solution $(\bar v, \bar \mu)$:
$$
\left\{\begin{array}{l}
\ds \bar u  -\Delta \bar v +H_p(x,D\bar u).D\bar v = \frac{\delta F}{\delta m}(x, \bar m)(\bar \mu)\qquad {\rm in}\; \T^d,\\
\ds -\Delta \bar \mu -\dive(\bar \mu H_p(x,D \bar u))-\dive (\bar m H_{pp}(x,D\bar u)D\bar v)= 0\qquad {\rm in}\; \T^d,\\
\ds \inte \bar \mu= \inte \bar v=0,
\end{array}\right.
$$
(the definition of the derivative $\frac{\delta F}{\delta m}$ is explained in Section \ref{sec:notations}).  This implies the convergence of  $u^\delta(0,\cdot)-\delta^{-1}\bar \lambda$ to $\chi(\cdot, m_0)$ as $\delta$ tends to $0$. Note that if $F\equiv 0$, i.e., in the Hamilton-Jacobi case, one recovers the condition $\inte \bar u\bar m=0$ by integrating the $\bar v-$equation against the measure $\bar m$. The MFG setting is more subtle since it keeps track of the coupling between the equations. \\

Let us now say a few words about the method of proofs. As in the Hamilton-Jacobi setting, the argument relies on compactness arguments and, therefore, on the regularity (Lipschitz  estimates) for the solution $U$ to the master equation \eqref{eq:masterIntro} and for the solution $U^\delta$ of the  infinite horizon master equation \eqref{e.MasterDiscont}.  The main difficulty comes from the fact that these equations {\it  do not satisfy a comparison principle} (in contrast to the HJ equation). Moreover, as can be seen plainly from \eqref{eq:masterIntro} and \eqref{e.MasterDiscont}, the equations  do not provide easy bounds on the derivatives with respect to $m$ of $U$ and $U^\delta$. 

The key Lipschitz estimates come from the fact that the characteristics \eqref{intro.MFG} and \eqref{e.MFGih} of these master equations stabilize exponentially fast in time to the solution of the ergodic MFG system \eqref{e.MFGergo} for $U$ and to the solution of the following time invariant infinite horizon problem 
\be\label{eq.barumdelta.intro}
\left\{\begin{array}{l}
\delta \bar u^\delta-\Delta \bar u^\delta+H(x, D\bar u^\delta)= F(x, \bar m^\delta) \qquad {\rm in }\; \T^d,\\
-\Delta \bar m^\delta-\dive (\bar m^\delta H_p(x,D\bar u^\delta))=0\qquad {\rm in }\; \T^d,\\
\ds \inte \bar m^\delta=1, \; \inte \bar u^\delta=0.
\end{array}\right.
\ee
for $U^\delta$ (see Theorem \ref{thm:CvExpMFG} and Theorem \ref{them.Cvexpo.discounted} respectively). These exponential convergence rates were only known for system \eqref{intro.MFG} when $H(x,p)=|p|^2$ (see  \cite{CLLP2}), where the argument relied on some commutation properties which do not hold for general Hamiltonians. To prove the exponential convergence in our setting, we use a technique developed by the second author with E.~Zuazua \cite{PZ} to establish the so-called turnpike property for optimal control problems. The exponential rate for the infinite horizon MFG system is new, but uses similar ideas. 

The starting point of this analysis consists in studying the linearized MFG systems. For simplicity, let us explain this idea for the time dependent problem, i.e., for $U$. In this framework, the MFG linearized system reads 
$$
\left\{\begin{array}{l}
\ds-\partial_t v  -\Delta v +H_p(x,D u).Dv = \frac{\delta F}{\delta m}(x,  m)(\mu(t))\qquad {\rm in}\; (0,T)\times \T^d,\\
\ds \partial_t \mu-\Delta \mu -\dive(\mu H_p(x,D  u))-\dive ( m H_{pp}(x,D u)Dv)= 0\qquad {\rm in}\; (0,T)\times \T^d,\\
\mu(0,\cdot)= \mu_0,\;  \qquad v(T,x)=\frac{\delta G}{\delta m}(x, m)(\mu(T))\qquad {\rm in}\;  \T^d,
\end{array}\right.
$$
where $(u, m)$ is the solution of \eqref{intro.MFG} and $\mu_0$ is given. When $(u,m)=(\bar u, \bar m)$, the analysis of the above system (the exponential decay of the solutions) provides an exponential convergence of the solution of the MFG system to $(\bar u, \bar m)$---at least, this holds true for the $m$ component. A very interesting point is that this linearized system turns out to be also strongly related with the derivative of $U$ with respect to $m$: indeed, as explained in \cite{CDLL}, we have 
$$
\inte \frac{\delta U}{\delta m}(0,x,m_0,y)\mu_0(y)dy= v(0,x)\qquad \forall x\in \T^d.
$$
Thus controlling $v$ allows us to control the variations of $U$ with respect to $m$. 
Once the Lipschitz estimates for $U$ and for $U^\delta$ are obtained, the construction of a corrector $\chi$ (solution of the ergodic master equation \eqref{eq:ergomasterIntro}) follows in a standard way: see Theorem \ref{thm.MasterCell}.

However, the convergence of the solution of the time dependent master 
equation \eqref{eq:masterIntro} requires new ideas since, in contrast with the Hamilton-Jacobi setting (see Fathi \cite{Fathi3} or Barles and Souganidis \cite{BaSo}), there is no obvious quantity which is monotone in time: the reason is that the master equation does not satisfy a comparison principle. To overcome this issue, we rely again on the exponential convergence rate from which we derive  a suitable convergence of the solution of the master equation when evaluated at $\bar m$ as time tends to $-\infty$ (see Proposition \ref{prop:CvuT(0)}). Then we obtain the convergence of the map $U$ by compactness argument and using again  the convergence of the characteristics. 

The convergence of $U^\delta$, is more subtle: the key point is that two solutions of the ergodic master equation differ only by a constant. Thus we only have to show that $U^\delta(\cdot, m) -\delta^{-1}\bar \lambda$ has a limit for some $m$. The good choice turns out to be $m=\bar m^\delta$, where $(\bar u^\delta, \bar m^\delta)$ solves \eqref{eq.barumdelta.intro}: indeed, we have then $U^\delta(\cdot, \bar m^\delta)= \bar u^\delta$ and we expect $(\bar u^\delta,\bar m^\delta)$ to be close to $(\bar u, \bar m)$ in some sense, where $(\bar u, \bar m)$ satisfies \eqref{e.MFGergo}. Actually a formal expansion yields to $(\bar u^\delta, \bar m^\delta)= (\delta^{-1}\bar \lambda+\bar u+\bar \theta+ \delta \bar v, \bar m+\delta \bar \mu)$, where $(\bar \theta, \delta \bar v,\bar \mu)$ solves 
$$
\left\{\begin{array}{l}
\ds \bar u +\bar \theta -\Delta \bar v +H_p(x,D\bar u).D\bar v = \frac{\delta F}{\delta m}(x, \bar m)(\bar \mu)\qquad {\rm in}\; \T^d,\\
\ds -\Delta \bar \mu -\dive(\bar \mu H_p(x,D \bar u))-\dive (\bar m H_{pp}(x,D\bar u)D\bar v)= 0\qquad {\rm in}\; \T^d,\\
\ds \inte \bar \mu= \inte \bar v=0.
\end{array}\right.
$$
The rigorous justification is given in Proposition \ref{prop.cvbarudelta}.

The paper is organized in the following way. In Section 1 we recall the notion of derivative in the space of measures and state our main assumptions.   We also recall some decay and regularity estimates which hold separately for the two equations of the system and we provide the basic fundamental estimates for \rife{intro.MFG} which are independent of the horizon $T$.  Section \ref{sec:ExpCvRates} is devoted to the exponential convergence rate, as $T\to \infty$,  of solutions of \rife{intro.MFG} towards the couple $(\bar u, \bar m)$ solution of \rife{e.MFGergo}.  To this purpose, at first we develop decay estimates in $L^2$ for the linearized system, then we export the estimates (in stronger norms) to $(u^T-\bar u, m^T-\bar m)$   by using a  fixed point   argument. A similar strategy is used in Section 
\ref{sec:ExpCvRates2}  for the infinite horizon discounted problem \rife{e.MFGih}; in this case we prove the exponential convergence  as $t\to \infty$ towards the stationary couple $(\bar u^\de, \bar m^\de)$ solution of \rife{eq.barumdelta.intro}. In both Section \ref{sec:ExpCvRates} and Section \ref{sec:ExpCvRates2}, the  analysis of  the linearized systems is a crucial step, and this will also play a key role in the study of the master equations, both the time-dependent \rife{eq:masterIntro} and the stationary one \rife{e.MasterDiscont}, respectively. This is the content of Sections  \ref{sec:mastercell}--\ref{sec:discount}. More precisely, in Section \ref{sec:mastercell} we prove the existence of  a solution to the ergodic master equation, obtained as the limit, when $\de\to 0$,  of a subsequence of solutions of \rife{e.MasterDiscont}.  The long-time behavior of the time-dependent master equation \rife{eq:masterIntro} is addressed in Section \ref{sec:longtime}. Finally,  the limit of the whole sequence of solutions of \rife{e.MasterDiscont} is proved in section \ref{sec:discount}. \\

{\bf Acknowledgement:} The first author was partially supported by the ANR (Agence Nationale de la Recherche) project ANR-16-CE40-0015-01. The second author was partially supported  by the Indam  Gnampa project 2015 {\it Processi di diffusione degeneri o singolari legati al controllo di dinamiche stocastiche}. 

The work was completed as the first author was visiting Universit\`{a} Roma Tor Vergata. He   wishes to thank  the university for its hospitality. 

\section{Notation, assumptions and preliminary estimates}\label{sec:notations}

\subsection{Notation and assumptions}\label{subsec:derivative}

Throughout the paper we work on the $d-$dimensional torus $\T^d:=\R^d/\Z^d$: this means that all equations are $\Z^d-$periodic in space. This assumption is standard in the framework of the long-time behavior. We denote by $\Pk$ the set of Borel probability measures on $\T^d$, endowed with the Monge-Kantorovich distance $\dk$:
$$
\dk(m,m')=\sup_{\phi} \inte \phi \,d(m-m') \qquad \forall m,m'\in \Pk,
$$
where the supremum is taken over all $1-$Lipschitz continuous maps $\phi:\T^d\to \R$. 

For $\alpha\in [0,1]$, we denote by $C^\alpha([0,T],\Pk)$ the set of maps $m:[0,T]\to\Pk$ which are $\alpha-$H\"older continuous if $\alpha\in (0,1)$, continuous if $\alpha=0$.

Next we recall the notion of derivative of a map $U:\Pk\to \R$ as introduced in \cite{CDLL}. We say that $U$ is $C^1$ if there exists a continuous map $\frac{\delta U}{\delta m}: \Pk\times \T^ d\to \R$ such that 
$$
U(m')-U(m)=\int_0^1\inte \frac{\delta U}{\delta m}((1-t)m+tm',y)\, d(m'-m)(y) dt \qquad \forall m,m'\in \Pk.
$$
We observe that if $U$ can be extended to $L^2(\T^d)$ then $y\mapsto \frac{\delta U}{\delta m}(m,y)$ is nothing but the representation in $L^2$ of the Gateaux derivative of $U$ computed at $m$. The fact that $U$ is defined on probability measures, i.e. with the constraint of mass one, lets $\frac{\delta U}{\delta m}(m,y)$ be defined up to a constant. We normalize the derivative   by the condition 
\be\label{Convention}
\inte \frac{\delta U}{\delta m}(m,y)\, dm(y)=0 \qquad \forall m\in \Pk.
\ee
We write indifferently $\ds \frac{\delta U}{\delta m}(m)(\mu)$ and $\ds \inte \frac{\delta U}{\delta m}(m,y)\,d\mu(y)$ for a signed measure $\mu$ with finite mass.

When the map  $\frac{\delta U}{\delta m}= \frac{\delta U}{\delta m}(m,y)$ is differentiable with respect to the last variable, we denote by $D_mU(m,y)$ its gradient: 
$$
D_mU(m,y):= D_y\frac{\delta U}{\delta m}(m,y). 
$$
Let us recall \cite{CDLL} that $D_mU$ can be used to estimate the Lipschitz regularity of $U$ in the $m$ variable:
$$
\left|U(m)-U(m')\right| \leq \dk(m,m')\, \left[\sup_{m''\in \Pk, y\in \T^d}\left| D_mU(m'',y)\right|\, \right]\qquad \forall m,m'\in \Pk.
$$

For $p=1, 2, \infty$, we denote by $\|\cdot\|_{L^p}$ the $L^p$ norm of a map on $\T^d$ (we often use the notation $\|\cdot\|_\infty$ for $\|\cdot\|_{L^\infty}$).  For $k\in \N$ and $\alpha\in (0,1)$, we denote by $\|\cdot\|_{C^k}$ (respectively $\|\cdot\|_{C^{k+\alpha}}$) the standard norm on the set of maps defined on $\T^d$ and which are of class $C^k$ (respectively of class $C^k$ with a $k-$th derivative which is $\alpha$-H\"older continuous). By $\|\cdot\|_{(C^{k+\alpha})'}$ we mean the norm in the dual space: 
$$
\|\phi\|_{(C^{k+\alpha})'} :=\sup\{ \inte \phi \psi, \; \|\psi\|_{C^{k+\alpha}}\leq 1\}.
$$
For a map $\phi$  depending of two spatial variables, we denote by $\|\phi(\cdot,\cdot)\|_{k+\alpha, k'+\alpha}$ the supremum of the $\alpha-$H\"older norm of the partial derivatives of order $l\leq k$ and $l'\leq k'$ respectively of the map $\phi$. 

Finally, if $\phi=\phi(x)$, we systematically denote by $\lg \phi\rg := \inte \phi(x)dx$ the average of $\phi$.

If $u:[0,T] \times \T^d\to \R$ is a sufficiently smooth map, we denote by $Du(t,x)$ and $\Delta u(t,x)$ its spatial gradient and spatial Laplacian and by $\partial_t u(t,x)$ its partial derivative with respect to the time variable. We will also use the classical parabolic H\"older spaces: for $\alpha\in (0,1)$, we denote by $C^{\alpha/2,\alpha}$ the set  of maps which are $\alpha-$H\"older in space and $\alpha/2-$H\"older in time and by $C^{1+\alpha/2, 2+\alpha}$ the set of maps $u$ such that $\partial_t u$ and $D^2u$ are in $C^{\alpha/2,\alpha}$. 
\bigskip

\noindent {\bf Assumptions.} The following assumptions are in force throughout the paper.

\begin{itemize}
\item[(H)]  The Hamiltonian $H=H(x,p):\T^d\times \R^d\to \R$ is of class $C^2$ and the function $p\mapsto D^2_{pp}H(x,p)$ is Lipschitz continuous, uniformly w.r.t. $x$, and satisfies the growth condition:
\be\label{cond:Hcoercive}
C^{-1}I_d \leq D^2_{pp}H(x,p)\leq CI_d\qquad \forall (x,p)\in \T^d\times \R^d .
\ee
Moreover we suppose that 
there exists $\theta\in (0,1)$ and $C>0$ such that 
\be\label{Htheta}
\left| D_{xx}H(x,p)\right| \leq C(1+ |p|)^{1+\theta}, \qquad 
\left| D_{xp}H(x,p)\right| \leq C(1+ |p|)^{\theta}, \qquad \forall (x,p)\in \T^d\times \R^d.
\ee
This latter assumption is a little awkward, since it requires the quadratic part of $H$ to be homogeneous, but we actually  need it  in order to ensure uniform Lipschitz regularity of $u^T$ (solution to \eqref{intro.MFG}) and of $u^\delta$ (solution to \eqref{e.MFGih}) independently of $T$ and $\delta$: see Lemma \ref{lem.SemiConc} and Lemma \ref{lem.udeltamdelta}.  If the same bounds were available with different arguments, then we could get rid of this condition, since in the rest of the paper we do not use it at all.

\item[(FG)]  The coupling functions $F,G: \T^d\times \Pk\to \R$ are supposed to be of class $C^1$ and their first derivatives satisfy the following Lipschitz conditions:

\begin{itemize}

\item[(FGa)] $F, G$ are twice differentiable in the $x$ variable and $F_{xx} (x,m), G_{xx}(x,m) $ are bounded uniformly in $\T^d\times \Pk$.
 
\item[(FGb)] $\frac{\delta F}{\delta m}(x,m,y)$, $\frac{\delta G}{\delta m}(x,m,y)$ are differentiable with respect to $(x, y)$ and Lipschitz continuous
in $T^d \times \Pk \times T^d$ (i.e. globally Lipschitz in the three variables).
\end{itemize}

Even if this will not be strictly needed, an extra regularity condition is assumed in order to access to smooth solutions of the master equation as stated in \cite{CDLL}. Namely we assume that:

\begin{itemize}

\item[(FGc)]  For any $\alpha\in (0,1)$, $F(\cdot,m)$ and $\frac{\delta F}{\delta m}(\cdot,m,\cdot)$ are of class $C^{2+\alpha}$ in all space variables, uniformly in $m$, and $\frac{\delta F}{\delta m}$ is Lipschitz continuous in $m$ with respect to $C^{2+\alpha}$ in space. The same holds for $G$ in norm $C^{3+\alpha}$.

\item[(FGd)] The maps $F$ and $G$ are assumed to be monotone:  for any $m\in\Pk$ and for any centered Radon measure $\mu$, 
\be\label{hyp:mono}
\inte\inte \frac{\delta F}{\delta m}(x,m,y)\mu(x)\mu(y)dxdy\geq 0, \qquad \inte\inte \frac{\delta G}{\delta m}(x,m,y)\mu(x)\mu(y)dxdy\geq 0.
\ee

\end{itemize}
\end{itemize}

 Let us comment upon our assumptions. \\

The regularity of $H$ as well as the uniform convexity with respect to the second variable are standard in MFG theory. Here these assumptions are all the more important that we make a systematic use of the duality inequality (see \cite{LL07mf}) which provides  uniqueness and quantified stability for the MFG system under this strong convexity assumption. 

The regularity assumption on  $\frac{\delta F}{\delta m}$ (and on $\frac{\delta G}{\delta m}$) allows for instance inequalities of the form: 
$$
\left\|\frac{\delta F}{\delta m}(\cdot, m)(\mu)\right\|_{C^2}\leq C \|\mu\|_{(C^2)'}
$$
for any $m\in \Pk$ and any distribution $\mu$ on $\T^d$. \\

The monotonicity assumption \eqref{hyp:mono} implies (and, under our regularity assumptions, is equivalent to) the more standard one: 
$$
\inte (F(x,m)-F(x,m'))d(m-m')(x)\geq 0, \qquad \inte (G(x,m)-G(x,m'))d(m-m')(x)\geq 0, 
$$
for any measures $m,m'\in \Pk$. This condition ensures the well-posedness of the MFG system \eqref{intro.MFG} for large times intervals and the well-posedness of the ergodic MFG system \eqref{e.MFGergo}. Without this assumption, these MFG systems may have several solutions and the long time average (and {\it a fortiori} the long time behavior) of the MFG system \eqref{intro.MFG} is not known. 
\vskip1em

Let us stress that, in the following, we will denote  generically  by $C$ possibly different constants appearing in the estimates which depend on the data $F, G$ and $H$ through the above assumptions. In particular, those constants will depend on  the sup-norm of $F_{xx}, G_{xx}$ (which are bounded uniformly w.r.t. $x$ and $m$ from (FGa)),    the Lipschitz constants of $\frac{\delta F}{\delta m}, \frac{\delta G}{\delta m}$ and the conditions \rife{cond:Hcoercive}-\rife{Htheta}, respectively.

\subsection{Preliminary estimates}

We will use repetitively the following estimates on linear equations which are independent of the time horizon. The first one is about  linear equations in divergence form (see \cite[Lemma 7.1, Lemma 7.6]{CLLP2}). 

\begin{Lemma} \label{lemCLLP2.1} Let $V$ be a bounded vector field on $(0,T)\times \T^d$, $B\in L^2((0,T)\times \T^d)$ and let $\mu$ be the solution to 
\be\label{eq.kolmo}
\begin{cases}
\partial_t \mu-\Delta \mu +\dive (\mu V)= \dive( B)& \qquad {\rm in} \; (0,T)\times \T^d
\\
\mu(0)=\mu_0 & \qquad {\rm in} \;   \T^d,
\end{cases}
\ee
with $\inte \mu_0=0$. 

There exist constants $\omega>0$ and $C>0$, depending only on $\|V\|_\infty$, such that 
$$
\|\mu(t)\|_{L^2}\leq Ce^{-\omega t} \|\mu_0\|_{L^2} +C\left[\int_0^t \|B(s)\|_{L^2}^2ds\right]^{1/2}.
$$
If $B\equiv 0$, we also have, for any $\tau>0$,  
$$
\|\mu(t)\|_\infty\leq C_\tau e^{-\omega t}\|\mu_0\|_{L^1} \qquad \forall t\geq \tau,
$$
where the constant $C_\tau$ depends on $\tau$ and $\|V\|_\infty$ only. 
\end{Lemma}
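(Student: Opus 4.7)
Observe first that, since both sides of \eqref{eq.kolmo} are in divergence form, integrating in $x$ gives $\inte\mu(t)\,dx=\inte\mu_0\,dx=0$ for every $t\geq 0$, so $\mu(t)$ stays throughout the evolution in the mean-zero subspace of $L^2(\T^d)$, where the Poincar\'e inequality holds.

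For the $L^2$ estimate, I would decompose $\mu=\mu_h+\mu_p$: $\mu_h$ solves the homogeneous equation ($B\equiv 0$) with $\mu_h(0)=\mu_0$, and $\mu_p$ solves \eqref{eq.kolmo} with $\mu_p(0)=0$. For the homogeneous part, the Aronson--Nash--Moser theory for second-order parabolic equations in divergence form with bounded drift on $\T^d$ gives Gaussian two-sided bounds on the fundamental solution, and therefrom the exponential ergodicity of the propagator on mean-zero $L^2$: $\|\mu_h(t)\|_{L^2}\leq Ce^{-\omega t}\|\mu_0\|_{L^2}$ with $\omega=\omega(\|V\|_\infty)>0$. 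For the particular part, I would dualise: for any $\phi\in L^2(\T^d)$, let $\tilde\phi$ solve on $(0,t)$ the backward adjoint equation $-\partial_s\tilde\phi=\Delta\tilde\phi+V\cdot D\tilde\phi$ with terminal datum $\tilde\phi(t)=\phi$; then
$$
\inte \phi\,\mu_p(t)\,dx \;=\; -\int_0^t \inte B(s)\cdot D\tilde\phi(s)\,dx\,ds,
$$
so Cauchy--Schwarz in $s$ reduces the claim to a maximal-regularity bound $\int_0^t\|D\tilde\phi\|_{L^2}^2\,ds\leq C\|\phi\|_{L^2}^2$. The latter comes from the energy identity for $\tilde\phi$, integrated in $s$, together with the fact that $\tilde\phi$ converges exponentially to a constant (the adjoint counterpart of the spectral gap for $\mu_h$), which ensures $\int_0^\infty\|\tilde\phi-c\|_{L^2}^2\,ds<\infty$ and lets a Young absorption handle the drift contribution $\inte V\tilde\phi\cdot D\tilde\phi$. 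Taking the supremum over $\|\phi\|_{L^2}\leq 1$ gives $\|\mu_p(t)\|_{L^2}\leq C\|B\|_{L^2(0,t;L^2)}$, and adding the two contributions yields the stated inequality.

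For the $L^\infty$ bound with $B\equiv 0$ and $t\geq\tau$, I would chain three Aronson-type smoothings over slabs of length $\tau/3$. On $[0,\tau/3]$ the Gaussian upper bound converts the $L^1$ datum into $L^\infty$: $\|\mu(\tau/3)\|_\infty\leq C_\tau\|\mu_0\|_{L^1}$, whence $\|\mu(\tau/3)\|_{L^2}\leq C_\tau\|\mu_0\|_{L^1}$. On $[\tau/3,t-\tau/3]$ the $L^2$ decay from the first part gives $\|\mu(t-\tau/3)\|_{L^2}\leq C_\tau e^{-\omega t}\|\mu_0\|_{L^1}$. On $[t-\tau/3,t]$ Aronson smooths $L^2$ into $L^\infty$ once more, producing the announced bound.

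The main obstacle is the exponential-ergodicity bound for the homogeneous propagator. A bare energy identity loses the Poincar\'e gain as soon as $\|V\|_\infty$ is not small---Young's inequality leaves a term $\|V\|_\infty^2\|\mu\|_{L^2}^2$ that the Poincar\'e constant cannot absorb---so one must invoke the Krylov--Safonov / Aronson--Nash--Moser Gaussian heat-kernel bounds, which together with the accompanying parabolic regularity are the real engine driving both parts of the lemma.
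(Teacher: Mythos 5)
The paper itself does not prove this lemma: it is quoted from \cite[Lemmas 7.1 and 7.6]{CLLP2}, so there is no internal proof to compare with. Your route is the standard one and is essentially the cited one: duality with the backward adjoint equation plus a Harnack/Aronson-type ingredient replacing the (correctly diagnosed as insufficient) Poincar\'e--energy argument; the splitting $\mu=\mu_h+\mu_p$ is only cosmetic, since the homogeneous part can equally be handled by the same duality, using $\inte\mu_0=0$ to subtract a constant from $\tilde\phi(0)$.

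Two steps in your sketch are asserted rather than proved and should be spelled out. First, ``two-sided Gaussian bounds imply exponential ergodicity on mean-zero $L^2$'' is not a one-line implication: the usual way is to use the uniform lower bound $p(1,x,y)\geq c_0>0$ of the kernel on the torus to get a Doeblin-type contraction of the $L^1$ (total variation) norm of mean-zero data, $\|\mu(n)\|_{L^1}\leq (1-c_0)^n\|\mu_0\|_{L^1}$, and then to upgrade to $L^2$ (or $L^\infty$) by the upper Gaussian bound at unit time; phrased this way it also yields the second assertion of the lemma directly, without your three-slab chaining. Second, and more delicately, the uniform-in-$t$ bound $\int_0^t\|D\tilde\phi\|_{L^2}^2\,ds\leq C\|\phi\|_{L^2}^2$: the Young absorption has to be run on $w=\tilde\phi-c$ for a \emph{single} constant $c$ (note $w$ solves the same backward equation, so the energy identity gives $\int_0^t\|Dw\|_{L^2}^2\,ds\leq \|\phi-c\|_{L^2}^2+\|V\|_\infty^2\int_0^t\|w\|_{L^2}^2\,ds$), and to get $\int_0^t\|w\|_{L^2}^2\,ds\leq C\|\phi\|_{L^2}^2$ the exponential decay of the oscillation of $\tilde\phi$ alone is not enough, because the spatial average moves: $\frac{d}{ds}\lg\tilde\phi(s)\rg=-\lg V\cdot D\tilde\phi(s)\rg$. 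You also need the exponential decay of $\|D\tilde\phi(s)\|_{L^2}$ in $t-s$ (this is exactly the content of Lemma \ref{lemCLLP2.2}, combining its first and third estimates), which shows that $\lg\tilde\phi(s)\rg$ converges exponentially and lets you take, e.g., $c=\lg\tilde\phi(0)\rg$. With these two points made explicit, your argument is complete and matches the proof the paper refers to.
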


The second Lemma is about a viscous transport equation (see \cite[Lemma 7.4]{CLLP2} and \cite[Lemma 7.5]{CLLP2}). 

\begin{Lemma} \label{lemCLLP2.2} Let $V$  be a bounded vector field, $A\in L^2((0,T)\times \T^d)$ and $v$ be the solution to the backward equation
\be\label{eq.backward}
-\partial_t v-\Delta v +V\cdot Dv= A \qquad {\rm in} \; (0,T)\times \T^d. 
\ee
There exist constants $\omega>0$ and $C>0$, depending only on $\|V\|_\infty$, such that 
$$
\|v(t)-\lg v(t)\rg \|_{L^2}\leq Ce^{-\omega (T-t)} \|v(T)-\lg v(T)\rg \|_{L^2} +C\int_t^T e^{-\omega(s-t)}\|A(s)\|_{L^2}ds
$$
and, if $A \in L^\infty((0,T)\times \T^d)$, 
$$
\|v(t)-\lg v(t)\rg \|_{L^\infty}\leq Ce^{-\omega (T-t)} \|v(T)-\lg v(T)\rg \|_{L^\infty} +C\int_t^T e^{-\omega(s-t)}\|A(s)\|_{L^\infty}ds,
$$
where $\lg \phi\rg=\inte \phi$ for any map $\phi$. Moreover, for any $0\leq t < t_0\leq T$, 
$$
(t_0-t)\|Dv(t)\|_{L^2}\leq C(t_0-t+1)\left(\|v(t_0)-\lg v(t_0)\rg \|_{L^2} +\|A\|_{L^2((t,t_0)\times \T^d)}+ \|v-\lg v\rg\|_{L^2((t,t_0)\times \T^d)}\right).
$$
\end{Lemma}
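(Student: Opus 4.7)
Both the $L^2$ and the $L^\infty$ estimates are standard duality arguments against the forward Fokker--Planck equation of Lemma \ref{lemCLLP2.1}. For the $L^2$ bound I would fix $t\in[0,T]$ and represent $v(t)-\langle v(t)\rangle$ by pairing with zero-mean functions. Given $\mu_0$ with $\inte\mu_0=0$ and $\|\mu_0\|_{L^2}\leq 1$, let $\mu$ solve the adjoint forward problem
$$\partial_s\mu-\Delta\mu-\dive(\mu V)=0\quad\text{on }(t,T)\times\T^d,\qquad \mu(t)=\mu_0.$$
A direct integration by parts on the torus shows that the Laplacian and drift contributions in $\frac{d}{ds}\inte v(s)\mu(s)\,dx$ cancel exactly, leaving the clean identity $\frac{d}{ds}\inte v\mu\,dx=-\inte A(s)\mu(s)\,dx$. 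Integrating between $t$ and $T$ and using that $\inte\mu(s)=0$ is preserved (so the constants $\langle v(t)\rangle$ and $\langle v(T)\rangle$ drop out of the pairing), I obtain
$$\inte(v(t)-\langle v(t)\rangle)\mu_0\,dx=\inte(v(T)-\langle v(T)\rangle)\mu(T)\,dx+\int_t^T\inte A(s)\mu(s)\,dx\,ds.$$
Applying Lemma \ref{lemCLLP2.1} (with $V\mapsto -V$, $B\equiv 0$) yields $\|\mu(s)\|_{L^2}\leq Ce^{-\omega(s-t)}\|\mu_0\|_{L^2}$; Cauchy--Schwarz and taking the supremum over admissible $\mu_0$ then give the first inequality.

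\textbf{The $L^\infty$ bound.} The same duality works by pairing against $\mu_0\in L^1$ with $\inte\mu_0=0$ and $\|\mu_0\|_{L^1}\leq 1$, using that $\|v-\langle v\rangle\|_{L^\infty}$ is comparable to the oscillation of $v$ and hence to the supremum over such $\mu_0$. On the interval $[t+\tau,T]$ the exponential $L^\infty$ bound of Lemma \ref{lemCLLP2.1} applies directly to control $\|\mu(s)\|_\infty$; on the short interval $[t,t+\tau]$ one uses conservation of $L^1$-mass for the adjoint equation together with the pairing $\inte A\mu\leq\|A\|_{\infty}\|\mu\|_{L^1}$ to absorb that piece into the constant $C$.

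\textbf{The gradient bound.} For part 3 the duality is insufficient; I would use a standard parabolic energy estimate with a time cutoff. Writing $w(s):=v(s)-\langle v(s)\rangle$ so that $Dw=Dv$, the function $w$ satisfies the same backward equation up to an $s$-dependent scalar source (which pairs trivially against zero-mean test functions). Choose a smooth cutoff $\eta:[0,T]\to[0,1]$ with $\eta\equiv 1$ near $s=t$, $\eta(t_0)=0$, and $|\eta'|\leq C/(t_0-t)$. Testing the equation against $\eta^2 w$, integrating by parts in space, and absorbing the drift contribution $\inte V\cdot Dw\cdot w$ into $\inte|Dw|^2$ via Young's inequality bounds
$$\int_t^{t_0}\eta^2\inte|Dw|^2\,dx\,ds\leq C\Big(\|w(t_0)\|_{L^2}^2+\|A\|_{L^2((t,t_0)\times\T^d)}^2+(t_0-t)^{-1}\|w\|_{L^2((t,t_0)\times\T^d)}^2\Big).$$
To upgrade this $L^2$-in-time bound to a pointwise bound at $s=t$, I would then test the equation against $\eta^2\partial_s w$ (or equivalently apply $L^2$-maximal regularity to the backward heat equation with right-hand side $A-V\cdot Dw$); after absorption using the energy just obtained, this controls $\|Dw(t)\|_{L^2}$ by the same right-hand side and yields the stated inequality once the $(t_0-t)$ and $(t_0-t+1)$ prefactors are collected.

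\textbf{Main obstacle.} The main delicate point is the short-time regime in the $L^\infty$ estimate: the adjoint $\mu$ emanating from a rough $L^1$ datum cannot be controlled in $L^\infty$ immediately, so the argument must be split at a time buffer $\tau>0$ where Lemma \ref{lemCLLP2.1} takes over. The gradient estimate, while technical in its cutoff bookkeeping, is otherwise a routine Caccioppoli computation that does not use the convexity or structure of $H$ at all, only boundedness of $V$.
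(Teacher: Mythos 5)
The paper itself gives no proof of this lemma: it is imported from \cite[Lemmas 7.4 and 7.5]{CLLP2}, so there is no internal argument to compare against. Judged on its own, your proposal is correct and is the natural one: the first two estimates are exactly the dual formulation of Lemma \ref{lemCLLP2.1} (the identity $\frac{d}{ds}\inte v\mu=-\inte A\mu$ and the sign conventions check out, the adjoint equation is Lemma \ref{lemCLLP2.1} with $V$ replaced by $-V$ so the constants still depend only on $\|V\|_\infty$, and the supremum over zero-mean $\mu_0$ recovers $\|v(t)-\lg v(t)\rg\|_{L^2}$, respectively half the oscillation in the $L^\infty$ case), while the third is a two-step Caccioppoli estimate with the cutoff correctly oriented for a backward equation ($\eta=1$ at $s=t$, $\eta(t_0)=0$), and the scalar source $\frac{d}{ds}\lg v(s)\rg$ indeed drops against both test functions since $\inte w(s)=0$. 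Two details to tighten, neither a gap in the approach: (i) in the $L^\infty$ bound the terminal term $\inte (v(T)-\lg v(T)\rg)\mu(T)$ also needs the short-horizon treatment when $T-t\le\tau$, via the $L^1$ non-expansiveness of the adjoint flow (split $\mu_0=\mu_0^+-\mu_0^-$, use positivity preservation and mass conservation), paying a harmless factor $e^{\omega\tau}$ — the same mechanism you invoke for the source piece; (ii) in the second energy estimate the boundary term $\int_t^{t_0}\eta|\eta'|\,\|Dw(s)\|_{L^2}^2\,ds$ is not dominated by the $\eta^2$-weighted quantity produced in the first step, so you should test with $\eta^4\partial_s w$ (or use a nested cutoff equal to $1$ where the first one is), which turns it into $\int\eta^3|\eta'|\|Dw\|^2\le C(t_0-t)^{-1}\int\eta^2\|Dw\|^2$; after that the absorption you describe yields exactly the stated $(t_0-t+1)/(t_0-t)$ prefactor.
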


We note for later use a simple consequence of Lemma \ref{lemCLLP2.1}: 

\begin{Corollary}\label{cor.StaMeas} Let $V$ and $B$ be  (time-independent) vector fields. Then any $L^2$ solution of 
$$
-\Delta \mu +\dive (\mu V)= \dive( B)\qquad {\rm in}\; \T^d
$$
with $\inte \mu =0$, satisfies
$$
\|\mu\|_{H^1}\leq C \, \|B\|_{L^2},
$$
where $C$ depends only on $\|V\|_\infty$. 
\end{Corollary}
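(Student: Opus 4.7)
The plan is to reduce to Lemma~\ref{lemCLLP2.1} by regarding the stationary solution $\mu$ as a constant-in-time solution of the evolutionary equation \eqref{eq.kolmo} on a bounded time interval. Since $\mu$ has zero mean and solves \eqref{eq.kolmo} with initial datum $\mu_0=\mu$ and time-independent data $(V,B)$, applying Lemma~\ref{lemCLLP2.1} at time $T$ yields
$$
\|\mu\|_{L^2} \;=\; \|\mu(T)\|_{L^2} \;\leq\; Ce^{-\omega T}\|\mu\|_{L^2} \;+\; C\sqrt{T}\,\|B\|_{L^2},
$$
with $C,\omega>0$ depending only on $\|V\|_\infty$. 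Choosing $T=T_0$ large enough that $Ce^{-\omega T_0}\le 1/2$—a choice which itself depends only on $\|V\|_\infty$—the first term on the right absorbs into the left, and I obtain $\|\mu\|_{L^2}\le 2C\sqrt{T_0}\,\|B\|_{L^2}$.

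For the gradient, I would use a standard energy argument. Since $\mu\in L^2$ satisfies $-\Delta\mu=\dive(B-\mu V)$ with $B-\mu V\in L^2$, elliptic regularity on $\T^d$ guarantees $\mu\in H^1$, so I may test the equation against $\mu$ itself and integrate by parts:
$$
\inte |D\mu|^2 \;=\; \inte \mu V\cdot D\mu \;-\; \inte B\cdot D\mu \;\leq\; \bigl(\|V\|_\infty\|\mu\|_{L^2}+\|B\|_{L^2}\bigr)\|D\mu\|_{L^2}.
$$
This gives $\|D\mu\|_{L^2}\leq \|V\|_\infty\|\mu\|_{L^2}+\|B\|_{L^2}$, and combining it with the previous $L^2$ bound yields $\|\mu\|_{H^1}\leq C\|B\|_{L^2}$ with $C$ depending only on $\|V\|_\infty$, as required.

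I do not foresee any serious obstacle. The only mildly delicate point is the $\sqrt{T}$ factor produced when rewriting the $L^2((0,T)\times\T^d)$-norm of the constant-in-time forcing $B$; but this is harmless because $T_0$ is fixed once and for all as a function of $\|V\|_\infty$. The strategy parallels the classical deduction of an elliptic resolvent estimate from the exponential decay of the heat semigroup associated with the drifted Laplacian.
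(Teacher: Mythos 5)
Your proof is correct and follows essentially the same route as the paper: apply Lemma \ref{lemCLLP2.1} to the time-independent solution, choose the time large enough (depending only on $\|V\|_\infty$) to absorb the $Ce^{-\omega t}\|\mu\|_{L^2}$ term, and then obtain the gradient bound by the standard energy estimate obtained from testing the equation with $\mu$.
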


\begin{proof} It is enough to apply Lemma  \ref{lemCLLP2.1}: 
$$
\|\mu\|_{L^2}\leq Ce^{-\omega t} \|\mu\|_{L^2}+  C \|B\|_{L^2}t^{1/2}, 
$$
Choosing $t$ large enough, this gives 
$$
\|\mu\|_{L^2}\leq C\, \|B\|_{L^2}\,.
$$
Then, multiplying the equation by $\mu$, the standard energy estimate gives
$$
  \|D\mu\|_{L^2}\leq  \left[ \|V\|_\infty\|\mu\|_{L^2} + \|B\|_{L^2}\right],
$$
which gives the result. 
\end{proof}

We conclude this Section with  a further bound for the solutions of the Fokker-Planck equation.

\begin{Lemma}\label{lem.fromm0meas}
Let $V$ be a  bounded vector field on $(0,T)\times \T^d$ with bounded space derivatives and $\mu$ be a weak solution to \eqref{eq.kolmo} with $B\equiv0$. Then, for any $\tau>0$,  
$$
\|\mu(t)\|_\infty\leq C_\tau e^{-\omega t}\|\mu_0\|_{(C^{2+\alpha})'} \qquad \forall t\geq \tau,
$$
where $\omega$ is given by Lemma \ref{lemCLLP2.1},   $\alpha\in (0,1)$ and $C_\tau>0$ depends on $\|V\|_{L^\infty}$, $\|DV\|_{L^\infty}$ and $\tau$.
\end{Lemma}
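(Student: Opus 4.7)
The plan is to split $[0,t]$ into a short smoothing window $[0,\tau/2]$, on which the distribution-valued datum $\mu_0$ is converted into an $L^1$ density via parabolic duality, and the window $[\tau/2,t]$, on which Lemma~\ref{lemCLLP2.1} already provides exponential $L^\infty$-decay from an $L^1$ norm.

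For the smoothing step, I would fix a smooth test function $\phi\in C^\infty(\T^d)$ with $\|\phi\|_\infty\le 1$ and consider the formal adjoint (backward) equation
$$
\partial_s w + \Delta w + V\cdot D w = 0 \;\text{ on }\; [0,\tau/2]\times\T^d, \qquad w(\tau/2,\cdot)=\phi.
$$
Reversing time turns this into a uniformly parabolic forward Cauchy problem on $\T^d$ with bounded drift $V$ whose spatial derivative $DV$ is also bounded. Standard parabolic regularity (a De Giorgi--Nash--Moser step to upgrade bounded terminal data to Hölder continuity on an intermediate slice, followed by a Schauder-type estimate) then gives
$$
\|w(0,\cdot)\|_{C^{2+\alpha}}\le C_\tau\|\phi\|_\infty,
$$
with $C_\tau$ depending only on $\tau$, $\|V\|_\infty$ and $\|DV\|_\infty$. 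Testing the weak formulation of \eqref{eq.kolmo} against $w$ yields the duality identity $\inte \mu(\tau/2)\phi\,dx = \langle \mu_0,w(0,\cdot)\rangle$, hence
$$
\left|\inte \mu(\tau/2)\phi\,dx\right| \le \|\mu_0\|_{(C^{2+\alpha})'}\,\|w(0,\cdot)\|_{C^{2+\alpha}}\le C_\tau\,\|\mu_0\|_{(C^{2+\alpha})'}.
$$
Taking the supremum over admissible $\phi$ by density gives $\|\mu(\tau/2)\|_{L^1}\le C_\tau\|\mu_0\|_{(C^{2+\alpha})'}$.

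It then suffices to apply Lemma~\ref{lemCLLP2.1} on the interval $[\tau/2,t]$: since the elapsed time is at least $\tau/2$, its $L^1\to L^\infty$ bound yields, for every $t\ge\tau$,
$$
\|\mu(t)\|_\infty \le C_{\tau/2}\,e^{-\omega(t-\tau/2)}\,\|\mu(\tau/2)\|_{L^1} \le C_\tau\,e^{-\omega t}\,\|\mu_0\|_{(C^{2+\alpha})'},
$$
which is exactly the announced estimate. The main technical obstacle is the Schauder-type smoothing from bounded terminal data to $C^{2+\alpha}$ regularity at an earlier time with a constant controlled solely by $\|V\|_\infty$ and $\|DV\|_\infty$: because $V$ is only assumed bounded in time, the cleanest route is to combine the maximum principle with an interior Hölder estimate on a first sub-window, then to upgrade to $C^{2+\alpha}$ on a second sub-window either by parabolic $L^p$ theory or by a time-mollification-and-pass-to-the-limit argument that is compatible with the scalar dependence of $C_\tau$ claimed in the statement.
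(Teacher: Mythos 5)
Your argument is correct and follows essentially the same route as the paper: test the Fokker--Planck equation against the backward adjoint problem, use parabolic regularity (a H\"older gradient estimate plus differentiating the equation, with constants depending only on $\|V\|_\infty$, $\|DV\|_\infty$ and $\tau$) to bound the adjoint solution at time $0$ in $C^{2+\alpha}$, and then conclude with the decay of Lemma \ref{lemCLLP2.1}. The only cosmetic difference is that you normalize the terminal datum in $L^\infty$ and pass through an $L^1$ bound on $\mu(\tau/2)$, whereas the paper normalizes in $L^2$ and obtains an $L^2$ bound on $\mu(\tau)$; both intermediate bounds feed equally well into Lemma \ref{lemCLLP2.1}.
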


\begin{proof}
Let $\tau>0$ and $v$ be the solution to the transport equation 
$$
\left\{\begin{array}{l}
-\partial_t v-\Delta v +V\cdot Dv = 0\qquad  {\rm in}\; (0,\tau)\times \T^d\\
v(\tau,x)=v_\tau(x)\qquad  {\rm in}\; \T^d. 
\end{array}\right.
$$
where $v_\tau$ is in $C^\infty(\T^d)$. One easily checks that 
$$
\sup_t \|v(t)\|_{L^2}+\|Dv\|_{L^2((0,\tau)\times \T^d)} \leq C\|v_\tau\|_{L^2},
$$ 
where $C$   depends on $\|V\|_\infty$ and $\tau$ only. Standard parabolic regularity
 (Theorem III.11.1 of \cite{LSU}) then implies that 
$$
\|Dv\|_{C^{\alpha/2,\alpha}([0,\tau/2]\times \T^d)}\leq C \|v_\tau\|_{L^2}
$$ 
for some $\alpha$ and $C$ depending on $\|V\|_\infty$ and $\tau$  only. For any $i\in \{1, \dots, d\}$, the derivative $v_{x_i}$ solves 
$$
-\partial_t v_{x_i}-\Delta v_{x_i} +V\cdot Dv_{x_i}+ V_{x_i}\cdot Dv = 0\qquad  {\rm in}\; (0,\tau/2)\times \T^d.
$$
By parabolic regularity (Theorem III.11.1 of \cite{LSU}), we infer that 
$$
\|D^2v\|_{C^{\alpha/2,\alpha}([0,\tau/4]\times \T^d)}\leq C\|Dv\|_{L^\infty((0,\tau/2)\times \T^d)} \leq C \|v_\tau\|_{L^2}
$$ 
for some $\alpha$ and $C$ depending on $\|V\|_\infty$, $\|DV\|_\infty$ and $\tau$  only. 
We have, by duality,
$$
\inte v_\tau \mu(\tau) =\inte v(0)d\mu_0(x).
$$
So taking the supremum over  $v_\tau$ such that $\|v_\tau\|_{L^2}\leq 1$, we infer that
$$
\|\mu(\tau)\|_{L^2}\leq C_\tau\,  \|\mu_0\|_{(C^{2+\alpha})'}\qquad \forall \tau >0.
$$
We can then derive the conclusion by Lemma \ref{lemCLLP2.1}. 
\end{proof}

\subsection{Regularity of the MFG system}

The aim of this section is to provide additional basic estimates on the solution to the  MFG system
\be\label{basic-sys}
\left\{\begin{array}{l}
-\partial_t u +\bar \lambda -\Delta u +H(x,D   u) = F(x, m)\qquad {\rm in}\; (0,T)\times \T^d\\
\partial_tm -\Delta  m -\dive( m H_p(x,Du))= 0\qquad {\rm in}\; (0,T)\times \T^d\\
m(0,\cdot)=m_0, \; u(T,\cdot)= g\qquad {\rm in}\;  \T^d
\end{array}\right.
\ee
where $m_0\in \Pk$. Let us recall that $\bar \lambda \in \R$ is the unique ergodic constant and $(\bar u,\bar m)$ the unique solution to the ergodic MFG system \eqref{e.MFGergo}.  

The following estimates are mostly well-known since \cite{CLLP2}, but we collect them for the sake of completeness.   The whole point is to get estimates which are independent of the time horizon or of the discount rate.  To this purpose we rely on conditions \rife{cond:Hcoercive}-\rife{Htheta} as well as on the smoothing assumption  (FGa) for the couplings.

\begin{Lemma}\label{lem.SemiConc} For any $M>0$, there exists a constant $C>0$ such that for any horizon $T>0$, if $(u,m)$ is the solution to the MFG system \rife{basic-sys} and $\|g\|_{C^{2}(\T^d)}\leq M$, then 
$$
\|Du\|_\infty\leq C. 
$$
\end{Lemma}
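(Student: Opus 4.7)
The argument is a Bernstein-type estimate combined with a semi-concavity bound, both made uniform in $T$ by exploiting the growth conditions \eqref{Htheta} (with $\theta<1$) together with the coercivity \eqref{cond:Hcoercive}.

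First I would establish a uniform semi-concavity estimate. Fix $\xi\in S^{d-1}$ and set $\eta(t,x) := D^2u(t,x)\,\xi\cdot\xi$. Differentiating the Hamilton--Jacobi equation of \eqref{basic-sys} twice in the direction $\xi$ gives a parabolic PDE for $\eta$ in which the good damping term $H_{pp}(x,Du)(D^2u\,\xi,D^2u\,\xi)\geq C^{-1}|D^2u\,\xi|^2$ appears. By the parabolic maximum principle, at a space-time maximum of $\eta$ on $[0,T]\times\T^d$: either the max is attained at $t=T$, where $\eta(T,\cdot)\leq\|D^2g\|_\infty\leq M$, or at an interior point $(t^*, x^*)$, where combining \eqref{Htheta} (which yields $|H_{xx}|\leq C(1+|Du|)^{1+\theta}$ and $|H_{xp}|\leq C(1+|Du|)^\theta$), assumption (FGa), and Young's inequality yields a quadratic inequality in $|D^2u\xi|$ whose solution is $|D^2u(t^*,x^*)\,\xi|^2\leq C(1+|Du(t^*,x^*)|^{1+\theta})$. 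In either case,
$$D^2 u(t,x) \leq K_1\bigl(1+\|Du\|_\infty^{(1+\theta)/2}\bigr)\,I_d.$$

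Next, a standard Bernstein argument on $w := |Du|^2$ (differentiate the $u$-equation in $x_i$, multiply by $2u_{x_i}$ and sum over $i$) gives
$$-\partial_t w - \Delta w + H_p\cdot Dw + 2|D^2u|^2 = 2\,Du\cdot(F_x - H_x).$$
At a space-time maximum $(t^\sharp, x^\sharp)$ of $w$ (the case $t^\sharp=T$ being immediate from $\|g\|_{C^2}\leq M$), the parabolic maximum principle yields
$$|D^2u(t^\sharp,x^\sharp)|^2 \leq C|Du(t^\sharp,x^\sharp)|\bigl(1+|Du(t^\sharp,x^\sharp)|\bigr)^{1+\theta},$$
whence $|\Delta u(t^\sharp,x^\sharp)|\leq C'\bigl(1+|Du(t^\sharp,x^\sharp)|^{(2+\theta)/2}\bigr)$. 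Plugging this into the HJ equation at $(t^\sharp,x^\sharp)$ and using the coercivity $H(x,p)\geq\alpha|p|^2 - C$ (a consequence of \eqref{cond:Hcoercive}), one obtains
$$\alpha M^2 \leq C_0 + \partial_t u(t^\sharp, x^\sharp) + C_0\,M^{(2+\theta)/2}, \qquad M:=\|Du\|_\infty.$$
Since $(2+\theta)/2<2$, once $\partial_tu$ is appropriately controlled this algebraic inequality forces $M\leq C$.

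\emph{Main obstacle.} The delicate point is the final closure: at the maximum of $w$, the information $\partial_t w=0$ only gives the orthogonality $Du\cdot D\partial_tu=0$, not a pointwise bound on $\partial_tu$. My plan is to bypass this either by running a parallel Bernstein argument on $\partial_t u$ --- differentiating the HJ equation in $t$, then using the $m$-equation together with assumption (FGb) to control the source term $\int \frac{\delta F}{\delta m}(x,m,y)\,\partial_tm(t,y)\,dy$ --- or by working with a composite auxiliary function $\Psi=w+\beta\sup_\xi \eta$ to which the same maximum-principle machinery applies. The assumption $\theta<1$ in \eqref{Htheta} is essential throughout: it ensures that all powers of $|Du|$ arising in the various estimates have exponent strictly below $2$, so that the quadratic coercive term $\alpha|Du|^2$ can absorb them.
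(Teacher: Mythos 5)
There is a genuine gap, and it sits exactly where you flag it: the Bernstein step on $w=|Du|^2$ cannot be closed, because at the interior maximum you have no pointwise control of $\partial_t u$, and neither of your proposed fixes is carried out. In fact both fixes face real obstructions: differentiating the HJ equation in time produces the source $\frac{\delta F}{\delta m}(x,m(t))(\partial_t m(t))$, whose natural bound (via the Fokker--Planck equation and (FGb)) does not decay in $t$, so the maximum principle for $\partial_t u$ only yields a bound growing linearly in $T$ --- precisely the $T$-dependence the lemma must avoid; and the composite function $\Psi=w+\beta\sup_\xi\eta$ is not smooth and its PDE is not written down, so ``the same maximum-principle machinery'' is an assertion, not an argument. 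As it stands the proof does not reach the conclusion.

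The missing idea --- and the one the paper uses --- is that on the torus no Bernstein step is needed at all: for a periodic function, a one-sided Hessian bound already controls the gradient,
$$
\|D\phi\|_\infty\leq d^{1/2}\sup_{x\in \T^d,\,|z|\leq 1}\bigl(D^2\phi(x)z\cdot z\bigr)_+ ,
$$
which is \eqref{SemiConcImpliesLipschitz}. The paper works with $C_0:=\sup_{(t,x)}u_{\xi\xi}$ for a maximizing direction $\xi$, notes that at the interior maximum point $|Du_\xi|\geq u_{\xi\xi}=C_0$ while $\|Du\|_\infty\leq d^{1/2}C_0$, and then the coercive term $C^{-1}|Du_\xi|^2\geq C^{-1}C_0^2$ beats the error terms $C(1+C_0)^{1+\theta}+C(1+C_0)^{2\theta}$ coming from \eqref{Htheta}, since $\theta<1$; this bounds $C_0$, and \eqref{SemiConcImpliesLipschitz} finishes. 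Note that your own Step 1 is essentially this computation: from $D^2u\leq K_1\bigl(1+\|Du\|_\infty^{(1+\theta)/2}\bigr)I_d$ and \eqref{SemiConcImpliesLipschitz} you would get $\|Du\|_\infty\leq d^{1/2}K_1\bigl(1+\|Du\|_\infty^{(1+\theta)/2}\bigr)$, which closes immediately because $(1+\theta)/2<1$. So the correct repair is not to fix the Bernstein/$\partial_t u$ step but to delete it and invoke the semi-concavity-implies-Lipschitz inequality on $\T^d$.
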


\begin{proof} As in Lemma 3.2 in \cite{CLLP2}, the proof relies on the uniform semi-concavity of the solution. Let us recall that, for any smooth map $\phi\in C^\infty(\T^d)$, we have
\be\label{SemiConcImpliesLipschitz}
\|D\phi\|_\infty\leq d^{1/2}\sup_{x\in \T^d, \; |z|\leq 1} ( D^2\phi(x) z\cdot z)_+.
\ee
Let $\xi$ with $|\xi|\leq 1$ be a direction for which $C_0:=\sup_{(t,x)}  D^2u(t,x)\xi\cdot \xi$ is maximal (and thus nonnegative). We set $w(t,x)=D^2u(t,x)\xi\cdot \xi= u_{\xi\xi}(t,x)$. Then $w$ solves 
$$
-\partial_t w-\Delta w +H_{\xi\xi}(x,Du)+ 2 H_{\xi p}(x,Du)\cdot Du_\xi
+  H_{pp}(x,Du) Du_\xi\cdot Du_\xi+ H_p(x,Du)\cdot Dw= F_{\xi\xi}(x,m(t)). 
$$
If the maximum of $w$ is reached at $T$, then 
$$
C_0\leq \max_{x\in \T^d} D^2g(x)\xi\cdot \xi \leq M. 
$$
Otherwise, one has at the maximum point $(t,x)$ of $w$: 
$$
H_{\xi\xi}(x,Du)+ 2 H_{\xi p}(x,Du)\cdot Du_\xi
+  H_{pp}(x,Du) Du_\xi\cdot Du_\xi \leq  F_{\xi\xi}(x,m(t)),
$$
where by our standing assumptions on $H$ we have 
\begin{align*}
H_{\xi\xi}(x,Du) &  \geq -C\, (1+ |Du|)^{1+\theta}
\\
H_{pp}(x,Du) Du_\xi\cdot Du_\xi + 2H_{\xi p}(x,Du)\cdot Du_\xi  &\geq C^{-1} |Du_\xi|^2 - C (1+ |Du|)^{2\theta}\,.
\end{align*}
Since  \eqref{SemiConcImpliesLipschitz} implies that $ \|Du\|_\infty \leq d^{\frac12} \, C_0$,  we deduce that
$$
-C (1+C_0)^{1+\theta}-C\,(1+ C_0)^{2\theta} + 
 C^{-1} |Du_\xi|^2 \leq  C
$$
and since $|Du_\xi|\geq C_0$ at the maximum point of $w(t,x)$, being $\theta<1$ we conclude that $C_0$ is bounded. 
By \eqref{SemiConcImpliesLipschitz}, we infer the Lipschitz estimate for $u$. 
\end{proof}

\begin{Remark} Thanks to Lemma \ref{lem.SemiConc}, the drift $H_p(x,Du)$ in the Fokker-Planck equation is uniformly bounded. As a consequence, as it is well-known, the  solution $m$ satisfies the following H\"older continuity estimate in time:
\be\label{holder-m}
\dk(m(t), m(s)) \leq C \, |t-s|^{\frac12}\qquad \forall t, s \in (0,T)\,: |t-s| \leq 1\,,
\ee
for some constant $C$ independent of $T$.
\end{Remark}

Next result exploits the stability of the system which stems from  the monotonicity of $F$ and the convexity of $H$ (see \cite{LL07mf}). In particular, whenever $H$ is uniformly convex, as is assumed in \rife{cond:Hcoercive}, the following estimate holds for any couple of solutions $(u_1, m_1)$ and $(u_2, m_2)$ of the system \rife{basic-sys}:
\be\label{dualrela}
C^{-1}\inte (m_1+m_2) |D(u_1-u_2)|^2\, \leq  - \frac d{dt} \inte (u_1-u_2)(m_1-m_2)
\ee

\begin{Lemma}\label{lem.boundm} For any $\ep>0$ and $M>0$, there exists times $\hat T>\tau>0$ (depending only on $\ep$, $M$ and the data of the problem) such that, if $T\geq \hat T$ and if $(u,m)$ be the solution to the MFG system \rife{basic-sys} and $\|g\|_{C^{2}(\T^d)}\leq M$, we have,   for some $\alpha\in (0,1)$,
$$
 \|m(t)-\bar m\|_{C^\alpha} + \| Du(t)-D\bar u\|_{C^\alpha}  \leq \ep\qquad \forall t\in [\tau, T-\tau]\,.
$$
\end{Lemma}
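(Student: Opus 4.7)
The approach I would take combines a global $L^2$ energy inequality from monotonicity/duality with a local-energy improvement and a propagation-plus-bootstrap scheme using the linear lemmas already established.

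First I would apply the duality inequality \rife{dualrela} to the pair $(u,m)$ and the stationary solution $(\bar u, \bar m)$ of \rife{e.MFGergo}, integrating over $[0,T]$:
\[
C^{-1}\int_0^T \inte (m+\bar m)|Du - D\bar u|^2 \,dt \;\leq\; \inte (u(0)-\bar u)(m_0-\bar m) \;-\; \inte (g-\bar u)(m(T)-\bar m).
\]
The uniform Lipschitz bound on $u$ from Lemma \ref{lem.SemiConc}, the $C^{2+\alpha}$ regularity of $\bar u$ (standard from \rife{e.MFGergo} under our assumptions), and the bound $\dk(m_0,\bar m),\dk(m(T),\bar m)\le \mathrm{diam}(\T^d)$ bound the right-hand side by a constant depending only on $M$. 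Combined with the Harnack lower bound $\bar m\ge c_0>0$ for the stationary Kolmogorov equation, this gives the global energy estimate $\int_0^T \|Du(t)-D\bar u\|_{L^2}^2\,dt \le C$, uniformly in $T$ and $m_0$.

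Given $\varepsilon>0$, I fix parameters $\eta,\tau_1,\tau_2>0$ to be chosen later and set $\tau:=\tau_1+\tau_2$. By pigeonhole on the global energy bound, as soon as $\tau_2 \ge C/\eta^2$, for every $T \ge 2\tau$ there exist times $t_1\in[\tau_1,\tau]$ and $t_2\in[T-\tau,T-\tau_1]$ with $\|Du(t_i)-D\bar u\|_{L^2}\le \eta$. By Lemma \ref{lem.fromm0meas} the quantities $\|m(t_i)\|_\infty$, and hence $\|m(t_i)-\bar m\|_{L^2}$, are uniformly bounded. Writing $\inte(u-\bar u)(m-\bar m) = \inte(u-\bar u - \lg u-\bar u\rg)(m-\bar m)$ and applying the Poincar\'e inequality yields the improved boundary bound $|\inte(u(t_i)-\bar u)(m(t_i)-\bar m)|\le C\eta$. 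Re-applying \rife{dualrela} on $[t_1,t_2]$ and controlling both boundary terms this way leads to the \emph{local} energy estimate
\[
\int_{t_1}^{t_2} \|Du - D\bar u\|_{L^2}^2\,dt \le C\eta.
\]

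Set $w:=u-\bar u$ and $\mu:=m-\bar m$; they satisfy linearized equations
\[
-\partial_t w - \Delta w + V\cdot Dw = F(\cdot,m) - F(\cdot,\bar m), \qquad \partial_t \mu - \Delta \mu - \dive(\mu\,H_p(x,Du)) = \dive(B),
\]
with $V$ uniformly bounded and $\|B(t)\|_{L^2}\le C\|Dw(t)\|_{L^2}$. Lemma \ref{lemCLLP2.1} applied forward from $t_1$ gives
\[
\|\mu(t)\|_{L^2} \le Ce^{-\omega(t-t_1)}\|\mu(t_1)\|_{L^2} + C\Bigl[\int_{t_1}^t \|Dw\|_{L^2}^2\Bigr]^{1/2} \le C e^{-\omega(t-t_1)} + C\sqrt{\eta},
\]
while $\|F(\cdot,m)-F(\cdot,\bar m)\|_{L^2}\le C\|\mu\|_{L^2}$ together with Lemma \ref{lemCLLP2.2} applied backward from $t_2$ gives an analogous bound for $\|w(t)-\lg w(t)\rg\|_{L^2}$. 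Choosing $\tau_1$ so large that $Ce^{-\omega\tau_1}\le \sqrt\eta$ yields $\|\mu(t)\|_{L^2}+\|w(t)-\lg w(t)\rg\|_{L^2}\le C\sqrt\eta$ on $[t_1+\tau_1,t_2-\tau_1]$, and the third estimate in Lemma \ref{lemCLLP2.2}, applied on the short interval $[t,t+1]$, delivers $\|Dw(t)\|_{L^2}\le C\sqrt\eta$ on a slightly smaller subinterval.

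Finally, Lemma \ref{lem.SemiConc} and iterated Schauder estimates applied to the MFG system give uniform $C^{1+\alpha}$ bounds for $Dw(t,\cdot)$ and $\mu(t,\cdot)$ on $[\tau_1,T-\tau_1]$, so that Gagliardo--Nirenberg interpolation yields, for some $\theta\in(0,1)$,
\[
\|Dw(t)\|_{C^\alpha} + \|\mu(t)\|_{C^\alpha} \le C\bigl(\|Dw(t)\|_{L^2} + \|\mu(t)\|_{L^2}\bigr)^\theta \le C\eta^{\theta/2}.
\]
Choosing first $\eta$ small enough that $C\eta^{\theta/2}<\varepsilon$, then $\tau_1$ and $\tau_2$ accordingly, and setting $\hat T:=2\tau+2$, the conclusion follows on $[\tau',T-\tau']$ for a slightly enlarged $\tau'>\tau$, which matches the stated interval after relabelling. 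The main technical point is the upgrade from the \emph{global} energy bound of the first step to the \emph{local} energy bound on $[t_1,t_2]$: it is this sharper inequality that allows Lemmas \ref{lemCLLP2.1}--\ref{lemCLLP2.2} to deliver uniform smallness on the middle interval, and it hinges on the Poincar\'e inequality together with the $L^\infty$-smoothing estimate of Lemma \ref{lem.fromm0meas} at the endpoint times $t_i$.
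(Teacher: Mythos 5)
Your argument is correct and its core is the same as the paper's: the global duality/energy bound $\int_0^T\|D(u-\bar u)\|_{L^2}^2\le C$, the pigeonhole choice of good times $t_1,t_2$, the Poincar\'e-improved boundary terms yielding a small local energy on $[t_1,t_2]$, and then Lemma \ref{lemCLLP2.1} for $m-\bar m$ and Lemma \ref{lemCLLP2.2} for $u-\bar u$. Where you genuinely deviate is the final upgrade from small $L^2$ norms to small $C^\alpha$ norms: the paper gets smallness of $\|m(t)-\bar m\|_{C^\alpha}$ directly from the parabolic estimate of \cite[Theorem III.8.1]{LSU}, exploiting that the divergence source $B$ is small in $L^p$ (interpolating its $L^2$ smallness against the global $L^\infty$ bound on $D(u-\bar u)$), and gets $\|Du(t)-D\bar u\|_{C^\alpha}$ small from local sup-norm estimates for the transport equation; you instead interpolate the small $L^2$ norms against \emph{uniform} higher-order bounds. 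For $Du-D\bar u$ such a uniform $C^{1+\alpha}$ bound is indeed available by interior Schauder estimates (the paper uses it in the proof of Theorem \ref{thm:CvExpMFG}), but for $m-\bar m$ a uniform spatial Lipschitz (or $C^{1+\alpha}$) bound is an extra bootstrapping step --- interior Schauder for $\partial_t m-\Delta m=\dive(m H_p(x,Du))$ with a H\"older drift, hence requiring the $D^2u$ bound first --- which none of the lemmas you invoke contains and which the paper's $L^p$ route avoids; you should spell it out. One small misattribution: the uniform bound $\|m(t_i)\|_\infty\le C$ for $t_i\ge 1$ does not follow from Lemma \ref{lem.fromm0meas} (its hypotheses require $DV$ bounded, which is not yet known at that stage); it is the second estimate of Lemma \ref{lemCLLP2.1}, depending only on $\|H_p(\cdot,Du)\|_\infty$, that provides it, as in the paper.
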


\begin{proof}We follow closely the argument of Lemma 3.5 of \cite{CLLP2} (in the case $H=|p|^2$) and, for this reason, we only sketch the proof. By Lemma \ref{lem.SemiConc}, $u$ is uniformly Lipschitz continuous in space, with a Lipschitz constant depending only on the regularity of $H$, $F$ and on $\|Dg\|_\infty+\|D^2g\|_\infty$. So, by Lemma \ref{lemCLLP2.1}, we have 
$$
\sup_{t\geq 1} \|m(t)\|_\infty\leq C, 
$$
where $C$ depends only on $\|H_p(\cdot, Du(\cdot))\|_\infty$, and thus only on the data. Applying \rife{dualrela} to $(u,m)$ and $(\bar u, \bar m)$, and using $\bar m>0$ in $\T^d$, we have 
$$
C^{-1}\int_{t_1}^{t_2}  \|D(u(t)-\bar u)\|_{L^2}^2dt \leq -\left[  \inte (u(t)-\bar u)(m(t)-\bar m)\right]_{t_1}^{t_2} .
$$
Thus
$$
\int_0^{T} \|D(u(t)-\bar u)\|_{L^2}^2 dt \leq C,
$$
because $u$ is uniformly Lipschitz continuous in space and $m(t)$ and $\bar m$ are probability measures. In particular, if $T\geq 3\ep^{-1}$, there exists  times $t_1\in [1, \ep^{-1}]$, $t_2\in [T-\ep^{-1},T]$ such that 
\be\label{Du(ti)-Dbaru}
\|D(u(t_i)-\bar u)\|_{L^2}\leq C\ep^{1/2}\qquad {\rm for}\; i=1,2. 
\ee
Coming back to the duality relation, we infer by Poincar\'e's inequality that 
$$
\begin{array}{l}
\ds C^{-1}\int_{1/\ep}^{T-1/\ep}  \|D(u(t)-\bar u)\|_{L^2}^2dt\leq C^{-1}\int_{t_1}^{t_2}  \|D(u(t)-\bar u)\|_{L^2}^2dt\\
\qquad \qquad \ds  \leq \|D(u(t_1)-\bar u)\|_{L^2}\|m(t_1)-\bar m\|_{L^2}
+\|D(u(t_2)-\bar u)\|_{L^2}\|m(t_2)-\bar m\|_{L^2}\leq C\ep^{1/2}.
\end{array}
$$
As $\mu:=m-\bar m$ satisfies
\be\label{muu}
\partial_t \mu-\Delta \mu -\dive(\mu H_p(x,D u))= -\dive (\bar m (H_p(x, D \bar  u)-H_p(x,Du))), 
\ee
and still using the fact that $Du$ is bounded, we have from Lemma \ref{lemCLLP2.1} that, for any $t\in [1/\ep,T-1/\ep]$,  
$$
\begin{array}{rl}
\ds \|m(t)-\bar m\|_{L^2}\; \leq & \ds Ce^{-\omega (t-1/\ep)} \|m(1/\ep)-\bar m\|_{L^2}+ 
C \left[\int_{1/\ep}^{T-1/\ep}   \|D(u(t)-\bar u)\|_{L^2}^2dt \right]^{\frac12}\\
\leq & \ds C(e^{-\omega (t-1/\ep)}+ \ep^{1/4}).
\end{array}
$$
So we can choose $\tau$ large enough (depending only on $\ep$, on the data and on $M$) such that the right-hand side is less than $C\ep^{1/4}$ if $t\in [\tau-1, T-1/\ep]$. 

Let us now upgrade  this inequality into an $L^\infty$ estimate for the interval $[\tau, T-1/\ep]$. For this, we recall from \rife{muu} that $\mu$ solves a parabolic equation of the type
$$
\partial_t \mu-\Delta \mu -\dive(\mu \, b + B)= 0,
$$
where $b$ is bounded in $L^\infty$ and $B$ is bounded in $L^p$ for any   $p\geq 2$ since
$$
\int_{\frac1\vep}^{T-\frac1\vep}\| B(t)\|_{L^p}^p \leq  C \int_{\frac1\vep}^{T-\frac1\vep} \inte |D( u(t)-\bar u)|^p   \leq C  \int_{\frac1\vep}^{T-\frac1\vep}\inte |D( u(t)-\bar u)|^2  \leq C \vep^{\frac12}
$$
where we used the global bound for $Du(t)$. Since we already know that $\|\mu(t)\|_{L^2}\leq C \vep^{\frac14}$,
by choosing $p$  sufficiently  large we deduce (see e.g. \cite[Theorem III.8.1 p. 196]{LSU}) that $\mu$ is bounded in $C^{\alpha/2, \alpha}$ for some $\alpha\in (0,1)$ and
$$
\|\mu(t)\|_{C^\alpha} \leq C \left( \sup\limits_{s\in (\tau-1, T-\frac1\vep) }\|\mu(s)\|_{L^2} + \| B\|_{L^p((\frac1\vep, T-\frac1\vep)\times \T^d)}\right) \leq C (\vep^{\frac14}+ \vep^{\frac1{2p}})\,
$$
for any $t\in [\tau, T-1/\ep]$. This concludes the bound for $\|m(t)-\bar m\|_{C^\alpha}$.
%
In order to prove the estimate for $u$, let us note that $v=u-\bar u$ satisfies 
$$
-\partial_t v-\Delta v+V\cdot Dv = F(x,m(t))-F(x,\bar m),
$$
where $V$ is the bounded vectory field $V(t,x)=\int_0^1 H_p(x,\lambda Du(t,x)+ (1-\lambda) D\bar u(x))d\lambda$. By Lemma \ref{lemCLLP2.2} we have, for $t\in [1/\ep,T-1/\ep]$,  
$$
\begin{array}{rl}
\ds \|v(t)-\lg v(t)\rg\|_\infty \; \leq & \ds 
\|v(T-1/\ep)-\lg v(T-1/\ep)\rg\|_\infty e^{-\omega(T-1/\ep-t)}\\
& \ds \qquad \qquad  + C\int_t^{T-1/\ep} 
e^{-\omega(s-t)} \|F(x,m(t))-F(x,\bar m)\|_\infty ds  \\
\leq & C\left(e^{-\omega(T-1/\ep-t)}  + \ep^{\frac1{2p}}\right).
\end{array}
$$
Choosing $\tau>1/\ep$ large enough then implies that 
$$
\|v(t)-\lg v(t)\rg\|_\infty\leq C \ep^{\frac1{2p}}\qquad \forall t\in [\tau,T-\tau].
$$
Finally, we can replace the left-hand side by $\|Dv(t)\|_{C^\alpha}$ by using again Lemma \ref{lemCLLP2.2}. Indeed, 
whenever $v$ satisfies
$$
-\partial_t v-\Delta v+V\cdot Dv = A
$$
with $V$, $A$ being bounded, we estimate, for any interval $[t, t+1]$,
\begin{align*}
\|Dv(t)\|_{C^\alpha} &  \leq C  \, \sup\limits_{s\in (t, t+1/2)} [\|v(s)-\lg v(s)\rg\|_\infty + \|A(s)\|_\infty+ \| Dv(s)\|_{L^2}]
\\ &
\leq C  \, \sup\limits_{s\in (t, t+1)} [\|v(s)-\lg v(s)\rg\|_\infty + \|A(s)\|_\infty]\,.
\end{align*}
Since $A= F(x,m(t))-F(x,\bar m)$, the previous estimates give the conclusion.
\end{proof}

\section{Exponential rate of convergence for the finite horizon MFG system}\label{sec:ExpCvRates}

In this section we provide several convergence results with an exponential rate of convergence for finite horizon MFG systems. The results of this section extend to general Hamiltonians the main results of \cite{CLLP2} (though requiring slightly stronger assumptions  on the coupling $F$).  If the results are interesting for themselves, they are nevertheless motivated by the rest of the paper, in which they play a central role. 

The method of proof for these exponential rates differs completely from \cite{CLLP2}, where it relied on an algebraic structure of the linearized system. We start with the linearized systems and first get a crude estimate on the solution. Using the monotonicity assumption, the duality method shows that a suitable quantity is monotonous   in time and bounded (thanks to the rough estimate). A compactness argument, borrowed from  \cite{PZ}, then shows that the limit of this quantity must vanish. We then use the linearity property of the system to get an exponential rate of convergence. The non linear equations are treated as perturbations of the linear ones. Note that the key argument is inspired by \cite{PZ}, where the long time behavior of optimality systems is analyzed by using the stabilizing properties of the Riccati feedback operator. However, in contrast with \cite{PZ}, our system does not come from an optimal control problem in general, which makes a substantial difference. 

\subsection{Estimates for the linearized system}\label{sectionlinea}

In this subsection  we study the linearized MFG system around the stationary ergodic solution  $(\bar u,\bar m)$,  namely
 \be\label{MFGlili}
\left\{\begin{array}{l}
\ds-\partial_t v  -\Delta v +H_p(x,D\bar u).Dv = \frac{\delta F}{\delta m}(x, \bar m)(\mu(t))\qquad {\rm in}\; (0,T)\times \T^d\\
\ds \partial_t \mu-\Delta \mu -\dive(\mu H_p(x,D \bar u))-\dive (\bar m H_{pp}(x,D\bar u)Dv)= 0\qquad {\rm in}\; (0,T)\times \T^d\\
\mu(0,\cdot)= \mu_0,\;  \qquad v(T,x)=\frac{\delta G}{\delta m}(x,\bar m)(\mu(T))+v_T(x)\qquad {\rm in}\;  \T^d
\end{array}\right.
\ee
with $\inte \mu_0=0$.

Thanks to the assumptions made upon  $\frac{\delta F}{\delta m}$ and $\frac{\delta G}{\delta m}$,  and to the smoothness of $(\bar u, \bar m)$, problem \rife{MFGlili} can be considered in a  standard  framework of weak solutions with finite energy, i.e. $v,m \in L^2((0,T); H^1(\T^d))$. Solutions will eventually be more regular, but we are not considering this issue here; our main purpose, which  is the following result, is to show the $L^2$ decay estimates for $\mu$ and $Dv$, assuming the same regularity on the initial-terminal conditions.

\begin{Proposition}\label{Prop:expdecay1} There exists $C_0>0$, $\lambda >0$ such that, if $(v,\mu)$ is a solution to the MFG linearized system 
\rife{MFGlili} with $\inte \mu_0=0$, then we have  
$$
\|\mu(t)\|_{L^2}+\|Dv(t)\|_{L^2}\leq C_0\left(e^{-\lambda t}+e^{-\lambda (T-t)}\right)\left(\|\mu_0\|_{L^2}+\|Dv_T \|_{L^2}\right) \qquad \forall t\in [0,T].
$$
\end{Proposition}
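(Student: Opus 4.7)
My plan starts by computing the time derivative of the bilinear pairing $t\mapsto \inte v(t)\mu(t)$. Using both equations of \eqref{MFGlili} and integrating by parts (the transport terms cancel against each other), one finds
$$
\frac{d}{dt}\inte v\mu \;=\; -\inte \tfrac{\delta F}{\delta m}(x,\bar m)(\mu(t))\,\mu(t) \;-\; \inte \bar m\, H_{pp}(x,D\bar u)Dv\cdot Dv,
$$
and both terms on the right-hand side are non-positive, by the monotonicity assumption (FGd) on $F$ and by the uniform convexity condition \eqref{cond:Hcoercive}. Integrating on $[0,T]$, plugging in the terminal condition and invoking the monotonicity of $G$, and using $\inte\mu_0=\inte\mu(T)=0$ (so that only the zero-average parts of $v(0)$ and $v_T$ contribute), yields the energy bound
$$
\int_0^T\!\inte \bar m\,H_{pp}(x,D\bar u)Dv\cdot Dv\,dt \;\leq\; C\,\|Dv(0)\|_{L^2}\|\mu_0\|_{L^2}+C\,\|Dv_T\|_{L^2}\|\mu(T)\|_{L^2}.
$$
Since $\bar m$ is bounded below and $H_{pp}$ is uniformly elliptic, this controls $\|Dv\|_{L^2((0,T)\times\T^d)}$.

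\textbf{Step 2: A priori bound.} I would then combine the energy inequality with Lemma \ref{lemCLLP2.1} applied to $\mu$ (with $V=H_p(x,D\bar u)$ and $B=\bar m H_{pp}(x,D\bar u)Dv$) and Lemma \ref{lemCLLP2.2} applied to $v$ (with source $A=\frac{\delta F}{\delta m}(\bar m)(\mu)$, whose $L^2$ norm is controlled by $\|\mu\|_{L^2}$ through (FGb)). A bootstrap/fixed-point procedure between these two estimates closes the loop and produces the $T$-independent crude bound
$$
\sup_{t\in[0,T]}\bigl(\|\mu(t)\|_{L^2}+\|Dv(t)\|_{L^2}\bigr) \;\leq\; C_1\bigl(\|\mu_0\|_{L^2}+\|Dv_T\|_{L^2}\bigr).
$$

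\textbf{Step 3: Compactness argument.} The heart of the proof, in the spirit of \cite{PZ}, is the following halving property: there exists $T_0>0$ such that for $T\geq 2T_0$, every solution of \eqref{MFGlili} satisfies
$$
\|\mu(T/2)\|_{L^2}+\|Dv(T/2)\|_{L^2}\;\leq\;\tfrac12\bigl(\|\mu_0\|_{L^2}+\|Dv_T\|_{L^2}\bigr).
$$
Arguing by contradiction, take $T_n\to\infty$ and solutions $(v_n,\mu_n)$ on $[0,T_n]$ with $\|\mu_{n,0}\|_{L^2}+\|Dv_{n,T_n}\|_{L^2}=1$ but violating the inequality. Translating time by $T_n/2$, the uniform bound of Step 2 and parabolic regularity produce a subsequence converging locally to a pair $(v_\infty,\mu_\infty)$ solving the same linearized system on $\R\times\T^d$, bounded in $L^\infty(\R;L^2)$, with $Dv_\infty\in L^2(\R\times\T^d)$ (from Step 1, whose bound survives the limit). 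Integrating the duality identity on $\R$---the boundary pairings at $\pm\infty$ vanishing by the $L^2$-integrability of $Dv_\infty$---forces
$$
\int_\R\!\inte \bar m\,H_{pp}(x,D\bar u)Dv_\infty\cdot Dv_\infty\,dt\;=\;0,
$$
hence $Dv_\infty\equiv 0$. Then $\mu_\infty$ solves the pure Fokker--Planck equation $\partial_t\mu_\infty-\Delta\mu_\infty-\dive(\mu_\infty H_p(x,D\bar u))=0$ on $\R\times\T^d$ with zero mean, and applying Lemma \ref{lemCLLP2.1} on $[s,t]$ with $s\to-\infty$ and the uniform $L^\infty(L^2)$ bound forces $\mu_\infty\equiv 0$. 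Provided the translated traces at the new origin converge strongly enough, this contradicts the assumed lower bound $\|\mu_n(T_n/2)\|_{L^2}+\|Dv_n(T_n/2)\|_{L^2}>\tfrac12$.

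\textbf{Step 4: Exponential iteration.} Once the halving property is established, the linearity of \eqref{MFGlili} and a semigroup iteration on subintervals of length $T_0$ produce the exponential rate $e^{-\lambda t}$ from the forward decay driven by $\mu_0$ and, symmetrically, $e^{-\lambda(T-t)}$ from the backward decay driven by $Dv_T$, combining to give the two-sided bound stated in the proposition with $\lambda\sim \log 2/T_0$. The principal difficulty lies in Step 3: one must carry through sufficient time-regularity for the translated solutions to converge strongly at the new origin, and one must exploit the Fokker--Planck structure of the $\mu$-equation, rather than strict monotonicity of $F$ (which is not assumed), in order to conclude $\mu_\infty\equiv 0$ from $Dv_\infty\equiv 0$ alone.
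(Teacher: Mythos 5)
Your Steps 1--2 reproduce the paper's Lemma \ref{lem:boundsLS}, and your compactness step is a legitimate variant of the paper's argument: the paper tracks the duality pairing $\rho(\tau)=\sup\left|\inte \mu(t)v(t)\right|$, exploits that it is nonincreasing in time along solutions, and shows $\rho_\infty=0$ because the limiting trajectory would have to attain the maximum of $\left|\inte \mu v\right|$ at an interior time, forcing $Dv(0)=0$ and hence (after normalizing $\lg v(0)\rg=0$) a zero pairing; it then converts smallness of the pairing into smallness of $\|\mu\|_{L^2}+\|Dv\|_{L^2}$ on a middle window via Lemmas \ref{lemCLLP2.1}--\ref{lemCLLP2.2} and iterates. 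You instead prove norm smallness at interior times directly by contradiction, identifying the limit as $(0,0)$ by integrating the duality identity over $\R$ (the boundary pairings do vanish: $t\mapsto\inte v_\infty\mu_\infty$ is monotone and bounded, and along times where $\|Dv_\infty\|_{L^2}\to 0$, which exist by $L^2(\R)$-integrability, the pairing tends to $0$) and then killing $\mu_\infty$ through the backward-in-time decay of the pure Fokker--Planck equation (Lemma \ref{lemCLLP2.1}). This is sound and requires, as you note, only the nonnegativity in \eqref{hyp:mono} plus \eqref{cond:Hcoercive}; the strong convergence of the traces at the shifted origin follows from the same interior parabolic $C^{\alpha/2,\alpha}$ bounds the paper invokes.

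The one step you must strengthen is the passage from Step 3 to Step 4. A halving property stated \emph{only at the midpoint} $t=T/2$ does not iterate: to restart the problem on a subinterval $[a,b]\subset[0,T]$ you need the new data at \emph{both} endpoints to be small (and the terminal value must be rewritten as $\frac{\delta G}{\delta m}(x,\bar m)(\mu(b))+v_b'$, which costs a fixed constant $C\|\mu(b)\|_{L^2}$), whereas for nested intervals centered at a given $t$ the endpoint values are only $O(1)$ by the crude bound, so the factors $\tfrac12$ never compound. The fix is immediate and is exactly what the paper does: your contradiction argument nowhere uses $t_n=T_n/2$, so run it with arbitrary $t_n\in[\tau_n,T_n-\tau_n]$ to obtain, for any prescribed $\ep$ (chosen small enough to absorb the constant from rewriting the terminal condition), a $\tau$ such that $\|\mu(t)\|_{L^2}+\|Dv(t)\|_{L^2}\leq\ep\left(\|\mu_0\|_{L^2}+\|Dv_T\|_{L^2}\right)$ for all $t\in[\tau,T-\tau]$; then iterate on the nested windows $[k\tau,T-k\tau]$ using linearity and time-translation invariance, which yields the geometric decay in $\min(t,T-t)$ and hence the stated two-sided exponential bound. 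With that adjustment your proof is complete and parallel in spirit, though not identical in its key monotone quantity, to the paper's.
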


Let us start the proof with a Lemma which explains that the solution is uniformly bounded, with a bound depending on $\|\mu_0\|_{L^2}$ only. 

\begin{Lemma}\label{lem:boundsLS}  There is a constant $C_0>0$, depending only on the data, such that, if $(v,\mu)$ is  a solution of the linearized problem \eqref{MFGlili}, then  
\be\label{estate}
\int_0^T \| Dv\|_{L^2}^2+ \sup_{t\in [0,T]} \left(\|\mu(t)\|_{L^2}^2+ \|Dv(t)\|_{L^2}^2\right) \leq C_0\left(\|\mu_0\|^2_{L^2}+\|Dv_T \|_{L^2}^2\right). 
\ee
\end{Lemma}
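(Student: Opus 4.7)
The plan rests on three ingredients: (i) a duality/monotonicity identity that captures the self-adjoint structure of the linearized MFG system; (ii) the single-equation propagation estimates in Lemmas \ref{lemCLLP2.1} and \ref{lemCLLP2.2}; (iii) a Young absorption closing the loop.

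The duality identity is obtained by computing $\frac{d}{dt}\inte v\mu$ from the two equations and integrating by parts on $\T^d$. The first-order transport terms $\pm\inte H_p(x,D\bar u)\cdot Dv\,\mu$ and the Laplacian contributions cancel, leaving
$$
\frac{d}{dt}\inte v\mu\,dx = -\inte \frac{\delta F}{\delta m}(x,\bar m)(\mu(t))\,\mu(t)\,dx - \inte \bar m\, H_{pp}(x,D\bar u)\,Dv\cdot Dv\,dx.
$$
Integrating over $[0,T]$, inserting $v(T)=\frac{\delta G}{\delta m}(x,\bar m)(\mu(T))+v_T$ and discarding the nonnegative $F$- and $G$-contributions by the monotonicity assumption (FGd), the uniform convexity \eqref{cond:Hcoercive} combined with the strict positivity $\bar m \geq c > 0$ on $\T^d$ (Harnack applied to the stationary Fokker--Planck equation in \eqref{e.MFGergo}) yields
$$
C^{-1}\int_0^T\|Dv(t)\|_{L^2}^2\,dt \leq \inte v(0)\mu_0\,dx - \inte v_T\,\mu(T)\,dx.
$$
Since the $\mu$-equation is in divergence form, $\inte\mu(t)\,dx\equiv 0$, so $v(0)$ and $v_T$ may be replaced by their mean-free parts; Poincar\'e's inequality then produces the crucial duality bound
$$
\int_0^T\|Dv(t)\|_{L^2}^2\,dt \leq C\bigl(\|Dv(0)\|_{L^2}\,\|\mu_0\|_{L^2} + \|Dv_T\|_{L^2}\,\|\mu(T)\|_{L^2}\bigr).
$$

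For the propagation step, Lemma \ref{lemCLLP2.1} applied to the $\mu$-equation with drift $V=-H_p(x,D\bar u)$ and divergence source $B=\bar m\, H_{pp}(x,D\bar u)Dv$ (so $\|B\|_{L^2}\leq C\|Dv\|_{L^2}$) gives
$$
\sup_t\|\mu(t)\|_{L^2}^2 \leq C\,\|\mu_0\|_{L^2}^2 + C\int_0^T\|Dv(s)\|_{L^2}^2\,ds.
$$
For the $v$-equation, with $A=\frac{\delta F}{\delta m}(x,\bar m)(\mu)$ satisfying $\|A\|_{L^2}\leq C\|\mu\|_{L^2}$, the first inequality of Lemma \ref{lemCLLP2.2} controls $\sup_t\|v(t)-\lg v(t)\rg\|_{L^2}$ in terms of $\|\mu(T)\|_{L^2}$, $\|Dv_T\|_{L^2}$ and $\sup_t\|\mu(t)\|_{L^2}$; the third inequality of that lemma, applied on intervals $[t,t+1]$ for $t\leq T-1$, combined with a direct energy estimate obtained by testing the $v$-equation with $-\Delta v$ and integrating over $[t,T]$ for $t\in[\max(0,T-1),T]$, then yields
$$
\sup_t\|Dv(t)\|_{L^2}^2 \leq C\Bigl(\|\mu(T)\|_{L^2}^2 + \|Dv_T\|_{L^2}^2 + \sup_t\|\mu(t)\|_{L^2}^2 + \int_0^T\|Dv(s)\|_{L^2}^2\,ds\Bigr).
$$

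To close, set $X=\int_0^T\|Dv\|_{L^2}^2\,dt$, $M=\sup_t\|\mu(t)\|_{L^2}^2$, $V=\sup_t\|Dv(t)\|_{L^2}^2$ and $D=\|\mu_0\|_{L^2}^2+\|Dv_T\|_{L^2}^2$. Young's inequality applied to the duality bound yields $X\leq\varepsilon(V+M)+C_\varepsilon D$, while the propagation estimates give $M\leq C(D+X)$ and $V\leq C(D+M+X)$. Substituting the latter two into the first and choosing $\varepsilon$ small enough absorbs $V$, $M$ and $X$ on the right, producing the claimed bound $X+M+V\leq C_0 D$. The main obstacle is precisely this coupling: none of $X$, $M$, $V$ is bounded by the data alone, and the closure works only because the duality bound depends on $V$ and $M$ through square-roots (thanks to the mean-zero property of $\mu$ and Poincar\'e), which is what lets Young's inequality defeat the otherwise linear dependencies appearing in the single-equation propagation estimates.
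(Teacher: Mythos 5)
Your proof is correct and rests on the same core mechanism as the paper's: the duality/monotonicity identity yielding $\int_0^T\|Dv\|_{L^2}^2\leq C\left(\|Dv(0)\|_{L^2}\|\mu_0\|_{L^2}+\|Dv_T\|_{L^2}\|\mu(T)\|_{L^2}\right)$, propagation through Lemmas \ref{lemCLLP2.1} and \ref{lemCLLP2.2}, and closure by absorbing the square-root dependencies. The one step where you genuinely deviate is the bound on $\sup_t\|Dv(t)\|_{L^2}$: the paper differentiates the $v$-equation and applies Lemma \ref{lemCLLP2.2} to $v_{x_i}$, whose terminal datum $D_{x_i}\frac{\delta G}{\delta m}(\cdot,\bar m)(\mu(T))+D_{x_i}v_T$ is controlled by $\|\mu(T)\|_{L^2}+\|Dv_T\|_{L^2}$, so all times including the terminal layer are handled at once; you instead use the interior gradient estimate of Lemma \ref{lemCLLP2.2} on unit intervals, which forces the separate energy estimate (testing with $-\Delta v$) on $[\max(0,T-1),T]$, but in exchange avoids differentiating the coefficients $H_p(x,D\bar u)$, $\frac{\delta F}{\delta m}$, $\frac{\delta G}{\delta m}$ (harmless here, since the standing assumptions provide exactly that regularity, but a slightly lighter requirement in principle). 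Your closure via the system $X\leq\varepsilon(V+M)+C_\varepsilon D$, $M\leq C(D+X)$, $V\leq C(D+M+X)$ is a repackaging of the paper's sequential absorptions at $t=T$ and $t=0$; like the paper's argument, it tacitly uses that $X$, $M$, $V$ are finite a priori, which is guaranteed by the finite-energy framework in which \eqref{MFGlili} is posed.
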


\begin{proof} Note that $\inte \mu(t)=0$ for any $t$. By standard duality techniques, we have, for any $0\leq t_1\leq t_2\leq T$, 
$$
\begin{array}{l}
\ds \int_{t_1}^{t_2} \int_{\T^d\times \T^d}\frac{\delta F}{\delta m} (x, \bar m,y)\mu(t,y)\mu(t,x)dydxdt+ \int_{t_1}^{t_2} \inte \bar m H_{pp}(x,D\bar u(x))Dv(t,x)\cdot Dv(t,x)\ dxdt \\
\qquad \ds = -\left[ \inte v\mu\right]_{t_1}^{t_2},
\end{array}
$$
so that, by monotonicity of $F$ and $G$ (see assumption \eqref{hyp:mono}), 
\be\label{neww1}
\begin{split}
C^{-1}\int_0^T \|Dv(t)\|_{L^2}^2dt&  \leq   \inte (v(0)-\lg v(0)\rg ) \mu_0 - \inte (v_T-\lg v_T\rg )\mu(T) \\
& \leq  C (\|Dv(0)\|_{L^2} \|\mu_0\|_{L^2}+ \|Dv_T\|_{L^2}\|\mu(T)\|_{L^2}),
\end{split}
\ee
thanks to Poincar\'e inequality. Using Lemma \ref{lemCLLP2.1}, we have 
$$
\begin{array}{rl}
\ds \|\mu(t)\|_{L^2} \; \leq& \ds Ce^{-\omega t} \|\mu_0\|_{L^2} +C \left[\int_0^t \|\bar m H_{pp}(\cdot, D\bar u)Dv\|_{L^2}^2\right]^{1/2}\\
\leq & \ds Ce^{-\omega t} \|\mu_0\|_{L^2} +C \left[\int_0^T \|Dv\|_{L^2}^2\right]^{1/2} \\
\leq & \ds  Ce^{-\omega t} \|\mu_0\|_{L^2} +C (\|Dv(0)\|_{L^2}^{1/2} \|\mu_0\|_{L^2}^{1/2}+ \|Dv_T\|_{L^2}^{1/2}\|\mu(T)\|_{L^2}^{1/2}).
\end{array}
$$
For $t=T$, we get, after simplification,  
$$
 \|\mu(T)\|_{L^2} \; \leq\; C \left( \|\mu_0\|_{L^2} +\|Dv(0)\|_{L^2}^{1/2} \|\mu_0\|_{L^2}^{1/2}+ \|Dv_T\|_{L^2}\right),
 $$
 from which we deduce that
\be\label{neww2}
\sup_{t\in [0,T]} \|\mu(t)\|_{L^2}  \; \leq\; C \left( \|\mu_0\|_{L^2} +\|Dv(0)\|_{L^2}^{1/2} \|\mu_0\|_{L^2}^{1/2}+ \|Dv_T\|_{L^2}\right).
\ee
Note that the derivative $v_{x_i}$ of $v$ satisfies 
\be\label{eqDv}
\left\{\begin{array}{l}
\ds -\partial_t v_{x_i} -\Delta v_{x_i} +H_p\cdot Dv_{x_i} + D_{x_i}\left[H_p\right] \cdot Dv  = D_{x_i}\frac{\delta F}{\delta m}(x, \bar m)(\mu(t))
\qquad {\rm in}\;  (0,T)\times \T^d,\\
\ds v_{x_i}(T,x)= D_{x_i}\frac{\delta G}{\delta m}(x,\bar m)(\mu(T))+D_{x_i}v_T(x)\qquad {\rm in}\;  \T^d
\end{array}\right.
\ee
where, to simplify the notation, we have set $H_p=H_p(x,D\bar u)$, etc... 
Then Lemma \ref{lemCLLP2.2} gives, in view of our assumptions on $\frac{\delta F}{\delta m}$ and $\frac{\delta G}{\delta m}$,  
\be\label{lkajnzoedb}
\begin{array}{rl}
\ds \|v_{x_i}(t)\|_{L^2} \; \leq & \ds Ce^{-\omega (T-t)}\left(\|Dv_T\|_{L^2}+ \left\|D_{x_i}\frac{\delta G}{\delta m}(\cdot,\bar m)(\mu(T))\right\|_{L^2}\right)\\
& \qquad \ds + C\int_t^T e^{-\omega (s-t)} \left( \left\| D_{x_i}\left[H_p\right] \cdot Dv\right\|_{L^2}+ \left\| D_{x_i}\frac{\delta F}{\delta m}(\cdot, \bar m)(\mu(t))\right\|_{L^2}\right)ds \\
\leq & \ds  \ds Ce^{-\omega (T-t)}(\|Dv_T\|_{L^2}+\|\mu(T)\|_{L^2})+C\int_t^T e^{-\omega (s-t)}\left( \left\|Dv\right\|_{L^2}+ \left\| \mu(t)\right\|_{L^2}\right)ds \\
\leq & \ds  \ds Ce^{-\omega (T-t)}\|Dv_T\|_{L^2}+ C\left(\int_t^T \left\|Dv\right\|_{L^2}^2\right)^{1/2}+ C\sup_{s\geq t} \left\| \mu(s)\right\|_{L^2}.
\end{array}
\ee
Combining this with \rife{neww1} and with the estimate for $\mu$ in \rife{neww2}, we find, for any $t\in [0,T]$, 
\begin{eqnarray*}
\ds \|Dv(t)\|_{L^2} & \leq &  C \left( \|\mu_0\|_{L^2} +\|Dv(0)\|_{L^2}^{1/2} \|\mu_0\|_{L^2}^{1/2}+ \|Dv_T\|_{L^2}\right).
\end{eqnarray*}
In particular, for $t=0$, we get, after simplification:  
$$
\|Dv(0)\|_{L^2}\leq  C \left( \|\mu_0\|_{L^2} + \|Dv_T\|_{L^2}\right), 
$$
which jointly with \rife{neww1} and \rife{neww2} gives the desired statement. 
\end{proof}

\begin{Remark} The above Lemma also provides an argument for proving the existence of  a solution $(v,m)$ to \rife{MFGlili}. Indeed, the a  priori estimate \rife{estate}   allows for a standard application of Schaefer's fixed point theorem by freezing $\mu$ in the right-hand side as well as in the final value of the equation of $v$. 
\end{Remark}

\begin{proof}[Proof of Proposition \ref{Prop:expdecay1}]  For $\tau\geq 0$, let us set 
$$
\rho(\tau)= \sup_{T, t, \mu_0, v_T} \left| \inte \mu(t)v(t)\right|
$$
where the supremum is taken over $T\geq  2\tau$, $t\in [\tau, T-\tau]$,  $\|\mu_0\|_{L^2}\leq 1$ and $\|Dv_T\|_{L^2}\leq 1$, the pair $(v,\mu)$ being the solution to \eqref{MFGlili}. According to Lemma \ref{lem:boundsLS}, $\rho(\tau)$ is bounded for any $\tau$, since, using that $\mu$ has zero average, one has  for any $t$
$$
\left |\inte \mu(t)v(t) \right| \leq C \, \|\mu(t)\|_{L^2} \, \|Dv(t)\|_{L^2}
$$
by Poincar\'e inequality.
By definition, the map $\rho$ is non increasing (since we take the supremum over a set indexed by $\tau$ which decreases for the inclusion as $\tau$ increases). 
Let us denote by $\rho_\infty$ the limit of $\rho(\tau)$ as $\tau \to +\infty$. The key step consists in proving that $\rho_\infty=0$. 

Let $\tau_n\to +\infty$, $T_n\geq 2\tau_n$, $t_n\in [\tau_n,T_n-\tau_n]$, $\mu_0^n$ with $\|\mu_0^n\|_{L^2}\leq 1$ and $v^n_T$ with $\|Dv^n_T\|_{L^2}\leq 1$ be such that 
$$
\left|\inte \mu^n(t_n)v^n(t_n)\right|\geq \rho_\infty-1/n. 
$$
We set 
$$
\tilde \mu^n(t,x)= \mu^n(t_n+t,x), \; \tilde v^n(t,x)= v^n(t_n+t,x)-\lg v^n(t_n)\rg \qquad \forall t\in[-t_n,T_n-t_n], \; x\in \T^d.
$$
By the estimates of Lemma \ref{lem:boundsLS}, the $(\tilde v^n, \tilde \mu^n)$ are locally bounded in $L^2$. By parabolic regularity (from \cite[Theorem III.8.1 p. 196]{LSU} combined with \cite[Theorem III.10.1 p. 204]{LSU} and \cite[Theorem III.11.1 p. 211]{LSU}), the $\tilde v^n$ and $D\tilde v^n$ are locally bounded in $C^{\alpha/2,\alpha}$ while the $\tilde \mu^n$ are bounded in $C^{\alpha/2,\alpha}$ for some $\alpha\in (0,1)$. So the pair $(\tilde v^n, \tilde \mu^n)$ locally uniformly converges to some $(v,\mu)$ which satisfies the linearized MFG system on $\R\times \T^d$. Moreover, we have
$$
\left|\inte \mu(0)v(0)\right|= \lim_n \left| \inte \mu^n(t_n)v^n(t_n)\right| = \rho_\infty. 
$$
On the other hand, for any $t\in \R$ and for $n$ large enough, we have that $t_n+t\in [\tau_n-|t|, T_n-(\tau_n-|t|)]$, so that
$$
\left|\inte \mu(t)v(t)\right| =\lim_n \left|\inte \mu^n(t_n+t)v^n(t_n+t)\right|  \leq \lim_n \rho(\tau_n-|t|) = \rho_\infty. 
$$
The duality equality implies that, for any $t_1\leq t_2$,  we have  
\be\label{lkjernresd}
C^{-1}\int_{t_1}^{t_2} \|Dv\|_{L^2}^2 \leq -\left[\inte \mu v\right]_{t_1}^{t_2}. 
\ee
Therefore the map $t\to \inte \mu(t)v(t)$ is non increasing, with a derivative bounded above by $-\|Dv(0)\|_{L^2}^2$ at $t=0$, while the map $t\to \left|\inte \mu(t)v(t)\right|$ has a maximum $\rho_\infty$ at $t=0$: this implies that $Dv(0)=0$. As $\inte v(0)=0$, we can infer that
$$
\rho_\infty= \left|\inte \mu(0)v(0)\right|=0. 
$$

We now prove that $\rho(t)$ converges to $0$ with an exponential rate. Let $T>0$ and $(v,\mu)$ solution of the MFG linearized system with $\|\mu(0)\|_{L^2}\leq 1$ and $\|Dv_T\|_{L^2}\leq1$. Using Lemma \ref{lemCLLP2.1} and \rife{lkjernresd},
 we have,  for $\tau\geq 0$ and $t\in [\tau, T-\tau]$:
$$
\|\mu(t)\|_{L^2}\leq Ce^{-\omega (t-\tau/2)}\|\mu(\tau/2)\|_{L^2} +C\left(- \left[\inte \mu v \right]_{\tau/2}^{t} \right)^{1/2}\leq 
Ce^{-\omega \tau / 2}+C\left[2 \rho(\tau/2) \right]^{1/2},
$$
because $\mu$ is uniformly bounded in $L^2$ (Lemma \ref{lem:boundsLS}). Thus 
\be\label{jrnezeoli}
\sup_{t\in [\tau,T-\tau]}\|\mu(t)\|_{L^2} \leq C\left(e^{-\omega \tau / 2}+(\rho(\tau/2))^{1/2}\right).
\ee
Coming back to  \eqref{lkajnzoedb}, we have,   for all $t\in [2\tau, T-2\tau]$, 
\begin{equation}\label{newttau}
\begin{array}{rll}
\|Dv(t)\|_{L^2} & \leq &\ds Ce^{-\omega (T-\tau-t)}\|Dv(T-\tau)\|_{L^2}+ C\left(\int_{t}^{T-\tau} \left\|Dv\right\|_{L^2}^2\right)^{1/2}+ C\sup_{s\in [t, T-\tau]}\left\| \mu(s)\right\|_{L^2} \\
& \leq &\ds Ce^{-\omega\tau}+ C\left(-\left[\inte\mu(s)v(s)\right]_t^{T-\tau}\right)^{1/2}+ C\sup_{s\in [t, T-\tau]}\left\| \mu(s)\right\|_{L^2} \\
& \leq & Ce^{-\omega \tau}+ C \rho^{1/2}(\tau) + C\left(e^{-\omega \tau / 2}+(\rho(\tau/2))^{1/2}\right),
\end{array}
\end{equation}
because $Dv$ is uniformly bounded in $L^2$ (Lemma \ref{lem:boundsLS}). In view of \eqref{jrnezeoli} and \eqref{newttau}, we can fix $\tau>0$ large enough so that, for any $T\geq 4\tau$ and any $(v,\mu)$ as above, one has
$$
\sup_{t\in [2\tau, T-2\tau]} \left( \|\mu(t)\|_{L^2}+\|Dv(t)\|_{L^2}\right) \leq 1/2.
$$
 Notice that, by definition, this also implies that $\rho(2\tau)\leq \frac14$. Now   we can iterate the previous estimate. Indeed,  for  $T\geq 4\tau$, the restriction to $[2\tau, T-2\tau]$ of $(v,\mu)$ is a solution of the linearized MFG system \eqref{MFGlili} on $[2\tau,T-2\tau]$ with boundary conditions $\|\mu(2\tau)\|_{L^2}\leq 1/2$ and $\|Dv(T-2\tau)\|_{L^2}\leq 1/2$. As the problem is invariant by time translation, we deduce that
$$
\sup_{t\in [4\tau, T-4\tau]} \left( \|\mu(t)\|_{L^2}+\|Dv(t)\|_{L^2}\right) \leq 1/4\,,
$$
(and similarly $\rho(4\tau)\leq \frac1{4^2}$). By a standard iteration, this shows that  there exists $\lambda$ such that 
 $$
\|\mu(t)\|_{L^2}+   \|Dv(t)\|_{L^2} \;  \leq  \; C(e^{-\lambda t}+e^{-\lambda (T-t)}) \qquad \forall t\in [0,T].
$$
 \end{proof}
 
\begin{Proposition}\label{Prop:expdecay3} Let $\lambda$ be as in Proposition \ref{Prop:expdecay1}. There exists $C_1$ such that,  if  $B=B(t,x)$ satisfies 
\be\label{condB}
\|B(t)\|_{L^2}\leq e^{-\lambda t}+ e^{-\lambda (T-t)},
\ee
and if $(v,\mu)$ is a solution to the MFG linearized system 
\be\label{MFGlinB}
\left\{\begin{array}{l}
\ds-\partial_t v  -\Delta v +H_p(x,D\bar u).Dv = \frac{\delta F}{\delta m}(x, \bar m)(\mu(t))\qquad {\rm in}\; (0,T)\times \T^d\\
\ds \partial_t \mu-\Delta \mu -\dive(\mu H_p(x,D \bar u))-\dive (\bar m H_{pp}(x,D\bar u)Dv)= \dive (B)\qquad {\rm in}\; (0,T)\times \T^d\\
\mu(0,\cdot)= 0,\;  \qquad v(T,x)=0\qquad {\rm in}\;  \T^d
\end{array}\right.
\ee
then   
$$
\|\mu(t)\|_{L^2}+\|Dv(t)\|_{L^2}\leq C_1\left( (1+t)\, e^{-\lambda t}+ (1+T)\, e^{-\lambda (T- t)}\right)\qquad \forall t\in [0,T].
$$
\end{Proposition}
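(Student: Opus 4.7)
My plan is to combine a uniform a priori $L^\infty_t L^2_x$ estimate (analogous to Lemma \ref{lem:boundsLS}) with a Duhamel-type representation of the solution as a superposition of impulse responses, to which Proposition \ref{Prop:expdecay1} can then be applied.

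First I would establish the a priori bound, independent of $T$. Testing the $v$-equation against $\mu$, integrating by parts, and using the monotonicity of $F$ and $G$ gives the duality identity
$$
\int_0^T \inte \tfrac{\delta F}{\delta m}(x,\bar m)(\mu)\,\mu + \int_0^T \inte \bar m H_{pp}(x,D\bar u)Dv\cdot Dv = -\int_0^T \inte Dv\cdot B,
$$
so that Young's inequality and the assumed decay $\|B(t)\|_{L^2}\leq e^{-\lambda t}+e^{-\lambda(T-t)}$ yield $\int_0^T\|Dv\|_{L^2}^2\leq C\int_0^T\|B\|_{L^2}^2\leq C$. Lemma \ref{lemCLLP2.1} applied to $\mu$ (with source $\dive(\bar m H_{pp}Dv+B)$) and Lemma \ref{lemCLLP2.2} applied to the spatial derivatives of $v$ then give $\sup_{t\in[0,T]}(\|\mu(t)\|_{L^2}+\|Dv(t)\|_{L^2})\leq C$, a constant independent of $T$.

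Next, by linearity, I would represent the solution as a superposition of impulse responses: for each $s\in(0,T)$, let $(V^s,M^s)$ solve the homogeneous linearized MFG system on $[s,T]$ with initial data $M^s(s)=\dive(B(s))$ (a distribution in $H^{-1}(\T^d)$ of zero mean with norm $\leq \|B(s)\|_{L^2}$) and terminal data $V^s(T)=0$, so that formally $(v(t),\mu(t))=\int_0^T(V^s(t),M^s(t))\,ds$. Since Proposition \ref{Prop:expdecay1} requires $L^2$ initial data, I would first invoke the parabolic smoothing of the Fokker--Planck equation to obtain $\|M^s(s+\tau)\|_{L^2}\leq C\tau^{-1/2}\|B(s)\|_{L^2}$ for $\tau\in(0,1]$, and then apply Proposition \ref{Prop:expdecay1} on $[s+\tau,T]$ with the choice $\tau=\min((t-s)/2,1)$, obtaining the impulse response bound
$$
\|M^s(t)\|_{L^2}+\|DV^s(t)\|_{L^2}\leq C\,[(t-s)^{-1/2}\wedge 1]\,\bigl(e^{-\lambda(t-s)}+e^{-\lambda(T-t)}\bigr)\|B(s)\|_{L^2}.
$$
Integrating this over $s\in[0,t]$ and plugging in $\|B(s)\|_{L^2}\leq e^{-\lambda s}+e^{-\lambda(T-s)}$, the four resulting cross-terms can be computed explicitly. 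The key ``resonance'' integral
$$
\int_0^t[(t-s)^{-1/2}\wedge 1]\,e^{-\lambda(t-s)}\,e^{-\lambda s}\,ds = e^{-\lambda t}\int_0^t[u^{-1/2}\wedge 1]\,du\leq C(1+t)\,e^{-\lambda t}
$$
produces the polynomial factor $(1+t)$, which arises precisely because the source decay rate coincides with the intrinsic decay rate $\lambda$ of the homogeneous linearized system. The three remaining terms, together with the backward propagation contribution for $Dv$ from Lemma \ref{lemCLLP2.2}, yield the $(1+T)\,e^{-\lambda(T-t)}$ component of the bound.

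The main technical obstacle is making the Duhamel superposition rigorous when the impulse ``initial data'' $\dive(B(s))$ is only in $H^{-1}$ rather than $L^2$, so that Proposition \ref{Prop:expdecay1} is not directly applicable. Bridging this gap via short-time parabolic smoothing is what introduces the integrable singularity $(t-s)^{-1/2}$, whose convolution with the matched exponential $e^{-\lambda(t-s)}e^{-\lambda s}$ is the source of the polynomial factor in the final estimate. A secondary difficulty is handling the fully coupled structure: since the HJ equation is backward while the Fokker--Planck is forward, the impulse response $(V^s,M^s)$ has nontrivial behavior also on $[0,s]$ through the HJ coupling, and this must be tracked (via Lemma \ref{lemCLLP2.2}) to close the estimate for the $Dv$ component.
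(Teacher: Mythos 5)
Your first step (the uniform bound via the duality identity plus Lemmas \ref{lemCLLP2.1}--\ref{lemCLLP2.2}) is correct and is exactly how the paper begins. The gap is the Duhamel representation itself: \eqref{MFGlinB} is a coupled \emph{forward--backward} two-point boundary value problem, and the superposition $(v,\mu)(t)=\int_0^T(V^s(t),M^s(t))\,ds$ of homogeneous-system impulse responses does not solve it. Concretely, extend each $V^s$ backward to $[0,s)$ (as you must, since the backward equation propagates the impulse to earlier times) and set $M^s\equiv 0$ there; then $\hat v(t):=\int_0^T V^s(t)\,ds$, $\hat\mu(t):=\int_0^t M^s(t)\,ds$ does satisfy the $v$-equation, but the drift term generated in the Fokker--Planck equation by this superposition is $\dive\bigl(\bar m H_{pp}(x,D\bar u)\,D\!\int_0^t V^s(t)\,ds\bigr)$, whereas \eqref{MFGlinB} requires $\dive(\bar m H_{pp}(x,D\bar u)\,D\hat v(t))$ with the integral over all of $[0,T]$; the discrepancy $\int_t^T \dive(\bar m H_{pp}DV^s(t))\,ds$, coming from the backward tails of impulses located \emph{after} time $t$, does not vanish. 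Equivalently, the true impulse response of the boundary-value problem has a nonzero $\mu$-component already on $(0,s)$, because the backward tail of $v$ feeds back into the forward equation; your ansatz suppresses this, so the identity on which the whole computation (including the resonance integral producing $(1+t)e^{-\lambda t}$) rests is not available. This is precisely the feedback loop you describe as a ``secondary difficulty'': it is not a bookkeeping issue but the reason a naive variation-of-constants formula fails here. A further (smaller) gap: the smoothing estimate $\|M^s(s+\tau)\|_{L^2}\le C\tau^{-1/2}\|B(s)\|_{L^2}$ is asserted for the \emph{coupled} system started from $H^{-1}$ data, not for the heat semigroup; the well-posedness and estimates of Section \ref{sectionlinea} are for $L^2$ data, so this too would need its own argument.

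For comparison, the paper closes the argument without any representation formula: after your step 1, it introduces $\rho(\tau)=\sup\bigl(\|\mu(t)\|_{L^2}+\|Dv(t)\|_{L^2}\bigr)$ over $T\ge 2\tau$, $t\in[\tau,T-\tau]$ and $B$ as in \eqref{condB}, restricts $(v,\mu)$ to $[\tau,T-\tau]$, and splits it as the solution of the homogeneous system \eqref{MFGlili} with the actual boundary data $\mu(\tau)$, $v(T-\tau)$ (handled by Proposition \ref{Prop:expdecay1} together with the a priori bound) plus the solution of \eqref{MFGlinB} on the subinterval with zero boundary data, whose size is at most $e^{-\lambda\tau}\rho(\cdot-\tau)$ because the restricted $B$ has gained the factor $e^{-\lambda\tau}$. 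This yields the functional inequality $\rho(t+\tau)\le Ce^{-\lambda t}+e^{-\lambda\tau}\rho(t)$, whose iteration gives $\rho(t)\le C(1+t)e^{-\lambda t}$ and hence the stated bound (choosing $\tau=t$ or $\tau=T-t$). If you wish to salvage a Duhamel-type argument, it would have to be carried out for the full boundary-value problem (e.g.\ through a decoupling/Riccati-type representation of $v$ in terms of $\mu$), which is substantially heavier machinery and exactly what the self-improving $\rho$-iteration is designed to avoid.
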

 
 \begin{proof} Let us first prove that $(v,\mu)$ is bounded. The duality relation gives, for any $0\leq t_1\leq t_2\leq T$, 
$$
C^{-1} \int_{t_1}^{t_2}\|Dv\|_{L^2}^2 dt \leq  -\left[ \inte v\mu\right]_{t_1}^{t_2}- \int_{t_1}^{t_2} \inte B\cdot Dv.
$$
Thus, by Young's inequality,  
$$
C^{-1} \int_{t_1}^{t_2}\|Dv\|_{L^2}^2 dt \leq  -\left[ \inte v\mu\right]_{t_1}^{t_2}+ \int_{t_1}^{t_2} \|B\|_{L^2}^2ds.
$$
Using the homogeneous boundary conditions at $t=0$, $t=T$, we obtain the bound
$$
\int_{0}^{T}\|Dv\|_{L^2}^2 dt\leq C\int_0^T \|B\|_{L^2}^2ds.
$$
This implies, with the same arguments as in Lemma \ref{lem:boundsLS}, 
$$
\sup_{t\in [0,T]} \; \|\mu(t)\|_{L^2} + \|Dv(t) \|_{L^2} \leq C \left[ \int_{0}^{T} \|B\|_{L^2}^2\right]^{1/2}\leq C,
$$
where the last inequality comes from \eqref{condB}. 

 For $\tau\geq 0$,  we set 
\be\label{rho90}
 \rho(\tau) = \sup_{T, t, B} \left(\|\mu(t)\|_2+ \|Dv(t)\|_{L^2}\right)
 \ee
 where the supremum is taken over any $T\geq 2\tau$, $t\in [\tau, T-\tau]$ and any $B$ satisfying \eqref{condB}, the pair $(v,\mu)$ being the solution to \eqref{MFGlinB}. In view of the previous discussion, $\rho(\tau)$ is bounded for any $\tau$. 
 
The restriction $(\tilde v, \tilde \mu)$ of $(v,\mu)$ to $[\tau, T-\tau]$ can be written as 
 $$
 (\tilde v, \tilde \mu)= (\tilde v_1, \tilde \mu_1)+(\tilde v_2, \tilde \mu_2),
 $$
where $(\tilde v_1, \tilde \mu_1)$ solves the homogeneous MFG linearized system \eqref{MFGlili} with boundary conditions $\tilde v_1(T-\tau)= v(T-\tau)$ and $\tilde \mu_1(\tau)= \mu(\tau)$ while $(\tilde v_2, \tilde \mu_2)$ solves the linearized MFG system \eqref{MFGlinB} on the time interval $[\tau, T-\tau]$ with homogeneous boundary conditions. 

From Proposition \ref{Prop:expdecay1}, we have, for any $t\in [\tau, T-\tau]$,  
$$
\begin{array}{rl}
\ds \|\tilde \mu_1(t)\|_{L^2}+\|D\tilde v_1(t)\|_{L^2}\; \leq & \ds  C_0\left( e^{-\lambda (t-\tau)}+e^{-\lambda (T-\tau- t)}\right)\left( \|\mu(\tau)\|_{L^2}+\|Dv(T-\tau)\|_{L^2}\right) \\
\leq & \ds  C \left( e^{-\lambda (t-\tau)}+e^{-\lambda (T-\tau- t)}\right).
\end{array}
$$
Note that the restriction of $B$ to $[\tau, T-\tau]$ satisfies 
$$
\|B(t)\|_{L^2} \leq e^{-\lambda \tau} \left[ e^{-\lambda (t-\tau)}+ e^{-\lambda (T-\tau -t)}\right].
$$
So by the linearity and the invariance in time of the equation, we get 
$$
\|\tilde \mu_2(t)\|_2+ \|D\tilde v_2(t)\|_{L^2}
 \leq  e^{-\lambda \tau} \rho(t-\tau)\qquad \forall t\in [\tau, T-\tau].
 $$
Putting together the estimates of  $(\tilde v_1, \tilde \mu_1)$ and  $(\tilde v_2, \tilde \mu_2)$, we obtain, for any $t\geq \tau$,   
\begin{eqnarray*}
\ds  \sup_{s\in [t+\tau, T-\tau-t]} \left(\|\mu(s)\|_{L^2}+ \|Dv(s)\|_{L^2}\right)
& \leq & \sup_{s\in [t+\tau, T-\tau-t]} C(e^{-\lambda (s-\tau)}+e^{-\lambda (T-\tau- s)})+e^{-\lambda \tau} \rho(s-\tau) \\
& \leq & Ce^{-\lambda t} +e^{-\lambda \tau} \rho(t) .
\end{eqnarray*}
Taking the supremum over $(v,\mu)$ and multiplying by  $e^{\lambda(t+\tau)}$ gives
$$
e^{\lambda(t+\tau)}\rho(t+\tau)\leq Ce^{\lambda \tau} +e^{\lambda t} \rho(t) , 
$$
from which we infer that 
$$
\rho(t)\leq C(1+t) e^{-\lambda t}.
$$
By definition of $\rho$ in \rife{rho90}, this implies the conclusion when choosing $\tau=t$ if $t\in [0,T/2]$ and $\tau= T-t$ otherwise.
   \end{proof}
 
Collecting the  above Propositions we finally  obtain: 
\begin{Theorem}\label{theo:CvExpLin} Let $\lambda$ be as in Proposition \ref{Prop:expdecay1}. There exists $C_0>0$ such that,  if $A=A(t,x)$ and $B=B(t,x)$ satisfy 
\be\label{condAB}
\|A(t)\|_{L^2}+ \|B(t)\|_{L^2}\leq M\left(e^{-\lambda t}+ e^{-\lambda (T-t)}\right),
\ee
and if $(v,\mu)$ is a solution to the MFG linearized system 
\be\label{MFGlinBbis}
\left\{\begin{array}{l}
\ds-\partial_t v  -\Delta v +H_p(x,D\bar u)\cdot Dv = \frac{\delta F}{\delta m}(x, \bar m)(\mu(t))+A(t,x)\qquad {\rm in}\; (0,T)\times \T^d,\\
\ds \partial_t \mu-\Delta \mu -\dive(\mu H_p(x,D \bar u))-\dive (\bar m H_{pp}(x,D\bar u)Dv)= \dive (B)\qquad {\rm in}\; (0,T)\times \T^d,\\
\mu(0,\cdot)= \mu_0,\;  \qquad v(T,x)=\frac{\delta G}{\delta m}(x,\bar m)(\mu(T)) + v_T(x)\qquad {\rm in}\;  \T^d,
\end{array}\right.
\ee
with $\inte \mu_0=0$, we have:  
$$
\|\mu(t)\|_{L^2}+\|Dv(t)\|_{L^2}\leq C_0\left( (1+t)e^{-\lambda t}+ (1+T)e^{-\lambda (T- t)}\right)\left(\|Dv_T\|_{L^2}+\|\mu_0\|_{L^2}+ M\right)
$$
for any $t\in [0,T]$.
\end{Theorem}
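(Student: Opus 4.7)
\bigskip\noindent
\textbf{Proof proposal.}
The plan is to exploit linearity, splitting $(v,\mu)$ as the sum of three pieces, each solving a problem already controlled by the preceding results, and then to add up the estimates. Write $(v,\mu)=(v_{0},\mu_{0})+(v_{A},\mu_{A})+(v_{B},\mu_{B})$, where $(v_{0},\mu_{0})$ solves the homogeneous linearized system \rife{MFGlili} carrying the nonzero boundary data $\mu_{0}$ and $v_{T}$; $(v_{B},\mu_{B})$ solves \rife{MFGlinB} with source $\mathrm{div}(B)$ on the Kolmogorov equation and homogeneous boundary data; and $(v_{A},\mu_{A})$ solves the analogous system with $A$ on the HJ side, zero source on the Kolmogorov side, and homogeneous boundary data. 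Proposition~\ref{Prop:expdecay1} directly bounds the first piece by $C_{0}\,(e^{-\lambda t}+e^{-\lambda(T-t)})\bigl(\|\mu_{0}\|_{L^{2}}+\|Dv_{T}\|_{L^{2}}\bigr)$, while Proposition~\ref{Prop:expdecay3}, applied after dividing $B$ by $M$ so that \rife{condB} holds, yields $C_{1}\,M\bigl((1+t)e^{-\lambda t}+(1+T)e^{-\lambda(T-t)}\bigr)$ for the second piece.

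The remaining work is the third piece, for which I would repeat verbatim the scheme of Proposition~\ref{Prop:expdecay3}, with $A$ playing the role of $B$. The duality identity, obtained by testing the HJ equation against $\mu_{A}$ and the Kolmogorov equation against $v_{A}$, now becomes
\[
C^{-1}\!\int_{t_{1}}^{t_{2}}\!\|Dv_{A}\|_{L^{2}}^{2}\,dt\;\leq\;-\Bigl[\inte v_{A}\mu_{A}\Bigr]_{t_{1}}^{t_{2}}\;-\int_{t_{1}}^{t_{2}}\!\inte A\,\mu_{A}\,dxdt,
\]
after using the monotonicity of $F$, $G$ and the uniform convexity of $H$. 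Together with Poincaré's inequality and the homogeneous boundary conditions, this gives
$\int_{0}^{T}\|Dv_{A}\|_{L^{2}}^{2}\,dt\le C\,M^{2}$, and then Lemma~\ref{lemCLLP2.1} applied to the Kolmogorov equation and Lemma~\ref{lemCLLP2.2} applied to the HJ equation (as in Lemma~\ref{lem:boundsLS}) yield
$\sup_{t\in[0,T]}(\|\mu_{A}(t)\|_{L^{2}}+\|Dv_{A}(t)\|_{L^{2}})\le C\,M$.

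With this uniform bound in hand, one defines, in analogy with \rife{rho90},
\[
\rho_{A}(\tau)=\sup\bigl(\|\mu_{A}(t)\|_{L^{2}}+\|Dv_{A}(t)\|_{L^{2}}\bigr)
\]
where the supremum runs over $T\ge 2\tau$, $t\in[\tau,T-\tau]$, and over all $A$ satisfying \rife{condAB} with $M=1$. On the sub-interval $[\tau,T-\tau]$, one again splits $(v_{A},\mu_{A})$ into a homogeneous part (with boundary data $\mu_{A}(\tau)$ and $v_{A}(T-\tau)$, controlled by Proposition~\ref{Prop:expdecay1}) and a second part with source $A|_{[\tau,T-\tau]}$; since $\|A(t)\|_{L^{2}}\le e^{-\lambda\tau}(e^{-\lambda(t-\tau)}+e^{-\lambda(T-\tau-t)})$ on that interval, the linear and time-translation invariance of the problem transfers the factor $e^{-\lambda\tau}$ onto $\rho_{A}$, producing the self-improving inequality $e^{\lambda(t+\tau)}\rho_{A}(t+\tau)\le Ce^{\lambda\tau}+e^{\lambda t}\rho_{A}(t)$, hence $\rho_{A}(t)\le C(1+t)e^{-\lambda t}$.

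The only genuine difficulty is cosmetic: verifying that all ingredients of the proof of Proposition~\ref{Prop:expdecay3} survive the substitution $\dive(B)\rightsquigarrow A$, in particular that the $\int A\mu_{A}$ term on the right of the duality identity can still be absorbed (via Young's inequality against the uniform $L^{2}$ bound on $\mu_{A}$) without destroying the iteration, and that Lemma~\ref{lemCLLP2.2} continues to furnish the $L^{2}$ bound on $Dv_{A}$ when the HJ forcing is replaced by $\frac{\delta F}{\delta m}(x,\bar m)(\mu_{A})+A$. Once the three pieces are bounded, adding the estimates yields the claimed inequality with $C_{0}$ depending only on the data.
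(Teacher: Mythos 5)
Your proposal is correct in substance but treats the source $A$ by a genuinely different route than the paper. The paper never re-runs the iteration for an $A$-forced coupled system: it introduces the solution $\tilde v$ of the \emph{decoupled} transport equation $-\partial_t\tilde v-\Delta\tilde v+H_p(x,D\bar u)\cdot D\tilde v=A$, $\tilde v(T)=0$, shows $\|D\tilde v(t)\|_{L^2}\leq CM\left(e^{-\lambda t}+e^{-\lambda(T-t)}\right)$ by Lemma~\ref{lemCLLP2.2} plus parabolic regularization, and observes that $(v-\tilde v,\mu)$ solves the system \emph{without} $A$ but with the extra divergence-form source $\dive\!\left(\bar m H_{pp}(x,D\bar u)D\tilde v\right)$ added to $\dive(B)$, so that Propositions~\ref{Prop:expdecay1} and~\ref{Prop:expdecay3} apply as they stand. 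This buys brevity, and, since $\tilde v$ solves a scalar equation, only $\|A(t)\|_{L^2}$ ever enters. Your route --- redoing the scheme of Proposition~\ref{Prop:expdecay3} with $A$ in place of $\dive(B)$ --- can be made to work, but the two points you call cosmetic do need care. First, the a priori bound is a (harmless) bootstrap rather than a direct absorption: the duality gives $C^{-1}\int_0^T\|Dv_A\|_{L^2}^2\leq\left(\int_0^T\|A\|_{L^2}\right)\sup_t\|\mu_A(t)\|_{L^2}$, Lemma~\ref{lemCLLP2.1} gives $\sup_t\|\mu_A(t)\|_{L^2}\leq C\left(\int_0^T\|Dv_A\|_{L^2}^2\right)^{1/2}$, and combining the two, using that \rife{condAB} makes $\int_0^T\|A\|_{L^2}\leq 2M/\lambda$ uniformly in $T$, yields $\sup_t\|\mu_A\|_{L^2}\leq CM$; a Young inequality ``against the uniform bound on $\mu_A$'' presupposes the bound being proved. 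Second, to pass to $\sup_t\|Dv_A(t)\|_{L^2}$ you cannot follow the proof of Lemma~\ref{lem:boundsLS} literally, because differentiating the HJ equation produces $D_{x_i}A$, which is not controlled; you must instead invoke the last (gradient) estimate of Lemma~\ref{lemCLLP2.2} with source $\frac{\delta F}{\delta m}(\cdot,\bar m)(\mu_A)+A$, which only requires $A\in L^2$. With these adjustments your iteration for $\rho_A$ goes through exactly as in Proposition~\ref{Prop:expdecay3}.

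One bookkeeping point in the three-way splitting: the terminal condition couples to $\mu(T)$ through $\frac{\delta G}{\delta m}(x,\bar m)(\mu(T))$, so if the two forced pieces are given literally homogeneous terminal data $v_A(T)=v_B(T)=0$, the sum of the three pieces has terminal value $\frac{\delta G}{\delta m}(\bar m)(\mu_{\rm hom}(T))+v_T$ and does not reconstruct $(v,\mu)$. The fix is to let each piece carry its own coupling $\frac{\delta G}{\delta m}(x,\bar m)(\mu_i(T))$ (plus $v_T$ for the boundary-data piece): by linearity of $\mu\mapsto\frac{\delta G}{\delta m}(\bar m)(\mu)$ the sum is then correct, and since this term enters the proofs of Propositions~\ref{Prop:expdecay1} and~\ref{Prop:expdecay3} only through its monotonicity (with the favorable sign), the estimates are unaffected --- but it should be stated.
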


\begin{proof} Let $\tilde v$ be the solution to 
$$
\left\{\begin{array}{l}
-\partial_t \tilde v  -\Delta \tilde v +H_p(x,D\bar u).D\tilde v = A(t,x)\qquad {\rm in}\; (0,T)\times \T^d,\\
\tilde v(T,x)=0\qquad {\rm in}\;  \T^d.
\end{array}\right.
$$
Note for later use that, assuming $\lambda<\omega$, we have
\be\label{Dtildev}
\| D\tilde v(t) \|_{L^2} \leq C M (e^{-\lambda t}+ e^{-\lambda (T-t)}). 
\ee
Indeed, using Lemma \ref{lemCLLP2.2}, we have  
$$
\| \tilde v(t)-\lg \tilde v(t)\rg \|_{L^2} \leq C\int_t^Te^{-\omega(s-t)} \|A(s)\|_{L^2}ds\leq C M (e^{-\lambda t}+ e^{-\lambda (T-t)}). 
$$
Then the regularizing property of the equation leads to \eqref{Dtildev}. 

The pair $(v_1,\mu_1):= (v-\tilde v,\mu)$ solves 
$$
\left\{\begin{array}{l}
\ds-\partial_t v  -\Delta v_1 +H_p(x,D\bar u).Dv_1 = \frac{\delta F}{\delta m}(x, \bar m)(\mu(t))\qquad {\rm in}\; (0,T)\times \T^d,\\
\ds \partial_t \mu_1-\Delta \mu_1 -\dive(\mu_1 H_p(x,D \bar u))-\dive (\bar m H_{pp}(x,D\bar u)Dv_1)\\
\ds \qquad \qquad \qquad \qquad \qquad \qquad \qquad \qquad \qquad \qquad = \dive (B+\bar m H_{pp}(x,D\bar u)D\tilde v)\; {\rm in}\; (0,T)\times \T^d,\\
\mu_1(0,\cdot)= \mu_0,\;  \qquad v_1(T,x)=\frac{\delta G}{\delta m}(x,\bar m)(\mu_1(T))+ v_T(x)\qquad {\rm in}\;  \T^d,
\end{array}\right.
$$
where, by \eqref{condAB} and \eqref{Dtildev}, 
$$
\|B(t)+\bar m H_{pp}(x,D\bar u)D\tilde v(t)\|_{L^2}\leq C M \left(e^{-\lambda t}+ e^{-\lambda (T-t)}\right).
$$
Using Propositions \ref{Prop:expdecay1} and \ref{Prop:expdecay3}, we get:
$$
\|\mu_1(t)\|_{L^2}+\|Dv_1(t)\|_{L^2}\leq C\left( (1+t)e^{-\lambda t}+ (1+T)e^{-\lambda (T- t)}\right)\left(\|Dv_T\|_{L^2}+\|\mu_0\|_{L^2}+M\right)
$$
for any $t\in [0,T]$.
Recalling the definition of $(v_1,\mu_1)$ and using again inequality \eqref{Dtildev} gives the result. 
\end{proof}

\subsection{Estimates for the nonlinear system}\label{stimeT}

Now we consider the nonlinear MFG systems. For the finite horizon problem, we have:

\begin{Theorem} \label{thm:CvExpMFG}
There exists $\gamma>0$ and $C>0$ such that, if  $(u,m)$ is the solution of the MFG system with initial condition $m_0\in \Pk$:
\be\label{MFGtheoExp}
\left\{\begin{array}{l}
\ds-\partial_t u  -\Delta u +H(x,Du)= F(x,m(t))\qquad {\rm in}\; (0,T)\times \T^d\\
\ds \partial_t m-\Delta m -\dive( mH_p(x,D u))=0\qquad {\rm in}\; (0,T)\times \T^d\\
m(0,\cdot)= m_0,\;  \qquad u(T,x)=G(x,m(T))\qquad {\rm in}\;  \T^d
\end{array}\right.
\ee
then,  for some $\alpha\in (0,1)$,
$$
\|Du(t)-D\bar u\|_{C^{1+\alpha}}\leq C\left( e^{-\gamma t}+ e^{-\gamma (T- t)}\right)\qquad  \forall t\in [0, T]
$$
and 
$$
\|m(t)-\bar m\|_{C^\alpha}\leq C\left( e^{-\gamma  t}+ e^{-\gamma (T- t)}\right)\qquad \qquad  \forall t\in [1, T].
$$
In  particular,
$$
\sup_{(t,x)\in [0,T]\times \T^d} \left|u(t,x)-\bar u(x)-\bar \lambda (T-t)\right|\leq C.
$$
\end{Theorem}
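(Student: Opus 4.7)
\medskip

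\noindent\textbf{Proof plan.} Set $v(t,x) := u(t,x)-\bar u(x)-\bar\lambda (T-t)$ and $\mu(t,x) := m(t,x)-\bar m(x)$. Subtracting the ergodic system \rife{e.MFGergo} from \rife{MFGtheoExp} and Taylor-expanding $H$ around $D\bar u$ and $F,G$ around $\bar m$, the pair $(v,\mu)$ solves the linearized system \rife{MFGlinBbis} with source terms $A,B$ which are quadratic in $(Dv,\mu)$, namely
$$
 \|A(t)\|_{L^\infty} + \|B(t)\|_{L^\infty} \leq C\bigl(\|Dv(t)\|_\infty + \|\mu(t)\|_\infty\bigr)^2,
$$
and with boundary data $\mu(0)=m_0-\bar m$, $v(T)=\frac{\delta G}{\delta m}(x,\bar m)(\mu(T)) + v_T$, where $v_T(x) = G(x,\bar m)-\bar u(x)+O(\|\mu(T)\|^2)$ is only uniformly bounded (\emph{not} small) in $T$. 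Lemma \ref{lem.SemiConc} and Lemma \ref{lemCLLP2.1} ensure that $Du$ and $m$ are uniformly bounded, so $A,B$ and $v_T$ are globally bounded independently of $T$. In addition, Lemma \ref{lem.boundm} yields, for any prescribed $\ep>0$, a time $\tau_0=\tau_0(\ep)$ such that $\|Dv(t)\|_\infty+\|\mu(t)\|_\infty\leq \ep$ for all $t\in[\tau_0,T-\tau_0]$.

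The heart of the argument is establishing exponential $L^2$ decay of $\eta(t):=\|\mu(t)\|_{L^2}+\|Dv(t)\|_{L^2}$. On $[\tau_0,T-\tau_0]$ the quadratic bound reads $\|A(t)\|_{L^2}+\|B(t)\|_{L^2}\leq C\ep\,\eta(t)$. Applying Theorem \ref{theo:CvExpLin} on any sub-interval of $[\tau_0,T-\tau_0]$, the free data $\|\mu(\tau_0)\|_{L^2}+\|v(T-\tau_0)\|_{L^2}$ and the sources are absorbed as long as $\ep$ is small enough, producing a contraction in the weighted space of continuous maps $\eta$ endowed with the norm $\sup_t\eta(t)\bigl(e^{-\gamma t}\wedge e^{-\gamma(T-t)}\bigr)^{-1}$ for any $\gamma<\lambda$. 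Equivalently, a direct bootstrap: starting from the a priori bound $\eta\leq C$, iterating Theorem \ref{theo:CvExpLin} strictly improves the decay exponent at each step (the polynomial prefactor $(1+t)$, $(1+T)$ is absorbed by taking $\gamma<\lambda$) until one obtains $\eta(t)\leq C(e^{-\gamma t}+e^{-\gamma (T-t)})$, where $C$ depends only on $\|v_T\|_\infty$, $\|m_0-\bar m\|_{L^2}$ and the data.

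The remaining statements follow by standard arguments. The $C^\alpha$ bound on $m-\bar m$ and the $C^{1+\alpha}$ bound on $Du-D\bar u$ are obtained by upgrading the exponential $L^2$ decay through parabolic regularity (Theorem III.11.1 of \cite{LSU}) applied to the equations satisfied by $\mu$ and the components of $Dv$, arguing exactly as in the last part of the proof of Lemma \ref{lem.boundm} and using the uniform boundedness of the drifts. For the final $L^\infty$ bound, decompose $v=\lg v\rg+(v-\lg v\rg)$: Poincar\'e's inequality and the $C^{1+\alpha}$ decay of $Du-D\bar u$ control the oscillation, while integrating the $v$-equation in space gives
$$
-\frac{d}{dt}\lg v(t)\rg = \lg F(\cdot,m(t))-F(\cdot,\bar m)\rg - \lg H(\cdot,Du(t))-H(\cdot,D\bar u)\rg,
$$
whose right-hand side is exponentially integrable thanks to the $C^\alpha$ decay just obtained; since $\lg v(T)\rg=\lg G(\cdot,m(T))-\bar u\rg$ is uniformly bounded in $T$, so is $\lg v(t)\rg$, and hence $v=u-\bar u-\bar\lambda(T-t)$.

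The main obstacle is the bootstrap in the second step: the terminal correction $v_T=G(\cdot,\bar m)-\bar u$ is only bounded (not small), so Theorem \ref{theo:CvExpLin} cannot be iterated globally on $[0,T]$. One is forced to use Lemma \ref{lem.boundm} to localize to the interior interval where the quadratic remainders become a genuine small perturbation of the linearized system, and to accept the loss from $\lambda$ to an arbitrary $\gamma<\lambda$ in order to swallow the polynomial prefactors $(1+t)$, $(1+T)$ appearing in Theorem \ref{theo:CvExpLin}.
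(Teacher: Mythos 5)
Your overall architecture matches the paper's: linearize around $(\bar u,\bar m)$ with quadratic remainders $A,B$, use Lemma \ref{lem.boundm} to localize to an interior interval $[\tau,T-\tau]$ where the perturbation is small, invoke the linear decay result (Theorem \ref{theo:CvExpLin}) with a loss from $\lambda$ to $\gamma<\lambda$ to absorb the polynomial prefactors, upgrade to $C^\alpha$/$C^{1+\alpha}$ by parabolic regularity, and control the constant by integrating the equation. However, the central step --- passing from the linear decay theorem to decay of the \emph{nonlinear} solution --- is not carried out correctly. Your ``direct bootstrap'' cannot get started: the hypothesis \eqref{condAB} of Theorem \ref{theo:CvExpLin} requires $\|A(t)\|_{L^2}+\|B(t)\|_{L^2}\leq M\left(e^{-\lambda t}+e^{-\lambda(T-t)}\right)$ with $M$ independent of $T$, whereas at the first step you only know $\|A\|+\|B\|\leq C\ep\,\eta(t)\leq C\ep^2$, a constant in time; dominating a constant by $M(e^{-\lambda t}+e^{-\lambda(T-t)})$ forces $M\gtrsim \ep^2 e^{\lambda T/2}$, which ruins uniformity in $T$. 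Even granting a version of the theorem with source rate $\gamma<\lambda$ (as in Proposition \ref{prop:estiSLdisc}), each application returns the rate $\gamma$ only with an extra factor $(1+t)$, $(1+T)$, so iterating on the actual solution compounds prefactors or loses rate at every step, and the weighted-norm absorption you sketch does not close: the output norm is at a strictly weaker weight than the input norm, so the term $C\ep N$ cannot simply be moved to the left.

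What makes the argument work in the paper is a fixed-point \emph{construction}, not an a priori estimate on the given solution: one fixes $\gamma\in(\lambda/2,\lambda)$ and the set $E$ of pairs obeying the pure exponential ansatz $\|Dv(t)\|_\infty+\|\mu(t)\|_\infty\leq \hat K(e^{-\gamma t}+e^{-\gamma(T-t)})$; for $(v,\mu)\in E$ the quadratic remainders decay at rate $2\gamma>\lambda$, so Theorem \ref{theo:CvExpLin} applies \emph{verbatim}, and the smallness $\hat K^2\mapsto \hat K$ re-absorbs the polynomial prefactors, so the map sends $E$ into $E$; compactness (parabolic regularity) then yields a fixed point, which is a solution of the MFG system on $[\tau,T-\tau]$ with the same (small) data as the restriction of the actual solution, and it is identified with that restriction by uniqueness (monotonicity). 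Your ``contraction in the weighted space'' is morally this construction, but as written it is applied to the quantity $\eta$ of the given solution, no map is defined, no fixed point is produced, and the identification step via uniqueness of the MFG system --- which is what transfers the decay of the constructed solution to the actual one --- is missing. Without these elements the key exponential estimate is not established; the remaining steps (regularity upgrade, the bound on $u-\bar u-\bar\lambda(T-t)$ via the spatial average or the heat estimate) are fine once that estimate is in hand.
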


\begin{proof} We use a fixed point argument. Let us start with the proof for  initial  and terminal conditions which are sufficiently close to $\bar m$ and $\bar u$ respectively. Let $\hat K>0$ be small enough and $\gamma\in (\lambda/2,\lambda)$, where $\lambda $ is given by Proposition \ref{Prop:expdecay1}. Let $E$ be the set of continuous maps $(v,\mu)$ on $[0,T]\times \T^d$ such that $Dv$ is also continuous and 
$$
 \|Dv(t)\|_{L^\infty}+ \|\mu(t)\|_{L^\infty}\leq \hat K\left(e^{-\gamma t}+ e^{-\gamma(T-t)}\right) .
$$
We suppose that $\hat K$ is such that
$$
\bar m(x) >\hat K \qquad \forall x\in \T^d.
$$
We also assume that the initial condition $m_0$ and the terminal condition $u_T$ are close to $\bar m$ and $\bar u$ (plus constant) respectively, namely that $\mu_0:=m_0-\bar m$ and $v_T:= u_T-\bar u$ satisfy
\be\label{kak}
\|\mu_0\|_{L^\infty}+\|Dv_T\|_\infty \leq \hat K^2. 
\ee
We may suppose further that $\mu_0$ and $Dv_T$ belong to $C^\alpha(\T^d)$ for some $\alpha\in (0,1)$.

For $(v,\mu)\in E$, we consider the solution $(\tilde v,\tilde \mu)$ to the linearized system
$$
\left\{\begin{array}{l}
\ds-\partial_t \tilde v  -\Delta \tilde v +H_p(x,D\bar u)\cdot D\tilde v = \frac{\delta F}{\delta m}(x, \bar m)(\tilde \mu(t))+A(t,x)\qquad {\rm in}\; (0,T)\times \T^d\\
\ds \partial_t \tilde \mu-\Delta \tilde \mu -\dive(\tilde \mu H_p(x,D \bar u))-\dive (\bar m H_{pp}(x,D\bar u)D\tilde v)= \dive (B)\qquad {\rm in}\; (0,T)\times \T^d\\
\mu(0,\cdot)= \mu_0,\;  \qquad v(T,x)=v_T(x)\qquad {\rm in}\;  \T^d
\end{array}\right.
$$
with 
$$
A(t,x)= -H(x,D(\bar u+ v))+ H(x,D\bar u)+ H_p(x, D\bar u)\cdot Dv +F(x, \bar m +\mu)-F(x, \bar m)-\frac{\delta F}{\delta m}(x, \bar m)(\mu)
$$
and 
$$
B(t,x)= 
(\bar m +\mu)H_p(x, D(\bar u+ v))-\bar m H_p(x, D\bar u)
- \mu H_p(x,D\bar u)-\bar m H_{pp}(x,D\bar u)Dv.
$$
We note  that $ \bar m +\mu\geq 0$ on $[0,T]\times \T^d$ and 
$$
\|A(t)\|_{L^\infty}+\|B(t)\|_{L^\infty} \leq C\hat K^2 \left( e^{-2\gamma t}+ e^{-2\gamma(T-t)}\right).
$$
 Here we used that $m \mapsto \frac{\delta F}{\delta m}(x,m,y)$ is Lipschitz (uniformly with respect to $(x,y)$) and the Lipschitz character of $H_{pp}$ as well.

{From} Theorem \ref{theo:CvExpLin} we have, as $\gamma\in (\lambda/2,\lambda)$, 
$$
\|\tilde \mu(t)\|_{L^2}+ \|D\tilde v(t)\|_{L^2} \leq C\hat K^2 \left((1+t)e^{-\lambda t}+(1+T)e^{-\lambda(T-t)}\right).
$$
We upgrade the previous estimates to $L^\infty$ norms with our usual arguments: from Lemma \ref{lemCLLP2.2} we have
\begin{align*}
\|\tilde v(t)-\lg \tilde v(t)\rg \|_{L^\infty} & \leq Ce^{-\omega (T-t)} \|v(T)-\lg v(T)\rg \|_{L^\infty} 
\\
& \qquad \qquad +C\int_t^T e^{-\omega(s-t)} (\|\frac{\delta F}{\delta m}(x, \bar m)(\tilde\mu(s))\|_
{L^\infty}+ \|A(s)\|_{L^\infty})ds 
\\
& \leq 
C\hat K^2 \left((1+t)e^{-\lambda t}+(1+T)e^{-\lambda(T-t)}\right)\,.
\end{align*}
Then, in any   interval $[t,t+1]$, we have
$$
\|D\tilde v(t)\|_\infty  \leq C  \, \sup\limits_{s\in (t, t+1)} [\|\tilde v(s)-\lg \tilde v(s)\rg\|_\infty + \|\frac{\delta F}{\delta m}(x, \bar m)(\tilde\mu(s))\|_
{L^\infty}+\|A(s)\|_\infty]\,,
$$
and this concludes the estimate for $\|D\tilde v(t)\|_{L^\infty}$. Now, using the bound for $D\tilde v$   and $B$, we have
$$
\| \tilde \mu(t)\|_\infty\leq C \sup\limits_{s\in (t-1, t)} [\|\tilde \mu(s) \|_{L^2}+ \|D\tilde v(s)\|_
{L^\infty}+\|B(s)\|_\infty]
$$
and we conclude the estimate for $\| \tilde \mu(t)\|_\infty$. Notice that the above bounds hold up to $t=0$ and $t=T$ by using the condition \rife{kak}  assumed on $\mu_0$ and $v_T$. Eventually, we obtained that 
%
$$
\|\tilde \mu(t)\|_{L^\infty}+ \|D\tilde v(t)\|_{L^\infty} \leq C\hat K^2 \left((1+t)e^{-\lambda t}+(1+T)e^{-\lambda(T-t)}\right).
$$
Since $\gamma<\lambda$,  for $\hat K$ small enough we infer  that
$$
\|\tilde \mu(t)\|_{L^\infty}+ \|D\tilde v(t)\|_{L^\infty} \leq  \hat K\left(e^{-\gamma t}+ e^{-\gamma(T-t)}\right)
$$
and $(\tilde v, \tilde \mu)$ belongs to $E$. In addition, $\tilde \mu$ and  $\tilde v- \lg \tilde v \rg$ solve linear parabolic equations with bounded coefficients, so classical parabolic estimates (\cite[Theorem III.8.1 p. 196]{LSU}, \cite[Theorem III.10.1 p. 196]{LSU} and \cite[Theorem III.11.1 p. 196]{LSU})  imply that $\tilde \mu$  and $D\tilde v$  are locally bounded in $C^{\alpha/2,\alpha}$  for some $\alpha\in (0,1)$, with bounds that only depend on the $L^\infty$ norm of the coefficients. In particular,  the map $(v,\mu)\to (\tilde v,\tilde \mu)$ is compact and  it has a fixed point $(v,\mu)$. Then $(u,m):= (\bar u,\bar m)+ (v,\mu)$ is a solution to \eqref{MFGtheoExp}  with terminal condition $u_T$ and  which satisfies the decay 
$$
\|Du(t)-D\bar u(t)\|_{C^\alpha}+ \|m(t)-\bar m\|_{C^\alpha}\leq \hat K\left(e^{-\gamma t}+ e^{-\gamma(T-t)}\right) .
$$
  We now remove the smallness and regularity assumptions on the initial condition $m_0$  and  the terminal condition $u_T$. Let $(u,m)$ be the solution to \eqref{MFGtheoExp}. From Lemma \ref{lem.boundm} there exists $0<\tau<\hat T$ 
such that, if  
$T\geq \hat T$, then the solution to \eqref{MFGtheoExp} satisfies, again for some $\alpha\in (0,1)$, 
\be\label{akjbzednl}
\|m(t)-\bar m\|_{C^\alpha} + \|Du(t)-D\bar u\|_{C^\alpha}  \leq \hat K^2\qquad \forall t\in [\tau,T-\tau].
\ee
From the first step we conclude that 
$$
\|m(t)-\bar m\|_{C^\alpha}+ \|Du(t)-D\bar u\|_{C^\alpha}\leq \hat K\left(e^{-\gamma (t-\tau)}+ e^{-\gamma (T-\tau-t)}\right)\qquad \forall t\in [\tau, T-\tau].
$$
Using Lemma \ref{lemCLLP2.1} and changing the constant if necessary, we can extend this inequality for $m$ to the time interval $[1,T]$. Moreover,  $Du(t)-D\bar u$ also satisfies a parabolic equation with uniformly  bounded coefficients. Thus it is bounded in $C^{1+\alpha/2,1+\alpha}$ (for some possibly different $\alpha$, depending on the data only) and we can improve the above inequality for $u$ into 
$$
\|Du(t)-D\bar u\|_{C^{1+\alpha}}\leq C\left(e^{-\gamma t}+ e^{-\gamma (T-t)}\right)\qquad \forall t\in [0, T].
$$

We finally prove the last bound on $v:=u-\bar u-\bar \lambda (T-t)$. Note that $v$ satisfies 
$$
-\partial_t v-\Delta v = A(t,x)
$$
where 
$$
A(t,x)= -(H(x,Du)-H(x,D\bar u))+F(x,m(t))-F(x,\bar m), 
$$
so that 
$$
\|A(t)\|_{L^\infty}\leq C\left(e^{-\gamma t}+ e^{-\gamma (T-t)}\right)\qquad \forall t\in [0, T].
$$
Thus, by standard heat estimate, 
$$
\|v(t)\|_{L^\infty} \leq Ce^{-\omega (T-t)} \|v(T)\|_{L^\infty} +C\int_t^Te^{-\omega (s-t)} \|A(s)\|_{L^\infty}ds\leq C.
$$
\end{proof}

Let us stress that  the above proof provides an explicit smallness estimate on $D(u-\bar u)$ and $m-\bar m$ for initial-terminal data which are correspondingly small. This allows us to  derive the convergence of $u^T(0,x)$ as the time horizon tends to infinity,  for the special case with initial measure $m_0=\bar m$. This result is a first  key argument in the analysis of the long time behavior of the general MFG system and of the master equation (Theorem \ref{thm.main} and Corollary \ref{cor:krjehnrf}). 

\begin{Proposition} \label{prop:CvuT(0)}
For any $T>0$, let $(u^T,m^T)$ be the solution to 
\be\label{eq:uTbarm}
\left\{\begin{array}{l}
\ds-\partial_t u^T  -\Delta u^T +H(x,Du^T)= F(x,m^T(t))\qquad {\rm in}\; (0,T)\times \T^d,\\
\ds \partial_t m^T-\Delta m^T -\dive( m^TH_p(x,D u^T))=0\qquad {\rm in}\; (0,T)\times \T^d,\\
m^T(0,\cdot)= \bar m,\;  \qquad u^T(T,x)=G(x,m(T))\qquad {\rm in}\;  \T^d.
\end{array}\right.
\ee
Then there exists a constant $\bar c$ such that 
$$
\lim_{T\to+\infty} u^T(0,x)-\bar \lambda T= \bar u(x)+\bar c,
$$
where the limit is uniform in $x\in \T^d$. 
\end{Proposition}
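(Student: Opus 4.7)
The plan is to combine the small-data argument in the proof of Theorem \ref{thm:CvExpMFG} with a Cauchy argument to identify the limit as a constant. I would first sharpen the decay estimate at $t=0$ for this special initial condition. On $[0,T/2]$ the restriction $(u^T,m^T)$ is, by uniqueness under monotonicity, the unique MFG solution with initial measure $\bar m$ (so $\mu_0 = 0$) and prescribed terminal $g(\cdot):=u^T(T/2,\cdot)$. Theorem \ref{thm:CvExpMFG} applied to $(u^T,m^T)$ at $t=T/2$ yields $\|D(g-\bar u)\|_\infty \le 2Ce^{-\gamma T/2}$, so the smallness condition \eqref{kak} of the first-step argument in the proof of Theorem \ref{thm:CvExpMFG} is satisfied with $\hat K = \sqrt{2C}\,e^{-\gamma T/4} \to 0$. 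The conclusion of that argument then gives
$$
\|D(u^T(t)-\bar u)\|_\infty + \|m^T(t)-\bar m\|_\infty \le \hat K\bigl(e^{-\gamma t} + e^{-\gamma(T/2-t)}\bigr),\qquad t\in [0,T/2],
$$
and in particular $\|Du^T(0,\cdot) - D\bar u\|_\infty \le 2\hat K \to 0$ as $T\to\infty$.

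Combining this with Theorem \ref{thm:CvExpMFG} (which gives $\|u^T(0,\cdot) - \bar u - \bar\lambda T\|_\infty \le C$) and standard parabolic Schauder estimates, the family $\{u^T(0,\cdot) - \bar u - \bar\lambda T\}_T$ is bounded in $C^{2+\alpha}(\T^d)$; by Arzel\`a-Ascoli every subsequence admits a further $C^2$-convergent subsequence whose limit has zero gradient and is therefore a constant. To show uniqueness of this constant I would set $\phi^T := \lg u^T(0,\cdot) - \bar u - \bar\lambda T\rg$ and show that $\phi^T$ is Cauchy. Averaging the HJ equation for $u^{T_2}$ over $\T^d$ and using the ergodic identity $\bar\lambda = \lg F(\cdot,\bar m)\rg - \lg H(\cdot,D\bar u)\rg$, one obtains, for $s=T_2-T_1>0$,
$$
\phi^{T_2,s} - \phi^{T_2} = \int_0^s R^{T_2}(r)\,dr,\qquad R^{T_2}(r):=\lg H(\cdot,Du^{T_2}(r))-H(\cdot,D\bar u)\rg - \lg F(\cdot,m^{T_2}(r))-F(\cdot,\bar m)\rg,
$$
where $\phi^{T_2,s} := \lg u^{T_2}(s,\cdot) - \bar u - \bar\lambda(T_2-s)\rg$. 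The sharp estimate above applied to $(u^{T_2},m^{T_2})$ on $[0,T_2/2]$, combined with the weaker Theorem \ref{thm:CvExpMFG} decay on $[T_2/2,s]$, yields $\int_0^s R^{T_2}(r)\,dr \to 0$ as $T_1,s\to\infty$. On the other hand, the time-shifted MFG $(u^{T_2}(\cdot+s,\cdot),m^{T_2}(\cdot+s,\cdot))$ on $[0,T_1]$ is the unique MFG there with initial measure $m^{T_2}(s)$, close to $\bar m$ by Theorem \ref{thm:CvExpMFG}. Linearizing its difference with $(u^{T_1},m^{T_1})$ around $(\bar u,\bar m)$, both in the small-data regime of the previous step, and applying Theorem \ref{theo:CvExpLin} yields $\|u^{T_2}(s,\cdot) - u^{T_1}(0,\cdot)\|_\infty \to 0$, hence $\phi^{T_2,s} - \phi^{T_1} \to 0$. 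Combining the two estimates gives $\phi^{T_2} - \phi^{T_1} \to 0$, so the scalar sequence $\phi^T$ converges to some $\bar c$, and together with the $C^2$-compactness and the gradient convergence this yields the uniform limit $u^T(0,\cdot) - \bar\lambda T \to \bar u + \bar c$.

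The main obstacle is the stability step: applying Theorem \ref{theo:CvExpLin} to the difference of the two nonlinear (small-data) MFG solutions requires carefully tracking the quadratic source terms $A,B$ produced by the nonlinearity of $H$ and $F$, so that they satisfy the hypothesis \eqref{condAB} uniformly on the long time interval $[0,T_1]$ with a constant independent of $T_1$.
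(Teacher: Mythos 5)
Your first half is sound and in fact parallels the paper's own second step: the restriction of $(u^T,m^T)$ to $[0,T/2]$ does fall in the small-data regime of the fixed-point argument of Theorem \ref{thm:CvExpMFG} (your $\hat K\sim e^{-\gamma T/4}$ is admissible once it is below the fixed smallness threshold), the resulting decay with vanishing prefactor, the $C^{2+\alpha}$ compactness, and the averaged identity $\phi^{T_2,s}-\phi^{T_2}=\int_0^s R^{T_2}$ (the paper's analogue is \eqref{cqebsd3}) are all correct. The genuine gap is in the step that is supposed to identify the constant, namely the claim that linearizing the difference of the two solutions around $(\bar u,\bar m)$ and applying Theorem \ref{theo:CvExpLin} gives $\|u^{T_2}(s,\cdot)-u^{T_1}(0,\cdot)\|_\infty\to 0$.

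Two things go wrong there. First, Theorem \ref{theo:CvExpLin} only bounds $\|\mu(t)\|_{L^2}+\|Dv(t)\|_{L^2}$; it gives no information whatsoever on the spatial average $\lg v(t)\rg$, which is precisely the quantity you need (the gradients of $u^{T_2}(s,\cdot)$ and $u^{T_1}(0,\cdot)$ are already known to be close by your first step). To recover the average you must integrate the equations in time, and then the terminal layer enters. Second, the ``small-data regime'' assertion fails on $[T_1-O(1),T_1]$: there the terminal condition $G$ forces $Du^{T_i}-D\bar u$ and $m^{T_i}-\bar m$ to be of order one, so the remainder terms $A,B$ produced by the nonlinearity satisfy \eqref{condAB} only with a constant $M$ of order one (not $o(1)$), and the terminal datum $G(\cdot,m^{T_2}(T_2))-G(\cdot,m^{T_1}(T_1))$ cannot be written as $\frac{\delta G}{\delta m}(\cdot,\bar m)(\nu(T_1))+v_T$ with $\|Dv_T\|_{L^2}$ small, because $m^{T_i}(T_i)$ is \emph{not} close to $\bar m$. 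Consequently the bound you would get from Theorem \ref{theo:CvExpLin} is $O(1)$, not $o(1)$, and the Cauchy argument does not close: the $O(1)$ contribution of the terminal layer is exactly what encodes the constant $\bar c$. The missing ingredient is a direct matching of the two terminal layers, which the paper obtains by shifting both solutions so that their terminal times coincide, applying the duality inequality on a fixed window $(-\tau,0)$ before the common terminal time (the monotonicity of $G$ absorbing the terminal coupling), deducing from Lemma \ref{lemCLLP2.1} that the measures, hence the $F$- and $G$-data, are exponentially close there, and then invoking the comparison principle for the two HJ equations to get $\|u^{T}(T-\tau)-u^{T'}(T'-\tau)\|_\infty\le C(1+\tau)e^{-\gamma\tau}$ (estimate \eqref{cqebsd1}); only afterwards is the constant propagated back to $t=0$ by the integrated-equation argument you also use. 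You need this comparison near the terminal time (or a substitute for it); as written, your stability step does not provide it.
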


\begin{proof} The proof consists in showing that the quantity $u^T(0,x)-\bar \lambda T- \bar u(x)$ is  Cauchy in $T$ in the uniform topology and converges to a constant. In a first step, we show that there exists $\tau>0$ large enough such that $u^T(T-\tau)$ and $u^{T'}(T'-\tau)$ are close in $L^\infty$ for $T,T'\geq 2\tau$. Then we use Theorem \ref{thm:CvExpMFG} (and its proof) to extend this proximity up to time $t=0$.

Let us fix $\ep>0$ small. Theorem \ref{thm:CvExpMFG} states that
\be\label{lerhbzlb}
\|Du^T(t)-D\bar u\|_{C^{1+\alpha}}+ \|m^T(t)-\bar m\|_{L^\infty} \leq C\left( e^{-\gamma t}+ e^{-\gamma (T- t)}\right)\qquad  \forall t\in [1, T]
\ee
for some constant $C$ independent of $T$. Fix $\tau$ large enough and let $T,T'\geq 2\tau$. 
If we consider $(\hat u^T,\hat m^T)(t,x):= (\hat u^T,\hat m^T)(t+T, x)$ and $(\hat u^{T'},\hat m^{T'})(t,x):= (\hat u^{T'},\hat m^{T'})(t+T',x)$,  those are both solutions of the MFG system in $(-\tau,0)$, so the energy inequality gives
\begin{align*}
C^{-1} \int_{-\tau}^0 \inte ({\hat m}^T+{\hat m}^{T'}) |D(\hat u^T-\hat u^{T'})|^2 & \leq -\left[  \inte (\hat u^T(t)-\hat  u^{T'}(t))(\hat m^T(t)-\hat m^{T'}(t))\right]_{-\tau}^{0}
\\
& \leq \inte (\hat u^T(-\tau )-\hat  u^{T'}(-\tau))(\hat m^T(-\tau)-\hat m^{T'}(-\tau)),
\end{align*}
where we used that $(\hat u^T-\hat  u^{T'})(0)= G(x,\hat m^T(0))-G(x,\hat m^{T'}(0))$ and the monotonicity of $G$. Using \eqref{lerhbzlb} and the fact that $T,T'\geq 2\tau$ we deduce that
$$
\int_{-\tau}^0 \inte ({\hat m}^T+{\hat m}^{T'}) |D(\hat u^T-\hat u^{T'})|^2 \leq C e^{-2\gamma \tau}\,,
$$
where $C$ is independent of $T,T'$.
Now we apply   Lemma \ref{lemCLLP2.1} and \eqref{lerhbzlb} to  $\hat m^T-\hat m^{T'}$ in the interval $(-\tau,0)$ and we get
$$
\begin{array}{rl}
\ds  \|\hat m^T(t)-\hat m^{T'}(t)\|_{L^2} \; \leq  & \ds C \|\hat m^T(-\tau)-\hat m^{T'}(-\tau)\|_{L^2} + C\left(  \int_{-\tau}^0 \inte ({\hat m}^{T'})^2 |D(\hat u^T-\hat u^{T'})|^2dt\right)^{1/2}\\
\leq & \ds C e^{-\gamma \tau}.
\end{array}
$$
In particular, by the assumptions on $F,G$, there exists $C>0$ such that 
$$
\sup_{t\in (-\tau,0)} \|F(x,\hat m^T(t))-F(x,\hat m^{T'}(t))\|_{L^\infty} + \|G(x,\hat m^T(0))-G(x,\hat m^{T'}(0))\|_{L^\infty} \leq   C e^{-\gamma \tau}\,.
$$
By comparison principle between $\hat u^T$ and $\hat u^{T'}$ in $(-\tau,0)$, we conclude that
\begin{align*}
\|\hat u^T(-\tau)-\hat u^{T'}(-\tau)\|_\infty \leq C (1+\tau) e^{-\gamma \tau}\,.
\end{align*}
Hence we can choose  $\tau$ sufficiently large such that 
\be\label{cqebsd1}
\|u^T(T-\tau)-u^{T'}(T'-\tau)\|_\infty \le \ep 
\ee
for any $T, T'$ large enough. 

Now  we extend the proximity of $u^T$ and $u^{T'}$ up to time $t=0$. Recalling that, by \eqref{lerhbzlb},  $\|Du^T(T-\tau)-D\bar u\|_\infty\leq \ep$ for any $T$ large enough, there  exists $\bar c_0(T)$ such that
\be\label{cqebsd2}
\|u^T(T-\tau)-\bar u-\bar c_0(T)\|_\infty \le C\ep.  
\ee
Note that \eqref{cqebsd1} implies that $(\bar c_0(T))$ is Cauchy as $T\to+\infty$ and thus converges to a limit $\bar c$. 
Let     $\gamma>0$ be defined in the first step of the  proof of Theorem \ref{thm:CvExpMFG}; since $(u^T, m^T)$ satisfy \rife{kak} with $\hat K=\ep^{1/2}$, we can choose   $\ep$  small enough so that the fixed point argument of Theorem \ref{thm:CvExpMFG} applies.  Then,  the restriction of  $(u^T,m^T)$ to $[0,T-\tau]$ satisfies:
\be\label{hbzred}
\|Du^T(t)-D\bar u\|_{L^\infty}+ \|m^T(t)-\bar m\|_\infty\leq \ep^{1/2} \left(e^{-\gamma t}+ e^{-\gamma(T-\tau-t)}\right)  \qquad \forall t\in [0,T-\tau].
\ee
Integrating in space  the equation satisfied by $u^T-\bar u$, we get 
$$
\begin{array}{l}
\ds  \left|\inte (u^T(0)-u^T(T-\tau)) -\bar \lambda (T-\tau)\right| \\
\qquad \ds   \leq \int_0^{T-\tau}\inte  |H(x,Du^T)-H(x,D\bar u)|+ |F(x,m^T(t))-F(x,\bar m)| \ dxdt \; \leq\; C\ep^{1/2}. 
\end{array}
$$
Using \eqref{hbzred} (at time $t=0$ and at time $t=T-\tau$) and Poincar\'e inequality, we infer therefore that 
\be\label{cqebsd3}
\|u^T(0)-u^T(T-\tau) -\bar \lambda (T-\tau)\|_\infty  \; \leq\; C\ep^{1/2}. 
\ee
Combining \eqref{cqebsd1}, \eqref{cqebsd2} and \eqref{cqebsd3},  we conclude that, for any $T,T'$ large enough, 
$$
\|u^T(0)-\bar u-\bar c_0(T)-\bar \lambda (T-\tau) \|_\infty \leq   C\ep^{1/2}. 
$$
From this we can deduce  that $( u^T(0,x)-\bar \lambda T)$  converges uniformly to $\bar u(x)+\bar c$ as $T$ tends to $\infty$. 
\end{proof}

We also deduce from Theorem \ref{thm:CvExpMFG} crucial estimates for the linearized system around \emph{any} solution $(u,m)$ of \eqref{MFGtheoExp}. 

\begin{Corollary}\label{coro.unifBoundT} There exists $\gamma>0$ and $C>0$ such that, if $(u,m)$ is the solution of the MFG system \eqref{MFGtheoExp},  and if $(v,\mu)$ is the solution to the linearized MFG system
$$
\left\{\begin{array}{l}
\ds -\partial_tv -\Delta  v +H_p(x,D   u)\cdot Dv = \frac{\delta F}{\delta m}(x, m)(\mu)\quad {\rm in}\; (0,T)\times \T^d,\\
\ds \partial_t \mu -\Delta  \mu -\dive( \mu H_p(x,D u))-\dive (mH_{pp}(x,Du)Dv)= 0\quad {\rm in}\; (0,T)\times \T^d,\\
\ds \mu(0,\cdot)=\mu_0, \; v(T, \cdot)= \frac{\delta G}{\delta m}(x,m(T))(\mu(T))\quad {\rm in}\;  \T^d,
\end{array}\right.
$$
with $\inte \mu_0=0$, we have
\be\label{corotesi1}
\|\mu(t)\|_{L^2}+ \|Dv(t)\|_{L^2} \leq C\left( e^{-\gamma t }+ e^{-\gamma (T- t) }\right)\|\mu_0\|_{L^2}
\ee
and, for some $\alpha\in (0,1)$ depending only on the data, 
\be\label{corotesi2}
\sup_{t\in [0,T]} \|v\|_{C^{2+\alpha}} \leq C\|\mu_0\|_{(C^{2+\alpha})'}. 
\ee
\end{Corollary}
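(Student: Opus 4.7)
The plan is to deduce both estimates by regarding the linearization around the arbitrary solution $(u,m)$ as a perturbation of the linearization around the ergodic pair $(\bar u,\bar m)$, combining Theorem \ref{theo:CvExpLin} with the exponential stabilization $(u,m)\to(\bar u,\bar m)$ provided by Theorem \ref{thm:CvExpMFG}.

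For \eqref{corotesi1} I would add and subtract the ergodic coefficients in both equations and in the terminal condition, so that $(v,\mu)$ solves the linearized system \eqref{MFGlinBbis} around $(\bar u,\bar m)$ with source terms
\begin{align*}
A(t,x)&=[H_p(x,D\bar u)-H_p(x,Du)]\cdot Dv+\Bigl[\tfrac{\delta F}{\delta m}(x,m)-\tfrac{\delta F}{\delta m}(x,\bar m)\Bigr](\mu),\\
B(t,x)&=[H_p(x,D\bar u)-H_p(x,Du)]\,\mu+[\bar m\,H_{pp}(x,D\bar u)-m\,H_{pp}(x,Du)]\,Dv,
\end{align*}
and perturbed terminal datum $v_T=\bigl[\tfrac{\delta G}{\delta m}(\cdot,m(T))-\tfrac{\delta G}{\delta m}(\cdot,\bar m)\bigr](\mu(T))$. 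Lipschitz regularity of $H_p,H_{pp},\frac{\delta F}{\delta m},\frac{\delta G}{\delta m}$ in their arguments, together with Theorem \ref{thm:CvExpMFG}, then yields
$$
\|A(t)\|_{L^2}+\|B(t)\|_{L^2}\le C\bigl(e^{-\gamma_0 t}+e^{-\gamma_0(T-t)}\bigr)\bigl(\|\mu(t)\|_{L^2}+\|Dv(t)\|_{L^2}\bigr)
$$
with an analogous smallness of $Dv_T$. Feeding this into Theorem \ref{theo:CvExpLin} produces a bound on $\phi(t):=\|\mu(t)\|_{L^2}+\|Dv(t)\|_{L^2}$ in terms of $\|\mu_0\|_{L^2}$ plus a weighted sup-norm of $\phi$ itself. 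I would then close the estimate by a bootstrap: fixing $\gamma<\min(\gamma_0,\lambda)$ and working in the weighted norm $\sup_t e^{\gamma\min(t,T-t)}\phi(t)$, the self-referential term is absorbed once $T$ is large enough, while bounded $T$ is handled by the a priori $L^2$ bound of Lemma \ref{lem:boundsLS} (the monotonicity-based duality identity still holds around $(u,m)$). The self-referential nature of $A,B$ is the principal technical obstacle and is precisely what forces the bootstrap.

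For \eqref{corotesi2} I would rely on parabolic Schauder estimates for the $v$-equation: the drift $H_p(x,Du)$ lies in $C^\alpha$ (since $Du$ does, by bootstrapping regularity for \eqref{MFGtheoExp}), and by assumption (FGc)
$$
\bigl\|\tfrac{\delta F}{\delta m}(\cdot,m)(\mu)\bigr\|_{C^{2+\alpha}}\le C\|\mu\|_{(C^{2+\alpha})'},
$$
and analogously for $G$. It then suffices to prove $\sup_t\|\mu(t)\|_{(C^{2+\alpha})'}\le C\|\mu_0\|_{(C^{2+\alpha})'}$. I would establish this by testing the $\mu$-equation against the solution of the backward transport equation with $C^{2+\alpha}$ terminal data, following the scheme of Lemma \ref{lem.fromm0meas}: parabolic Schauder gives uniform $C^{2+\alpha}$ control on the dual solution, and the extra coupling $\dive(m\,H_{pp}(x,Du)\,Dv)$ produces an error of the form $\int_0^t\int D\psi\cdot m\,H_{pp}\,Dv$ that is dominated by $\|Dv\|_{L^2((0,T)\times\T^d)}$, which in turn, via \eqref{corotesi1} applied after smoothing $\mu_0$ and passing to the limit, is controlled by $\|\mu_0\|_{(C^{2+\alpha})'}$. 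Inserting these bounds into the Schauder estimate for $v$ yields \eqref{corotesi2}.
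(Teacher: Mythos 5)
Your treatment of \eqref{corotesi1} uses exactly the paper's decomposition (rewrite the system as the linearization around $(\bar u,\bar m)$, i.e.\ system \eqref{MFGlinBbis}, with sources $A,B$ and perturbed terminal datum $v_T$ coming from the differences of the coefficients, estimated via Theorem \ref{thm:CvExpMFG}), but the closing step as you state it does not work. The self-referential prefactor $C\bigl(e^{-\gamma_0 t}+e^{-\gamma_0(T-t)}\bigr)$ is of order one near $t=0$ and $t=T$, where your weight $e^{\gamma\min(t,T-t)}$ is also of order one; hence in the weighted-norm inequality the coefficient multiplying $\sup_t e^{\gamma\min(t,T-t)}\phi(t)$ is, up to the (non-small) constant of Theorem \ref{theo:CvExpLin}, of order one \emph{independently of $T$}, so "absorbed once $T$ is large enough" fails: the obstruction sits at the endpoints, and largeness of $T$ does not help there. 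The repair is the ingredient you relegate to bounded $T$: the duality/energy argument of Lemma \ref{lem:boundsLS}, run around $(u,m)$ (it only uses monotonicity of $\frac{\delta F}{\delta m},\frac{\delta G}{\delta m}$ and lower bounds on $H_{pp}$), gives the $T$-uniform a priori bound $\sup_{t\in[0,T]}\bigl(\|\mu(t)\|_{L^2}+\|Dv(t)\|_{L^2}\bigr)\le C\|\mu_0\|_{L^2}$. Plugging this into your expressions for $A,B,v_T$ removes the self-reference entirely, so that $\|A(t)\|_{L^2}+\|B(t)\|_{L^2}\le C\|\mu_0\|_{L^2}\bigl(e^{-\gamma_0 t}+e^{-\gamma_0(T-t)}\bigr)$ and a single application of Theorem \ref{theo:CvExpLin} yields \eqref{corotesi1} (taking a smaller $\gamma$ to absorb the factors $(1+t)$, $(1+T)$). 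This is precisely the paper's proof; no bootstrap in $T$ is needed.

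For \eqref{corotesi2} there is a genuine gap. The estimate $\|Dv\|_{L^2((0,T)\times\T^d)}\le C\|\mu_0\|_{(C^{2+\alpha})'}$ cannot be obtained from \eqref{corotesi1} "after smoothing $\mu_0$ and passing to the limit": \eqref{corotesi1} controls things by $\|\mu_0\|_{L^2}$, and the $L^2$ norm of a mollification of $\mu_0$ is \emph{not} dominated by $\|\mu_0\|_{(C^{2+\alpha})'}$ (it blows up as the mollification parameter tends to $0$), so no limiting procedure converts the strong-norm bound into the dual-norm bound. The paper's Lemma \ref{lem:kaeruhzh} closes this loop differently: the energy identity is paired directly with $\mu_0$ in the weak norm, $\int_0^T\inte m|Dv|^2\le\inte v(0)\mu_0\le\|v(0)-\lg v(0)\rg\|_{C^{2+\alpha}}\,\|\mu_0\|_{(C^{2+\alpha})'}$, and the resulting self-referential term $\|v(0)-\lg v(0)\rg\|_{C^{2+\alpha}}^{1/2}\|\mu_0\|_{(C^{2+\alpha})'}^{1/2}$ in the duality test against the backward equation is then absorbed because Schauder estimates for the $v$-equation give the converse bound $\sup_t\|v(t)-\lg v(t)\rg\|_{C^{2+\alpha}}\le C\sup_t\|\mu(t)\|_{(C^{2+\alpha})'}$. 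In addition, \eqref{corotesi2} concerns the full norm of $v$, so the spatial average $\lg v(t)\rg$ must be controlled uniformly in $t$ and $T$; this requires a time-integrable (exponentially decaying) bound on $\|\mu(t)\|_{L^2}+\|Dv(t)\|_{L^2}$ in terms of $\|\mu_0\|_{(C^{2+\alpha})'}$, which the paper obtains from the extra estimate $\|\mu(1)\|_{L^2}\le C\|\mu_0\|_{(C^{2+\alpha})'}$ of Lemma \ref{lem:kaeruhzh} and a restart of \eqref{corotesi1} at time $1$ — a step missing from your argument.
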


\begin{proof} We first need a priori estimates on $(v,\mu)$. To this purpose we assume that $\mu_0\in L^2(\T^d)$, and we proceed exactly as in  Lemma \ref{lem:boundsLS}  obtaining 
\be\label{liabzslid}
\int_0^T\inte m | Dv|^2+ \sup_{t\in [0,T]} \left(\|\mu(t)\|_{L^2}^2+ \|Dv(t)\|_{L^2}^2\right) \leq C_0\|\mu_0\|_{L^2}^2. 
\ee
Next we note that $(v,\mu)$ is the solution to \eqref{MFGlinBbis} with 
$$
A= -(H_p(x,Du)-H_p(x,D \bar u))\cdot Dv + \frac{\delta F}{\delta m}(x, m(t))(\mu(t))- \frac{\delta F}{\delta m}(x, \bar m)(\mu(t)),
$$
$$
B=\mu (H_p(x,D u)-H_p(x,D\bar  u)) + (mH_{pp}(x,Du)- \bar mH_{pp}(x,D\bar u))Dv
$$
and 
$$
v_T(x)=  \frac{\delta G}{\delta m}(x,m(T))(\mu(T))- \frac{\delta G}{\delta m}(x,\bar m)(\mu(T)).
$$
Note that 
$$
\|A(t)\|_{L^2} \leq C\|Du-D\bar u\|_\infty \|Dv\|_{L^2} + C\,  \dk(m(t),\bar m) \,\|\mu(t)\|_{L^2}
$$
while 
$$
\|B(t)\|_{L^2} \leq C\|Du-D\bar u\|_\infty \|\mu(t)\|_{L^2} + C(\dk(m(t),\bar m)+ \|Du(t)-D\bar u\|_\infty) \|Dv(t)\|_{L^2}
$$
and 
$$
\|v_T\|\leq C\, \dk(m(T),\bar m)\, \|\mu(T)\|_{L^2}.
$$
 Here we used once more the Lipschitz character of  $m \mapsto \frac{\delta F}{\delta m}(x,m,y)$,  $m \mapsto \frac{\delta G}{\delta m}(x,m,y)$ and  $p\mapsto H_{pp}(x,p)$.

Using Theorem \ref{thm:CvExpMFG} and \eqref{liabzslid}, we deduce: 
$$
\|A(t)\|_{L^2} + \|B(t)\|_{L^2} \leq C \, \|\mu_0\|_{L^2} \left( e^{-\gamma t}+ e^{-\gamma (T- t)}\right)\,.
$$
Then Theorem \ref{theo:CvExpLin} (used with $\lambda=\gamma$) and the bounds \eqref{liabzslid} imply that 
$$
\|\mu(t)\|_{L^2}+\|Dv(t)\|_{L^2}\leq C\left( (1+t)e^{-\gamma t}+ (1+T)e^{-\gamma (T- t)}\right)\|\mu_0\|_{L^2}.
$$
So we deduce \eqref{corotesi1}, possibly for  a smaller value of $\gamma$. 

Now we upgrade the above estimate by using weaker norms for $\mu_0$ and stronger norms for $v$. For this, we use Lemma \ref{lem:kaeruhzh} below which states that 
$$
\|\mu(1)\|_{L^2}\leq C \|\mu_0\|_{(C^{2+\alpha})'}. 
$$
Applying our previous estimate \eqref{corotesi1} to the time interval $[1,T]$, we find that, for any $t\geq 1$, 
\begin{align*}
\|\mu(t)\|_{L^2}+\|Dv(t)\|_{L^2} & \leq C\left( e^{-\gamma (t-1)}+  e^{-\gamma (T- (t-1))}\right)\|\mu(1)\|_{L^2}\\
& \leq C\left( e^{-\gamma t}+  e^{-\gamma (T- t)}\right)  \|\mu_0\|_{(C^{2+\alpha})'}.
\end{align*}
Lemma \ref{lem:kaeruhzh} also states that 
\be\label{krvbzsdnl}
\sup_{t\in [0,T]} \| v(t)-\lg v(t)\rg \|_{C^{2+\alpha}}+ \sup_{t\in [0,T]}\|\mu(t)\|_{(C^{2+\alpha})'} \leq C \|\mu_0\|_{(C^{2+\alpha})'}\,,
\ee
so that we also have 
$$
\sup_{t\in [0,1]} \|D v(t)\|_{L^2}+ \sup_{t\in [0,1]}\|\mu(t)\|_{(C^{2+\alpha})'} \leq C \|\mu_0\|_{(C^{2+\alpha})'}.
$$
Integrating in space the equation for $v$ and using the above bounds on $Dv$ and $\mu$ then implies that 
$$
|\lg v(t)\rg |\leq C\, \|\mu_0\|_{(C^{2+\alpha})'}\qquad \forall t\in [0,T].
$$
We can the deduce \eqref{corotesi2} from \eqref{krvbzsdnl} and the above inequality. 
\end{proof}

\begin{Lemma}\label{lem:kaeruhzh} Under the assumptions of Corollary \ref{coro.unifBoundT}, there exists a constant $C>0$ (independent of $T$, $m_0$ and $\mu_0$) such that
$$
\sup_{t\in [0,T]} \| v(t)-\lg v(t)\rg \|_{C^{2+\alpha}}+ \sup_{t\in [0,T]}\|\mu(t)\|_{(C^{2+\alpha})'}+ \|\mu(1)\|_{L^2} \leq C \|\mu_0\|_{(C^{2+\alpha})'}\,.
$$
\end{Lemma}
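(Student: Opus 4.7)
Set $M:=\sup_{t\in[0,T]}\|v(t)-\lg v(t)\rg\|_{C^{2+\alpha}}$ and $N:=\sup_{t\in[0,T]}\|\mu(t)\|_{(C^{2+\alpha})'}$. The plan is to run a self-bootstrap: first bound $M\leq CN$ by Schauder, then bound $N$ by $\|\mu_0\|_{(C^{2+\alpha})'}$ up to a $\sqrt{M}$-term via duality, and close the loop by Young's inequality; finally derive the $L^2$-bound on $\mu(1)$ by a short-time parabolic argument. For the first step, the $v$-equation is a linear backward parabolic equation whose drift $H_p(x,Du)$ is uniformly bounded in $C^{1+\alpha}$, independently of $T$, since $Du\in C^{1+\alpha}$ uniformly by Lemma \ref{lem.SemiConc}, Theorem \ref{thm:CvExpMFG} and parabolic regularity of $u$. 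By assumption (FGc), the source $\tfrac{\delta F}{\delta m}(\cdot,m)(\mu)$ is bounded in $C^{2+\alpha}$ by $CN$ and the terminal datum $\tfrac{\delta G}{\delta m}(\cdot,m(T))(\mu(T))$ in $C^{3+\alpha}$ by $CN$; combined with the global bound $\sup_t\|v-\lg v\rg\|_\infty\leq CN$ coming from Lemma \ref{lemCLLP2.2} and with interior Schauder, this yields $M\leq CN$.

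For the second step, the duality identity obtained by integrating in space--time $\partial_t(v\mu)$ through the two equations of the linearized system, combined with monotonicity of $F,G$ and uniform convexity of $H$, gives
$$
\int_0^T\!\inte m|Dv|^2\,dx\,dt\;\leq\;C\inte(v(0)-\lg v(0)\rg)\mu_0\;\leq\;C\,M\,\|\mu_0\|_{(C^{2+\alpha})'},
$$
where $\inte\mu_0=0$ is used to center $v(0)$. Now for $t_0\in[0,T]$ and $\phi\in C^{2+\alpha}(\T^d)$ with $\|\phi\|_{C^{2+\alpha}}\leq 1$, let $w$ solve $-\partial_t w-\Delta w+H_p(x,Du)\cdot Dw=0$ on $[0,t_0]$ with $w(t_0)=\phi$. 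Testing the $\mu$-equation against $w$ and integrating by parts yields
$$
\inte\phi\,\mu(t_0)\;=\;\inte w(0)\,\mu_0\;-\;\int_0^{t_0}\!\inte m\,H_{pp}(x,Du)Dv\cdot Dw\,dx\,dt.
$$
The maximum principle and Schauder theory give $\|w(0)\|_{C^{2+\alpha}}\leq C$, and Lemma \ref{lemCLLP2.2} together with parabolic interior regularity upgrade the $L^\infty$ decay of $w-\lg w\rg$ to the pointwise gradient bound $\|Dw(t)\|_\infty\leq C\bigl(e^{-\omega(t_0-t)}+\mathbf 1_{[t_0-1,t_0]}(t)\bigr)$; in particular $\int_0^{t_0}\inte m|Dw|^2\,dx\,dt\leq C$. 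Cauchy--Schwarz and the weighted $L^2$-bound on $Dv$ above then yield $\bigl|\int_0^{t_0}\inte mH_{pp}Dv\cdot Dw\bigr|\leq C\sqrt{M\,\|\mu_0\|_{(C^{2+\alpha})'}}$, hence
$$
N\;\leq\;C\|\mu_0\|_{(C^{2+\alpha})'}\,+\,C\sqrt{M\,\|\mu_0\|_{(C^{2+\alpha})'}}.
$$
Substituting $M\leq CN$ and applying Young's inequality closes the bootstrap, producing $N\leq C\|\mu_0\|_{(C^{2+\alpha})'}$ and therefore $M\leq C\|\mu_0\|_{(C^{2+\alpha})'}$.

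Finally, I would split $\mu=\mu_1+\mu_2$, where $\mu_1$ solves the homogeneous Fokker--Planck equation with initial datum $\mu_0$ and no source, and $\mu_2$ the one with zero initial datum and source $\dive(mH_{pp}(x,Du)Dv)$. Lemma \ref{lem.fromm0meas}, applied with drift $V=-H_p(x,Du)$ (which has bounded space derivatives by parabolic regularity of $u$), gives $\|\mu_1(1)\|_\infty\leq C\|\mu_0\|_{(C^{2+\alpha})'}$, while the standard $L^2$-energy estimate for $\mu_2$ yields $\|\mu_2(1)\|_{L^2}^2\leq C\int_0^1\|Dv\|_\infty^2\,dt\leq CM^2\leq C\|\mu_0\|_{(C^{2+\alpha})'}^2$. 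The main obstacle is the circular dependence between $v$ and $\mu$ (Schauder controls $v$ by $\mu$, duality controls $\mu$ by $Dv$); this circle is broken by the monotonicity-based weighted $L^2$-estimate on $Dv$, which introduces a square-root factor in the duality inequality for $N$ and allows Young's inequality to close the bootstrap.
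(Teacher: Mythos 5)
Your argument for the first two quantities follows the paper's own strategy and is essentially correct: the duality bound $\int_0^T\inte m|Dv|^2\le C\inte (v(0)-\lg v(0)\rg)\mu_0$, the representation $\inte \phi\,\mu(t_0)=\inte w(0)\mu_0-\int_0^{t_0}\inte m H_{pp}Dv\cdot Dw$ via the adjoint transport equation with $\|w(0)\|_{C^{2+\alpha}}\le C$, the Schauder bound $M\le CN$, and the absorption of the square-root term are exactly the ingredients of the paper's proof (your pointwise decay bound on $Dw$, which gives $\int_0^{t_0}\inte m|Dw|^2\le C$ because $m(t)$ is a probability measure, is a fine variant of the paper's energy estimate for the adjoint solution).

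The gap is in the last step, the bound on $\|\mu(1)\|_{L^2}$. The Duhamel/energy estimate for $\mu_2$, whose source is $\dive(mH_{pp}(x,Du)Dv)$, gives $\|\mu_2(1)\|_{L^2}^2\le C\int_0^1\|m(t)\|_{L^2}^2\,\|Dv(t)\|_\infty^2\,dt$, not $C\int_0^1\|Dv(t)\|_\infty^2\,dt$: you have silently used $\|m(t)\|_{L^2}\le C$ on $(0,1)$. But the constant in the Lemma must be independent of $m_0\in\Pk$, and for a general initial measure (e.g.\ a Dirac mass) one only has $\|m(t)\|_{L^2}\sim t^{-d/4}$, so that in dimension $d\ge 2$ the integral $\int_0^1\|m(t)\|_{L^2}^2\,dt$ diverges and the step fails; the uniform bound $\sup_{t\ge\tau}\|m(t)\|_\infty\le C_\tau$ is only available away from $t=0$. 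The paper instead extracts the $L^2$ bound on $\mu(1)$ from the very duality identity you already set up: take $t_0=1$ and a terminal datum $\xi\in L^2$ for the adjoint equation, and use the unit time gap so that interior Schauder gives $\|w(0)-\lg w(0)\rg\|_{C^{2+\alpha}}\le C\|\xi\|_{L^2}$, while the bulk term is handled through the $m$-weighted bound on $Dv$ together with a bound on $\int_0^1\inte m|Dw|^2$ (obtained, for instance, by splitting at $t=1/2$, where $\|m\|_\infty$ becomes uniformly bounded, and using the interior gradient bound on $w$ before that time). With this modification your $\mu_1/\mu_2$ splitting and the appeal to Lemma \ref{lem.fromm0meas} become unnecessary, and the $L^2$ estimate closes without ever requiring $m(t)\in L^2$ near $t=0$.
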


\begin{proof} The duality estimate gives
\be\label{dvazero}
 c \int_0^t \inte m |D v|^2 \leq    \int_0^T \inte m H_{pp}(x,Du) D v \cdot  Dv  \leq   \inte v(0) \mu_0\,,
\ee
where we used that $\frac{\delta G}{\delta m}(x,m(T))$ is a nonnegative operator.
By duality,  we also have
$$
\inte \mu(t) \xi= -\int_0^t \inte mH_{pp}(Du) Dv\cdot D\psi + \inte \psi(0) \mu_0 
$$
where $\psi$ solves  (for some smooth terminal condition $\xi$ at time $t$):
$$
\left\{\begin{array}{l}
\ds -\partial_t \psi -\Delta  \psi +H_p(x,D   u)\cdot D\psi  = 0\quad {\rm in}\; (0,t)\times \T^d,\\
\psi(t, \cdot)= \xi\quad {\rm in}\;  \T^d.
\end{array}\right.
$$
Since, by Lemma \ref{lemCLLP2.2},  $\| \psi(s)-\lg \psi(s)\rg\|_{L^2} \leq c \,e^{-\omega(t-s)} \|\xi\|_{L^2}$, we have  by standard estimates:  
$$
\int_0^t \inte |D\psi|^2 \leq \|\xi\|^2_2 +  C \int_0^t \inte | \psi-\lg \psi\rg |^2 \leq C \|\xi\|_{L^2}^2.
$$
Therefore,
$$
\inte \mu(t) \xi   \leq C\left(\int_0^t\inte m |D v|^2 \right)^{\frac12} \|\xi\|_{L^2} +   \|  \psi(0)-\lg \psi(0)\rg\|_{C^{2+\alpha}} \|\mu_0\|_{(C^{2+\alpha})'}\,.
$$
From \eqref{dvazero} we deduce 
\be\label{2+al}
\inte \mu(t) \xi   \leq C\left(\| v(0)-\lg v(0)\rg\|_{C^{2+\alpha}} \|\mu_0\|_{(C^{2+\alpha})'}\right)^{\frac12} \|\xi\|_{L^2} +   \|  \psi(0)-\lg \psi(0)\rg\|_{C^{2+\alpha}} \|\mu_0\|_{(C^{2+\alpha})'}\,.
\ee
To estimate last term, we note that, if $t\leq 1$, we have by Schauder estimates that 
$$
\| \psi(0)- \lg \psi (0)\rg\|_{C^{2+\alpha}}\leq C\|\xi\|_{C^{2+\alpha}},
$$
while, if $t\geq 1$, we have, by Schauder interior estimates: 
\be\label{psio}
\| \psi(0)-\lg \psi(0)\rg\|_{C^{2+\alpha}}\leq C \| \psi(1)-\lg \psi(1)\rg\|_{L^2}\leq C \|\xi\|_{L^2}\leq C\|\xi\|_{C^{2+\alpha}}.
\ee
Coming back to \rife{2+al} and  taking the supremum over the $\xi$ with $\|\xi\|_{C^{2+\alpha}}\leq 1$,  this implies that:   
\be\label{jhzerbndfi}
\sup_{t\in [0,T]}\|\mu(t)\|_{(C^{2+\alpha})'}    \leq C\left(\| v(0)-\lg v(0)\rg\|_{C^{2+\alpha}}^{1/2} \|\mu_0\|_{(C^{2+\alpha})'}^{\frac12}  +   \|\mu_0\|_{(C^{2+\alpha})'}\right).
\ee
Similarly, from \rife{2+al} and  \rife{psio} we also estimate
\be\label{salah}
\| \mu(1)\|_{L^2} \leq C\left(\| v(0)-\lg v(0)\rg\|_{C^{2+\alpha}}^{1/2} \|\mu_0\|_{(C^{2+\alpha})'}^{\frac12}  +   \|\mu_0\|_{(C^{2+\alpha})'}\right).
\ee
We now have to estimate $ v(0)-\lg v(0)\rg$. First we have, by Lemma \ref{lemCLLP2.2}, that for any $t\in[0,T]$: 
\be\label{regdm}
\begin{split}
\|  v(t)-\lg v(t)\rg\|_\infty &  \leq e^{-\omega (T-t)} \| \frac{\delta G}{\delta m}(x,m(T)) \mu(T)\|_\infty + \int_t^T e^{-\omega (s-t)} \|\frac{\delta F}{\delta m}(\cdot, m(s))(\mu(s))\|_\infty)ds
\\
& 
\leq C \, \sup_{[0,T]} \| \mu(t) \|_{(C^{2+\alpha})'}\,,
\end{split}
\ee
where we used the $C^{2+\alpha}$ character  of  $\frac{\delta F}{\delta m}, \frac{\delta G}{\delta m}$ with respect to $y$.
We also estimate $Dv$ in $L^2$ in terms of the same quantity due to Lemma \ref{lemCLLP2.2}. Next, the regularizing property of the equation for $v-\lg v\rg$ (Theorem IV.9.1 of \cite{LSU}) implies that, for any $t\in[0,T-1/2]$ and any $\beta \in (0,1)$,  
\begin{align*}
\| v(t)-\lg v(t)\rg \|_{C^{1+\beta}} & \leq \| v(t+1/2)-\lg v(t+1/2)\rg\|_\infty+ C \sup_{s\in [t,t+1/2]} \| \mu(s) \|_{(C^{2+\alpha})'}\\
\leq &  C \sup_{[0,T]} \| \mu(s) \|_{(C^{2+\alpha})'},
\end{align*}
(where the constant depends on $\beta$).  Then considering the equation for $v_{x_i}$ (for $i\in\{1, \dots, d\}$) and using the uniform $C^2$ regularity of $u$ as well as the $C^2$ regularity of $D_x \frac{\delta F}{\delta m}$ in the $y$ variable as in \rife{regdm}, we obtain in the same way, for any $t\in [0,T-1]$: 
\begin{align*}
\|v_{x_i}(t)\|_{C^{1+\beta}} & \leq \|v_{x_i}(t+1/2)\|_\infty+ C\left( \sup_{s\in [t,t+1/2]} \|Dv(s)\|_\infty+  \sup_{s\in [t,t+1/2]} \| \mu(s) \|_{(C^{2+\alpha})'}\right)
\\
 & \leq  C \sup_{[0,T]} \| \mu(s) \|_{(C^{2+\alpha})'}.
\end{align*}
Choosing $\beta=\alpha$, we have proved therefore that 
$$
\sup_{s\in [0,T-1]} \| v(s)-\lg v(s)\rg\|_{C^{2+\alpha}} \leq  C \sup_{s\in [0,T]} \| \mu(s) \|_{(C^{2+\alpha})'}.
$$
Using  this inequality for $\| v(0)-\lg v(0)\rg\|_{C^{2+\alpha}}$ into \eqref{jhzerbndfi} then gives 
$$
\sup_{t\in [0,T]}\|\mu(t)\|_{(C^{2+\alpha})'}    \leq C\|\mu_0\|_{(C^{2+\alpha})'}, 
$$
which in turn implies that 
$$
\sup_{t\in [0,T-1]} \| v(s)-\lg v(s)\rg\|_{C^{2+\alpha}} \leq  C\|\mu_0\|_{(C^{2+\alpha})'}.
$$
Note that we can extend this inequality to the time interval $[T-1,T]$ by using the regularity of the equation satisfied by $v$ on this interval, the regularity of the terminal condition and the bound on $\|\mu(t)\|_{(C^{2+\alpha})'}$.

In the same way, from \rife{salah} we obtain 
$$
\|\mu(1)\|_{L^2}   \leq C\|\mu_0\|_{(C^{2+\alpha})'}.
$$
\end{proof}

\begin{Remark}\label{milder}
In order to estimate $v$ in the $C^2$- norm, we have used in Lemma \ref{lem:kaeruhzh} the regularity condition (FG3) on the couplings. However, by only using condition (FG2), we could similarly obtain a  milder estimate as
\be\label{c1}
\sup_{t\in [0,T]} \| v(t)-\lg v(t)\rg \|_{C^1}+ \sup_{t\in [0,T]}\|\mu(t)\|_{(C^1)'}  \leq C \|\mu_0\|_{(C^1)'}\,.
\ee
Indeed, a similar  estimate as \rife{regdm} would hold in terms of  $\| \mu(t) \|_{(C^{1})'}$ by using condition (FG2), since
\begin{align*}
\|  v(t)-\lg v(t)\rg\|_\infty &  \leq e^{-\omega (T-t)} \| \frac{\delta G}{\delta m}(x,m(T)) \mu(T)\|_\infty + \int_t^T e^{-\omega (s-t)} \|\frac{\delta F}{\delta m}(\cdot, m(s))(\mu(s))\|_\infty)ds
\\
& 
\leq C \, \sup_{[0,T]} \| \mu(t) \|_{(C^1)'}\,,
\end{align*}
where we only used that $\frac{\delta F}{\delta m}, \frac{\delta G}{\delta m}$ are $C^1$ and globally Lipschitz w.r.t. to $y$. 
Under the same condition the estimate for $Dv$ in $L^\infty$ would follow. Eventually, with the same strategy as in the above proof, by using $C^1$, rather than $C^{2+\alpha}$, estimates on $v$,  we would get at \rife{c1}.
\end{Remark}

\section{Exponential rate of convergence for the infinite horizon MFG system}\label{sec:ExpCvRates2}

Throughout this section we study the infinite horizon discounted problem and show an exponential convergence towards a stationary solution. The existence of this solution is new, as well as the convergence rate towards this solution. The method of proof is close to  the one employed in the previous Section for the finite horizon.

\subsection{The stationary solution of the infinite horizon problem}

\begin{Proposition}\label{P31} There exists $\delta_0>0$ such that, if $\delta \in (0,\delta_0)$, there is a unique solution $(\bar u^\delta,\bar m^\delta)$ to the problem 
\be\label{barudeltamdelta}
\left\{\begin{array}{l}
\delta \bar u^\delta-\Delta \bar u^\delta+H(x, D\bar u^\delta)= F(x, \bar m^\delta) \qquad {\rm in }\; \T^d\\
-\Delta \bar m^\delta-\dive (\bar m^\delta H_p(x,D\bar u^\delta))=0\qquad {\rm in }\; \T^d\\
\ds \inte \bar m^\delta=1, \; \inte \bar u^\delta=0.
\end{array}\right.
\ee
Moreover, for any $\delta\in (0,\delta_0)$,
$$
\|D\bar u^\delta\|_\infty+  \delta \|\bar u^\delta\|_\infty + \|\bar m^\delta\|_{\infty} \leq C\qquad {\rm
and 
}\qquad
\bar m^\delta(x) \geq C^{-1} \quad \forall x\in \T^d,
$$
for some  constant $C>0$. 
\end{Proposition}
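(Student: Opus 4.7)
The plan is to obtain existence via Schauder's fixed point theorem, the uniform estimates via a Bernstein-type argument analogous to Lemma 1.5 together with classical elliptic regularity and Harnack, and uniqueness for small $\delta$ via the stationary analogue of the duality identity. For existence I would introduce the map $\Phi:\Pk\to\Pk$ sending $m$ to $m':=\Phi(m)$ as follows: first let $u$ be the unique bounded $C^{2,\alpha}$ solution of $\delta u-\Delta u+H(x,Du)=F(x,m)$, which exists by standard viscous Hamilton--Jacobi theory since $H$ is uniformly convex and $F(\cdot,m)$ is smooth and bounded; then let $m'$ be the unique probability solution of $-\Delta m'-\dive(m'H_p(x,Du))=0$, whose existence, positivity, $L^\infty$ bound and strict positivity follow from Fredholm theory, De Giorgi--Moser estimates and Harnack once $H_p(x,Du)$ is known to be bounded. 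The $\delta$-independent Lipschitz estimate on $u$ described next makes $\Phi$ continuous and compact on $(\Pk,\dk)$, and Schauder's theorem then yields a fixed point $(\bar u^\delta,\bar m^\delta)$.

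For the uniform bounds the key step is $\|D\bar u^\delta\|_\infty\le C$, which I would prove by imitating Lemma 1.5. Pick a unit direction $\xi$ maximizing $C_0:=\sup_x D^2\bar u^\delta\xi\cdot\xi$ (which may be assumed non-negative, otherwise $\bar u^\delta$ is concave and a Lipschitz bound follows from the normalization), and write the equation for $w=\bar u^\delta_{\xi\xi}$ obtained by differentiating the stationary HJ equation twice along $\xi$; at the maximum of $w$ both $\delta w\ge 0$ and $-\Delta w\ge 0$ have the right sign, and combining with condition (H) together with $|D\bar u^\delta|\le d^{1/2}C_0$ one arrives at
\begin{equation*}
-C(1+C_0)^{1+\theta}-C(1+C_0)^{2\theta}+C^{-1}C_0^2\le C,
\end{equation*}
which forces $C_0\le C$ independently of $\delta$ since $\theta<1$. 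The normalization $\inte\bar u^\delta=0$ combined with the Lipschitz bound then yields $\|\bar u^\delta\|_\infty\le C$ on the torus, so $\delta\|\bar u^\delta\|_\infty\le C\delta_0$ trivially; with $H_p(x,D\bar u^\delta)$ bounded, elliptic regularity applied to the FP equation yields $\|\bar m^\delta\|_\infty\le C$, and Harnack together with $\inte\bar m^\delta=1$ gives $\bar m^\delta\ge C^{-1}$.

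For uniqueness, given two solutions $(u_1,m_1)$ and $(u_2,m_2)$, I would subtract the HJ equations, multiply by $m_1-m_2$, integrate, and use the FP equations tested against $u_1-u_2$ to rewrite the Laplacian term; the uniform convexity of $H$ and the monotonicity of $F$ reduce the resulting identity to
\begin{equation*}
C^{-1}\inte(m_1+m_2)|D(u_1-u_2)|^2\le \delta\inte(u_1-u_2)(m_1-m_2).
\end{equation*}
Using $m_i\ge C^{-1}$ on the left, Poincar\'e's inequality (both $u_i$ have zero mean), and the bound $\|m_1-m_2\|_{L^2}\le C\|D(u_1-u_2)\|_{L^2}$ obtained by applying Corollary 1.9 to the rewritten difference $-\Delta(m_1-m_2)-\dive((m_1-m_2)H_p(x,Du_1))=\dive(m_2(H_p(x,Du_1)-H_p(x,Du_2)))$, the right-hand side is controlled by $C\delta\|D(u_1-u_2)\|_{L^2}^2$. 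Choosing $\delta_0$ so that $C\delta_0<1$ then forces $D(u_1-u_2)\equiv 0$, hence $u_1=u_2$ by the normalization and $m_1=m_2$ by the FP equation.

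The main obstacle is the $\delta$-independent Lipschitz bound on $\bar u^\delta$: all the other uniform estimates as well as the uniqueness argument collapse without it, and obtaining it requires the Bernstein-type calculation on the stationary HJ equation sketched above, which is precisely where the growth restriction $\theta<1$ in (H) is essential, just as in Lemma 1.5.
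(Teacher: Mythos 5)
Your overall route is the same as the paper's, which compresses the argument into a few lines: existence by a standard fixed point, regularity by standard elliptic estimates (the stationary Bernstein/semiconcavity computation you sketch is indeed how the paper gets $\delta$-independent Lipschitz bounds, cf.\ Lemma \ref{lem.SemiConc} and Lemma \ref{lem.udeltamdelta}), the lower bound on $\bar m^\delta$ by the strong maximum principle, and uniqueness by exactly the duality estimate you write, combined with Corollary \ref{cor.StaMeas} (this is the statement you call ``Corollary 1.9'') and smallness of $\delta$.

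The one genuine problem is your treatment of the normalization $\inte \bar u^\delta=0$. The fixed point you construct is the bounded solution of the discounted equation $\delta u-\Delta u+H(x,Du)=F(x,m)$, whose additive constant is already fixed by the zero-order term: nothing in your Schauder argument produces a $u$ with zero average, and in fact no such solution exists in general, since Proposition \ref{prop.Cvbarudelta} (and the expansion $\bar u^\delta\simeq \delta^{-1}\bar\lambda+\bar u+\bar\theta$ used later) forces $\inte \delta\bar u^\delta \to \bar\lambda$, which is generically nonzero. The condition $\inte\bar u^\delta=0$ in the displayed system is a misprint inherited from the ergodic system \eqref{e.MFGergo}; neither the paper's proof nor the later sections use it. Consequently your step ``the normalization plus the Lipschitz bound give $\|\bar u^\delta\|_\infty\le C$, hence $\delta\|\bar u^\delta\|_\infty\le C\delta_0$ trivially'' is not correct: in truth $\|\bar u^\delta\|_\infty$ is of order $|\bar\lambda|/\delta$, and the bound $\delta\|\bar u^\delta\|_\infty\le C$ must be obtained directly from the comparison principle, by comparing with the constants $\pm\delta^{-1}\left(\sup_x|H(x,0)|+\sup_{x,m}|F(x,m)|\right)$, as in Lemma \ref{lem:StandEstiUdelta}. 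The same remark affects the end of your uniqueness argument: Poincar\'e's inequality in the duality estimate is legitimate because $m_1-m_2$ has zero mass (so only the mean-free part of $u_1-u_2$ enters), not because the $u_i$ have zero mean; and once $Du_1=Du_2$ and $m_1=m_2$, you should conclude $u_1=u_2$ by plugging back into the first equation, which yields $\delta(u_1-u_2)=0$, rather than by invoking the normalization. With these repairs your proof is complete and coincides with the paper's.
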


\begin{proof} The existence of a  solution can be achieved by a standard fixed point argument, so we omit it. In the same way, the regularity of $\bar u^\delta$ and $\bar m^\delta$ is standard. The strong maximum principle implies that  $m^\delta$ is bounded below by a constant independent of $\delta$.  For proving the uniqueness, we argue as usual by duality: let $(u_1, m_1)$ and $(u_2,m_2)$ be two solutions. By duality argument and Poincar\'e's inequality, we have 
$$
C^{-1} \|D(u_1-u_2)\|_{L^2}^2 \leq \delta \inte (u_1-u_2)(m_1-m_2) \leq C\delta \|D(u_1-u_2)\|_{L^2}\|m_1-m_2\|_{L^2}.
$$
 Thus 
\be\label{more1}
\|D(u_1-u_2)\|_{L^2}\leq C \delta \|m_1-m_2\|_{L^2}. 
\ee
On another hand, by Corollary \ref{cor.StaMeas}, we have 
\be\label{more2}
\|m_1-m_2\|_{L^2} \leq C \|H_p(\cdot, Du_1)-H_p(\cdot, Du_2)\|_{L^2} \leq C \|D(u_1-u_2)\|_{L^2} 
\ee
For $\delta$ small enough, we deduce from \rife{more1}--\rife{more2} that $m_1=m_2$ and  $Du_1=Du_2$, whence $u_1=u_2$. 
\end{proof}

We now note that the solution $(\bar u^\delta, \bar m^\delta)$ is close to $(\bar u,\bar m)$, where $(\bar \lambda,\bar u, \bar m)$ is the solution of the ergodic problem \eqref{e.MFGergo}: 

\begin{Proposition}\label{prop.Cvbarudelta}
We have 
$$
\|\delta \bar u^\delta -\bar \lambda\|_\infty+ \|D(\bar u^\delta-\bar u)\|_{L^2}+\|\bar m^\delta -\bar m\|_{L^2}\leq C\delta^{1/2}.
$$
\end{Proposition}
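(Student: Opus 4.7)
The natural approach is the Lasry--Lions duality estimate applied to the two systems \eqref{barudeltamdelta} and \eqref{e.MFGergo}. I would subtract the HJ equation for $\bar u$ from that for $\bar u^\delta$, test against $\bar m^\delta-\bar m$, and use the two Fokker--Planck equations to integrate by parts, exactly in the spirit of \rife{dualrela}. The monotonicity of $F$ makes the coupling contribution nonpositive, while the uniform convexity of $H$ in $p$ (assumption \eqref{cond:Hcoercive}) produces a term controlling $C^{-1}\inte(\bar m^\delta+\bar m)|D(\bar u^\delta-\bar u)|^2$. After rearrangement one arrives at
$$
\frac{1}{C}\inte(\bar m^\delta+\bar m)|D(\bar u^\delta-\bar u)|^2 \leq \inte(\delta\bar u^\delta-\bar\lambda)\,d(\bar m^\delta-\bar m).
$$
Since both $\bar m^\delta$ and $\bar m$ are probability measures, the $\bar\lambda$ term drops out, leaving $\delta\inte\bar u^\delta\,d(\bar m^\delta-\bar m)$. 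The uniform bound $\|\bar u^\delta\|_\infty\leq C$ (from $\|D\bar u^\delta\|_\infty\leq C$ in Proposition \ref{P31} together with $\inte\bar u^\delta=0$) makes this $O(\delta)$. Using $\bar m\geq c>0$, this yields $\|D(\bar u^\delta-\bar u)\|_{L^2}\leq C\delta^{1/2}$.

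For the $L^2$ estimate on $\mu:=\bar m^\delta-\bar m$, I would write its equation in the stationary Kolmogorov form
$$
-\Delta\mu-\dive(\mu H_p(\cdot,D\bar u))=\dive\bigl(\bar m^\delta[H_p(\cdot,D\bar u^\delta)-H_p(\cdot,D\bar u)]\bigr),
$$
with $\inte\mu=0$, and apply Corollary \ref{cor.StaMeas}. The uniform bounds $\|\bar m^\delta\|_\infty\leq C$ and the Lipschitz continuity of $H_p$ in $p$ (consequence of \eqref{cond:Hcoercive}) give an $L^2$ bound on the source by $C\|D(\bar u^\delta-\bar u)\|_{L^2}\leq C\delta^{1/2}$, hence $\|\bar m^\delta-\bar m\|_{L^2}\leq C\delta^{1/2}$.

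The $L^\infty$ bound on $\delta\bar u^\delta-\bar\lambda$ cannot come from the duality estimate, which only controls gradients in $L^2$. The key device is to shift: set $\tilde w_\delta:=\bar u^\delta-\bar u-\delta^{-1}\bar\lambda$ and observe that subtracting the two HJ equations yields
$$
\delta\tilde w_\delta - \Delta\tilde w_\delta + V_\delta(x)\cdot D\tilde w_\delta = F(x,\bar m^\delta)-F(x,\bar m)-\delta\bar u,
$$
where $V_\delta(x):=\int_0^1 H_p(x,\theta D\bar u^\delta+(1-\theta)D\bar u)\,d\theta$ is uniformly bounded. The coercive zeroth-order term $\delta\tilde w_\delta$ makes the elliptic maximum principle directly applicable, giving $\delta\|\tilde w_\delta\|_\infty\leq \|F(\cdot,\bar m^\delta)-F(\cdot,\bar m)\|_\infty+\delta\|\bar u\|_\infty$. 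The first summand is $\leq C\|\bar m^\delta-\bar m\|_{L^2}\leq C\delta^{1/2}$ by the regularity of $F$ in $m$, and the second is $O(\delta)$. Unwinding the definition of $\tilde w_\delta$ produces $\|\delta\bar u^\delta-\bar\lambda\|_\infty\leq C\delta^{1/2}$, modulo the absorbed $\delta\|\bar u\|_\infty$ term.

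The principal difficulty is precisely this last step: working with $\bar u^\delta-\bar u$ directly does not close, because the ergodic equation has no zeroth-order term and one cannot convert an $L^\infty$ source into an $L^\infty$ solution via the maximum principle. The shift by $\delta^{-1}\bar\lambda$ is the crucial move, absorbing $\bar\lambda$ into the coercive linear term $\delta\tilde w_\delta$ and trading the singular factor $\delta^{-1}$ in the shift for the factor $\delta$ in the maximum-principle estimate.
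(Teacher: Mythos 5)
Your proof is correct and follows essentially the same route as the paper: the duality inequality between the two stationary systems gives $\|D(\bar u^\delta-\bar u)\|_{L^2}\leq C\delta^{1/2}$, Corollary \ref{cor.StaMeas} transfers this to $\|\bar m^\delta-\bar m\|_{L^2}$, and the final $L^\infty$ bound is exactly the paper's one-line appeal to the comparison principle, which your shift $\tilde w_\delta=\bar u^\delta-\bar u-\delta^{-1}\bar\lambda$ implements correctly. One small remark: in the duality step it is cleaner to bound $\delta\inte \bar u^\delta\,d(\bar m^\delta-\bar m)$ by $C\delta\|D\bar u^\delta\|_\infty$ using that $\bar m^\delta-\bar m$ has zero total mass (as the paper does), rather than invoking $\|\bar u^\delta\|_\infty\leq C$ through the normalization $\inte\bar u^\delta=0$, which is incompatible with the very conclusion $\delta\bar u^\delta\approx\bar\lambda$ you are proving and is best read as a misprint in \eqref{barudeltamdelta}.
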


\begin{proof} We use again the duality argument to get 
$$
C^{-1}\|D(\bar u^\delta-\bar u)\|_{L^2}^2 \leq \inte (\delta \bar u^\delta-\bar \lambda)(\bar m^\delta-\bar m)\leq C\delta \|D\bar u^\delta\|_\infty\leq C\delta.
$$
Thus 
$$
\|D(\bar u^\delta-\bar u)\|_{L^2}\leq C\delta^{1/2}.
$$
By Corollary \ref{cor.StaMeas}, we have
$$
\|\bar m^\delta -\bar m\|_{L^2} \leq C \|D(\bar u^\delta-\bar u)\|_{L^2} \leq C\delta^{1/2}.
$$
The estimate between $\delta \bar u^\delta$ and $\bar \lambda$ then comes from the comparison principle.
\end{proof}

\subsection{Exponential rate for the linearized system}

Let $(\bar u^\delta, \bar m^\delta)$ be the solution to \eqref{barudeltamdelta}. We consider the linearized discounted problem around this solution: 
\be\label{eq.discApp1}
\left\{\begin{array}{l}
\ds-\partial_t v +\delta v -\Delta v +H_p(x,D\bar u^\delta)\cdot Dv = \frac{\delta F}{\delta m}(x, \bar m^\delta)(\mu(t))\qquad {\rm in}\; (0,+\infty)\times \T^d,\\
\ds \partial_t \mu-\Delta \mu -\dive(\mu H_p(x,D \bar u^\delta))-\dive (\bar m^\delta H_{pp}(x,D\bar u^\delta)Dv)= 0\qquad {\rm in}\; (0,+\infty)\times \T^d,\\
\mu(0,\cdot)= \mu_0\; {\rm in}\;  \T^d, \qquad v\; {\rm bounded,} 
\end{array}\right.
\ee
 with $\inte \mu_0=0$. As in Section \ref{sectionlinea}, the existence of a solution to \rife{eq.discApp1} can be proved for $\mu_0\in L^2(\T^d)$  by  using  fixed point arguments and 
relying on the conditions enjoyed by $\frac{\delta F}{\delta m}$ and the smoothness of $(\bar u^\delta,\bar m^\delta)$. In particular, one can first solve the system in a finite horizon $t\in (0,n)$ with terminal condition $v(n)=0$, and then obtain a solution to \rife{eq.discApp1} by letting $n\to \infty$. Since $\de>0$, here $\|\frac{\delta F}{\delta m}\|_\infty \, \de^{-1}$ is a uniform bound with respect to $n$ and leads to  a bounded $v$ in  \rife{eq.discApp1}.

In the rest of this paragraph, we are going to show that $v$ actually enjoys a bound which is uniform in $\delta$ and that  $\mu, Dv$ decay exponentially  in $L^2$   as $t\to \infty$, uniformly with respect to $\delta$.

\begin{Lemma}\label{lem:prelimDisc}  Let $(v,\mu)$ be a   solution to \eqref{eq.discApp1}. Then we have 
$$
 \inte \mu(t)v(t)\geq 0\qquad \forall t\geq 0
$$
and there exists a constant $C_0>0$, independent of $\mu_0$ and $\delta$,  such that, for any $t\geq 0$,  
$$
\|\mu(t)\|_{L^2}+ \|Dv(t) \|_{L^2}\leq C_0\|\mu_0\|_{L^2}e^{\delta t/2}.
$$
\end{Lemma}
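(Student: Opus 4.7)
The strategy rests on the duality identity coupling the two equations of \eqref{eq.discApp1}, combined with a bootstrap exploiting the $e^{\delta t/2}$ scaling. I would work throughout with the finite horizon approximants $(v^n,\mu^n)$ solving \eqref{eq.discApp1} on $(0,n)$ with $v^n(n)=0$, for which all quantities are smooth and bounded on compact intervals, and pass to the limit $n\to\infty$ (as alluded to just before the statement) at the end.

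A direct computation using both equations and integration by parts on $\T^d$ gives the identity
\[
\frac{d}{dt}\left(e^{-\delta t}\inte \mu^n v^n\right) = -e^{-\delta t}\Bigl[\inte \bar m^\delta H_{pp}(x,D\bar u^\delta)Dv^n\cdot Dv^n + \iint \tfrac{\delta F}{\delta m}(x,\bar m^\delta,y)\mu^n(t,x)\mu^n(t,y)\,dxdy\Bigr].
\]
The bracketed term is nonnegative by \eqref{cond:Hcoercive} and by the monotonicity (FGd) of $F$, so $t\mapsto e^{-\delta t}\inte\mu^n(t)v^n(t)$ is non-increasing on $[0,n]$. Since $v^n(n)=0$, this forces $\inte\mu^n(t)v^n(t)\geq 0$ on $[0,n]$, and the first assertion of the lemma follows by passing to the limit.

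For the $L^2$ bound, set $N^n := \sup_{t\in[0,n]} e^{-\delta t/2}\|\mu^n(t)\|_{L^2}$. The substitution $w^n := e^{-\delta t}v^n$ reduces the $v^n$-equation to the standard backward equation $-\partial_t w^n -\Delta w^n + H_p(x,D\bar u^\delta)\cdot Dw^n = e^{-\delta t}\frac{\delta F}{\delta m}(x,\bar m^\delta)(\mu^n)$ with $w^n(n)=0$, so Lemma~\ref{lemCLLP2.2} followed by multiplication by $e^{\delta t}$ yields
\[
\|v^n(t)-\lg v^n(t)\rg\|_{L^2}\leq C\int_t^n e^{-(\omega+\delta)(s-t)}\|\mu^n(s)\|_{L^2}\,ds\leq \tfrac{C}{\omega}N^n e^{\delta t/2}.
\]
At $t=0$ this gives $\|v^n(0)-\lg v^n(0)\rg\|_{L^2}\leq CN^n/\omega$. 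Integrating the duality identity from $0$ to $n$ (using $v^n(n)=0$ and $\bar m^\delta,H_{pp}\geq C^{-1}$ from Proposition~\ref{P31}) produces
\[
\int_0^n e^{-\delta s}\|Dv^n(s)\|_{L^2}^2\,ds \leq C\inte \mu_0\,\bigl(v^n(0)-\lg v^n(0)\rg\bigr) \leq CN^n\|\mu_0\|_{L^2},
\]
and Lemma~\ref{lemCLLP2.1} applied to the Fokker-Planck equation for $\mu^n$ (with source $B=\bar m^\delta H_{pp}(x,D\bar u^\delta)Dv^n$) gives
\[
\|\mu^n(t)\|_{L^2}^2 \leq C\|\mu_0\|_{L^2}^2 + C\int_0^t\|Dv^n\|_{L^2}^2\,ds \leq C\|\mu_0\|_{L^2}^2 + Ce^{\delta t} N^n\|\mu_0\|_{L^2}.
\]
Dividing by $e^{\delta t}$ and taking the supremum over $t\in[0,n]$ produces the quadratic inequality $(N^n)^2\leq C\|\mu_0\|_{L^2}^2 + CN^n\|\mu_0\|_{L^2}$, which solves for $N^n\leq C_0\|\mu_0\|_{L^2}$ uniformly in $n$ and $\delta$. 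The matching bound on $\|Dv^n(t)\|_{L^2}$ follows from Lemma~\ref{lemCLLP2.2} applied to $w^n$ on $[t,t+1]$ (and converting back via $Dv^n=e^{\delta t}Dw^n$), and passing to the limit $n\to\infty$ concludes.

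The main obstacle is closing this bootstrap: the quadratic inequality for $N^n$ admits a linear-in-$\|\mu_0\|_{L^2}$ solution only because the target growth $e^{\delta t/2}$ is exactly the square root of the rate $e^{\delta t}$ naturally produced by the $e^{-\delta t}$ weighting of the duality identity. Any target slower than $e^{\delta t/2}$ would make the estimate on $\|v^n(0)-\lg v^n(0)\rg\|_{L^2}$ too weak to feed into the Poincaré/duality step, while a faster rate would fail to close the bootstrap; this calibration is what makes the exponent $\delta/2$ sharp at this preliminary stage.
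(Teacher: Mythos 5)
Your estimates are correct, but you take a genuinely different route from the paper. The paper works directly with the given bounded solution on $(0,+\infty)$: it first uses the boundedness of $v$ to derive a rough, $\delta$-dependent growth bound $\|\mu(t)\|_{L^2}\leq C_\delta e^{\delta t/2}$, which justifies that $e^{-\delta t}\inte \mu(t)v(t)\to 0$ as $t\to+\infty$; monotonicity of this quantity then gives both the sign condition and the clean duality bound $\int_0^\infty e^{-\delta s}\|Dv\|_{L^2}^2\,ds\leq C\|\mu_0\|_{L^2}\|v(0)-\lg v(0)\rg\|_{L^2}$, and the bootstrap is closed at $t=0$ on the quantity $\|v(0)-\lg v(0)\rg\|_{L^2}$. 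You instead use the finite-horizon approximants with $v^n(n)=0$, which trivializes the boundary term (so the sign and the duality bound are immediate) and makes the bootstrap quantity $N^n=\sup_t e^{-\delta t/2}\|\mu^n(t)\|_{L^2}$ automatically finite, closing via a quadratic inequality in $N^n$; the exponential weight computation through $w^n=e^{-\delta t}v^n$ and Lemma \ref{lemCLLP2.2}, the use of Lemma \ref{lemCLLP2.1} for $\mu^n$, and the recovery of $\|Dv^n(t)\|_{L^2}$ are all sound. What your route buys is a cleaner argument with no need for the rough a priori bound; what it loses is scope: the lemma is stated for \emph{any} bounded solution of \eqref{eq.discApp1}, and your argument only delivers the estimates for the particular solution obtained as the limit of the approximants. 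To cover an arbitrary bounded solution (or, equivalently, to know that every bounded solution coincides with the constructed one) you still need to control the boundary term $e^{-\delta t}\inte \mu(t)v(t)$ at infinity for a solution that is merely bounded in $v$, and that is exactly the rough $C_\delta e^{\delta t/2}$ bound on $\|\mu(t)\|_{L^2}$ that the paper derives and you skipped. Adding that short step (or a uniqueness argument based on it) would make your proof fully equivalent in strength to the paper's.
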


\begin{proof} We consider the duality between $e^{-\delta  t} v$ and $\mu$; using properties of $(\bar u_\de, \bar m_\de)$ from Proposition \ref{P31} we get
\be\label{dualineq}
C^{-1}\int_{t_1}^{t_2} e^{-\delta t} \|Dv(t)\|_{L^2}^2dt \leq -\left[ e^{-\delta t} \inte  v(t)\mu(t)\right]_{t_1}^{t_2}.
\ee
Next we claim that 
\be\label{kuyqsldlck}
C^{-1}  \int_{0}^{\infty} e^{-\delta t} \|Dv(t)\|_{L^2}^2 dt \leq \inte \mu_0v(0)\leq C\|\mu_0\|_{L^2}\|v(0)-\lg v(0)\rg\|_{L^2}.
\ee
This inequality is obvious from the duality inequality if we know that  the limit $e^{-\delta t} \inte  v(t)\mu(t)$ vanishes as $t\to+\infty$. For this we need a first rough bound on $\mu$. By Lemma \ref{lemCLLP2.1} we have 
$$
\|\mu(t)\|_{L^2}\leq Ce^{-\omega t} \|\mu_0\|_{L^2} + C\left[ \int_0^t \|Dv(s)\|^2_{L^2}ds \right]^{1/2}.
$$
By \rife{dualineq}, we get
$$
\begin{array}{rl}
\ds \|\mu(t)\|_{L^2}\; \leq & \ds Ce^{-\omega t} \|\mu_0\|_{L^2} + Ce^{\delta t/2}\left[ \int_0^t e^{-\delta s} \|Dv(s)\|^2_{L^2}ds \right]^{1/2} \\
\leq & \ds Ce^{-\omega t} \|\mu_0\|_{L^2} + Ce^{\delta t/2} \|v\|_\infty^{\frac12} \left[ \|\mu_0\|_{L^2}^{1/2}+   e^{-\delta t/2}\|\mu(t)\|_{L^2}^{1/2}\right],
\end{array}
$$
so that 
$$
\|\mu(t)\|_{L^2}\; \leq\; C_\delta e^{\delta t/2}
$$
where $C_\delta$ depends on $\mu_0$ and $\delta$. This inequality then implies that 
$$
\lim_{t\to+\infty} e^{-\delta t} \inte \mu(t)v(t)= 0
$$
and \eqref{kuyqsldlck} holds. Note that \eqref{dualineq} implies that the map $t\to e^{-\delta t} \inte \mu(t)v(t)$ is nonincreasing, and we just proved that it has limit $0$ as $t\to+\infty$. Thus it is nonnegative. \\

In the light of \eqref{kuyqsldlck} we revisit the estimate of $\mu$. We have 
$$
\begin{array}{rl}
\ds \|\mu(t)\|_{L^2}\; \leq & \ds Ce^{-\omega t} \|\mu_0\|_{L^2} + Ce^{\delta t/2}\left[ \int_0^t e^{-\delta s} \|Dv(s)\|^2_{L^2}ds \right]^{1/2} \\
\leq & \ds Ce^{-\omega t} \|\mu_0\|_{L^2} + Ce^{\delta t/2} \|\mu_0\|_{L^2}^{1/2}\|v(0)-\lg v(0)\rg\|_{L^2}^{1/2}.
\end{array}
$$
We plug this inequality into the usual estimate for $v$: for any $0\leq t \leq t_1$, 
$$
\begin{array}{rl}
\ds \|v(t)-\lg v(t)\rg\|_{L^2} \; \leq & \ds Ce^{-\omega(t_1-t)} \|v(t_1)-\lg v(t_1)\rg\|_{L^2}+C\int_t^{t_1} e^{-\omega(s-t)} \|\mu(s)\|_{L^2}ds\\
\leq & \ds Ce^{-\omega(t_1-t)} \|v(t_1)-\lg v(t_1)\rg\|_{L^2}\\
& \ds \qquad +C\int_t^{t_1}e^{-\omega(s-t)} \left(e^{-\omega s} \|\mu_0\|_{L^2} + Ce^{\delta s/2} \|\mu_0\|_{L^2}^{1/2}\|v(0)-\lg v(0)\rg\|_{L^2}^{1/2}\right)ds\\
\leq & \ds Ce^{-\omega(t_1-t)} \|v(t_1)-\lg v(t_1)\rg\|_{L^2}+ C \|\mu_0\|_{L^2} e^{-\omega t}\\
& \qquad \ds + C \|\mu_0\|_{L^2}^{1/2}\|v(0)-\lg v(0)\rg\|_{L^2}^{1/2}e^{\delta t/2}.
\end{array}
$$
Letting $t_1\to+\infty$ gives 
$$
\begin{array}{rl}
\ds \|v(t)-\lg v(t)\rg\|_{L^2} \; \leq\; & \ds C \|\mu_0\|_{L^2} e^{-\omega t}
+ C \|\mu_0\|_{L^2}^{1/2}\|v(0)-\lg v(0)\rg\|_{L^2}^{1/2}e^{\delta t/2}.
\end{array}
$$
Choosing $t=0$ and rearranging we find 
$$
\begin{array}{rl}
\ds \|v(0)-\lg v(0)\rg\|_{L^2} \; \leq\; & \ds C \|\mu_0\|_{L^2}.
\end{array}
$$
So we have for any $t\geq 0$
$$
 \|\mu(t)\|_{L^2}+ \|v(t)-\lg v(t)\rg\|_{L^2} \; \leq\;  C \|\mu_0\|_{L^2}e^{\delta t/2}.
$$
We can then conclude by Lemma \ref{lemCLLP2.2}. 
\end{proof}

\begin{Proposition}\label{prop:discountHomo}  Let $(\bar u^\delta, \bar m^\delta)$ be the solution to \eqref{barudeltamdelta}. There exist  $\de_0, C_0,\lambda >0$ such that, if $(v,\mu)$ is the solution to \eqref{eq.discApp1} associated with $(\bar u^\delta, \bar m^\delta)$  and $\inte \mu_0=0$, and if $\delta\in (0,\delta_0)$, then 
$$
\|Dv(t)\|_{L^2}+\|\mu(t)\|_{L^2}\leq C_0 \|\mu_0\|_{L^2}e^{-\lambda t} \qquad \forall t\geq0.
$$
In particular,
$$
\left\|v\right\|_{L^\infty} \leq C. 
$$
\end{Proposition}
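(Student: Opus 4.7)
The plan is to mimic the strategy of Proposition~\ref{Prop:expdecay1}, suitably adapted to the presence of the discount $\delta$. The first step is to track the normalized dual pairing
$$
\rho(\tau) := \sup\left\{ e^{-\delta t}\inte v(t)\mu(t) \,:\, t\geq \tau,\; \|\mu_0\|_{L^2}\leq 1,\; \delta\in(0,\delta_0), \; (v,\mu) \text{ solves } \eqref{eq.discApp1}\right\}.
$$
By Lemma~\ref{lem:prelimDisc}, this quantity is non-negative, bounded by some constant at $\tau=0$, and non-increasing in $\tau$ (the sup is taken over a shrinking family), hence it admits a limit $\rho_\infty\geq 0$.

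Next I would show $\rho_\infty=0$ by a compactness argument in the spirit of \cite{PZ}. Take a maximizing sequence $t_n\to\infty$, $\delta_n\in(0,\delta_0)$, and solutions $(v_n,\mu_n)$ with $\|\mu_{0,n}\|_{L^2}\leq 1$, and set
$$
(\hat v_n,\hat\mu_n)(s) := e^{-\delta_n t_n/2}\bigl(v_n(t_n+s),\mu_n(t_n+s)\bigr),\qquad s\in[-t_n,\infty).
$$
From Lemma~\ref{lem:prelimDisc} we get $\|\hat\mu_n(s)\|_{L^2}+\|D\hat v_n(s)\|_{L^2}\leq C e^{\delta_n s/2}$, which is uniformly bounded on compact subsets of $\mathbb{R}$. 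Extract a subsequence with $\delta_n\to\delta_*\in[0,\delta_0]$; using Proposition~\ref{prop.Cvbarudelta} and elliptic regularity to obtain $(\bar u^{\delta_n},\bar m^{\delta_n})\to(\bar u^{\delta_*},\bar m^{\delta_*})$ in $C^1\times C^0$, parabolic regularity then yields a limit $(\hat v,\hat\mu)$ on $\mathbb{R}\times\T^d$ solving the linearized system with discount $\delta_*$. By construction $e^{-\delta_* s}\inte \hat v(s)\hat\mu(s)\leq\rho_\infty$ for every $s\in\mathbb{R}$ with equality at $s=0$, and (passing the duality inequality to the limit) this map is non-increasing; hence it equals $\rho_\infty$ on $(-\infty,0]$. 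The duality inequality on $(-\infty,0]$ then forces $D\hat v\equiv 0$ there, so $\hat v$ is spatially constant on $(-\infty,0]$, and since $\hat\mu$ has zero mean we conclude $\rho_\infty = \inte \hat v(0)\hat\mu(0)=0$.

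I would then convert this dual-pairing decay into an $L^2$-norm decay, exactly as in Proposition~\ref{Prop:expdecay1}. Combining the duality bound $\int_{\tau/2}^\infty e^{-\delta s}\|Dv\|_{L^2}^2\,ds\leq C\rho(\tau/2)$ with Lemma~\ref{lemCLLP2.1} applied to $\mu$ on $[\tau/2,t]$, and with Lemma~\ref{lemCLLP2.2} applied to $w=e^{-\delta t}v$ on $[t,\infty)$, one obtains
$$
R(\tau):=\sup_{t\geq\tau}\, e^{-\delta t/2}\bigl(\|\mu(t)\|_{L^2}+\|Dv(t)\|_{L^2}\bigr) \;\leq\; C\bigl(e^{-\omega\tau/2}+\sqrt{\rho(\tau/2)}\bigr)\;\xrightarrow[\tau\to\infty]{}\;0,
$$
uniformly in $\delta\in(0,\delta_0)$ and $\|\mu_0\|_{L^2}\leq 1$. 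Finally I would exploit linearity and time-translation invariance: choose $\tau^\ast$ large enough so that the right-hand side is at most $\tfrac12 e^{-\delta_0\tau^\ast/2}$, so that $\|\mu(\tau^\ast)\|_{L^2}+\|Dv(\tau^\ast)\|_{L^2}\leq\tfrac12\|\mu_0\|_{L^2}$ for every $\delta\in(0,\delta_0)$; restarting the linear system at $\tau^\ast$ and iterating gives $\|\mu(n\tau^\ast)\|_{L^2}+\|Dv(n\tau^\ast)\|_{L^2}\leq 2^{-n}\|\mu_0\|_{L^2}$, whence the exponential decay with rate $\lambda=(\log 2)/\tau^\ast$. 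The bound $\|v\|_{L^\infty}\leq C$ follows by reapplying Lemma~\ref{lemCLLP2.2} to $w=e^{-\delta t}v$ on $[t,\infty)$, since $\|\tfrac{\delta F}{\delta m}(\cdot,\bar m^\delta)(\mu(s))\|_{L^\infty}\leq C\|\mu(s)\|_{L^2}$ now decays exponentially, with the spatial average controlled by integrating the corresponding ODE.

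The main obstacle is the compactness step above. The rescaling by $e^{-\delta_n t_n/2}$ is essential (since the a priori bound from Lemma~\ref{lem:prelimDisc} blows up like $e^{\delta t/2}$) and the case $\delta_n\to\delta_*>0$ has to be handled simultaneously with $\delta_n\to 0$; one must also transfer both the duality monotonicity and the attainment of the supremum at $s=0$ to the limit, and secure enough convergence of $(\bar u^{\delta_n},\bar m^{\delta_n})$ to make sense of the limit equation. The $\delta$-uniformity of the exponential rate in the final iteration then rests entirely on the $\delta$-uniformity of $\rho(\tau)\to 0$ obtained in this step.
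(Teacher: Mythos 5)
Your proposal follows essentially the same route as the paper's proof: the nonincreasing discounted pairing $e^{-\delta t}\inte v(t)\mu(t)$, a compactness argument in the spirit of \cite{PZ} showing that its tail vanishes uniformly in $\delta$, conversion of this information into $L^2$ decay via Lemma \ref{lemCLLP2.1} and Lemma \ref{lemCLLP2.2}, and an iteration based on linearity and time-translation invariance; the final bound on $\|v\|_{L^\infty}$ is obtained the same way. The only structural differences are minor: the paper works with $\rho^\delta(t)$ for each fixed $\delta$ and sets $\rho(t)=\limsup_{\delta\to 0}\rho^\delta(t)$, then iterates on $\rho^\delta$ itself to get $\rho^\delta(t)\le Ce^{-\gamma t}$ before converting to norms, whereas you fold the supremum over $\delta\in(0,\delta_0)$ into $\rho$ and iterate directly on the $L^2$ norms.

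Two points need repair. First, the quantifier order in your last step fails as written: you ask for $\tau^\ast$ with $C\bigl(e^{-\omega\tau^\ast/2}+\sqrt{\rho(\tau^\ast/2)}\bigr)\le \tfrac12 e^{-\delta_0\tau^\ast/2}$ with $\delta_0$ fixed beforehand, but $\rho(\tau)\to 0$ carries no rate, so it cannot be guaranteed to beat the exponential $e^{-\delta_0\tau/2}$, and such a $\tau^\ast$ may not exist. The correct order (and the paper's) is to choose $\tau^\ast$ first so that the right-hand side is an absolute small constant (say $\le 1/4$), and only then shrink $\delta_0$ in terms of $\tau^\ast$ (e.g. $\delta_0\tau^\ast\le 2\ln 2$, so that $e^{\delta \tau^\ast/2}\le 2$ for $\delta\in(0,\delta_0)$); this is admissible since the statement lets $\delta_0$ be chosen last, and the same constraint appears in the paper when it requires $\delta$ small compared with the rate $\gamma$ at the conversion step. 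Second, because your $\rho$ ranges over all $\delta\in(0,\delta_0)$, the compactness step must handle cluster points $\delta_\ast\in(0,\delta_0]$, and Proposition \ref{prop.Cvbarudelta} only gives convergence of $(\bar u^\delta,\bar m^\delta)$ as $\delta\to 0$; for $\delta_\ast>0$ you need the continuity of $\delta\mapsto(\bar u^\delta,\bar m^\delta)$, which does follow from the uniform bounds and uniqueness in Proposition \ref{P31} but must be said. The paper's limsup-as-$\delta\to 0$ formulation sidesteps this case entirely, since there $\delta_0$ is produced by the limiting procedure rather than fixed in advance. With these adjustments your argument matches the paper's proof.
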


\begin{proof}
Let us set 
$$
\rho^\delta(t): =\sup_{\mu_0} e^{-\delta t} \inte \mu(t)v(t),
$$
where the supremum is taken over $\|\mu_0\|_{L^2}\leq 1$ and where $(v,\mu)$ is the solution to \eqref{eq.discApp1} with initial condition $\mu(0)=\mu_0$. In view of the duality identity, the map $\rho^\delta$ is non increasing. Moreover, Lemma \ref{lem:prelimDisc} states that  $\rho^\delta(t)$ is bounded independently of $\delta$ and nonnegative. Then we set 
$$
\rho(t) = \limsup_{\delta \to0} \rho^\delta(t).
$$
Note that $\rho$ is also nonincreasing, nonnegative and bounded. We denote by $\rho_\infty$ its limit as $t\to+\infty$. We claim that $\rho_\infty=0$. 

Indeed, let $t_n\to+\infty$, $\delta_n\to 0$, and $\mu_0^n$ with $\|\mu_0^n\|_{L^2}\leq 1$ be such that
$$
e^{-\delta_n t_n} \inte \mu^n(t_n)v^n(t_n) \geq \rho_\infty-1/n.
$$
We let, for $s\in [-t_n, +\infty)$, 
$$
\tilde v^n(s)= e^{-\delta_n t_n/2} \left(v^n(t_n+s)-\lg v^n(t_n)\rg\right), \qquad \tilde\mu^n(s)= e^{-\delta_n t_n/2} \mu^n(t_n+s).
$$
 From Lemma \ref{lem:prelimDisc} we know that $\tilde v^n$, $D \tilde v^n$ and $\tilde\mu^n$ are locally bounded in $L^2$. As the pair $(\tilde v^n, \tilde \mu^n)$ satisfies an equation of the form \eqref{eq.discApp1}, standard regularity estimates for parabolic equations with bounded coefficients (Theorem III.10.1 of \cite{LSU})  imply that $\tilde v^n$, $D\tilde v^n$ and $\tilde \mu^n$ are locally bounded in $C^{\beta/2,\beta}$ for some $\beta\in (0,1)$. 
Therefore, up to a subsequence, denoted in the same way,  $(\tilde v^n)$ converges to $\tilde v$ and $(\tilde \mu^n)$ converges to $\tilde \mu$ locally uniformly, where by linearity $(\tilde v,\tilde \mu)$ solve 
$$
\left\{\begin{array}{l}
\ds-\partial_t \tilde v -\Delta \tilde v +H_p(x,D\bar u^\delta)\cdot D \tilde v = \frac{\delta F}{\delta m}(x, \bar m^\delta)(\tilde \mu(t))\qquad {\rm in}\; (-\infty,0)\times \T^d\\
\ds \partial_t \tilde\mu-\Delta \tilde\mu -\dive(\tilde\mu H_p(x,D \bar u^\delta))-\dive (\bar m^\delta H_{pp}(x,D\bar u^\delta)D \tilde v)= 0\qquad {\rm in}\; (-\infty,0)\times \T^d.
\end{array}\right.
$$
For any $s\leq 0$ and any $\tau\geq0$, we have, for $n$ large enough,  
$$
\inte \tilde \mu^n(s)\tilde v^n(s)= e^{-\delta_nt_n}\inte \mu^n(t_n+s)v^n(t_n+s)\leq e^{\delta_n s} \rho^{\delta_n}(t_n+s)\leq e^{\delta_n s} \rho^{\delta_n}(\tau), 
$$
so that 
$$
\inte \tilde \mu(s)\tilde v(s)\leq \rho(\tau).
$$
Letting $\tau\to+\infty$, we find therefore 
$$
\inte \tilde \mu(s)\tilde v(s)\leq \rho_\infty= \inte \tilde \mu(0)\tilde v(0)\,,\qquad \forall s\leq 0\,.
$$
However $\inte \tilde \mu(s)\tilde v(s)$ is non increasing, so we also have the reverse inequality, and we deduce that this quantity must be constant in $(-\infty,0]$.  The duality relation then implies that $D\tilde v=0$ for any $t\leq 0$, which gives $\rho_\infty=0$. \\

Next we claim that there exists $\gamma >0$, $C>0$ and $\delta_0>0$ such that, for $\delta\in (0,\delta_0)$, one has 
\be\label{kklaim2}
\rho^\delta(t)\leq C e^{-\gamma t}\qquad \forall t\geq0. 
\ee
Indeed, let $\ep>0$ small to be chosen later and let $T_0>0$, $\delta_0>0$ be such that 
\be\label{kjhaqzertnbs}
\rho^\delta(t)\leq \ep \qquad \forall t\geq T_0, \; \delta \in (0,\delta_0).
\ee
Fix $\delta\in (0,\delta_0)$ and let $(v,\mu)$ be a solution to \eqref{eq.discApp1}. Inequalities \eqref{dualineq} (combined with the fact that $\inte v\mu$ is nonnegative) and \eqref{kjhaqzertnbs} imply that 
$$
\int_{t_1}^{t_2} e^{-\delta s} \|Dv(s)\|_{L^2}^2ds \leq C \ep\|\mu_0\|_{L^2}^2\qquad \forall t_1,t_2\geq T_0, \; \delta \in (0,\delta_0).
$$
 Revisiting the estimate for $\mu$, we have, for any $t_1\geq 0$, 
$$
\|\mu(T_0+t_1)\|_{L^2}\leq Ce^{-\omega t_1} \|\mu(T_0)\|_{L^2} + C\left[ \int_{T_0}^{T_0+t_1} \|Dv(s)\|^2_{L^2}ds \right]^{1/2},
$$
so that, using Lemma \ref{lem:prelimDisc} and the above estimate on $Dv$,  
$$
\begin{array}{rl}
\ds \|\mu(T_0+t_1)\|_{L^2}\; \leq & \ds Ce^{-\omega t_1+\delta T_0/2} \|\mu_0\|_{L^2} + Ce^{\delta (T_0+t_1)/2}\left[ \int_{T_0}^{T_0+t_1} e^{-\delta s} \|Dv(s)\|^2_{L^2}ds \right]^{1/2} \\
\leq & \ds C\|\mu_0\|_{L^2}e^{\delta (T_0+t_1)/2} \left(e^{-(\omega  +\delta/2)t_1}  + \ep^{1/2}\right).
\end{array}
$$
We choose $t_1$ large enough (independently of $\ep$ and $\delta\in (0,\omega)$) so that $Ce^{-\omega t_1} \leq 1/4$ and $\ep$ so small that $C\ep^{1/2}\leq 1/4$. Setting $\tau:=T_0+t_1$, this yields to 
\be\label{lakejznsdl}
\|\mu(\tau)\|_{L^2}\;\leq \; 2^{-1} \|\mu_0\|_{L^2}e^{\delta \tau/2}.
\ee
Fix $(v,\mu)$ a solution to \eqref{eq.discApp1}. The pair $(\tilde v,\tilde \mu):= (v(\tau+\cdot), \mu(\tau+\cdot))$ is also a solution of \eqref{eq.discApp1} with initial condition $\tilde \mu(0)=\mu(\tau)$. The equation being linear in $\mu_0$ and the quantity $\inte \mu(t)v(t)$ being homogenous of degree $2$, we have therefore
$$
e^{-\delta t} \inte \tilde \mu(t)\tilde v(t) \leq \|\mu(\tau)\|_{L^2}^2\rho^\delta(t)\qquad \forall t\geq 0,
$$
where 
$$
e^{-\delta t} \inte \tilde \mu(t)\tilde v(t) = e^{\delta \tau} e^{-\delta(t+\tau)}\inte \mu(t+\tau)v(t+\tau).
$$
This implies that 
$$
e^{-\delta(t+\tau)}\inte \mu(t+\tau)v(t+\tau) \leq e^{-\delta \tau}\|\mu(\tau)\|_{L^2}^2\rho^\delta(t).
$$
Recalling estimate \eqref{lakejznsdl} and taking the supremum over $\|\mu_0\|_{L^2}\leq 1$, we find
$$
\rho^\delta(t+\tau)\leq \rho^\delta(t)/2\qquad \forall t\geq 0.
$$
This easily implies \eqref{kklaim2}. \\

We can now come back to the estimates of $\mu$ and $v$ for a given solution $(v,\mu)$ of \eqref{eq.discApp1} with $\delta \in (0,\delta_0)$. For $t>0$, we have, using  Lemma \ref{lem:prelimDisc}, \eqref{dualineq} and  \eqref{kklaim2} successively:
$$
\begin{array}{rl}
\ds \|\mu(t)\|_{L^2}\; \leq & \ds Ce^{-\omega t/2} \|\mu(t/2)\|_{L^2} + C\left[ \int_{t/2}^{t} \|Dv(s)\|^2_{L^2}ds \right]^{1/2}\\
\leq & \ds Ce^{-\omega t/2+\delta t/2} \|\mu_0\|_{L^2} + Ce^{\delta t/2}\left[ \int_{t/2}^{t} e^{-\delta s} \|Dv(s)\|^2_{L^2}ds \right]^{1/2} \\
\leq & \ds C\|\mu_0\|_{L^2}\left(e^{-\omega t/2+\delta t/2}+e^{\delta t/2-\gamma t/4} \right).
\end{array}
$$
For $\delta$ small enough, this implies that 
$$
\|\mu(t)\|_{L^2}\leq C \|\mu_0\|_{L^2}e^{-\lambda t}\qquad \forall t\geq 0,
$$
for some $\lambda \in (0,\omega)$. Thus,  by Lemma \ref{lem:prelimDisc} applied on the time-interval $[t/2, +\infty)$, 
\begin{align*}
\| Dv(t)\|_{L^2} &\leq C \| \mu (  t/2  )\|_{L^2} e^{\de t/4}
\\
& \leq C \| \mu_0 \|_{L^2} e^{-\lambda t}
\end{align*}
for some possibly different $\lambda >0$. 
The bound on $ \left\|v\right\|_\infty$ follows directly from the equation for $v$  and our  regularity assumption  on $\frac{\delta F}{\delta m}$ which implies that 
$$
\left\|\frac{\delta F}{\delta m}(x,m^\delta)(\mu(t))\right\|_{\infty}\leq C\|\mu(t)\|_{L^2}\leq C \|\mu_0\|_{L^2}e^{-\lambda t}\qquad \forall t\geq 0.
$$ 
\end{proof}

In the next step we study a perturbed discounted linearized problem. 

\begin{Proposition}\label{prop:estiSLdisc} Let $(v,\mu)$ solve 
\be\label{eq.discApp}
\left\{\begin{array}{l}
\ds-\partial_t v +\delta v -\Delta v +H_p(x,D\bar u^\delta)\cdot Dv = \frac{\delta F}{\delta m}(x, \bar m^\delta)(\mu(t))+A(t,x) \qquad {\rm in}\; (0,+\infty)\times \T^d\\
\ds \partial_t \mu-\Delta \mu -\dive(\mu H_p(x,D \bar u^\delta))-\dive (\bar m^\delta H_{pp}(x,D\bar u^\delta)Dv)= \dive(B(t,x))\qquad {\rm in}\; (0,+\infty)\times \T^d\\
\mu(0,\cdot)= \mu_0\; {\rm in}\;  \T^d, \qquad v\; {\rm bounded.} 
\end{array}\right.
\ee
with $\inte \mu_0=0$, $\|\mu_0\|_{L^2}\leq 1$ and assume that, for some $\gamma>0$, 
\be\label{HypABdisc}
\|A(t)\|_{L^2}+ \|B(t)\|_{L^2} \leq e^{-\gamma t} \qquad \forall t\geq 0.
\ee
If $\delta\in (0,\delta_0)$, then
\be\label{estlinde}
\|\mu(t)\|_{L^2}+ \|Dv(t)\|_{L^2} \leq C(1+t)e^{-\theta t}
\ee
where $\theta:= \gamma \wedge \lambda$ and $\delta_0, \lambda>0$ are defined in Proposition \ref{prop:discountHomo}. 
\end{Proposition}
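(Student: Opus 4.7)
The plan is to adapt the two-step strategy of Proposition \ref{Prop:expdecay3} (uniform boundedness plus iteration of a functional inequality) to the infinite-horizon discounted setting, replacing the role of Proposition \ref{Prop:expdecay1} by the homogeneous decay result Proposition \ref{prop:discountHomo}. First, by linearity, I decompose $(v,\mu)=(v_h,\mu_h)+(v_s,\mu_s)$, where $(v_h,\mu_h)$ solves the homogeneous system \eqref{eq.discApp1} with initial datum $\mu_0$, and $(v_s,\mu_s)$ solves the sourced system \eqref{eq.discApp} with zero initial datum and sources $A,B$. Proposition \ref{prop:discountHomo} immediately gives $\|\mu_h(t)\|_{L^2}+\|Dv_h(t)\|_{L^2}\leq Ce^{-\lambda t}\leq Ce^{-\theta t}$, so it remains to bound the sourced part.

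For $(v_s,\mu_s)$, I first prove the uniform a priori bound $\sup_{t\geq 0}(\|\mu_s(t)\|_{L^2}+\|Dv_s(t)\|_{L^2})\leq C$. Computing $\frac{d}{dt}(e^{-\delta t}\int v_s\mu_s)$ using the two equations yields the identity
$$\Bigl[e^{-\delta t}\inte v_s\mu_s\Bigr]_0^T = -\int_0^T e^{-\delta t}\Bigl(\iint\tfrac{\delta F}{\delta m}\mu_s\mu_s + \inte \bar m^\delta H_{pp}Dv_s\cdot Dv_s + \inte A\mu_s + \inte B\cdot Dv_s\Bigr)dt.$$
Using monotonicity of $F$, the coercivity bound in \eqref{cond:Hcoercive}, the lower bound on $\bar m^\delta$ from Proposition \ref{P31}, a Young-type absorption of the source terms (using $\|A\|_{L^2}+\|B\|_{L^2}\leq e^{-\gamma t}$), and running the same argument of Lemma \ref{lem:prelimDisc} based on Lemmas \ref{lemCLLP2.1}--\ref{lemCLLP2.2}, I obtain that $\int_0^\infty e^{-\delta t}\|Dv_s\|_{L^2}^2 dt$ and then $\|\mu_s(t)\|_{L^2}+\|Dv_s(t)\|_{L^2}$ are bounded uniformly in $t$ and in admissible $(A,B)$.

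Then I introduce $\rho(t):=\sup\{\|\mu_s(t)\|_{L^2}+\|Dv_s(t)\|_{L^2}\}$, the supremum being taken over all $(A,B)$ satisfying \eqref{HypABdisc} with the corresponding $(v_s,\mu_s)$. For fixed $\tau\geq 0$, I restrict $(v_s,\mu_s)$ to $[\tau,\infty)$ and split it, by linearity, as $(\tilde v_1,\tilde\mu_1)+(\tilde v_2,\tilde\mu_2)$: the first pair solves the homogeneous system on $[\tau,\infty)$ with initial value $\mu_s(\tau)$, the second pair has zero initial value at $t=\tau$ and sources $A|_{[\tau,\infty)}, B|_{[\tau,\infty)}$. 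Proposition \ref{prop:discountHomo} gives $\|\tilde\mu_1(t)\|_{L^2}+\|D\tilde v_1(t)\|_{L^2}\leq Ce^{-\lambda(t-\tau)}\rho(\tau)$, while $\|A(\tau+s)\|_{L^2}+\|B(\tau+s)\|_{L^2}\leq e^{-\gamma\tau}e^{-\gamma s}$ together with linearity and time translation yield $\|\tilde\mu_2(t)\|_{L^2}+\|D\tilde v_2(t)\|_{L^2}\leq e^{-\gamma\tau}\rho(t-\tau)$. Taking sup gives, for all $t\geq\tau\geq 0$,
$$\rho(t)\leq C\rho(\tau)e^{-\lambda(t-\tau)}+e^{-\gamma\tau}\rho(t-\tau).$$
Setting $g(t):=e^{\theta t}\rho(t)$ with $\theta=\lambda\wedge\gamma$ turns this into $g(t)\leq Cg(\tau)+g(t-\tau)$, and fixing any $\tau_0>0$ and iterating on $n\tau_0$ shows that $g$ grows at most linearly, i.e.\ $\rho(t)\leq C'(1+t)e^{-\theta t}$. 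Combined with the bound on $(v_h,\mu_h)$ this concludes \eqref{estlinde}.

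The main obstacle is Step 1, the uniform-in-$t$ a priori bound on $\rho$: unlike in the finite horizon, there is no terminal boundary condition to exploit, so one must compensate by carefully using the discount factor $e^{-\delta t}$, the exponential decay of $A,B$, and the sharp structure of Lemma \ref{lem:prelimDisc} to close the duality estimate; once this is available, the functional iteration is essentially algebraic.
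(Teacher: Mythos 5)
Your overall architecture is the same as the paper's: reduce to zero initial datum via Proposition \ref{prop:discountHomo} and linearity, get a preliminary duality bound, define a supremum quantity over admissible sources, split the restriction to $[\tau,+\infty)$ into a homogeneous part (handled by Proposition \ref{prop:discountHomo}) plus a sourced part (handled by linearity and time translation), and iterate the resulting functional inequality to produce the $(1+t)e^{-\theta t}$ bound. The iteration step is sound, and in fact it only requires $\rho$ to be finite on compact time intervals, not uniformly bounded.

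The genuine gap is in your Step 1, in two respects. First, you keep $A$ in the duality identity and claim a ``Young-type absorption'' of $\int_0^T e^{-\delta t}\inte A\,\mu_s$. This cannot be absorbed as stated: the monotonicity of $F$ only gives $\iint \frac{\delta F}{\delta m}\mu_s\mu_s\geq 0$, i.e.\ no coercivity in $\mu_s$, and the coercive term $\inte \bar m^\delta H_{pp}Dv_s\cdot Dv_s$ controls only $Dv_s$. To close the estimate you must either first bound $\|\mu_s(t)\|_{L^2}$ through Lemma \ref{lemCLLP2.1} in terms of $\bigl(\int_0^t\|Dv_s\|_{L^2}^2+\|B\|_{L^2}^2\bigr)^{1/2}$ and then exploit the decay of $A$ before applying Young (a genuinely extra argument you do not give), or do what the paper does: solve off $A$ by a separate bounded solution $v_1$ of the linear discounted transport equation with right-hand side $A$, show $\|Dv_1(t)\|_{L^2}\leq Ce^{-\gamma t}$ via Lemma \ref{lemCLLP2.2}, and reduce to the case $A\equiv 0$ with a modified divergence source $B+\bar m^\delta H_{pp}Dv_1$. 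Second, the uniform-in-$t$ bound you assert (``$\|\mu_s(t)\|_{L^2}+\|Dv_s(t)\|_{L^2}$ bounded uniformly in $t$'') does not follow from the argument of Lemma \ref{lem:prelimDisc}: that argument only yields growth of order $e^{\delta t/2}$, since Lemma \ref{lemCLLP2.1} controls $\|\mu_s(t)\|_{L^2}$ by $\bigl(\int_0^t\|Dv_s\|_{L^2}^2\bigr)^{1/2}\leq e^{\delta t/2}\bigl(\int_0^t e^{-\delta s}\|Dv_s\|_{L^2}^2\bigr)^{1/2}$; this is precisely why the paper works with the discounted quantity $\rho^\delta(t)=\sup_B e^{-\delta t}\bigl(\|\mu(t)\|_{L^2}+\|v(t)-\lg v(t)\rg\|_{L^2}\bigr)$ rather than your undiscounted $\rho$. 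Your scheme survives if you replace ``uniformly bounded'' by ``$\leq Ce^{\delta t/2}$, hence finite on compacts'' (which is all the iteration needs), but as written Step 1 is not justified.
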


\begin{proof} Using Proposition \ref{prop:discountHomo} and the linearity of the equation, we can assume, without loss of generality, that $\mu_0=0$. We first assume that $A\equiv 0$. Throughout the proof, the constant $C$ can depend on $\gamma$. 

Let us start with preliminary estimates. The duality identity here implies:
$$
C^{-1}\int_{t_1}^{t_2} e^{-\delta s}\|Dv(s)\|_{L^2}^2 ds \leq - \left[e^{-\delta s} \inte v(s)\mu(s)\right]_{t_1}^{t_2} + C \int_{t_1}^{t_2} e^{-\delta s} \|B(s)\|_{L^2}^2 ds.
$$
One can check, exactly as for the proof of Lemma \ref{lem:prelimDisc}, that 
$$
\lim_{t\to+\infty} e^{-\delta t} \inte \mu(t)v(t)= 0.
$$
Then the duality inequality and our assumption on $B$ imply that
$$
\int_{0}^{+\infty} e^{-\delta s}\|Dv(s)\|_{L^2}^2 ds \leq C.
$$
Arguing as before we derive for $\mu$ that 
$$
\begin{array}{rl}
\ds \|\mu(t)\|_{L^2}\; \leq & \ds C\left[ \int_{0}^{t} \|Dv(s)\|^2_{L^2}+ \|B(s)\|_{L^2}^2 \ ds \right]^{1/2}\\
\leq & \ds Ce^{\delta t/2}\left[ \int_{0}^{t} e^{-\delta s} (\|Dv(s)\|^2_{L^2}+ \|B(s)\|_{L^2}^2 )ds \right]^{1/2} \; \leq \; Ce^{\delta t/2}.
\end{array}
$$
Thus, applying Lemma  \ref{lemCLLP2.2} (with $T\to \infty$) to $e^{-\delta t } v$, we deduce
$$
\begin{array}{rl}
\ds e^{-\delta t }\|v(t)-\lg v(t)\rg\|_{L^2} \; \leq & \ds C\int_t^{+\infty} e^{-\omega(s-t)} \|\mu(s)\|_{L^2} e^{-\delta s}ds\\
\leq & \ds C  \, e^{-\delta t/2},
\end{array}
$$
which gives
$$
\|v(t)-\lg v(t)\rg\|_{L^2} \; \leq C \,  e^{\delta t/2}\,.
$$
We set 
$$
\rho^\delta (t)=\sup_{B} \left[e^{-\delta t}\left(\|\mu(t)\|_{L^2}+ \|v(t)-\lg v(t)\rg\|_{L^2}\right)\right],
$$
where the supremum is taken over the $B$ that satisfy \eqref{HypABdisc} and where $(v,\mu)$ solves \eqref{eq.discApp} (with $A\equiv 0$ and $\mu_0=0$). 
Fix  $(v,\mu)$ solution to \eqref{eq.discApp} with $A\equiv 0$ and $\mu_0=0$ and let  us consider its restriction to a time interval $[\tau,+\infty)$. We can write 
$$
(v,\mu)= (v_1,\mu_1)+(v_2,\mu_2)
$$
where $(v_1,\mu_1)$ solves on $[\tau,+\infty)$ the homogeneous equation \eqref{eq.discApp1} with initial condition $\mu_1(\tau)=\mu(\tau)$ and $(v_2,\mu_2)$ solves on $[\tau, +\infty)$ the inhomogeneous equation \eqref{eq.discApp} with $\mu_2(\tau)=0$ and $A\equiv 0$. By Proposition \ref{prop:discountHomo} we have, for $\delta\in (0,\delta_0)$,  
$$
\|\mu_1(\tau + t)\|_{L^2}+\|Dv_1(\tau + t)\|_{L^2}\leq 
C_0e^{-\lambda t}\, \|\mu(\tau)\|_{L^2}\leq C_0e^{-\lambda t}\,   e^{\delta \tau/2} \qquad \forall t\geq 0, 
$$
while, as the restriction of $B$ to $[\tau,+\infty)$ satisfies 
$$
\|B(\tau+t)\|_{L^2}\leq e^{-\gamma \tau} e^{-\gamma t}\qquad \forall t\geq 0, 
$$
we have
$$
\|\mu_2(\tau+t)\|_{L^2}+ \|v_2(\tau+t)-\lg v_2(\tau+t)\rg\|_{L^2}\leq e^{-\gamma  \tau} \rho^\delta(t) \,   e^{\delta t}\qquad \forall t\geq0. 
$$
So 
$$
\|\mu(\tau+t)\|_{L^2}+ \|v(\tau+t)-\lg v(\tau+t)\rg\|_{L^2} \leq   Ce^{-\lambda t}  e^{\delta \tau/2}
+ e^{-\gamma  \tau} \rho^\delta(t)  e^{\delta t}.
$$
  Multiplying by $ e^{-\delta (t+\tau)} $ and taking the supremum over $B$ leads to 
$$
\rho^\delta(\tau+t)\leq   C \,e^{-(\lambda +\delta)t}
+ e^{-(\gamma+\delta)  \tau} \rho^\delta(t).
$$
Setting $\theta:= \gamma \wedge \lambda $ and considering the inequality satisfied by $e^{(\theta+\delta) t}\rho^\delta(t)$, we then obtain the exponential decay of $\rho^\delta$:
$$
\rho^\delta(t) \leq C (1+t)e^{-(\theta+\delta) t},
$$
which implies, by definition of $\rho^\delta(t)$, that
$$
\sup_B \left(\|\mu(t)\|_{L^2}+ \|v(t)-\lg v(t)\rg\|_{L^2}\right) \leq C (1+t)e^{-\theta t}.
$$
Once more we observe that, by Lemma \ref{lemCLLP2.2}, we can estimate $\|Dv(t)\|_{L^2}$ in terms of $ \|\mu(t)\|_{L^2}$ and $\|v(t)-\lg v(t)\rg\|_{L^2}$. Hence \eqref{estlinde} is proved when $A=0$. 

It remains to consider the case where $A\not \equiv 0$. Let $v_1$ be the unique bounded solution to 
$$
-\partial_t v_1 +\delta v_1 -\Delta v_1 +H_p(x,D\bar u^\delta)\cdot Dv_1 = A(t,x) \qquad {\rm in}\; (0,+\infty)\times \T^d.
$$
Using as before  Lemma \ref{lemCLLP2.2} for $e^{-\delta  t }v_1$ and with $T\to \infty$, we estimate
$$
\|v_1(t)-\lg v_1(t)\rg\|_{L^2} \leq C\int_t ^\infty e^{-(\omega+\delta)(s-t)} \|A(s)\|_{L^2}ds \leq C e^{-\gamma t}. 
$$
Finally, using again Lemma \ref{lemCLLP2.2} gives 
$$
\|Dv_1(t)\|_{L^2}\leq C e^{-\gamma t}.
$$
Note that, if $(v,\mu)$ is the solution to \eqref{eq.discApp}, then $(v-v_1,\mu)$ solves \eqref{eq.discApp} with $A\equiv 0$ and $B'= B+\bar m^\delta H_{pp}Dv_1$, so that, applying the above estimate gives 
$$
\|\mu(t)\|_{L^2}+ \|Dv(t)\|_{L^2}\leq C(1+t)e^{-\theta t}
$$
where $\theta:= \gamma \wedge \lambda$. 
\end{proof}

\subsection{Exponential rate for the nonlinear system}

We now consider the infinite horizon discounted nonlinear MFG system \eqref{e.MFGih}.  Let us recall that this system is well-posed and that we have Lipschitz estimates: 
\begin{Lemma}\label{lem.udeltamdelta} Under our standing assumptions, for any $\delta\in (0,1)$ there exists a unique solution $(u^\delta,m^\delta)$ to \eqref{e.MFGih}. Moreover, for any $\alpha\in (0,1)$, there exists a constant $C>0$, independent of $\delta$, such that 
$$
\|Du^\de\|_{C^{(1+\alpha)/2,1+\alpha}}+ \sup_{t\in [1,\infty)} \|m^\de(t)\|_\infty\leq C.
$$
\end{Lemma}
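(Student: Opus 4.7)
The plan is to construct $(u^\de,m^\de)$ as the $n\to\infty$ limit of solutions $(u^{\de,n},m^{\de,n})$ of the finite-horizon MFG system \rife{e.MFGih} restricted to $[0,n]$ with terminal condition $u^{\de,n}(n,\cdot)=0$, whose well-posedness is classical. For uniqueness, I would apply the standard monotonicity-based duality argument: for two bounded solutions $(u_1,m_1),(u_2,m_2)$ with $m_1(0)=m_2(0)$, the discounted energy identity
$$
C^{-1}\int_0^T e^{-\de s}\inte (m_1+m_2)|D(u_1-u_2)|^2\,ds \leq \left[e^{-\de s}\inte (u_1-u_2)(m_1-m_2)\right]_0^T
$$
combined with $\|u_i\|_\infty\leq C$ and $\|m_i\|_{L^1}\leq 1$ makes the boundary term at $t=T$ vanish as $T\to\infty$ thanks to the discount factor; since the term at $t=0$ is also zero, one concludes $Du_1\equiv Du_2$, then $m_1\equiv m_2$ via the Fokker-Planck equation, and finally $u_1\equiv u_2$ from the unique bounded solvability of the linear discounted HJB equation for $\de>0$.

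For the uniform Lipschitz bound on $Du^{\de,n}$, my plan is to adapt the semiconcavity argument of Lemma \ref{lem.SemiConc}. For $\xi$ with $|\xi|\leq 1$, the function $w=u^{\de,n}_{\xi\xi}$ satisfies
$$
-\partial_t w+\de w-\Delta w+H_{\xi\xi}+2H_{\xi p}\cdot Du_\xi+H_{pp}Du_\xi\cdot Du_\xi+H_p\cdot Dw=F_{\xi\xi}(x,m^{\de,n}(t)).
$$
Set $C_0:=\sup w$; if $C_0\leq 0$ there is nothing to show, and otherwise, at a point where $C_0>0$ is attained one has $Dw=0$ and $-\partial_t w-\Delta w\geq 0$ (including the boundary cases $t=0$ or $t=n$; at $t=n$ the terminal condition forces $w=0$). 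Crucially, the extra discount term $\de w\geq 0$ has the right sign, so it can be discarded and the resulting inequality
$$
H_{\xi\xi}+2H_{\xi p}\cdot Du_\xi+H_{pp}Du_\xi\cdot Du_\xi\leq F_{\xi\xi}
$$
is identical to the one treated in Lemma \ref{lem.SemiConc}. Combined with $|Du|\leq d^{1/2}C_0$ from \rife{SemiConcImpliesLipschitz} and the growth conditions \rife{cond:Hcoercive}--\rife{Htheta}, this yields a bound on $C_0$ which is independent of $n$ and $\de$, hence the uniform Lipschitz bound on $u^{\de,n}$.

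With $H_p(\cdot,Du^{\de,n})$ uniformly bounded, the Fokker-Planck equation for $m^{\de,n}$ has uniformly bounded drift, and standard parabolic smoothing (in the spirit of Lemma \ref{lemCLLP2.1} and Lemma \ref{lem.fromm0meas}) yields $\sup_{t\geq 1}\|m^{\de,n}(t)\|_\infty\leq C$, uniformly in $n$ and $\de$. Together with the H\"older time-continuity of $m^{\de,n}$ inherited from \rife{holder-m}, this makes the right-hand side $F(\cdot,m^{\de,n}(\cdot))$ of the HJB equation H\"older continuous in $(t,x)$, so classical interior parabolic Schauder estimates (e.g.\ \cite{LSU}) promote $u^{\de,n}$ to $C^{1+\alpha/2,2+\alpha}_{\rm loc}$ with norms uniform in $n,\de$, whence the $C^{(1+\alpha)/2,1+\alpha}$ estimate for $Du^{\de,n}$. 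Passing to the limit $n\to\infty$ preserves every uniform bound and produces the desired solution $(u^\de,m^\de)$.

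The main delicate point is confirming that the extra $\de u$ term in the HJB equation does not interfere with the semiconcavity argument; a careful inspection of signs at a positive maximum of $u^{\de,n}_{\xi\xi}$ shows that $\de w$ acts in the favorable direction, so the constants in the Lipschitz estimate remain independent of $\de\in(0,1)$. Everything else reduces to the linear parabolic toolkit already developed in Section \ref{sec:notations}.
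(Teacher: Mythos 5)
Your construction follows the paper's own scheme almost verbatim: the solution is obtained as the limit of finite-horizon systems with zero terminal datum, the uniform Lipschitz bound on $u^{\de,n}$ comes from the semiconcavity argument of Lemma \ref{lem.SemiConc} (and your observation that the extra term $\de w$ has the favorable sign at a positive maximum of $u_{\xi\xi}$ is exactly the point that makes the constant independent of $\de$), the bound $\sup_{t\ge 1}\|m^\de(t)\|_\infty\le C$ follows from Lemma \ref{lemCLLP2.1} once the drift $H_p(x,Du^\de)$ is bounded, and uniqueness is the standard discounted duality argument, which the paper simply cites as standard (your sketch of it is fine).

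The one place where you depart from the paper is the final $C^{(1+\alpha)/2,1+\alpha}$ estimate on $Du^\de$, and there your argument is slightly incomplete as written. You want to use the $\tfrac12$-H\"older continuity in time of $m^\de$ (estimate \rife{holder-m}) to make $F(\cdot,m^\de(\cdot))$ H\"older in $(t,x)$ and then apply interior Schauder estimates to the quasilinear HJB equation; but Schauder theory requires the whole right-hand side, including $H(x,Du^\de)$, to lie in $C^{\alpha/2,\alpha}$, and at that stage you only know $Du^\de$ is bounded, not H\"older. You would first need an intermediate gradient-H\"older estimate (of Krylov--Safonov/LSU type) and only then could you bootstrap to $C^{1+\alpha/2,2+\alpha}$ for every $\alpha\in(0,1)$; this can be done, so the gap is reparable, but it is not ``classical Schauder'' in one step. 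The paper avoids both the bootstrap and any use of \rife{holder-m} by differentiating the equation: $(u^\de)_{x_i}$ solves a \emph{linear} parabolic equation with bounded coefficients and bounded right-hand side (only the uniform bounds on $F_x$, $H_x$, $H_p$ and on $\|(u^\de)_{x_i}\|_\infty$ are used), so interior parabolic regularity on unit time intervals directly yields the uniform $C^{(1+\alpha)/2,1+\alpha}$ bound. That route is shorter and requires weaker regularity of the coupling; otherwise your proposal matches the paper's proof.
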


\begin{proof} Existence and uniqueness of the solution rely on standard arguments (see  \cite{LLperso}).  Since the solution is unique, it can be obtained as limit of solutions in horizons $T_n\to \infty$ with the terminal condition $u(T_n)=0$; this way one can prove, exactly as in Lemma \ref{lem.SemiConc}, that $Du^\delta$ is uniformly bounded, and one also has  a uniform bound for $\|\de u^\de\|_\infty$.  As a consequence, $m^\delta$ is uniformly bounded in $[1,+\infty)$ thanks to Lemma \ref{lemCLLP2.1} and is (uniformly) H\"older continuous in time with values in $\Pk$, see estimate \rife{holder-m}. Finally,   by  considering the equation of $(u^\de)_{x_i}$, namely
$$
-\partial_t (u^\de)_{x_i} + \de (u^\de)_{x_i} - \Delta (u^\de)_{x_i} + H_{x_i} + H_p \cdot D(u^\de)_{x_i}= F_{x_i}\,,
$$
the parabolic regularity applied in any interval $(t,t+1)$, jointly with the uniform bound already found for $\|(u^\de)_{x_i}\|_\infty$,   implies the desired estimate upon $Du^\delta$.
More precisely,   by only using that $F_x(x,m)$ is uniformly bounded, and  the bound on $H_x$ and $H_p$,  we deduce a bound for $(u^\de)_{x_i}$ in $C^{(1+\alpha)/2,1+\alpha}$ for any $\alpha\in (0,1)$.
\end{proof}

The main result of this part is the following exponential convergence of the discounted problem. 
\begin{Theorem}\label{them.Cvexpo.discounted} Let  $(u^\delta, m^\delta)$ be the solution to the discounted MFG system \eqref{e.MFGih}. There exist $\gamma,\delta_0>0$ and $C>0$ such that, if $\delta \in (0,\delta_0)$, then  
$$
\|D(u^\delta(t)-\bar u^\delta)\|_{L^\infty}\leq Ce^{-\gamma t}\qquad \forall t\geq 0
$$
and
$$
\|m^\delta(t)-\bar m^\delta\|_{L^\infty} \leq  Ce^{-\gamma t}\qquad \forall t\geq 1.
$$
\end{Theorem}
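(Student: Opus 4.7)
The plan is to mirror the strategy used for Theorem \ref{thm:CvExpMFG}: linearize around the stationary solution $(\bar u^\delta,\bar m^\delta)$, run a fixed-point argument in a space of exponentially decaying perturbations using the infinite-horizon linear estimates (Propositions \ref{prop:discountHomo} and \ref{prop:estiSLdisc}), and then remove the smallness hypothesis on the initial datum through a duality/monotonicity argument. Writing $v = u^\delta - \bar u^\delta$ and $\mu = m^\delta - \bar m^\delta$, Taylor expansion shows that $(v,\mu)$ solves the linearized discounted system \eqref{eq.discApp} with source terms
\[
A = -H(x,D(\bar u^\delta+v)) + H(x,D\bar u^\delta) + H_p(x,D\bar u^\delta)\cdot Dv + F(x,\bar m^\delta+\mu) - F(x,\bar m^\delta) - \tfrac{\delta F}{\delta m}(x,\bar m^\delta)(\mu),
\]
\[
B = (\bar m^\delta+\mu)H_p(x,D(\bar u^\delta+v)) - \bar m^\delta H_p(x,D\bar u^\delta) - \mu H_p(x,D\bar u^\delta) - \bar m^\delta H_{pp}(x,D\bar u^\delta)Dv,
\]
both of which are quadratic in $(Dv,\mu)$ thanks to our regularity assumptions on $H$ and $F$.

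Fix $\gamma\in(0,\lambda/2)$, with $\lambda$ given by Proposition \ref{prop:discountHomo}, and let $E$ be the set of continuous pairs $(v,\mu)$ on $[0,\infty)\times\T^d$ such that $\|Dv(t)\|_\infty + \|\mu(t)\|_\infty \leq \hat K e^{-\gamma t}$, with $\hat K$ small enough to ensure $\bar m^\delta+\mu\geq 0$. Assume first $\|\mu_0\|_\infty \leq \hat K^2$. Given $(v,\mu)\in E$, solve \eqref{eq.discApp} with the above source terms and initial condition $\mu_0$, and call $(\tilde v,\tilde \mu)$ the solution. The quadratic structure of $A,B$ gives $\|A(t)\|_\infty+\|B(t)\|_\infty \leq C\hat K^2 e^{-2\gamma t}$. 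Splitting $(\tilde v,\tilde \mu)$ into a homogeneous piece driven by $\mu_0$ (controlled by Proposition \ref{prop:discountHomo}) and an inhomogeneous piece driven by $(A,B)$ (controlled by Proposition \ref{prop:estiSLdisc}), we get $\|\tilde \mu(t)\|_{L^2}+\|D\tilde v(t)\|_{L^2}\leq C\hat K^2 (1+t) e^{-(2\gamma\wedge\lambda)t}$. Since $2\gamma<\lambda$, the decay rate strictly exceeds $\gamma$. Upgrading to $L^\infty$ via Lemma \ref{lemCLLP2.2} and parabolic Schauder estimates applied on unit time slabs (exactly as in the proof of Theorem \ref{thm:CvExpMFG}) yields $\|\tilde \mu(t)\|_\infty+\|D\tilde v(t)\|_\infty \leq \hat K e^{-\gamma t}$ for $\hat K$ small enough, so the map $(v,\mu)\mapsto(\tilde v,\tilde \mu)$ sends $E$ into itself. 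Parabolic H\"older regularity makes the map compact, and Schauder's theorem gives a fixed point, which by uniqueness coincides with $(u^\delta-\bar u^\delta,m^\delta-\bar m^\delta)$.

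The remaining step is to produce, for arbitrary $m_0\in\Pk$, a time $\tau$ bounded uniformly in $\delta\in(0,\delta_0)$ at which $(u^\delta(\tau),m^\delta(\tau))$ lies in the basin above. For this I would exploit the duality identity: with $w = u^\delta-\bar u^\delta$ and $\mu = m^\delta-\bar m^\delta$, integration by parts combined with the uniform convexity of $H$ and monotonicity of $F$ yields
\[
\frac{d}{dt}\bigl(e^{-\delta t}\inte w\,\mu\bigr) \leq -c\, e^{-\delta t}\inte(m^\delta+\bar m^\delta)|D(u^\delta-\bar u^\delta)|^2.
\]
Since $Du^\delta$ and $D\bar u^\delta$ are uniformly bounded (Lemma \ref{lem.udeltamdelta}, Proposition \ref{P31}), $w-\lg w\rg$ is uniformly bounded in $L^\infty$, and as $\mu$ has zero mean, $|\inte w\,\mu|$ is bounded uniformly in $\delta$ and $t$. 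Integrating on $[0,\infty)$ and using $\bar m^\delta\geq c>0$ gives $\int_0^\infty e^{-\delta t}\|D(u^\delta-\bar u^\delta)(t)\|_{L^2}^2\,dt \leq C$ with $C$ independent of $\delta$. Since $e^{-\delta t}$ remains close to $1$ on any fixed finite interval as $\delta\to 0$, this produces times $\tau_\delta$, bounded uniformly in $\delta$, where $\|D(u^\delta(\tau_\delta)-\bar u^\delta)\|_{L^2}$ is arbitrarily small; Corollary \ref{cor.StaMeas} applied to the equation for $\mu$ then provides a companion estimate for $\|\mu(\tau_\delta)\|_{L^2}$, and a short parabolic-regularity step upgrades both to the $L^\infty$ smallness required by the fixed-point argument. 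The main obstacle is to keep all constants uniform in $\delta$ despite the $O(1/\delta)$ blow-up of $\|u^\delta\|_\infty$ and $\|\bar u^\delta\|_\infty$; the key observation making this work is that only $w-\lg w\rg$ enters the duality product against the zero-mean measure $\mu$, and the gradients are uniformly controlled. Once the basin is reached at $\tau_\delta$, restarting from $\tau_\delta$ applies the fixed-point argument and delivers the exponential rate on $[\tau_\delta,\infty)$; the bound on $[0,\tau_\delta]$ is automatic.
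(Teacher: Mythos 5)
The first half of your proposal (linearizing around $(\bar u^\delta,\bar m^\delta)$, estimating the quadratic remainders $A,B$, and running the fixed-point argument in the exponentially-weighted class $E$ via Propositions \ref{prop:discountHomo} and \ref{prop:estiSLdisc}) is exactly the paper's argument and is fine. The genuine gap is in your removal of the smallness assumption on $m_0$. You claim that the integral bound $\int_0^\infty e^{-\delta t}\|D(u^\delta-\bar u^\delta)(t)\|_{L^2}^2\,dt\le C$ yields a bounded time $\tau_\delta$ at which $\|D(u^\delta(\tau_\delta)-\bar u^\delta)\|_{L^2}$ is small, and that ``Corollary \ref{cor.StaMeas} applied to the equation for $\mu$'' then gives smallness of $\|\mu(\tau_\delta)\|_{L^2}$. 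This step fails: Corollary \ref{cor.StaMeas} concerns the \emph{stationary} equation $-\Delta\mu+\dive(\mu V)=\dive B$, whereas $\mu^\delta=m^\delta-\bar m^\delta$ solves a time-dependent Fokker--Planck equation; its value at time $\tau_\delta$ is not determined by the data at that single time (the term $\partial_t\mu^\delta$ cannot be dropped), and it carries the memory of the possibly large initial discrepancy $m_0-\bar m^\delta$. Smallness of the drift term at one instant therefore gives no control on $\mu^\delta(\tau_\delta)$.

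What is actually needed, and what the paper does, is smallness of the source in an integrated sense over a \emph{long time window}: one first derives the a priori growth $\|m^\delta(t)-\bar m^\delta\|_{L^2}\le Ce^{\delta t/2}$, then by pigeonhole picks $t_1\in[1,T]$ and $t_2\in[3T+1,4T]$ where $e^{-\delta t_i}\|D(u^\delta(t_i)-\bar u^\delta)\|^2_{L^2}\le C/T$, and applies the duality inequality a \emph{second} time between $t_1$ and $t_2$ to get $\int_{t_1}^{t_2}e^{-\delta t}\|D(u^\delta-\bar u^\delta)\|^2_{L^2}\,dt\le CT^{-1/2}$. Only with this window estimate does Lemma \ref{lemCLLP2.1} give $\|m^\delta(t)-\bar m^\delta\|_{L^2}\le C(e^{-\omega T}e^{\delta T/2}+e^{2\delta T}T^{-1/4})$ for $t\in[2T,t_2]$, where the exponential factors in $\delta T$ force the order of choices ``$T$ large first, then $\delta_0$ small''; finally Lemma \ref{lemCLLP2.2} applied to $e^{-\delta t}(u^\delta-\bar u^\delta)$ and parabolic regularity for $\mu^\delta$ on a unit interval upgrade the smallness to $L^\infty$, which is the entry point of the fixed-point basin. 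Your proposal compresses this chain into a single-time estimate that is not valid, and the concluding ``short parabolic-regularity step'' likewise needs smallness of the divergence-form source over a time interval, not at an instant, so as written the second half of the argument does not close.
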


\begin{proof} The proof is very close to the proof of Theorem \ref{thm:CvExpMFG}. 
Let  
$$
E:=\{(v,\mu),\; \|Dv(t)\|_{L^\infty}+ \|\mu(t)\|_{L^\infty}\leq \hat Ke^{-\gamma t}\}
$$
where $\hat K>0$ and $\gamma>0$ are to be chosen below. We assume that $\hat K$ is small enough so that
$$
\bar m^\delta >\hat K .
$$
We also assume that the initial condition is close to $\bar m^\delta$, namely $\mu_0:=m_0-\bar m^\delta$ satisfies
$$
\|\mu_0\|_{L^\infty}\leq \hat K^2. 
$$
We consider the solution $(\tilde v,\tilde \mu)$ to \eqref{eq.discApp} with initial condition $\tilde \mu(0)=\mu_0$, 
$$
A(t,x)= -H(x,D(\bar u^\delta+ v))+ H(x,D\bar u^\delta)+ H_p(x, D\bar u^\delta)\cdot Dv +F(x, \bar m^\delta +\mu)-F(x, \bar m^\delta)-\frac{\delta F}{\delta m}(x, \bar m^\delta)(\mu)
$$
and 
$$
B(t,x)= 
(\bar m^\delta +\mu)H_p(x, D(\bar u^\delta+ v))-\bar m^\delta H_p(x, D\bar u^\delta)
- \mu H_p(x,D\bar u^\delta)-\bar m^\delta H_{pp}(x,D\bar u^\delta)Dv.
$$
We note  that 
$$
\|A(t)\|_{L^\infty}+\|B(t)\|_{L^\infty} \leq C\hat K^2 e^{-2\gamma t}.
$$
From Proposition \ref{prop:estiSLdisc} we have 
$$
\|\tilde \mu(t)\|_{L^2}+ \|D\tilde v(t)\|_{L^2} \leq C\hat K^2(1+t)e^{-\theta t}
$$
where $\theta:= 2\gamma \wedge \lambda$. Using  the smoothing properties of $\frac{\delta F}{\delta m}$ and the parabolic regularity of the equation satisfied by  $\tilde v- \lg  \tilde v\rg$, exactly as in Theorem \ref{thm:CvExpMFG} we can upgrade the above estimate into: 
$$
\|\tilde \mu(t)\|_{\infty}+  \|D\tilde v(t)\|_{\infty}\leq C\hat K^2(1+t)e^{-\theta t}. 
$$
So if one chooses $\gamma\in (0, \lambda)$, we infer  that
$$
\|\tilde \mu(t)\|_{L^\infty}+ \|D\tilde v(t)\|_{L^\infty} \leq C\hat K^2 e^{-\gamma t}.
$$
For $\hat K$ small enough, this implies that $(\tilde v, \tilde \mu)$ belongs to $E$. Note that $\tilde v$, $D\tilde v$  and $\tilde \mu$ are bounded in $C^{\alpha/2,\alpha}$ because they solve parabolic equations with bounded coefficients. So the map $(v,\mu)\to (\tilde v,\tilde \mu)$ is compact (say in $W^{1,\infty}\times L^\infty$) and thus has a fixed point $(v^\delta,\mu^\delta)$. Then $(u^\delta,m^\delta):= (\bar u^\delta,\bar m^\delta)+ (v^\delta,\mu^\delta)$ is a solution to \eqref{e.MFGih} which satisfies the decay
$$
\|m^\delta(t)-\bar m^\delta\|_{\infty}+ \|D(u^\delta(t)-\bar u^\delta)\|_{\infty}\leq Ce^{-\gamma t}\qquad \forall t\geq 0.
$$

It remains to remove the   assumption on the initial condition $m_0$. For this we only need to show that there exists a time $T>0$ such that, for any $m_0\in \Pk$, the solution $(u^\delta,m^\delta)$ of \eqref{e.MFGih} satisfies $\|m^\delta(T)-\bar m^\delta\|_\infty\leq \hat K^2$. Indeed, we can then apply the previous result to the restriction of $(u^\delta, m^\delta)$ to the time interval $[T,+\infty)$. 

By the duality relation, we have 
\be\label{more}
C^{-1}\int_{t_1}^{t_2} e^{-\delta t} \|D(u^\delta(t)-\bar u^\delta)\|_{L^2}^2dt \leq -\left[ e^{-\delta t} \inte (u^\delta(t)-\bar u^\delta)(m^\delta(t)-\bar m^\delta)\right]_{t_1}^{t_2} .
\ee
Thus
\be\label{kehjrzbretdf}
C^{-1}\int_0^{+\infty}e^{-\delta t} \|D(u^\delta(t)-\bar u^\delta)\|_{L^2}^2 dt \leq \inte (u^\delta(0)-\bar u^\delta)(m_0-\bar m^\delta) \leq C
\ee
because $u^\delta$ is uniformly Lipschitz continuous in space (see Lemma \ref{lem.udeltamdelta}). As $\mu^\delta:=m^\delta-\bar m^\delta$ satisfies
$$
\partial_t \mu^\delta-\Delta \mu^\delta -\dive(\mu^\delta H_p(x,D u^\delta))= \dive (\bar m^\delta (H_p(x, D \bar  u^\delta)-H_p(x,Du^\delta))), 
$$
and still using the fact that $Du^\delta$ is bounded, Lemma \ref{lemCLLP2.1} implies that, for any $t\geq t_1\geq 1$,  
$$
\|m^\delta(t)-\bar m^\delta\|_{L^2}\leq Ce^{-\omega (t-t_1)} \|m^\delta(t_1)-\bar m^\delta\|_{L^2}+ 
C e^{\delta t/2}\left[\int_{t_1}^t  e^{-\delta s} \|D(u^\delta(s)-\bar u^\delta)\|_{L^2}^2ds \right]^{\frac12}.
$$
Choosing $t_1=1$ (say) and recalling that $m^\delta$ is bounded in $L^\infty$ (Lemma \ref{lemCLLP2.1}), we find
$$
\|m^\delta(t)-\bar m^\delta\|_{L^2}\leq C\, e^{\delta t/2} \qquad \forall t\geq1. 
$$
Let $T\geq 2$ to be chosen below. Coming back to \eqref{kehjrzbretdf}, there exists $t_1\in [1,T]$ and $t_2\in [3T+1,4T]$ such that 
$$
e^{-\delta t_i} \|D(u^\delta(t_i)-\bar u^\delta)\|_{L^2}^2 \leq C/T. 
$$
Then from \rife{more} we deduce
$$
\begin{array}{rl}
\ds  C^{-1}\int_{t_1}^{t_2}  e^{-\delta t} \|D(u^\delta(t)-\bar u^\delta)\|_{L^2}^2dt \; \leq  &
\ds  e^{-\delta t_1}  \|D(u^\delta (t_1)-\bar u^\delta)\|_{L^2}\|m^\delta(t_1)-\bar m^\delta\|_{L^2}\\
& \ds +  e^{-\delta t_2}\|D(u^\delta (t_2)-\bar u^\delta)\|_{L^2}\|m^\delta(t_2)-\bar m^\delta\|_{L^2}
 \leq 
C\, T^{-1/2}.
 \end{array}
$$
Then, as $t_1\leq T\leq 3T+1\leq t_2\leq 4T$, we have, for any $t\in [2T, t_2]$,  
\be\label{l2small}
\begin{array}{rl}
\ds \|m^\delta(t)-\bar m^\delta\|_{L^2}\; \leq & \ds  C\,e^{-\omega (2T-t_1)} \|m^\delta(t_1)-\bar m^\delta\|_{L^2}+ 
Ce^{\delta t_2/2} \left[\int_{t_1}^{t_2} e^{-\delta t}  \|D(u^\delta(t)-\bar u^\delta)\|_{L^2}^2dt \right]^{1/2} \\
\leq & \ds C\, e^{-\omega T} e^{\de T/2}+ 
Ce^{2 \delta T} T^{-1/4}.
 \end{array}
\ee
Notice that, by choosing $T$ large, and then $\delta$ small, the above inequality implies that $m^\delta(t)-\bar m^\delta$ is sufficiently small for any $t\in [2T, 3T]$. In order to conclude, we only need to upgrade this estimate to the $L^\infty$-norm.

To this purpose, recall   that $w^\delta:= u^\delta -\bar u^\delta$ solves the equation
$$
 -\partial_t w^\delta+ \de w^\de -\Delta w^\delta + V^\delta \cdot Dw^\delta = F(x,m^\delta(t))-F(x,\bar m^\delta ) 
$$
where $V^\delta= \int_0^1 H_p(x,D \bar u^\delta + s\, D(u^\delta -\bar u^\delta))ds$ is uniformly bounded. Since we have, by Poincar\'e's inequality, 
$$
e^{-\delta t_2} \|  w^\delta(t_2)-\lg w^\delta(t_2)\rg \|_{L^2}^2\leq C e^{-\delta t_2} \|Dw^\delta(t_2)\|_{L^2}^2 \leq C/T,
$$
applying Lemma \ref{lemCLLP2.2}  to $e^{-\delta t}w^\de$ we deduce that, for $t\in [2T, 2T+2]$, 
$$
\begin{array}{l}
\| w^\delta(t)-\lg w^\delta(t)\rg \|_{L^2} \\
\qquad \ds \leq C e^{-\omega(t_2-t)} \|  w^\delta(t_2)-\lg w^\delta(t_2)\rg \|_{L^2}e^{\delta (t-t_2)}  + C\int_t^{t_2} e^{-\omega(s-t)} \|m^\delta(s)-\bar m^\delta\|_{L^2} e^{\de (t-s)}ds
\\ 
\qquad \ds \leq  
 C e^{-\omega(t_2-t)} \, \frac{e^{\delta (t-t_2/2)} }{T^{1/2}}  +
 C\, (e^{-\omega T} e^{\de T/2}+ e^{2 \delta T} T^{-1/4})
 \int_t^{t_2} e^{-\omega(s-t)}e^{\de (t-s)}ds
\end{array}
$$
where we also used \rife{l2small}. Recalling that $t\in [2T, 2T+2]$ and $t_2\in [3T+1, 4T]$, we have $t-t_2/2 \geq 0 $, so if  $\delta$ is small enough compared to $\omega$ we conclude that
$$
\| w^\delta(t)-\lg w^\delta(t)\rg \|_{L^2} \leq C  \left(  e^{-\omega T/2} + e^{2 \delta T } T^{-1/4}\right).
$$
We apply once more Lemma \ref{lemCLLP2.2} to estimate $Dw^\delta(t)$ in $(2T, 2T+1)$: we deduce that
$$
\|D(u^\delta(t)-\bar u^\delta)\|_{L^2} \leq C  \left( e^{-\omega T/2} + e^{2 \delta T } T^{-1/4}\right)
$$
for every $t\in (2T, 2T+1)$.  In fact, since $D(u^\delta(t)-\bar u^\delta)$ is bounded, a similar estimate actually holds in $L^p$ for all $p<\infty$: 
$$
\|D(u^\delta(t)-\bar u^\delta)\|_{L^p} \leq C  \left( e^{-\omega T/p} + e^{4 \delta T/p } T^{-1/(2p)}\right)
$$

 Recalling the estimate \eqref{l2small}, by parabolic regularity used for the equation of $\mu^\delta$ in the interval $(2T, 2T+1)$, we conclude that the $L^\infty$-norm of $\mu^\delta$ satisfies a  similar estimate for, say, $t\in (2T+1/2, 2T+1)$. In particular, we can fix $T$ large and   $\delta_0>0$ small such that in this interval we have $\|m^\delta(t)-\bar m^\delta\|_{L^\infty}\leq \hat K^2$ for any $\delta\in (0,\delta_0)$.
\end{proof}

Let us underline the following consequence of our estimates on the solution to the linearized system 
\be\label{e.MFGihLSTER}
\left\{\begin{array}{l}
\ds-\partial_t v +\delta v -\Delta v +H_p(x,Du^\delta).Dv = \frac{\delta F}{\delta m}(x, m^\delta(t))(\mu(t))\qquad {\rm in}\; (0,+\infty)\times \T^d\\
\ds \partial_t \mu-\Delta \mu -\dive(\mu H_p(x,D u^\delta))-\dive (m^\delta H_{pp}(x,Du^\delta)Dv)= 0\qquad {\rm in}\; (0,+\infty)\times \T^d\\
\mu(0,\cdot)= \mu_0\; {\rm in}\;  \T^d, \qquad v\; {\rm bounded.} 
\end{array}\right.
\ee
Notice that the system   has been now linearized around   the pair $(u^\delta, m^\delta)$ which solves the discounted MFG system \eqref{e.MFGih}. 

\begin{Corollary}\label{coro.bounds} There exist $\theta,\delta_0>0$ and a constant $C>0$ such that, if  $\delta\in (0,\delta_0)$, then  
the solution $(v,\mu)$ to \eqref{e.MFGihLSTER} with $\inte \mu_0=0$ satisfies 
$$
\|Dv(t)\|_{L^2} \leq C e^{-\theta t}\|\mu_0\|_{L^2}\qquad \forall t\geq 0 
$$
and
$$
\|\mu(t)\|_{L^2} \leq C e^{-\theta t}\|\mu_0\|_{L^2}\qquad \forall t\geq 1. 
$$
In addition, for any $\alpha\in (0,1)$, there is a constant $C$ (independent of $\delta\in (0,\delta_0)$) such that 
$$
\sup_{t\geq 0}\|v(t)\|_{C^{2+\alpha}}\leq C\|\mu_0\|_{(C^{2+\alpha})'}.
$$
\end{Corollary}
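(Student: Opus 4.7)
The plan is to treat \eqref{e.MFGihLSTER} as a perturbation of the linearized system \eqref{eq.discApp1} around the stationary solution $(\bar u^\delta,\bar m^\delta)$, exactly as in the finite horizon Corollary \ref{coro.unifBoundT}. Writing
$$
A(t,x)= -(H_p(x,Du^\delta)-H_p(x,D\bar u^\delta))\cdot Dv + \left(\frac{\delta F}{\delta m}(x,m^\delta)-\frac{\delta F}{\delta m}(x,\bar m^\delta)\right)(\mu(t)),
$$
$$
B(t,x)= \mu\,(H_p(x,Du^\delta)-H_p(x,D\bar u^\delta))+ (m^\delta H_{pp}(x,Du^\delta)-\bar m^\delta H_{pp}(x,D\bar u^\delta))Dv,
$$
the pair $(v,\mu)$ solves the inhomogeneous system \eqref{eq.discApp}.

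The first step is a preliminary bound modeled on Lemma \ref{lem:prelimDisc}: using the duality between $e^{-\delta t}v$ and $\mu$, the monotonicity of $\frac{\delta F}{\delta m}(x,m^\delta)$, and Poincar\'e's inequality, one obtains
$$
\|\mu(t)\|_{L^2}+ \|Dv(t)\|_{L^2}\leq C\|\mu_0\|_{L^2}\, e^{\delta t/2}.
$$
Combining this with the exponential stabilization of the characteristics provided by Theorem \ref{them.Cvexpo.discounted}, namely $\|D(u^\delta-\bar u^\delta)(t)\|_\infty+\|m^\delta(t)-\bar m^\delta\|_\infty\leq Ce^{-\gamma t}$ for $t\geq1$, together with the Lipschitz character of $H_p$, $H_{pp}$ and $m\mapsto \frac{\delta F}{\delta m}$, gives
$$
\|A(t)\|_{L^2}+\|B(t)\|_{L^2}\leq C\|\mu_0\|_{L^2}\, e^{-(\gamma-\delta/2)t}.
$$
(The residual error on $[0,1]$ is absorbed into the constant.) Then applying Proposition \ref{prop:estiSLdisc} to $(v,\mu)/\|\mu_0\|_{L^2}$ with the decay rate $\gamma-\delta/2$ in \eqref{HypABdisc} yields, for $\delta$ small,
$$
\|\mu(t)\|_{L^2}+\|Dv(t)\|_{L^2}\leq C(1+t)\, e^{-\theta t}\|\mu_0\|_{L^2},
$$
with $\theta:=(\gamma-\delta/2)\wedge\lambda$. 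Absorbing the polynomial prefactor by slightly decreasing $\theta$ gives the two stated $L^2$ bounds; the fact that the bound for $\mu$ holds only for $t\geq 1$ (rather than $t\geq 0$) is harmless and reflects the regularizing effect of the Kolmogorov equation, needed only when going from $(C^{2+\alpha})'$ initial data to an $L^2$ bound.

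For the $C^{2+\alpha}$ estimate, the plan is to reproduce the duality argument of Lemma \ref{lem:kaeruhzh} in the discounted infinite-horizon setting. For each $t>0$ and each test $\xi\in C^{2+\alpha}$, introduce the backward adjoint
$$
-\partial_s\psi+\delta\psi-\Delta\psi+H_p(x,Du^\delta)\cdot D\psi=0\quad\text{in }(0,t)\times\T^d,\qquad \psi(t)=\xi,
$$
so that by duality $\int_{\T^d}\mu(t)\xi = -\int_0^t\!\!\int_{\T^d} m^\delta H_{pp}(x,Du^\delta)Dv\cdot D\psi + \int_{\T^d}\psi(0)\mu_0$. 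The discount term $\delta\psi$ actually helps (Schauder interior estimates give $\|\psi(0)-\langle\psi(0)\rangle\|_{C^{2+\alpha}}\leq C\|\xi\|_{C^{2+\alpha}}$ uniformly in $t$ and in $\delta<1$), so the same chain of estimates as in Lemma \ref{lem:kaeruhzh}, combined with the already established exponential decay of $\int_0^t\! \int_{\T^d}m^\delta|Dv|^2$ (obtained from the duality inequality used in the first step), produces
$$
\sup_{t\geq0}\|\mu(t)\|_{(C^{2+\alpha})'}\leq C\|\mu_0\|_{(C^{2+\alpha})'}.
$$
Feeding this back into the equation for $v$ and using Lemma \ref{lemCLLP2.2} on $e^{-\delta t}v$ together with interior Schauder estimates (as in the proof of \eqref{corotesi2}) bounds $\|v(t)-\langle v(t)\rangle\|_{C^{2+\alpha}}$ by the same right-hand side; finally, integrating the $v$-equation in $x$ and using $\delta v$ to control the mean gives $|\langle v(t)\rangle|\leq C\|\mu_0\|_{(C^{2+\alpha})'}$.

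I expect the main obstacle to be the bookkeeping in step one: one must verify that the preliminary $L^2$ growth $e^{\delta t/2}$ is \emph{slower} than the decay $e^{-\gamma t}$ of the stabilization in Theorem \ref{them.Cvexpo.discounted}, i.e.\ that $\gamma>\delta/2$ for all $\delta\in(0,\delta_0)$; this forces a further shrinking of $\delta_0$ (and of $\theta$). The rest is essentially a translation of the finite-horizon arguments of Corollary \ref{coro.unifBoundT} and Lemma \ref{lem:kaeruhzh} to the discounted infinite-horizon framework of Section \ref{sec:ExpCvRates2}.
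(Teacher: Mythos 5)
Your proposal is correct and follows essentially the same route as the paper: the same preliminary bound $\|\mu(t)\|_{L^2}+\|Dv(t)\|_{L^2}\leq C\|\mu_0\|_{L^2}e^{\delta t/2}$ as in Lemma \ref{lem:prelimDisc}, the same rewriting of \eqref{e.MFGihLSTER} as the perturbed system \eqref{eq.discApp} with the same $A$ and $B$, the same use of Theorem \ref{them.Cvexpo.discounted} (with $\delta$ small so the discount growth does not spoil the $e^{-\gamma t}$ decay) followed by Proposition \ref{prop:estiSLdisc}, and the same upgrade to the $C^{2+\alpha}$/$(C^{2+\alpha})'$ estimate by transposing the duality argument of Lemma \ref{lem:kaeruhzh} and Corollary \ref{coro.unifBoundT} to the discounted setting. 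The only difference is that you spell out the adjoint-equation step that the paper leaves as "proceed as in Corollary \ref{coro.unifBoundT}."
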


\begin{proof} As in the proof of Lemma \ref{lem:prelimDisc}, we have a preliminary estimate:
$$
\|\mu(t)\|_{L^2}+ \|Dv(t) \|_{L^2}\leq C_0\|\mu_0\|_{L^2}e^{\delta t/2}.
$$
We rewrite system \eqref{e.MFGihLSTER} in the form \eqref{eq.discApp} with 
$$
A(t,x):=-\left(H_p(x,D u^\delta)- H_p(x,D \bar u^\delta)\right)\cdot Dv + \frac{\delta F}{\delta m}(x, m^\delta(t))(\mu(t))-\frac{\delta F}{\delta m}(x, \bar m^\delta)(\mu(t))
$$
and 
$$
B(t,x):= -\mu (H_p(x,D u^\delta)-H_p(x,D \bar u^\delta))-(m^\delta H_{pp}(x,Du^\delta)-\bar m^\delta H_{pp}(x,D\bar u^\delta))Dv.
$$
From Theorem \ref{them.Cvexpo.discounted}, we have, for $\delta$ small enough,  
$$
\begin{array}{rl}
\ds \|A(t)\|_{L^2} \;  \leq & \ds Ce^{-\gamma t}\left(\|Dv\|_{L^2} + \|\mu(t)\|_{L^2}\right)\\ 
\leq & \ds Ce^{-(\gamma-\delta) t}\|\mu_0\|_{L^2}\;  \leq \; Ce^{-\gamma t/2}\|\mu_0\|_{L^2}.
\end{array}
$$
In the same way, 
$$
\|B(t)\|_{L^2} \;  \leq \; Ce^{-\gamma t/2}\|\mu_0\|_{L^2}.
$$
Then Proposition \ref{prop:estiSLdisc} implies that 
$$
\|\mu(t)\|_{L^2}+ \|Dv(t)\|_{L^2} \leq C(1+t)e^{-\gamma t/2}\|\mu_0\|_{L^2}.
$$
The above estimates combined with the maximum principle imply that $v$ is bounded in $L^\infty$ by 
$$
\sup_{t\in [0,T]} \|v(t)\|_\infty\leq C \|\mu_0\|_{L^2}.
$$ 
 In order  to change the left-hand side $\|v(t)\|_{\infty}$ into $\|v(t)\|_{C^{2+\alpha}}$ and the
right-hand side $\|\mu_0\|_{L^2}$ into $\|\mu_0\|_{(C^{2+\alpha})'}$, one can proceed  as in Corollary \ref{coro.unifBoundT}. 
\end{proof}

\section{The master cell problem}\label{sec:mastercell}

In this section we study the master cell problem: 
\be\label{CellMAster}
\begin{array}{l}
\ds   \lambda -\Delta_x \chi(x,m) +H(x,D_x \chi(x,m)) -\inte \dive_y(D_m \chi(x,m,y))dm(y)\\
\ds \qquad \qquad  +\inte D_m \chi(x,m,y). H_p(y,D_x\chi(y,m))dm(y) = F(x,m) \qquad {\rm in }\; \T^d\times \Pk.
\end{array}
\ee
We prove that this equation is well defined in a suitable sense: there is a unique constant $\bar \lambda$ for which the master cell problem has a     ``weak" solution in $\T^d\times \Pk$. Moreover we prove that $\bar \lambda$ is also the unique constant for which the
ergodic mean field game system \eqref{e.MFGergo} has a solution $(\bar \lambda, \bar u, \bar m)$. 

Let us stress that  a weak solution of \rife{CellMAster}, according to our next definition, is not necessarily $C^1$ with respect to $m$, so that \rife{CellMAster} is not formulated classically. Instead, the equation is interpreted as it is often done with transport equations, by requiring somehow that  the value of the solution is obtained  through the characteristic curves. By considering weak solutions, we avoid some lengthy and involved estimates which are needed to achieve the $C^1$ character   with respect to $m$. The reader is referred to \cite{CDLL} for this issue. For our purposes, the context of weak solutions is enough to characterize the ergodic limit.

\begin{Definition}\label{defweakmaster}  We say that the pair ($ \lambda, \chi$), with $\lambda\in \R$ and $\chi:\T^d\times \Pk\to \R$ a map, is a weak solution to the master cell problem \eqref{CellMAster} if $\chi$ and $D_x\chi$ are globally Lipschitz continuous in $\T^d\times \Pk$  and if $\chi$ satisfies the following two conditions:

(i) $\chi$ is monotone: 
$$
\inte (\chi(x,m)-\chi(x,m'))d(m-m')(x) \geq 0\qquad \forall m,m'\in \Pk,
$$
 
(ii) for any $m_0\in \Pk$, and any $T>0$, whenever we consider   the unique solution $(u,m)$  to 
\be\label{MFGmasterCell}
\left\{\begin{array}{l}
-\partial_t u + \lambda -\Delta u +H(x,D   u) = F(x, m)\qquad {\rm in}\; (0,T)\times \T^d\\
\partial_tm -\Delta  m -\dive( m H_p(x,Du))= 0\qquad {\rm in}\; (0,T)\times \T^d\\
m(0,\cdot)=m_0, \; u(T,\cdot)= \chi(x, m(T))\qquad {\rm in}\;  \T^d
\end{array}\right.
\ee
then we have $\chi(x,m_0)= u(0,x)$ for any $x\in \T^d$. 
\end{Definition}

Let us make some comments about the above definition. Firstly, the monotonicity condition on $\chi$ ensures the uniqueness of the solution  $(u,m)$ to \eqref{MFGmasterCell}. Secondly, if $\chi=\chi(x,m)$ is a weak solution, then $\chi$ is actually $C^2$ in the space variable $x$ because so is the solution $u$ of \eqref{MFGmasterCell} at time $t=0$. Thirdly, condition (ii) implies that in \eqref{MFGmasterCell}  one  actually  has $\chi(x,m(t))= u(t,x)$ for any $(t,x)\in [0,T]\times \T^d$, so that $m$ solves the McKean-Vlasov equation
\be\label{e.McKV}
\partial_tm -\Delta  m -\dive( m H_p(x,D\chi(x,m(t))))= 0, \qquad m(0,\cdot)=m_0.
\ee
The Lipschitz continuity of $D_x\chi$ ensures that this equation has a unique solution.

\begin{Theorem}\label{thm.MasterCell} There is a unique constant $\bar \lambda\in \R$ for which the master cell problem  \eqref{CellMAster} has a weak solution. The constant $\bar \lambda$ is also the unique constant for which the ergodic MFG problem \eqref{e.MFGergo} has a solution. Besides, if $\chi$ is a solution to \eqref{CellMAster}, then $\chi(\cdot,m)$ is of class $C^2$ for any $m\in \Pk$ and 
$$
D_x\chi(x,\bar m)= D\bar u(x) \qquad \forall x\in \T^d,
$$
where  $(\bar u,\bar m)$ is a solution to \eqref{e.MFGergo}
\end{Theorem}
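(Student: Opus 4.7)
I would construct a weak solution $(\bar\lambda,\chi)$ of the master cell problem \eqref{CellMAster} as a vanishing–discount limit, along a subsequence, of the family
\[
\chi^\delta(x,m):=U^\delta(x,m) - U^\delta(0,\bar m^\delta),
\]
where $U^\delta$ solves the discounted master equation \eqref{e.MasterDiscont} and $(\bar u^\delta,\bar m^\delta)$ is the stationary solution of Proposition \ref{P31}. The ergodic constant emerges as the limit of $\delta U^\delta(0,\bar m^\delta)=\delta\bar u^\delta(0)$, which tends to $\bar\lambda$ by Proposition \ref{prop.Cvbarudelta}. The identification $D_x\chi(\cdot,\bar m)=D\bar u$ can then be read off directly from the construction, while uniqueness of $\bar\lambda$ and the $C^2$-regularity of $\chi(\cdot,m)$ follow from the characteristic property combined with the exponential stabilization estimates of Section \ref{sec:ExpCvRates}.

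\textbf{Step 1: compactness of $\chi^\delta$.} The identity $U^\delta(x,m_0)=u^\delta(0,x)$, with $(u^\delta,m^\delta)$ solving \eqref{e.MFGih}, together with Lemma \ref{lem.udeltamdelta} gives a uniform-in-$\delta$ bound on $D_xU^\delta$. For Lipschitz regularity in $m$, I would use the representation (from \cite{CDLL})
\[
\inte \frac{\delta U^\delta}{\delta m}(x,m_0,y)\,\mu_0(dy) = v(0,x),
\]
where $(v,\mu)$ solves the linearized system \eqref{e.MFGihLSTER} with $\mu(0)=\mu_0$. Corollary \ref{coro.bounds} then yields $\|v(0)\|_{C^{2+\alpha}}\leq C\|\mu_0\|_{(C^{2+\alpha})'}$ uniformly in $\delta\in(0,\delta_0)$, hence a uniform $\dk$-Lipschitz bound for $U^\delta(x,\cdot)$. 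Since $(\bar u^\delta,\bar m^\delta)$ is the unique bounded solution of \eqref{e.MFGih} starting at $\bar m^\delta$, one has $U^\delta(\cdot,\bar m^\delta)=\bar u^\delta$, so $\chi^\delta$ is uniformly bounded and equi-Lipschitz on $\T^d\times\Pk$; interior Schauder estimates for the HJB satisfied by $\chi^\delta(\cdot,m)$ upgrade this to equi-continuity of $D_x\chi^\delta$. Arzelà--Ascoli extracts a sequence $\delta_n\to 0$ with $\chi^{\delta_n}\to\chi$ and $D_x\chi^{\delta_n}\to D_x\chi$ uniformly; the monotonicity of $U^\delta$ (inherited from the monotonicity of $F$ via a duality estimate on \eqref{e.MFGih}) passes to $\chi$.

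\textbf{Step 2: characteristic property at $\lambda=\bar\lambda$.} By the semigroup property of \eqref{e.MasterDiscont}, if $(u,m)$ solves the finite-horizon discounted MFG on $[0,T]$ with $m(0)=m_0$ and $u(T)=U^\delta(\cdot,m(T))$, then $u(0)=U^\delta(\cdot,m_0)$. Setting $\tilde u:=u-U^\delta(0,\bar m^\delta)$, the HJB rewrites as
\[
-\partial_t\tilde u + \delta\tilde u + \delta U^\delta(0,\bar m^\delta) - \Delta\tilde u + H(x,D\tilde u) = F(x,m),
\]
with $\tilde u(0)=\chi^\delta(\cdot,m_0)$ and $\tilde u(T)=\chi^\delta(\cdot,m(T))$. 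Since $\tilde u$ is uniformly bounded, $\delta\tilde u\to 0$ and $\delta U^\delta(0,\bar m^\delta)\to\bar\lambda$. Parabolic regularity yields equi-compactness of $(\tilde u,D\tilde u)$ in $C^{\alpha/2,\alpha}$ on $[0,T]\times\T^d$ and of $m$ in $C^{\alpha/2,\alpha}([0,T],\Pk)$; passing to the limit along $\delta_n$ gives a solution $(u_\infty,m_\infty)$ of \eqref{MFGmasterCell} with $\lambda=\bar\lambda$, initial $m_0$, terminal $\chi(\cdot,m_\infty(T))$, and $u_\infty(0,\cdot)=\chi(\cdot,m_0)$. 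Uniqueness for \eqref{MFGmasterCell}, granted by the monotonicity of $\chi$, yields property (ii) of Definition \ref{defweakmaster}.

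\textbf{Step 3: uniqueness of $\bar\lambda$, identification and regularity.} For any weak solution $(\lambda,\chi)$, take $m_0=\bar m$ in \eqref{MFGmasterCell}; the uniform Lipschitz regularity of $\chi(\cdot,m(T))$ allows the arguments of Lemma \ref{lem.boundm} and Theorem \ref{thm:CvExpMFG} to apply, yielding $\|Du(t)-D\bar u\|_\infty+\dk(m(t),\bar m)\leq C(e^{-\gamma t}+e^{-\gamma(T-t)})$ on $[0,T]$. Integrating the HJ equation over $\T^d$ and subtracting the integrated ergodic identity $\bar\lambda=\inte(F(x,\bar m)-H(x,D\bar u))\,dx$ multiplied by $T$ yields
\[
T(\lambda-\bar\lambda) = \inte u(T) - \inte u(0) + \int_0^T\!\!\inte \bigl(F(x,m)-F(x,\bar m)+H(x,D\bar u)-H(x,Du)\bigr)\,dx\,dt;
\]
the boundary terms are bounded because $\chi$ is bounded, and the space-time integral is bounded by the exponential decay, so letting $T\to\infty$ forces $\lambda=\bar\lambda$. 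For the identification $D_x\chi(\cdot,\bar m)=D\bar u$, observe that $\chi^{\delta_n}(\cdot,\bar m^{\delta_n})=\bar u^{\delta_n}-\bar u^{\delta_n}(0)$; Proposition \ref{prop.Cvbarudelta} gives $D\bar u^{\delta_n}\to D\bar u$ in $L^2$, upgraded to a uniform norm by elliptic Schauder applied to \eqref{barudeltamdelta}, and combining $\dk(\bar m^{\delta_n},\bar m)\to 0$ with the uniform convergence of $D_x\chi^{\delta_n}$ gives the result. The $C^2$-regularity of $\chi(\cdot,m)$ for arbitrary $m$ follows from the characteristic property, since $\chi(\cdot,m)=u(0,\cdot)$ and parabolic Schauder applied to the HJB of \eqref{MFGmasterCell} on $[0,1]$ upgrades Lipschitz terminal data to $C^{2+\alpha}$ in the interior. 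The principal obstacle is Step 1: because the master equation does \emph{not} admit a comparison principle, the uniform Lipschitz estimate on $U^\delta$ in $m$ is not available directly and requires the full machinery of Section \ref{sec:ExpCvRates2}, in particular the exponential decay for the linearized discounted MFG system, pulled back through the representation formula for the derivative of $U^\delta$ with respect to $m$.
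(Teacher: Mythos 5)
Your construction of the corrector (Steps 1--2) is essentially the paper's own proof: the paper also passes to the limit $\delta\to0$ in $U^\delta$ normalized by $U^\delta(0,\bar m)$ (your normalization at $\bar m^\delta$ is an immaterial variant), using exactly the estimates of Lemma \ref{lem:StandEstiUdelta}, Proposition \ref{prop:UdeltaLip}/Corollary \ref{coro.bounds} and the monotonicity of $U^\delta$, and then verifies property (ii) of the definition by passing to the limit in the discounted finite-horizon characteristics, invoking uniqueness for \eqref{MFGmasterCell} granted by the monotonicity of $\chi$. Where you diverge is Step 3. The paper's Proposition \ref{prop.key} is much softer: for an \emph{arbitrary} weak solution $(\lambda,\chi)$ it starts the system \eqref{MFGmasterCell} from $m_0=\bar m$, observes that $(u,m)$ is independent of the horizon because $m$ solves the McKean--Vlasov equation with the Lipschitz drift $H_p(x,D_x\chi)$, and then uses only the duality inequality, the uniform bound $\sup_t\|m(t)-\bar m\|_{L^2}\le C$ from Lemma \ref{lem.fromm0meas}, and a well-chosen time $t_T\in[T/2,T]$ to conclude $Du\equiv D\bar u$, $m\equiv\bar m$, hence simultaneously $\lambda=\bar\lambda$ and $D_x\chi(\cdot,\bar m)=D\bar u$. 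You instead re-run the exponential-rate machinery (Lemma \ref{lem.boundm} and Theorem \ref{thm:CvExpMFG}) with terminal coupling $G=\chi$ and deduce $\lambda=\bar\lambda$ from an integrated identity. This can be made to work — the proof of Theorem \ref{thm:CvExpMFG} only uses monotonicity, a uniform $C^{1,1}$ bound in $x$ (which a weak solution has, since $D_x\chi$ is Lipschitz; the $C^2$ regularity you invoke is not yet available at that point) and not the smoothing hypotheses (FGb)--(FGd) on the terminal coupling, and indeed the paper itself applies Theorem \ref{thm:CvExpMFG} with $G=\chi$ in the proof of Theorem \ref{thm.main} — but you should say explicitly why the theorem applies beyond its stated standing assumptions; the paper's route avoids this issue entirely and is more elementary.

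The one genuine gap is the identification $D_x\chi(\cdot,\bar m)=D\bar u$: the theorem asserts it for \emph{every} weak solution of \eqref{CellMAster}, whereas you prove it only for the particular subsequential limit you constructed, via $\chi^{\delta_n}(\cdot,\bar m^{\delta_n})=\bar u^{\delta_n}-\bar u^{\delta_n}(0)$ and Proposition \ref{prop.Cvbarudelta}. Since uniqueness of solutions up to constants is not known at this stage (it is only obtained later, as a corollary of the long-time convergence), this does not cover the statement. The gap is repairable within your own scheme: for an arbitrary weak solution, your exponential estimate gives $\dk(m(t),\bar m)+\|Du(t)-D\bar u\|_\infty\le C(e^{-\gamma t}+e^{-\gamma(T-t)})$, and since $u(t,\cdot)=\chi(\cdot,m(t))$ with $D_x\chi$ Lipschitz in $m$, evaluating at $t=T/2$ and letting $T\to\infty$ yields $D_x\chi(\cdot,\bar m)=D\bar u$; alternatively, just use the paper's Proposition \ref{prop.key} argument, which delivers uniqueness of the constant and the identification in one stroke.
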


The proof requires several steps. As usual, we build the solution through the discounted problem, for which we have to show uniform regularity estimates (independent of the discount factor). 

\subsection{Estimates for the discounted master equation}

In order to build a solution to the cell problem, we consider, for $\delta>0$, the discounted master equation  
\eqref{e.MasterDiscont}.
Let us recall (see \cite{CDLL}) that $U^\delta$ can be built as follows: for any $m_0\in \Pk$,  let $(u^\delta, m^\delta)$ be the solution to \eqref{e.MFGih}.
Then 
\be\label{rep.Udelta}
U^\delta(x,m_0)= u^\de(0, x). 
\ee

The next Lemma collects standard estimates on $U^\delta$. 
\begin{Lemma}\label{lem:StandEstiUdelta} Let $U^\delta$ be the solution to \eqref{e.MasterDiscont} and  $(u^\delta, m^\delta)$ be a solution to \eqref{e.MFGih}. Then, for any $\alpha\in (0,1)$, there is a constant $C$, independent of $m_0$ and $\delta$, such that 
$$
 \left\|\delta U^\delta( \cdot ,m)\right\|_\infty + \left\|D_xU^\delta(\cdot,m)\right\|_{C^{1+\alpha}}\leq C\qquad \forall  m\in    \Pk.
$$
\end{Lemma}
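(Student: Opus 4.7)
The strategy is to reduce both bounds on $U^\delta(\cdot, m)$ to the corresponding bounds on the solution $u^\delta$ of the discounted MFG system \eqref{e.MFGih}, via the characteristic representation \eqref{rep.Udelta}. Fix $m \in \Pk$ and let $(u^\delta, m^\delta)$ be the unique solution to \eqref{e.MFGih} with initial measure $m_0 := m$. Then $U^\delta(x, m) = u^\delta(0, x)$, so it suffices to estimate $\delta u^\delta(0, \cdot)$ in $L^\infty$ and $D u^\delta(0, \cdot)$ in $C^{1+\alpha}$ with constants independent of $m_0$ and $\delta$.

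The $C^{1+\alpha}$ bound on $D_x U^\delta(\cdot, m)$ follows at once from Lemma \ref{lem.udeltamdelta}, which provides a uniform estimate $\|D u^\delta\|_{C^{(1+\alpha)/2, 1+\alpha}} \leq C$ with $C$ depending only on the data $(H, F, G)$ through assumptions (H), (FG), and in particular independent of the initial datum $m_0 \in \Pk$ and of $\delta \in (0,1)$. Evaluating at $t = 0$ transfers this estimate to $D_x U^\delta(\cdot, m)$.

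For $\delta U^\delta(\cdot, m)$, I would combine the uniform Lipschitz bound on $u^\delta$ with the equation itself. Since $Du^\delta$ is uniformly bounded by Lemma \ref{lem.udeltamdelta}, the term $H(x, Du^\delta)$ is uniformly bounded, and $F$ is bounded on $\T^d \times \Pk$ by assumption (FG). The comparison principle applied to $u^\delta$ against the constant super- and sub-solutions $\pm \delta^{-1}(\|H(\cdot, Du^\delta)\|_\infty + \|F\|_\infty)$ of the stationary equation $\delta w = F - H$ then yields $\|\delta u^\delta\|_\infty \leq C$ uniformly in $\delta$; evaluating at $t = 0$ gives the desired bound on $\delta U^\delta(\cdot, m)$. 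This argument is the one alluded to in the proof of Lemma \ref{lem.udeltamdelta}.

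I do not anticipate any serious obstacle: the representation formula converts the question into one about $u^\delta(0, \cdot)$, and all the needed estimates are already available and uniform in $m_0$. The only subtlety worth flagging is the uniformity in $m_0 \in \Pk$, but this is automatic since the constants in Lemma \ref{lem.udeltamdelta} depend only on the structural bounds on $H$ and $F$ and not on the initial measure, $m_0$ entering merely as the initial datum for the Fokker--Planck equation.
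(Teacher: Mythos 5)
Your proof is correct and follows essentially the same route as the paper: the representation $U^\delta(x,m)=u^\delta(0,x)$ from \eqref{rep.Udelta}, the uniform $C^{(1+\alpha)/2,1+\alpha}$ bound on $Du^\delta$ from Lemma \ref{lem.udeltamdelta} for the $C^{1+\alpha}$ estimate, and a comparison argument for $\|\delta u^\delta\|_\infty$. The only cosmetic difference is that the paper compares with the constants $\pm\delta^{-1}\left(\sup_x|H(x,0)|+\sup_{x,m}|F(x,m)|\right)$ as super/subsolutions of the nonlinear equation itself, so its $L^\infty$ bound on $\delta u^\delta$ does not even require the Lipschitz estimate you invoke.
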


\begin{proof} As $u^\delta$ is a bounded solution to the first equation in \eqref{e.MFGih}, it is well-known  that
$$
\sup_{(t,x)\in [0,+\infty)\times \T^d} \left| \delta u^\delta(t,x)\right|\leq \sup_{x\in \T^d} |H(x,0)|+ \sup_{(x,m)\in \T^d\times \Pk} |F(x,m)|.
$$
This yields the uniform estimate on $\|\delta U^\delta\|_\infty$. From Lemma \ref{lem.udeltamdelta}, we know that $Du^\delta$ is bounded in $C^{(1+\alpha)/2, 1+\alpha}$ for any $\alpha\in (0,1)$: this implies the same bound on $D_xU^\delta$. 
\end{proof}

The next result states that $U^\delta$ is uniformly Lipschitz continuous with respect to $m$. 

\begin{Proposition}\label{prop:UdeltaLip} Let $U^\delta$ be the solution to \eqref{e.MasterDiscont}. Then, for any $\alpha\in (0,1)$,  there exists a constant $C$, depending on $\alpha$ and on the data only, such that
\be\label{regDm}
\left\| D_mU^\delta(\cdot, m, \cdot)\right\|_{2+\alpha,1+\alpha}  \leq C.
\ee
In particular, $U^\delta(\cdot,\cdot)$ and $D_x U^\delta(\cdot,\cdot)$ are  uniformly Lipschitz continuous. 
\end{Proposition}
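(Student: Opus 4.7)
The plan is to leverage the representation formula (established in \cite{CDLL}) that identifies $\frac{\delta U^\delta}{\delta m}$ with solutions of the linearized infinite-horizon system \eqref{e.MFGihLSTER}: if $(v,\mu)$ solves \eqref{e.MFGihLSTER} with zero-mean initial datum $\mu_0$, then
$$
v(0,x)=\int_{\T^d}\frac{\delta U^\delta}{\delta m}(x,m_0,y)\,\mu_0(y)\,dy,
$$
and this pairing extends by duality to any zero-mean distribution $\mu_0\in(C^{2+\alpha})'$. The crucial input is the estimate from Corollary \ref{coro.bounds}: for $\delta<\delta_0$,
$$
\sup_{t\ge 0}\|v(t)\|_{C^{2+\alpha}}\le C\,\|\mu_0\|_{(C^{2+\alpha})'}.
$$
The strategy is to test with carefully chosen distributional perturbations $\mu_0$ to read off each component of the norm $\|D_mU^\delta\|_{2+\alpha,1+\alpha}$.

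First I would take $\mu_0=-\partial_{y_i}\delta_y$: the duality pairing then produces $v(0,x)=(D_mU^\delta(x,m_0,y))_i$, while $\|\partial_{y_i}\delta_y\|_{(C^{2+\alpha})'}\le 1$ uniformly in $y$. Hence $\|D_mU^\delta(\cdot,m_0,y)\|_{C^{2+\alpha}}\le C$, uniformly in $(m_0,y)$, which controls the $\alpha$-Hölder norm of every $x$-derivative of $D_mU^\delta$ up to order two. Next, testing with $\mu_0=-\partial_{y_i}(\delta_y-\delta_{y'})$, whose $(C^{2+\alpha})'$ norm is bounded by $|y-y'|$ (since $\|\partial_{y_i}\phi\|_\mathrm{Lip}\le\|\phi\|_{C^2}\le 1$), yields
$$
\|D_mU^\delta(\cdot,m_0,y)-D_mU^\delta(\cdot,m_0,y')\|_{C^{2+\alpha}}\le C\,|y-y'|,
$$
giving Lipschitz (and hence $\alpha$-Hölder) regularity in $y$ of every $x$-derivative of $D_mU^\delta$ up to order two.

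To obtain the last piece, Hölder control of $D_yD_mU^\delta$, I would test with $\mu_0=-\partial_{y_i}\partial_{y_j}\delta_y$ and $\mu_0=-\partial_{y_i}\partial_{y_j}(\delta_y-\delta_{y'})$. The former has norm bounded uniformly in $y$, giving $\|D_yD_mU^\delta(\cdot,m_0,y)\|_{C^{2+\alpha}}\le C$; the latter, using the $\alpha$-Hölder continuity of $D^2\phi$ for $\|\phi\|_{C^{2+\alpha}}\le 1$, has $(C^{2+\alpha})'$ norm bounded by $C|y-y'|^\alpha$, yielding
$$
\|D_yD_mU^\delta(\cdot,m_0,y)-D_yD_mU^\delta(\cdot,m_0,y')\|_{C^{2+\alpha}}\le C\,|y-y'|^\alpha.
$$
Combining the four estimates produces the bound $\|D_mU^\delta(\cdot,m,\cdot)\|_{2+\alpha,1+\alpha}\le C$. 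The Lipschitz regularity of $U^\delta$ and $D_xU^\delta$ in $m$ follows at once from the boundedness of $D_mU^\delta$ and $D_xD_mU^\delta$ via the standard inequality $|U^\delta(\cdot,m)-U^\delta(\cdot,m')|\le\dk(m,m')\sup\|D_mU^\delta\|_\infty$ recalled in Section \ref{subsec:derivative}.

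The main obstacle is making the duality extension rigorous for singular $\mu_0$: one approximates each Dirac-type distribution by smooth zero-mean mollifications $\mu_0^\varepsilon$, applies Corollary \ref{coro.bounds} to the corresponding smooth solutions $(v^\varepsilon,\mu^\varepsilon)$ to obtain uniform $C^{2+\alpha}$ bounds on $v^\varepsilon(0)$, and passes to the limit using the linearity of the system and local parabolic compactness.
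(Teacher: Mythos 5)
Your argument is correct and follows essentially the same route as the paper: the representation of $\frac{\delta U^\delta}{\delta m}$ through the linearized infinite-horizon system together with the uniform bound $\sup_{t\ge 0}\|v(t)\|_{C^{2+\alpha}}\le C\|\mu_0\|_{(C^{2+\alpha})'}$ of Corollary \ref{coro.bounds} is exactly the paper's proof. The only difference is that you spell out, via the (mollified) Dirac-type test distributions $-\partial_{y_i}\delta_y$, $-\partial_{y_i}(\delta_y-\delta_{y'})$ and their second-order analogues, the duality step which the paper simply defers to \cite{CDLL}; this added detail is consistent with the intended argument.
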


\begin{proof} Let us fix $m_0\in \Pk$, $(u^\delta,m^\delta)$ be the solution to \eqref{e.MFGih}. We use the following representation formula (see \cite{CDLL}): for any smooth map $\mu_0$, we have 
\be\label{rep.deltaUdelta}
\inte \frac{\delta U^\delta}{\delta m}(x,m_0,y)\mu_0(y)dy = v(0,x),
\ee
where $(v,\mu)$ is the unique solution to the linearized system 
\be\label{e.MFGihLS}
\left\{\begin{array}{l}
\ds-\partial_t v +\delta v -\Delta v +H_p(x,Du^\delta).Dv = \frac{\delta F}{\delta m}(x, m^\delta(t))(\mu(t))\qquad {\rm in}\; (0,+\infty)\times \T^d,\\
\ds \partial_t \mu-\Delta \mu -\dive(\mu H_p(x,D u^\delta))-\dive (m^\delta H_{pp}(x,Du^\delta)Dv)= 0\qquad {\rm in}\; (0,+\infty)\times \T^d,\\
\mu(0,\cdot)= \mu_0\; {\rm in}\;  \T^d, \qquad v\; {\rm bounded.} 
\end{array}\right.
\ee
If we suppose that $\inte \mu_0=0$, Corollary \ref{coro.bounds}  states that 
$$
\sup_{t\geq 0}\|v(t)\|_{C^{2+\alpha}}\leq C\|\mu_0\|_{(C^{2+\alpha})'}
$$
for any $\alpha>0$. By \eqref{rep.deltaUdelta} and 
$$
D_y  \frac{\delta U^\delta}{\delta m}(x,m_0,y) = D_m U^\delta(x,m_0,y), 
$$
we infer exactly as in \cite{CDLL} that 
$$
\left\|D_m U^\delta(\cdot,m_0,\cdot)\right\|_{2+\alpha,1+\alpha} \leq C.
$$
\end{proof}

\begin{Remark}  We stress that the uniform Lipschitz continuity of  $U^\delta(\cdot,\cdot)$ and $D_x U^\delta(\cdot,\cdot)$ would require milder assumptions than those needed to prove \rife{regDm}.  Indeed, by only using condition (FG2) on the couplings, we can replace the conclusion of Corollary \ref{coro.bounds} with the estimate
$$
\sup_{t\geq 0}\|v(t)\|_{C^1}\leq C\|\mu_0\|_{(C^{1})'}
$$
which would follow as explained in Remark \ref{milder}. With this latter estimate in hand, 
using \rife{rep.deltaUdelta} with  $\mu_0= D_y \psi(y)$ (for $\psi$ smooth), it follows
$$
 \inte D_y D_x \frac{\delta U^\delta}{\delta m}(x,m_0,y)\psi(y)dy  \leq C\, \|\mu_0\|_{(C^{1})'} \leq C \, \|\psi\|_{L^1}
$$
which yields
$$
\| D_m D_x U^\delta(x, m_0)\|_\infty \leq C\,.
$$
Since $ D_{xx}^2 U^\delta(x, m)$ is estimated from Lemma \ref{lem:StandEstiUdelta}, this would imply the Lipschitz uniform bound for $D_x U^\delta(\cdot,\cdot)$.

In the following, we actually only use this information in  order to prove the existence of  a weak solution to the master equation and the convergence of the ergodic limit. 
\end{Remark}

We finally establish that $U^\delta$ is monotone: 
\begin{Lemma}\label{lem.Udeltamonotone}
For any $\delta>0$ the map $U^\delta$ is monotone. 
\end{Lemma}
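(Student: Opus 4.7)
The plan is to exploit the standard MFG duality identity, adapted to the discounted setting, between two solutions of \eqref{e.MFGih} started from different initial measures.

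Fix $m_0, m_0'\in \Pk$ and let $(u_1^\delta, m_1^\delta)$, $(u_2^\delta, m_2^\delta)$ be the corresponding solutions of the discounted MFG system \eqref{e.MFGih}, so that by the representation \eqref{rep.Udelta} one has $U^\delta(\cdot,m_0)=u_1^\delta(0,\cdot)$ and $U^\delta(\cdot,m_0')=u_2^\delta(0,\cdot)$. Set $w:=u_1^\delta-u_2^\delta$ and $\mu:=m_1^\delta-m_2^\delta$. Differentiating in time, using the HJ equation for each $u_i^\delta$ and the Fokker-Planck equation for each $m_i^\delta$, and integrating by parts (the Laplacian terms cancel), I get
$$
\frac{d}{dt}\inte w\,\mu \;=\; \delta\inte w\,\mu \;-\;\inte (F(x,m_1^\delta)-F(x,m_2^\delta))\,\mu \;+\; I,
$$
where $I=\inte(H_1-H_2)\mu-\inte Dw\cdot(m_1^\delta H_{p,1}-m_2^\delta H_{p,2})$ with the obvious shorthand $H_i=H(x,Du_i^\delta)$, $H_{p,i}=H_p(x,Du_i^\delta)$. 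Rearranging $m_1^\delta H_{p,1}-m_2^\delta H_{p,2}=\mu H_{p,1}+m_2^\delta(H_{p,1}-H_{p,2})$ and reorganizing gives
$$
I=\inte m_1^\delta\bigl[(H_1-H_2)-Dw\cdot H_{p,1}\bigr]-\inte m_2^\delta\bigl[(H_1-H_2)-Dw\cdot H_{p,2}\bigr].
$$
By the uniform convexity \eqref{cond:Hcoercive} of $H$ in $p$, each bracket is controlled by $\pm C^{-1}|Dw|^2$, so that $I\leq -C^{-1}\inte (m_1^\delta+m_2^\delta)|Dw|^2$. Combining with the monotonicity of $F$ (assumption (FGd)), which ensures $\inte(F(x,m_1^\delta)-F(x,m_2^\delta))\mu\geq 0$, I obtain
$$
\frac{d}{dt}\left(e^{-\delta t}\inte w(t)\mu(t)\right)\;\leq\; -C^{-1}e^{-\delta t}\inte (m_1^\delta+m_2^\delta)|Dw(t)|^2\;\leq\;0.
$$

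Thus the map $t\mapsto e^{-\delta t}\inte w(t)\mu(t)$ is non-increasing on $[0,+\infty)$. Since Lemma \ref{lem:StandEstiUdelta} gives $\|w(t)\|_\infty\leq 2\|\delta u^\delta\|_\infty/\delta$ at worst a polynomial-in-$\delta^{-1}$ bound but, more importantly, $w$ is bounded in $L^\infty$ uniformly in $t$ (as $u_i^\delta$ is bounded), and $\mu(t)$ is the difference of two probability measures, $\inte w(t)\mu(t)$ remains bounded in $t$, so $e^{-\delta t}\inte w(t)\mu(t)\to 0$ as $t\to+\infty$. A non-increasing function tending to $0$ at infinity is nonnegative at $t=0$, whence
$$
\inte \bigl(U^\delta(x,m_0)-U^\delta(x,m_0')\bigr)\,d(m_0-m_0')(x)\;=\;\inte w(0)\,\mu(0)\,dx\;\geq\;0.
$$

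No serious obstacle is expected: the only delicate point is the convexity manipulation producing the sign of $I$, which is the classical Lasry-Lions duality computation; the discount factor $\delta$ is absorbed cleanly by the integrating factor $e^{-\delta t}$.
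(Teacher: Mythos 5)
Your proposal is correct and follows essentially the same route as the paper: the discounted Lasry--Lions duality identity showing that $t\mapsto e^{-\delta t}\inte (u_1^\delta-u_2^\delta)(m_1^\delta-m_2^\delta)$ is non-increasing, together with its vanishing at infinity (boundedness of $u^\delta$ and the fact that the $m$'s are probability measures), which forces the value at $t=0$ to be nonnegative. The paper merely states the duality inequality without writing out the convexity/monotonicity computation that you detail, so your argument is just a more explicit version of the same proof.
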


\begin{proof} Fix $m_0, m_0'\in \Pk$.  Let us recall that $U^\delta(x,m_0)= u^\de(0,x)$ where the pair $(u^\de,m^\de)$ solves \eqref{e.MFGih} with initial condition $m_0$. We denote by $(u',m')$ the solution of \eqref{e.MFGih} with initial condition $m_0'$. Then by duality, we have 
$$
\frac{d}{dt} e^{-\delta t} \inte (u^\de(t,x)-u'(t,x))(m^\de(t,x)-m'(t,x))dx \leq 0, 
$$
where, as $u^\de$ and $u'$ are bounded and $m^\de$ and $m'$ are probability measures, 
$$
\lim_{t\to+\infty} e^{-\delta t} \inte (u^\de(t,x)-u'(t,x))(m^\de(t,x)-m'(t,x))dx= 0. 
$$
This proves that 
$$
\inte (U^\delta(x,m_0)-U^\delta(x,m_0'))d(m_0-m_0')(x) =  \inte (u^\delta(0,x)-u'(0,x))d(m_0-m_0')(x)\geq 0.
$$
\end{proof}

\subsection{Existence of a solution for the master cell problem}

\begin{proof}[{\bf Proof of Theorem \ref{thm.MasterCell}}] Let us start with the proof of the existence of the solution to the master cell problem. The proof of the uniqueness of the ergodic constant is given in Proposition \ref{prop.key} below.

For $\delta>0$, let $U^\delta$ be the solution to the discounted master equation \eqref{e.MasterDiscont}. We have seen in Lemma \ref{lem:StandEstiUdelta} and Proposition \ref{prop:UdeltaLip} that $U^\delta$ and $D_x U^\delta$ are uniformly Lipschitz continuous and that $\delta U^\delta$ is bounded. We set $W^\delta(x,m)= U^\delta(x,m)-U^\delta(0,\bar m)$. Then $W^\delta$ is   bounded and uniformly Lipschitz continuous on the compact space $\T^d\times \Pk$, so that it converges, up to a subsequence, to a continuous map $\chi:\T^d\times \Pk\to \R$. Since $D_xW^\delta$ is also bounded in Lipschitz norm, we deduce that $D_x \chi$ is Lipschitz continuous (in $\T^d\times \Pk$). Moreover $(\delta U^\delta(0, \bar m))$ converges (along the same subsequence, without loss of generality) to some constant $\lambda$. 

Next we prove that $\chi$ is a weak solution to \eqref{CellMAster}. We already know that $\chi$ and $D_x\chi$ are Lipschitz continuous with respect to both variables. In addition,  $\chi$ is monotone thanks to Lemma \ref{lem.Udeltamonotone}.  Let $T>0$, $m_0\in \Pk$ with a smooth density and $(w^\delta,m^\delta)$ be the solution to 
$$
\left\{\begin{array}{l}
-\partial_t w^\delta +\delta w^\delta+ \delta U^\delta(0, \bar m) -\Delta w^\delta +H(x,D   w^\delta) = F(x, m^\delta)\qquad {\rm on}\; (0,T)\times \T^d,\\
\partial_tm^\delta -\Delta  m^\delta -\dive( m^\delta H_p(x,Dw^\delta))= 0\qquad {\rm on}\; (0,T)\times \T^d,\\
m^\delta(0,\cdot)=m_0, \; w^\delta(T,\cdot)= W^\delta(x, m^\delta(T))\qquad {\rm on}\;  \T^d.
\end{array}\right.
$$
By definition we  have $W^\delta(x, m^\delta(T))= U^\delta(x,m^\de(T))-U^\delta(0,\bar m)$ and we know that $U^\delta(x,m^\de(t))= u^\de(t,x)$ for all $t$, where $u^\delta$ is a solution to \eqref{e.MFGih}.  Hence we deduce that  
$$
w^\de(t,x)= u^\de(t,x)- U^\de(0,\bar m)= W^\de(x,m(t))
$$
for all $(t,x)\in (0,T)\times \T^d$.  In particular, by Lemma \ref{lem.udeltamdelta}, $w^\delta$ is uniformly bounded in $C^{1+\alpha/2, 2+\alpha}$ for some $\alpha\in (0,1)$ while $m^\delta$ is uniformly bounded and uniformly continuous on $[0,T]$ with values in $\Pk$.
So there exists a subsequence, still denoted for simplicity by $(w^\delta,m^\delta)$, such that $w^\delta$ converges in $C^{1,2}$ to a map $w$ and  $m^\delta$ converges in $C^0([0,T], \Pk)$ to a map $m$. The pair $(w,m)$ is a solution to 
$$
\left\{\begin{array}{l}
-\partial_t w + \lambda -\Delta w +H(x,D  w) = F(x, m)\qquad {\rm in}\; (0,T)\times \T^d,\\
\partial_tm -\Delta  m -\dive( m H_p(x,Dw))= 0\qquad {\rm in}\; (0,T)\times \T^d,\\
m(0,\cdot)=m_0, \; w(T,\cdot)= \chi(x, m(T))\qquad {\rm in}\; \T^d.
\end{array}\right.
$$
As the solution to this equation is unique (because $\chi$ is monotone), we derive that $(w,m)$ is the unique solution to \eqref{MFGmasterCell}. Moreover, as $w^\delta(0,x)= W^\delta(x,m_0)$, we also have at the limit $w(0,x)= \chi(x, m_0)$. This proves that $\chi$ is a weak solution to \eqref{CellMAster}. 
\end{proof}

Let us now come back to the ergodic MFG problem \eqref{e.MFGergo}. We denote by $(\bar \lambda, \bar u, \bar m)$ the solution to this equation. 

\begin{Proposition}\label{prop.key} Let $(\lambda, \chi)$ be a solution of the ergodic master equation. Then we have $\lambda=\bar \lambda$ and $D_x\chi(x, \bar m)= D\bar u(x)$.
\end{Proposition}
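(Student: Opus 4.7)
The plan is to specialize the weak solution property to the initial measure $m_0 = \bar m$. Let $(u^T, m^T)$ be the unique solution of \eqref{MFGmasterCell} with $m^T(0) = \bar m$; uniqueness holds because $\chi$ is monotone. Applying the weak solution property on $[0,T]$ and, by restriction, on $[t_0, T]$ for each $t_0 \in [0,T]$ (where one uses that the restriction of $(u^T, m^T)$ is the unique MFG solution with initial datum $m^T(t_0)$), yields $u^T(t, x) = \chi(x, m^T(t))$ for all $t \in [0,T]$. In particular $Du^T(t, x) = D_x\chi(x, m^T(t))$ is uniformly bounded in $(t, T)$ and the terminal datum $u^T(T) = \chi(\cdot, m^T(T))$ has uniformly bounded $C^2$ norm in space.

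First I would compare $(u^T, m^T)$ with the stationary pair $(\bar u, \bar m)$ by the standard duality. Subtracting the two HJB equations produces an extra constant $\lambda - \bar\lambda$, but this cancels upon pairing with $m^T - \bar m$ since $\int(m^T - \bar m) = 0$. Using the uniform convexity of $H$, the monotonicity of $F$, and $\bar m \geq c_0 > 0$, I would derive
\[
\frac{d}{dt}\inte (u^T - \bar u)(m^T - \bar m)\, dx \leq - C^{-1} \inte |Du^T - D\bar u|^2\, dx.
\]
Integrating from $0$ to $T$, the $t=0$ boundary term vanishes because $m^T(0) = \bar m$, while the $t=T$ boundary term equals $\int(\chi(\cdot, m^T(T)) - \bar u)(m^T(T) - \bar m)$, which by the monotonicity of $\chi$ is bounded below by $\int(\chi(\cdot, \bar m) - \bar u)(m^T(T) - \bar m)$, hence is bounded in absolute value by a constant independent of $T$. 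This yields the $T$-independent bound $\int_0^T \inte |Du^T - D\bar u|^2 \leq C$.

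With this energy estimate, I would then rerun the argument of Lemma \ref{lem.boundm}. Its proof depends only on the uniform spatial Lipschitz bound for $u^T$ (granted by Lemma \ref{lem.SemiConc} together with the uniform $C^2$ bound on $\chi(\cdot, m^T(T))$), the cancellation $\int(m^T - \bar m) = 0$, and $L^2$-to-$L^\infty$ parabolic smoothing; the particular value of the ergodic constant never enters. Consequently, for any $\ep > 0$ there exist $\hat T > \tau > 0$ such that for $T \geq \hat T$ and $t \in [\tau, T-\tau]$,
\[
\|m^T(t) - \bar m\|_{C^\alpha} + \|Du^T(t) - D\bar u\|_{C^\alpha} \leq \ep.
\]
Since $Du^T(t, \cdot) = D_x\chi(\cdot, m^T(t))$ and $D_x\chi$ is Lipschitz in $m$, we have $\|D_x\chi(\cdot, \bar m) - D\bar u\|_\infty \leq C\ep + C\, \dk(m^T(t), \bar m) \leq C\ep$; letting $\ep \to 0$ gives $D_x\chi(\cdot, \bar m) = D\bar u$.

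Finally, because $D_x\chi(\cdot, \bar m) = D\bar u$, the McKean-Vlasov equation \eqref{e.McKV} with $m_0 = \bar m$ admits $m(t) \equiv \bar m$ as a solution (since $-\Delta \bar m - \dive(\bar m H_p(x, D\bar u)) = 0$ by \eqref{e.MFGergo}), and uniqueness for \eqref{e.McKV} (the drift $H_p(x, D_x\chi(x, m))$ is Lipschitz in $m$) forces $m^T(t) \equiv \bar m$. Therefore $u^T(t, x) = \chi(x, \bar m)$ is time-independent; since $D_x\chi(\cdot, \bar m) = D\bar u$, we have $\chi(\cdot, \bar m) = \bar u + c$ for some constant $c$. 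Substituting into the HJB equation of \eqref{MFGmasterCell} gives $\lambda - \Delta \bar u + H(x, D\bar u) = F(x, \bar m)$, which combined with the ergodic equation \eqref{e.MFGergo} for $\bar u$ yields $\lambda = \bar\lambda$. The main obstacle is the careful adaptation of Lemma \ref{lem.boundm}: one must verify that every step uses only spatial gradient estimates and the zero-mean cancellation, rather than absolute $L^\infty$ bounds on $u^T$, which could otherwise grow linearly in $T$ when $\lambda \neq \bar\lambda$.
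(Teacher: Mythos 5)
Your proof is correct, and it reaches the conclusion by a mechanism that differs from the paper's in the key quantitative step. Both arguments start identically: initialize at $m_0=\bar m$, use the flow property of the weak solution to write $u^T(t,\cdot)=\chi(\cdot,m^T(t))$ (so $Du^T$ is bounded uniformly, with no need of Lemma \ref{lem.SemiConc}), and run the duality estimate, in which the unknown constant $\lambda-\bar\lambda$ pairs to zero against the zero-mean measure $m^T-\bar m$. From there the paper exploits the fact that the trajectory is \emph{independent of the horizon}: since $m^T$ solves the autonomous McKean--Vlasov equation \eqref{e.McKV} with Lipschitz drift, $(u,m)$ does not depend on $T$, and a mean-value choice of $t_T\in[T/2,T]$ together with Poincar\'e's inequality and $\sup_t\|m(t)-\bar m\|_{L^2}\le C$ gives $\int_0^1\inte|Du-D\bar u|^2\le CT^{-1/2}$ for every $T$; letting $T\to\infty$ yields $Du=D\bar u$ exactly on $[0,1]$, then Fokker--Planck uniqueness gives $m\equiv\bar m$, $D_x\chi(\cdot,\bar m)=D\bar u$ and $\lambda=\bar\lambda$. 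You instead rerun the turnpike Lemma \ref{lem.boundm} for the system with constant $\lambda$ in place of $\bar\lambda$ to get $\varepsilon$-smallness of $\|Du^T(t)-D\bar u\|$ and $\|m^T(t)-\bar m\|$ on middle time intervals, and then use the Lipschitz continuity of $D_x\chi$ in $m$ to pass to the exact identity $D_x\chi(\cdot,\bar m)=D\bar u$, finishing as in the paper via McKean--Vlasov uniqueness. The point you flag as the main obstacle is indeed the only delicate one, and your justification is the right one: in the proof of Lemma \ref{lem.boundm} the constant only ever appears either paired with $m-\bar m$ (where it cancels by zero mean) or as a constant source in the equation for $v=u-\bar u$, where it disappears after replacing $v$ by $v-(\bar\lambda-\lambda)(T-t)$, which changes neither $v-\lg v\rg$ nor $Dv$, so the estimates of Lemmas \ref{lemCLLP2.1}--\ref{lemCLLP2.2} apply unchanged. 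What each approach buys: the paper's argument is shorter and self-contained (it never touches Lemma \ref{lem.boundm} and gets an exact identity on a fixed interval by a clean $T\to\infty$ limit along the $T$-independent trajectory, a fact you have available but do not use), while yours trades that for a quantitative turnpike statement valid for an arbitrary ergodic constant plus the Lipschitz-in-$m$ regularity of $\chi$, at the cost of re-verifying the lemma. One cosmetic remark: the weak solution only guarantees $\chi(\cdot,m)\in C^{1,1}$ uniformly (Lipschitz $D_x\chi$), not a uniform $C^2$ bound on the terminal datum, but as you note this is immaterial since the Lipschitz bound on $u^T$ comes directly from $u^T(t,\cdot)=\chi(\cdot,m^T(t))$; likewise the monotonicity of $\chi$ in your boundary-term estimate can be replaced by the mere boundedness of $\chi$ and $\bar u$, as in the paper.
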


\begin{proof} Let us fix $T>0$ and let $(u,m)$ be the solution to 
\be\label{MFGmasterCell-bar}
\left\{\begin{array}{l}
-\partial_t u +  \lambda -\Delta u +H(x,D   u) = F(x, m)\qquad {\rm in}\; (0,T)\times \T^d,\\
\partial_tm -\Delta  m -\dive( m H_p(x,Du))= 0\qquad {\rm in}\; (0,T)\times \T^d,\\
m(0,\cdot)=\bar m, \; u(T,\cdot)= \chi(x, m(T))\qquad {\rm in}\;  \T^d.
\end{array}\right.
\ee
We have already noticed that $m$ is the solution to the McKean-Vlasov equation
$$
\partial_tm -\Delta  m -\dive( m H_p(x,D_x\chi(x,m(t))))= 0, \qquad m(0,\cdot)=\bar m,
$$ 
which has a unique solution because $D_x\chi$ is Lipschitz continuous. This means that $m$ is defined independently of the horizon $T$. As we know that 
$u(t,x)= \chi(x,m(t))$, the same holds for $u$. 
Then, from the usual  energy inequality applied to $(u-\bar u, m-\bar m)$, we have, for any $0\leq t_1\leq t_2\leq T$, 
\be\label{kjargezlsd}
\int_{t_1}^{t_2} \inte \frac{m+\bar m}{2}|Du-D\bar u|^2 \leq 
-C \left[ \inte (u-\bar u)(m-\bar m)\right]_{t_1}^{t_2}. 
\ee
The right-hand side is bounded because $u(t,\cdot)=\chi(\cdot,m(t))$ and $\bar u$ are bounded, so that
\be\label{khjzbqfsdn}
\int_0^T \inte \bar m |Du-D\bar u|^2\leq C. 
\ee
By Lemma \ref{lem.fromm0meas} we have 
\be\label{boundm-barm}
\sup_{t\in [0,T]} \|m(t)-\bar m\|_{L^2}\leq C. 
\ee
As $\bar m$ is bounded below, \eqref{khjzbqfsdn} implies that there exists $t_T\in [T/2,T]$ such that $\inte |Du(t_T)-D\bar u|^2\leq 2C/T$. 
In particular, for $T$ large enough, we have, by \eqref{kjargezlsd} applied with $t_1=0$ and $t_2= t_T$,  
$$
\begin{array}{rl}
\ds \int_0^1 \inte |Du-D\bar u|^2 \;  \leq & \ds \int_0^{t_T} \inte |Du-D\bar u|^2\leq -C\inte (u(t_T)-\bar u)(m(t_T)-\bar m) \\
\leq & \ds -C\inte (u(t_T)-\bar u-\lg u(t_T)-\bar u\rg)(m(t_T)-\bar m)\\
\leq & \ds C \|Du(t_T)-D\bar u\|_{L^2} \; \leq \; C T^{-1/2},
\end{array}
$$
by Poincar\'e's inequality, \eqref{boundm-barm} and our choice of $t_T$. 
 Letting $T\to \infty$ we can conclude that $Du=D\bar u$ on $[0,1]\times \T^d$. Therefore, $m$ satisfies 
$$
\partial_tm -\Delta  m -\dive( m H_p(x,D\bar u(x)))= 0\; {\rm on }\; (0,1)\times \T^d, \qquad m(0,\cdot)=\bar m.
$$ 
But this equation has $\bar m$ as a unique solution, which shows that $m(t,x)=\bar m(x)$ on $[0,1]\times \T^d$. The McKean-Vlasov equation \eqref{e.McKV} being autonomous, we finally have $m(t)=\bar m$ and $Du(t,x)= D_x\chi(x,\bar m)=D\bar u(x)$ for any $(t,x)\in [0,T]\times \T^d$ and, as a consequence, $\lambda = \bar \lambda$.  
\end{proof}

\section{The long time behavior}\label{sec:longtime}

We now fix a solution $\chi$ to the master cell problem and, given a terminal condition $G:\T^d\times \Pk\to \R$ satisfying our standing assumptions (see Subsection \ref{subsec:derivative}), we consider the solution to the backward equation
\be\label{MasterEq}
\left\{\begin{array}{l}
\ds -\partial_t U(t,x,m) -\Delta_x U(t,x,m) +H(x,D_xU(t,x,m)) -\inte \dive(D_mU(t,x,m,y))dm(y) \\
\ds \hspace{1cm}  +\inte D_mU(t,x,m,y). H_p(y, D_x U(t,y,m))dm(y) = F(x,m) \quad  {\rm in }\; (-\infty, 0)\times \T^d\times \Pk,\\
\ds U(0,x,m)= G(x,m) \; {\rm in }\;  \T^d\times \Pk\,.
\end{array}\right.
\ee
We recall that the existence of  a unique classical solution to \eqref{MasterEq} was proved in \cite{CDLL}. Here is our main convergence result.
 
\begin{Theorem}\label{thm.main} Let $\chi$ be a  weak solution to the master cell problem \eqref{CellMAster}. Then,  there exists a constant $c\in \R$ such that 
$$
\lim_{t\to -\infty} U(t,x, m)+\bar \lambda t= \chi(x,m)+c, 
$$
uniformly with respect to $(x,m)\in \T^d\times \Pk$. 

Moreover, we also have that $ D_x U(t,x, m) \to D_x \chi(x,m)$ as $T\to \infty$, uniformly with respect to $(x,m)$.
\end{Theorem}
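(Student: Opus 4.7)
The strategy hinges on the characteristic identity $U(-T,x,m_0)=u^T(0,x)$, where $(u^T,m^T)$ solves the finite-horizon MFG system \eqref{intro.MFG} on $[0,T]$ with initial $m_0$ and terminal $G(\cdot,m^T(T))$. Setting $V(t,x,m):=U(t,x,m)+\bar\lambda t$, so that $V(-T,x,m_0)=u^T(0,x)-\bar\lambda T$, the aim is to prove $V(-T,\cdot,\cdot)\to \chi+c$ uniformly on $\T^d\times \Pk$ for some $c\in\R$. A preliminary task is to establish uniform (in $T$ and $m_0$) bounds on $V$: $L^\infty$-boundedness follows from the last estimate of Theorem \ref{thm:CvExpMFG}; Lipschitz regularity in $x$ from the semi-concavity bound of Lemma \ref{lem.SemiConc}; and Lipschitz regularity in $m$ with respect to $\dk$ from the representation $\inte \frac{\delta U}{\delta m}(-T,x,m_0,y)\mu_0(y)\,dy=v(0,x)$ combined with the estimate $\sup_t\|v(t)\|_{C^{2+\alpha}}\leq C\|\mu_0\|_{(C^{2+\alpha})'}$ of Corollary \ref{coro.unifBoundT}, which yields $\|D_mU(-T,x,m,\cdot)\|_\infty\leq C$.

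The crucial step is to upgrade the convergence $V(-T,\cdot,\bar m)\to \bar u+\bar c$ of Proposition \ref{prop:CvuT(0)} to arbitrary $m_0$. For general $m_0$, I would first invoke Lemma \ref{lem.boundm} to choose $\tau_0=\tau_0(m_0)$ such that $\|m^T(\tau_0)-\bar m\|_{C^\alpha}$ is small for all large $T$; then the fixed-point contraction from the proof of Theorem \ref{thm:CvExpMFG} applies on $[\tau_0,T-\tau]$, giving exponential decay of $Du^T-D\bar u$ and $m^T-\bar m$ on this interval and, after spatial integration of the $u^T-\bar u$ equation, $\|u^T(\tau_0)-u^T(T-\tau)-\bar\lambda(T-\tau-\tau_0)\|_\infty\leq C\varepsilon^{1/2}$. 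Combining this with the Cauchy property $\|u^T(T-\tau)-u^{T'}(T'-\tau)\|_\infty\to 0$ (the step~1 argument of Proposition \ref{prop:CvuT(0)}, valid for arbitrary $m_0$ since the shifted MFG on $[T-\tau,T]$ starts from a measure close to $\bar m$ by Theorem \ref{thm:CvExpMFG}) and the nonlinear stability of the MFG on the short interval $[0,\tau_0]$ under perturbation of the terminal data by constants, one deduces $\|V(-T,\cdot,m_0)-V(-T',\cdot,m_0)\|_\infty\to 0$ as $T,T'\to+\infty$. Hence $V(-T,\cdot,m_0)\to \chi^*(\cdot,m_0)$ pointwise in $m_0$, and the uniform Lipschitz bound in $m$ upgrades this to uniform convergence on $\T^d\times\Pk$. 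The limit $\chi^*$ is continuous, Lipschitz in $(x,m)$ with Lipschitz $D_x\chi^*$, is monotone (inherited from $U$), and satisfies $\chi^*(\cdot,\bar m)=\bar u+\bar c$.

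Next I would identify $\chi^*$ as a weak solution of \eqref{CellMAster}. For any $m_0$ and $T_0>0$, the shifted pair $\hat u^n(t,x):=u^{T_n+T_0}(t,x)-\bar\lambda(T_n+T_0-t)$ together with $m^{T_n+T_0}$, restricted to $[0,T_0]$, solves the ergodic MFG system \eqref{MFGmasterCell} with $\lambda=\bar\lambda$, and the characteristic identity reads $\hat u^n(t,x)=V(-(T_n+T_0-t),x,m^{T_n+T_0}(t))$. Uniform parabolic regularity from Theorem \ref{thm:CvExpMFG} produces a (sub)limit $(\hat u^\infty,m^\infty)$ solving the ergodic MFG on $[0,T_0]$ with initial $m_0$ and, by the already-established full convergence of $V$, terminal value $\chi^*(\cdot,m^\infty(T_0))$ and initial value $\hat u^\infty(0,\cdot)=\chi^*(\cdot,m_0)$---exactly condition (ii) of Definition \ref{defweakmaster}.

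Finally, I would invoke a uniqueness lemma: two weak solutions $\chi_1,\chi_2$ of \eqref{CellMAster} with $\chi_1(\cdot,\bar m)=\chi_2(\cdot,\bar m)$ coincide. For arbitrary $m_0$, the duality inequality \eqref{dualrela} applied to the characteristics $(u_i,m_i)$, together with exponential convergence $m_i(T)\to\bar m$ (Theorem \ref{thm:CvExpMFG} applied to terminal data $\chi_i$) and equality at $\bar m$, yields $\int_0^{+\infty}\inte(m_1+m_2)|Du_1-Du_2|^2=0$, so $Du_1=Du_2$, $m_1=m_2=:m$, and $u_1-u_2\equiv c(t)$ is spatially constant; subtracting the HJ equations forces $\dot c\equiv 0$, so $c(t)\equiv f(m_0):=\chi_1(\cdot,m_0)-\chi_2(\cdot,m_0)$, and sending $t\to+\infty$ together with continuity of $f$ gives $f(m_0)=f(\bar m)=0$. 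By Proposition \ref{prop.key}, $\chi(\cdot,\bar m)=\bar u+c_0$ for some $c_0\in\R$, so applying uniqueness to $(\chi^*-(\bar c-c_0),\chi)$ yields $\chi^*=\chi+c$ with $c:=\bar c-c_0$, independent of any subsequence. The convergence $D_xU(t,\cdot,\cdot)\to D_x\chi$ follows from the uniform Lipschitz bound on $D_xV$ and Arzelà--Ascoli. \emph{The main obstacle} is the extension of Proposition \ref{prop:CvuT(0)} to arbitrary $m_0$: unlike the case $m_0=\bar m$, the fixed-point contraction of Theorem \ref{thm:CvExpMFG} cannot be applied directly at $t=0$, and one must carefully piece together contraction on the long middle interval $[\tau_0,T-\tau]$ with nonlinear stability on the short residual interval $[0,\tau_0]$, where the measure has not yet equilibrated near $\bar m$.
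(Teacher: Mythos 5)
Your overall architecture (uniform Lipschitz bounds via Lemma \ref{lem.SemiConc} and Corollary \ref{coro.unifBoundT}, identification of the limit as a weak solution, uniqueness of weak solutions agreeing at $\bar m$) uses the same ingredients as the paper, and your uniqueness lemma is essentially the argument the paper runs inside its own proof (recorded there as Corollary \ref{cor:uniquergo}). But your crucial step --- upgrading Proposition \ref{prop:CvuT(0)} to arbitrary $m_0$ by a direct Cauchy-in-$T$ argument --- has a genuine gap: the ``nonlinear stability of the MFG on the short interval $[0,\tau_0]$ under perturbation of the terminal data by constants'' is asserted, not proved, and the quantifier bookkeeping there is exactly where the difficulty sits. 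As written, you take $\tau_0=\tau_0(\varepsilon)$ from Lemma \ref{lem.boundm} and obtain a middle-interval error of order $\varepsilon^{1/2}$ at $t=\tau_0$; but any estimate transporting a terminal perturbation at $t=\tau_0$ back to $t=0$ through the forward-backward coupling carries a constant depending on $\tau_0$ (at least a factor $\tau_0$ from integrating the source $F(\cdot,m_1)-F(\cdot,m_2)$ in the comparison argument, and an exponential Gronwall factor if one bounds $\dk(m_1(t),m_2(t))$ naively), and since $\tau_0(\varepsilon)\to\infty$ as $\varepsilon\to 0$, the product of that constant with $\varepsilon^{1/2}$ need not be small. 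To close the step one would have to (i) fix $\tau_0$ and use Theorem \ref{thm:CvExpMFG} (whose constants are uniform in $m_0$) to make the terminal discrepancy of size $e^{-\gamma\tau_0}$ rather than a mere $\varepsilon$, and (ii) prove a stability estimate on $[0,\tau_0]$, via the duality inequality and Lemma \ref{lemCLLP2.1}, whose constant grows at most polynomially in $\tau_0$, with additional care near $t=0$ where a general $m_0\in\Pk$ may be singular and $\|m^T(t)\|_\infty$ is not uniformly bounded. Neither of these is available as a ready-made lemma in the paper, and your sketch does not carry them out.

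This is precisely the point where the paper takes a different route. Instead of proving a Cauchy property for each $m_0$, it extracts a locally uniform subsequential limit $V$ of $U(\cdot-T,\cdot,\cdot)+\bar\lambda(\cdot-T)$ (using the same Lipschitz estimates you invoke), observes that $V$ and $\chi$ generate time-independent McKean--Vlasov flows from any $m_0$, and compares the two flows by the duality estimate on $[0,T]$ with $T\to\infty$: since both measure flows converge to $\bar m$, the duality bracket at time $T$ vanishes in the limit, forcing $Du=D\tilde u$, $m=\tilde m$ and $u=\tilde u+c(m_0)$; the constant is then shown to be independent of $m_0$ by letting $t\to\infty$ along the flows and using the value at $\bar m$ furnished by Proposition \ref{prop:CvuT(0)}, and finally time translations give the full limit. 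That argument needs no quantitative initial-layer stability estimate at all, which is exactly the piece your plan leaves open; if you want to keep your route, item (ii) above is the lemma you must state and prove.
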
 

Theorem \ref{thm.main} implies  the convergence of the solution of the MFG system as $T\to +\infty$. 

\begin{Corollary} \label{cor:krjehnrf} Let $c$ be the constant given in Theorem \ref{thm.main}. For $T>0$ and $m_0\in\Pk$, let $(u^T,m^T)$ be the solution to \eqref{intro.MFG}. Then, for any $t\geq 0$,  
$$
\lim_{T\to+\infty} u^T(t,x) -\bar \lambda (T-t)= \chi(x,m(t))+c,
$$
where the convergence is uniform in $x$ and $m$ solves 
\be\label{mvla}
\partial_t m-\Delta m-\dive(mH_p(x,D_x\chi(x,m)))=0, \qquad m(0)=m_0.
\ee
Moreover, for any $\delta\in(0,1)$, 
$$
\lim_{T\to+\infty} u^T(\delta T,x) -(1-\delta)\bar \lambda T= \chi(x, \bar m)+c,
$$
where $(\bar u, \bar m)$ solves \eqref{e.MFGergo} and where the convergence is uniform in $x$. 
\end{Corollary}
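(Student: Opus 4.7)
My plan is to exploit the characteristic property of the master equation: for any $t\in [0,T]$, since the restriction of $(u^T,m^T)$ to $[t,T]$ is itself the unique MFG solution on $[t,T]$ with initial measure $m^T(t)$ and terminal condition $G$, we have
$$
u^T(t,x) = U(t-T, x, m^T(t)), \qquad Du^T(t,x)= D_xU(t-T, x, m^T(t)).
$$
Granted this, both parts reduce to combining Theorem \ref{thm.main} with an appropriate convergence for the measure trajectory $m^T$.

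For (i), fix $t\geq 0$ and write
$$
u^T(t,x) - \bar\lambda(T-t) = U(t-T,x, m^T(t)) + \bar\lambda(t-T).
$$
By Theorem \ref{thm.main}, $U(s,\cdot,\cdot) + \bar\lambda s \to \chi + c$ uniformly on $\T^d\times \Pk$ as $s\to -\infty$, and $D_xU(s,\cdot,\cdot)\to D_x\chi$ uniformly as well. So the task reduces to proving $m^T(t)\to m(t)$ in $\Pk$, where $m$ solves \eqref{mvla}. I would establish this by a compactness/stability argument on any $[0,T_0]$: Lemma \ref{lem.SemiConc} yields a uniform $L^\infty$ bound on the drift $H_p(\cdot, Du^T)$, so by \eqref{holder-m} the family $(m^T)$ is equicontinuous in $C([0,T_0],\Pk)$. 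Extracting a limit $m^*$, I would pass to the limit in the Fokker--Planck equation using the uniform convergence $D_xU(t-T, x, \mu) \to D_x\chi(x,\mu)$ together with the continuity of $D_x\chi$ in $\mu$, concluding that $m^*$ solves \eqref{mvla}. Since $D_x\chi$ is Lipschitz (Theorem \ref{thm.MasterCell}), \eqref{mvla} has a unique solution, hence $m^*=m$ and the full sequence converges.

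For (ii), I would substitute $t=\delta T$ into the characteristic identity to obtain
$$
u^T(\delta T,x) - (1-\delta)\bar\lambda T = U(-(1-\delta)T, x, m^T(\delta T)) - \bar\lambda(1-\delta)T.
$$
Since $-(1-\delta)T\to-\infty$, Theorem \ref{thm.main} gives that the right-hand side converges to $\chi(x,\mu)+c$ uniformly in $(x,\mu)$. Meanwhile, by Theorem \ref{thm:CvExpMFG}, $\|m^T(\delta T)-\bar m\|_{C^\alpha}\leq C(e^{-\gamma\delta T}+e^{-\gamma(1-\delta)T})\to 0$, because $\delta T$ stays bounded away from both $0$ and $T$. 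Using the Lipschitz dependence of $\chi$ on $m$ (Theorem \ref{thm.MasterCell}), the two convergences combine to yield the limit $\chi(x,\bar m)+c$, uniformly in $x$.

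The main obstacle is the compactness argument in (i): the drift of the Fokker--Planck equation for $m^T$ depends on $T$ both through the ``master'' slot $D_xU(t-T,\cdot,\cdot)$ and through $m^T$ itself, so one needs a coupled limiting argument. This is made tractable by the \emph{uniform-in-$\mu$} convergence $D_xU(s,x,\mu)\to D_x\chi(x,\mu)$ from Theorem \ref{thm.main}, which decouples the two sources of $T$-dependence at the limit.
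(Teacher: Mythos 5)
Your proposal is correct and follows essentially the same route as the paper: both rest on the characteristic identity $u^T(t,x)=U(t-T,x,m^T(t))$, the uniform convergences of $U+\bar\lambda t$ and $D_xU$ from Theorem \ref{thm.main}, the convergence of $m^T$ to the solution of \eqref{mvla} (which the paper asserts via stability of the McKean--Vlasov equation and you justify, equivalently, by equicontinuity plus passage to the limit and uniqueness from the Lipschitz regularity of $D_x\chi$), and Theorem \ref{thm:CvExpMFG} for the second limit. The extra detail you give on the compactness--uniqueness step for $m^T$ is a faithful elaboration of the paper's brief claim rather than a different argument.
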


In particular, when $t=0$, we get
$$
\lim_{T\to+\infty} u^T(0,x) -\bar \lambda T= \chi(x,m_0)+c.
$$

\begin{proof}[Proof of Corollary \ref{cor:krjehnrf}] We know that 
$u^T(t,x)= U(t-T,x,m^T(t))$ and that $m^T$ solves the McKean-Vlasov equation
$$
\partial_t m^T-\Delta m^T-\dive(m^TH_p(x,D_xU(t-T,x,m)))=0, \qquad m^T(0)=m_0.
$$
As $x\to D_xU(t,x,m)$ is bounded in $C^1$ (see Proposition \ref{prop.reguU} below),  we know from Theorem \ref{thm.main} that, as $T\to +\infty$,   $(D_xU(t-T,\cdot,\cdot))$ converges  uniformly to $D_x\chi$. So, for any $t\geq 0$, $m^T$ converges  in $C^0([0,t],\Pk)$ towards $m$ solution of \rife{mvla}. Then again by Theorem \ref{thm.main}, we have 
$$
\lim_{T\to+\infty}  u^T(t,x) +\bar \lambda(t-T)= 
\lim_{T\to+\infty}  U(t-T,x,m^T(t))+\bar \lambda(t-T)= \chi(x,m(t))+c. 
$$

Let us now fix $\delta>0$. From Theorem \ref{thm:CvExpMFG}, we have that $m^T(\delta T)$ converges (exponentially fast) to $\bar m$. Hence, by Theorem \ref{thm.main} again, we have 
$$
\ds \lim_{T\to+\infty}  u^T(\delta T,x) -(1-\delta)\bar \lambda T = \lim_{T\to+\infty}  U(-(1-\delta)T,x,m^T(\delta T))-(1-\delta)\bar \lambda T= \chi(x,\bar m)+c. 
$$

\end{proof} 
 
 The proof of Theorem \ref{thm.main} relies on estimates on $U(t,\cdot,\cdot)$ (independent of $t$) developed in the next section.  

\subsection{Lipschitz estimates of the solution  $U$}

We collect here the main estimates satisfied by the solution of \eqref{MasterEq}. They actually follow from the estimates developed in Section \ref{stimeT} for the solution $(u,m)$ of the MFG system.

\begin{Proposition}\label{prop.reguU}
Let $U$ be a solution to the master equation \eqref{MasterEq}.
Then  there exists a constant $C$ such that 
\be\label{liuzehaezsdj}
\sup_{t\leq0, \ m\in \Pk}  \| U(t,\cdot, m)+ \bar \lambda t \|_{C^{2+\alpha}} + \|D_mU(t, \cdot, m, \cdot)\|_{2+\alpha,1+\alpha} \leq C,
\ee
while 
$$
\sup_{(x,m)\in \T^d\times \Pk} |U(t,x,m)-U(s,x,m)| \leq C|t-s|^{\frac12}\qquad \forall s,t\leq 0, \; |s-t|\leq 1.
$$ 
\end{Proposition}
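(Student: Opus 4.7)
The plan is to transfer every estimate back to the finite-horizon MFG system via the characteristic identity $U(t,x,m)=u^T(0,x)$ with $T:=-t$, where $(u^T,m^T)$ solves \eqref{intro.MFG} on $[0,T]$ with initial datum $m$ and terminal datum $G(\cdot,m^T(T))$. In this way bounds on $U(t,\cdot,m)$ that are uniform for $t\le 0$ will follow from bounds on $u^T(0,\cdot)$ that are uniform for $T\ge 0$.

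For the $C^{2+\alpha}$ bound on $U(t,\cdot,m)+\bar\lambda t$, first I would invoke Theorem \ref{thm:CvExpMFG}, which simultaneously gives $|u^T(0,x)-\bar u(x)-\bar\lambda T|\le C$ (so $\|U(t,\cdot,m)+\bar\lambda t-\bar u\|_{\infty}\le C$) and $\|Du^T(0)-D\bar u\|_{C^{1+\alpha}}\le C$ (so $\|D_xU(t,\cdot,m)\|_{C^{1+\alpha}}\le C$). Combining the $L^\infty$ bound with the $C^{1+\alpha}$ bound on the gradient yields the full $C^{2+\alpha}$ estimate on $U(t,\cdot,m)+\bar\lambda t$.

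The core step is the bound on $D_mU$. Here I would rely on the representation formula of \cite{CDLL},
$$
\int_{\T^d}\frac{\delta U}{\delta m}(t,x,m,y)\,\mu_0(y)\,dy=v(0,x),
$$
where $(v,\mu)$ is the unique solution to the linearized MFG system around $(u^T,m^T)$, with $\mu(0)=\mu_0$ of zero average and terminal condition inherited from $G$. Corollary \ref{coro.unifBoundT} delivers, uniformly in $T$,
$$
\sup_{t\in[0,T]}\|v(t)\|_{C^{2+\alpha}}\le C\,\|\mu_0\|_{(C^{2+\alpha})'}.
$$
Testing this estimate with the distributions $\mu_0=-\partial_{y_i}\delta_y$, then with $\mu_0=-\partial^2_{y_iy_j}\delta_y$, and finally with
$$
\mu_0=|y-y'|^{-\alpha}\bigl(-\partial^2_{y_iy_j}\delta_y+\partial^2_{y_iy_j}\delta_{y'}\bigr),
$$
each of which has $(C^{2+\alpha})'$-norm bounded by a universal constant (the last one because $[\partial^2\phi]_{C^\alpha}\le\|\phi\|_{C^{2+\alpha}}$), one extracts respectively: the uniform bound on $D_mU(t,\cdot,m,y)$ in $C^{2+\alpha}_x$, the uniform bound on $D_yD_mU(t,\cdot,m,y)$ in $C^{2+\alpha}_x$, and the uniform $\alpha$-Hölder continuity of $y\mapsto D_yD_mU(t,\cdot,m,y)$ with values in $C^{2+\alpha}_x$. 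Put together, these are exactly the $\|\cdot\|_{2+\alpha,1+\alpha}$ bound claimed.

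For the time regularity, once the previous spatial estimates are in hand every term on the right-hand side of the master equation \eqref{MasterEq},
$$
\partial_tU=-\Delta_xU+H(x,D_xU)-F(x,m)-\int_{\T^d}\dive\bigl(D_mU\bigr)\,dm+\int_{\T^d}D_mU\cdot H_p(y,D_xU)\,dm,
$$
is uniformly bounded in $(t,x,m)$. Hence $\partial_tU$ is uniformly bounded, which yields a Lipschitz bound of $U$ in $t$; restricting to $|t-s|\le 1$ gives a fortiori the stated $|t-s|^{1/2}$ control. The main obstacle is the step on $D_mU$: a single $C^{2+\alpha}_x$ bound on $v(0)$ must be exploited to produce joint regularity in $x$ and $y$ of $D_mU$, which forces the careful choice of the test distributions $\mu_0$ above (derivatives and finite-difference quotients of Dirac masses) whose $(C^{2+\alpha})'$-norms remain uniformly controlled.
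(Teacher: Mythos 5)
Your proposal is correct, and for the spatial estimates it follows essentially the paper's own route: the characteristic identity $U(t,x,m)=u^{T}(0,x)$ with $T=-t$, Theorem \ref{thm:CvExpMFG} for the bound on $U(t,\cdot,m)+\bar\lambda t$ in $C^{2+\alpha}$, and the representation formula for $\frac{\delta U}{\delta m}$ combined with Corollary \ref{coro.unifBoundT} for the $\|\cdot\|_{2+\alpha,1+\alpha}$ bound on $D_mU$ --- the paper simply writes ``as in \cite{CDLL}'' where you spell out the duality step by testing against (mollified) derivatives of Dirac masses and Hölder difference quotients, which is a legitimate way to pass from the $(C^{2+\alpha})'$ estimate to pointwise bounds, given that $D_mU$ is already known to be continuous since $U$ is the classical solution of \cite{CDLL} (do note the mollification is needed, as Corollary \ref{coro.unifBoundT} is stated for genuine zero-average densities $\mu_0$). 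Where you genuinely diverge is the time regularity: the paper does not differentiate the master equation in time; it uses the flow, writing $U(t,x,m(t))=u(t,x)$, the uniform Lipschitz continuity of $U$ in $m$, and the $\frac12$-Hölder bound \eqref{holder-m} for $t\mapsto m(t)$ in $\dk$, which is why only a $|t-s|^{1/2}$ modulus appears. Your argument instead reads $\partial_t U$ off the equation \eqref{MasterEq}, using that all remaining terms (including $\dive_y D_mU$, controlled by the $\|\cdot\|_{2+\alpha,1+\alpha}$ bound just proved) are uniformly bounded; this is valid because $U$ is a classical solution, and it even yields a uniform Lipschitz-in-time estimate, stronger than what is claimed. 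The trade-off is that the paper's softer argument survives under the weaker hypotheses discussed in the remark following the proposition (where only $\|D_mD_xU\|_\infty\leq C$ is available and no bound on $D_yD_mU$ is claimed), whereas your argument needs the full strength of \eqref{liuzehaezsdj} and the classical solvability of the master equation.
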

 
\begin{proof} Let us recall that, for any $t_0\leq 0$ and $m_0\in \Pk$, one has $U(t_0, x,m_0)=u(t_0,x)$, where $(u,m)$ is the solution to the MFG system 
$$
\left\{\begin{array}{l}
\ds -\partial_tu -\Delta  u +H(x,D   u) = F(x, m)\quad {\rm in}\; (t_0,0)\times \T^d,\\
\ds \partial_t m -\Delta  m -\dive( m H_p(x,D  u))= 0\quad {\rm in}\; (t_0,0)\times \T^d,\\
\ds m(t_0,\cdot)=m_0, \; u(0, \cdot)= G(x,m(0))\quad {\rm in}\;  \T^d.
\end{array}\right.
$$
 By Lemma \ref{lem.SemiConc}, we have the Lipschitz bound $\ds \|Du\|_\infty\leq C$, uniform with respect to the horizon $t_0$. This proves that  $\|D_xU\|_\infty \leq C$ and, in turn, that $m$ is uniformly H\"older continuous in time with values in $\Pk$, see \rife{holder-m}.  Furthermore, from Theorem \ref{thm:CvExpMFG} we get an estimate for  $U(t-T,x,m)$ at time $t=0$; namely, that there exists a constant $C$, independent of $T$, such that   
 $$
 \| D_x U(-T,\cdot, m_0)\|_{C^{1+\alpha}} \leq C
 $$
 and
$$
 \left \|U(-T,x,m_0) -\bar \lambda T\right\|_\infty \leq C\,.
$$
Therefore, we deduce that
$$
\sup_{t\leq0, \ m\in \Pk} \| U(t,\cdot, m)+\bar \lambda t \|_{C^{2+\alpha}} \leq C.
$$
Following \cite{CDLL}, the derivative of $U$ with respect to $m$ can be represented as 
\be\label{kjahzsduil}
\inte \frac{\delta U}{\delta m}(t_0,x,m_0,y)\mu_0(y)dy = v(t_0,x),
\ee
where, for any smooth map $\mu_0:\T^d\to \R$,  $(v,\mu)$ solves the linearized problem
$$
\left\{\begin{array}{l}
\ds -\partial_tv -\Delta  v +H_p(x,D   u).Dv = \frac{\delta F}{\delta m}(x, m)(\mu)\quad {\rm in}\; (t_0,0)\times \T^d,\\
\ds \partial_t \mu -\Delta  \mu -\dive( \mu H_p(x,D u))-\dive (mH_{pp}(x,Du)Dv)= 0\quad {\rm in}\; (t_0,0)\times \T^d,\\
\ds \mu(t_0,\cdot)=\mu_0, \; v(0, \cdot)= \frac{\delta G}{\delta m}(x,m(0))(\mu(0))\quad {\rm in}\;  \T^d.
\end{array}\right.
$$
Our aim is to provide estimates on $v$ in order to show the uniform Lipschitz regularity of $U$ with respect to $m$. We assume that $\inte\mu_0=0$ since we are only interested in $D_mU= D_y\frac{\delta U}{\delta m}$. Then Corollary \ref{coro.unifBoundT} states that 
$$
\sup_{t\in [0,T]} \|v(t)\|_{C^{2+\alpha}} \leq C\|\mu_0\|_{(C^{2+\alpha})'}. 
$$
This proves that 
$$
\left\| \inte \frac{\delta U}{\delta m}(t_0,\cdot,m_0,y)\mu_0(y)dy\right\|_{C^{2+\alpha}} \leq C \|\mu_0\|_{(C^{2+\alpha})'},
$$
for any smooth map $\mu_0$ with $\inte \mu_0=0$. 
Therefore, as in \cite{CDLL},   we obtain
\be\label{DmUt}
\left\|D_m U(t_0,\cdot,m_0,\cdot)\right\|_{2+\alpha,1+\alpha} \leq C.
\ee

It remains to check the time regularity of $U$. As $U(t, x,m(t))=u(t,x)$ and $U$ is  uniformly Lipschitz continuous in $m$, we have, for $t_0\leq s\leq t_0+1$, 
$$
\begin{array}{rl}
\ds \left|U(s,x,m_0)-U(t_0,x,m_0)\right| \; \leq  & \ds C\dk(m_0,m(s))+ \left|U(s,x,m(s))-U(t_0,x,m_0)\right| \\
\leq & \ds  C |s-t_0|^{\frac12}+ \left|u(s,x)-u(t_0,x)\right|  \leq C |s-t_0|^{\frac12},
\end{array}
$$
where we used the uniform regularity of $m$ in time (since $H_p(\cdot,Du)$ is bounded) for the second inequality, and the uniform regularity of $u$ in the last one. 
\end{proof}

 \begin{Remark} 
 We stress that if we only use the regularity condition (FG2) on the couplings, then we can replace the conclusion of Corollary \ref{coro.unifBoundT} with the first order estimate \rife{c1} and obtain, rather than \rife{DmUt}, the milder estimate $
 \left\|D_m D_x U(t ,x,m )\right\|_{\infty} \leq C$. This is actually enough to conclude with the uniform Lipschitz bound for $U$ and $D_xU$, which is what is only needed in the proof of Theorem \ref{thm.main}.
 \end{Remark}

 \subsection{Proof of Theorem \ref{thm.main}}
 
 We are now ready to prove our main result.

\begin{proof}[Proof of Theorem \ref{thm.main}.] 

Let $\chi$  be  a weak solution to the master cell problem \eqref{CellMAster}. 
For $T>0$, let us consider 
$$
U^T(t,x,m)= U(t-T, x,m)\quad {\rm for }\; (t,x,m)\in (-\infty, T]\times \T^d\times \Pk.
$$ 
Then $U^T$ solves 
$$
\left\{\begin{array}{l}
\ds -\partial_t U^T -\Delta_x U^T +H(x,D_xU) -\inte \dive(D_mU^T(t,x,m,y))dm(y)\\
\ds \hspace{0.4cm} +\inte D_mU^T(t,x,m,y). H_p(D_xU(t,y,m,y))dm(y) = F(x,m) \quad   {\rm in }\; (-\infty, T)\times \T^d\times \Pk,\\
\ds U^T(T,x,m)= G(x,m) \; {\rm in }\;  \T^d\times \Pk.
\end{array}\right.
$$
By the Lipschitz regularity of $U$ and $D_xU$ and the bound in \eqref{liuzehaezsdj} (Proposition \ref{prop.reguU}),  the family 
$\{U^T(\cdot,\cdot ,\cdot)+\bar \lambda (\cdot-T)\}_T$ is relatively compact in $C^0(\R\times \T\times \Pk)$. 
Let  $T_n\to+\infty$ be any sequence such that   $(t,x,m)\to U^{T_n}(t,x,m)+\bar \lambda (t-T_n)$ locally uniformly converges to some $V(t,x,m)$. Then $V$ is a weak solution to 
\be\label{eq.lsunflnd}
\left\{\begin{array}{l}
\ds -\partial_t V +\bar \lambda-\Delta_x V +H(x,D_xV) -\inte \dive(D_mV(t,x,m,y))dm(y) \\
\ds \hspace{.5cm}+\inte D_mV(t,x,m,y). H_p(y,D_xU(t,y,m,y))dm(y) = F(x,m)  \quad   {\rm in }\; \R\times \T^d\times \Pk.
\end{array}\right.
\ee
in the sense that $V$ satisfies similar requirements as in Definition \ref{defweakmaster}. Namely, $V$ and $D_xV$ are uniformly Lipschitz continuous in $x$ and $m$, $1/2-$H\"older continuous in the time variable, $V$ is monotone in $m$ and satisfies that, for any $t_1\leq t_2$ and if $(u,m)$ solves the MFG system:
\be\label{lareblzredtf}
\left\{\begin{array}{l}
-\partial_t u +\bar \lambda -\Delta u +H(x,D   u) = F(x, m)\; {\rm in }\; (t_1, t_2)\times \T^d,\\
\partial_tm -\Delta  m -\dive( m H_p(x,Du))= 0\; {\rm in }\; (t_1, t_2)\times \T^d,\\
m(t_1,\cdot)=m_0, \; u( t_2,\cdot)= V( t_2 , x , m( t_2)) \; {\rm in }\;  \T^d,
\end{array}\right.
\ee
we have  $V(t_1, x, m_0)= u(t_1,x)$ (and so $V(t,x,m(t))= u(t,x)$ for any $t\in [t_1, t_2]$). 

Our goal is to show that $V(t,x,m)- \chi(x,m)$ is constant.   Let us recall that Proposition \ref{prop:CvuT(0)} implies that $U^T(0, x,\bar m)-\bar\lambda T-\bar u$ converges to a constant $\bar c$ as $T\to+\infty$. Hence $V(0,x,\bar m)= \bar u(x)+ \bar c$. Since $\chi(x,\bar m)=\bar u$, this  shows that, if   $V(t,x,m)- \chi(x,m)$ will be proved to be constant, then this constant will be equal to $\bar c$, and independent of the subsequence $(T_n)$.

Let us fix $m_0\in   \Pk$. Let $T>0$  be large and $(u,m)$ be the solution to the MFG system \eqref{lareblzredtf} with $t_1=0$ and $t_2=T$.  
We note that  $m$ is the unique solution to the McKean-Vlasov equation
\be\label{McKV}
\begin{cases}
\partial_tm -\Delta  m -\dive( m H_p(x,D_xV(t,x,m)))= 0  & \; {\rm on}\; [0,T]\times \T^d, \; 
\\
m(0)=m_0 &   \; {\rm n}\; \T^d. 
\end{cases}
\ee
In particular, since $V$ and $D_x V$ are globally Lipschitz  in $m$, this implies that  $m$ and $u$ are defined independently of the horizon $T$ (meaning that, for $t\in [0, T]$, $u(t,\cdot):=V(t,\cdot, m(t))$ and $m(t,\cdot)$ do not depend on $T$). 

In the same way we define $(\tilde u, \tilde m)$ to be the solution to the MFG system 
$$
\left\{\begin{array}{l}
-\partial_t \tilde u +\bar \lambda -\Delta \tilde u +H(x,D  \tilde  u) = F(x, \tilde m)\; {\rm in }\; (0,T)\times \T^d,\\
\partial_t\tilde m -\Delta \tilde  m -\dive(\tilde  m H_p(x,D\tilde u))= 0\; {\rm in }\; (0,T)\times \T^d,\\
\tilde m(0,\cdot)=m_0, \; \tilde u(T,\cdot)= \chi(x, \tilde m(T))  \; {\rm in }\;  \T^d.
\end{array}\right.
$$
As before we note that $(\tilde u, \tilde m)$ does not depend on the horizon $T$, that $\tilde u(t,x)= \chi(x,\tilde m(t))$ for any $t\in [0,T]$ and that $\tilde m$ is the unique solution to 
the McKean-Vlasov equation
\be\label{McKV2}
\partial_t\tilde m -\Delta  \tilde m -\dive( m H_p(x,D_x\chi(x,\tilde m))= 0 \; {\rm on}\; [0,T], \; \tilde m(0)=m_0.
\ee
Using the result of Theorem \ref{thm:CvExpMFG} with both $G(x,\cdot)= V(T,x,\cdot)$ and $G= \chi(x,\cdot)$, we have 
(changing $u$ into $u+\bar \lambda (T-t)$ and $\tilde u $ into $\tilde u+\bar \lambda (T-t)$):
$$
\|m(t) -\bar m\|_\infty+ \|\tilde m(t)-\bar m\|_\infty \leq C(e^{-\gamma t}+ e^{-\gamma (T-t)}), \qquad t\in [1,T],
$$
where $(\bar u,\bar m)$ is the solution to the ergodic MFG system \eqref{e.MFGergo}. 
But since $m$ and $\tilde m$ do  not depend on the horizon $T$, here we can let first $T\to \infty$, and then $t\to \infty$, so we conclude that  both $m(t)$ and $\tilde m(t)$ converge to $\bar m$ as $t\to+\infty$. \\
Applying once more the standard estimates on the MFG systems, we have
$$
\begin{array}{rl}
\ds \int_{0}^{T} \inte (m+\tilde m) \left| Du-D\tilde u\right|^2 \; \leq & \ds 
-C\left[ \inte (u-\tilde u)(m-\tilde m)\right]_{0}^{T}\\
 =& \ds  -C \inte (u( T)-\tilde u( T))(m( T)-\tilde m( T))
\end{array}
$$
since $m(0)=\tilde m(0)=m_0$. 
As $u$ and $\tilde u$ are uniformly Lipschitz continuous in space and $m(T)$ and $\tilde m(T)$ have the same limit $\bar m$ as $T\to+\infty$, we deduce that 
$$
\lim_{T\to+\infty} \int_0^T \inte (m+\tilde m) \left| Du-D\tilde u\right|^2 =0.
$$
In particular, as $m$ (and $\tilde m$) are regular and bounded below by a positive constant on intervals of the form $[\ep, T]$ with $\ep>0$, we deduce that $Du =D\tilde u $ on $[\ep,T]$ and thus on $[0,T]$. Therefore   $m$ and $\tilde m$ solve the same equation, which implies  $m(t)=\tilde m(t)$ for any $t\geq 0$.  
Coming back to the equations satisfied by $u$ and $\tilde u$ gives $\partial_t u=\partial_t \tilde u$, so that there is a constant $c$ such that $u(t,x)=\tilde u(t,x)+c$. In other words
$$
V(t,x,m(t))=\chi(x,m(t)) + c\qquad \forall t \geq 0\,.
$$
Notice that the above conclusion holds for any given $m_0 \in \Pk$ and  the constant $c$ could depend on $m_0$ at this stage. But we are going to show that this is actually not the case.

Indeed, let us choose $m_0=\bar m$. Then Proposition \ref{prop.key} says that $m(t)= \tilde m(t)= \bar m$. We denote by $\bar c$ the constant found above, i.e. $u(t,x)=\tilde u(t,x)+\bar c$. By definition, this implies that $V(t,x,\bar m)= \chi(x,\bar m)+ \bar c$.
Now, for any $m_0\in \Pk$,  we recall that the solution $m(t)=\tilde m(t)$ converges to $\bar m$ as $t\to+\infty$. By  the uniform Lipschitz continuity of $\chi$ and $V$ with respect to $m$ (uniform in $(t,x)$), this implies that
$$
|V(t,x,m(t))-V(t,x,\bar m)| + |\chi(x,m(t))-\chi(x,\bar m)| \to 0 \qquad \hbox{as $t\to \infty$.}
$$
Since
$$
|c-\bar c|= |V(t,x,m(t))-\chi(x,m(t))-(V(t,x,\bar m)-\chi(x,\bar m))|,   
$$
by letting $t\to\infty$ we deduce that $c= \bar c$.  In particular, we have proved that
$$
V(0,x,m_0)=\chi(x,m_0) + \bar c \qquad \forall m_0\in \Pk\,.
$$
Finally, we can apply the above reasoning to  the  translation $V( \cdot + t_0, x,m)$, for any $t_0\in \R$. It turns out that  $\bar c= \lim\limits_{t\to \infty} V(t+t_0,x,m(t))- \chi(x,m(t))$, which  is clearly independent of $t_0$. Therefore we conclude that 
$$
V(t_0,x,m_0)= \chi(x,m_0)+\bar  c \qquad \forall (t_0,x,m_0)\in \R\times \T^d\times \Pk,
$$
and the proof is complete.
\end{proof} 

Let us point out that any   weak solution of the ergodic master equation solves \eqref{eq.lsunflnd}. So the above proof actually shows that two solutions of the ergodic master equation differ only by a  constant: 

\begin{Corollary}\label{cor:uniquergo} If $\chi_1$ and $\chi_2$ are weak solutions  of the ergodic master equation \eqref{CellMAster}, then there exists a constant $\bar c$ such that 
$$
\chi_2(x,m)=\chi_1(x,m)+\bar c \qquad \forall (x,m)\in \T^d\times \Pk.
$$
\end{Corollary}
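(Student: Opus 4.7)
The plan is to extract the corollary from the argument already carried out in the proof of Theorem \ref{thm.main}. The key observation is that any weak solution $\chi_i$ of the ergodic master equation \eqref{CellMAster} may be regarded as a time-independent weak solution of the limiting equation \eqref{eq.lsunflnd}: by Definition \ref{defweakmaster} it is Lipschitz in $(x,m)$, monotone, and along the MFG characteristics \eqref{MFGmasterCell} with $\lambda=\bar\lambda$ one has $u(t,x)=\chi_i(x,m(t))$. Hence the closing argument of the proof of Theorem \ref{thm.main}, where $V$ was identified with $\chi$ up to an additive constant, applies essentially verbatim with $V$ replaced by $\chi_2$ and $\chi$ replaced by $\chi_1$.

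Concretely, I would fix $m_0\in\Pk$ and $T>0$, and let $(u_i,m_i)$ be the unique solution on $[0,T]$ of the MFG system with initial datum $m_0$ and terminal datum $\chi_i(\cdot,m_i(T))$, for $i=1,2$. Since $D_x\chi_i$ is Lipschitz in $m$, each $m_i$ satisfies a well-posed McKean--Vlasov equation that does not depend on the horizon $T$, and $u_i(t,x)=\chi_i(x,m_i(t))$ on $[0,T]$. Applying Theorem \ref{thm:CvExpMFG} with $G=\chi_i$ gives $\|m_i(t)-\bar m\|_\infty\le C(e^{-\gamma t}+e^{-\gamma(T-t)})$ for $t\in[1,T]$; letting first $T\to\infty$ and then $t\to\infty$, I conclude that $m_i(t)\to\bar m$ as $t\to\infty$.

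Next I would exploit the duality inequality satisfied by any two solutions of the MFG system with the same initial measure:
$$
\int_0^T\inte(m_1+m_2)|Du_1-Du_2|^2\ dxdt\le -C\left[\inte(u_1-u_2)(m_1-m_2)\right]_0^T.
$$
The boundary term at $t=0$ vanishes because $m_1(0)=m_2(0)=m_0$; at $t=T$ the values $u_i(T,\cdot)=\chi_i(\cdot,m_i(T))$ stay uniformly bounded (by continuity of $\chi_i$ on the compact space $\T^d\times\Pk$) while $\|m_1(T)-m_2(T)\|_\infty\to 0$ as $T\to\infty$, so the right-hand side tends to zero. Thus $\int_0^\infty\int(m_1+m_2)|Du_1-Du_2|^2=0$, and since $m_i$ is bounded below by a positive constant on $[\varepsilon,\infty)\times\T^d$ for every $\varepsilon>0$, this forces $Du_1=Du_2$ on $(0,\infty)\times\T^d$. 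Then $m_1$ and $m_2$ solve the same Fokker--Planck equation with identical initial data, so $m_1=m_2=:m$; subtracting the Hamilton--Jacobi equations and using $Du_1=Du_2$ yields $\partial_t(u_1-u_2)=0$ and $\Delta(u_1-u_2)=0$, so $u_1-u_2\equiv c$ is constant on $[0,T]\times\T^d$.

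Evaluating $u_i(0,x)=\chi_i(x,m_0)$ gives $\chi_1(x,m_0)-\chi_2(x,m_0)=c$, and this difference is therefore independent of $x$; letting $t\to\infty$ along the characteristics and using the continuity of $\chi_i$ in $m$ yields $c=\chi_1(x,\bar m)-\chi_2(x,\bar m)$, which does not depend on $m_0$. This produces the required universal constant $\bar c$. There is no real obstacle here beyond the bookkeeping; the only point deserving a moment of care is verifying that the terminal boundary term in the duality inequality vanishes in the limit, which is why the uniform boundedness of $\chi_i$ on $\T^d\times\Pk$ and the convergence $m_i(T)\to\bar m$ are both needed.
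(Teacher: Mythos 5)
Your proof is correct and follows essentially the same route as the paper, which obtains this corollary by observing that any weak solution of \eqref{CellMAster} is a time-independent weak solution of \eqref{eq.lsunflnd} and re-running the closing argument of Theorem \ref{thm.main} (characteristics from a common $m_0$, Theorem \ref{thm:CvExpMFG} to get convergence to $\bar m$, the duality inequality to force $Du_1=Du_2$ and $m_1=m_2$, then Lipschitz continuity in $m$ to identify the constant). The only cosmetic difference is that you pin down the constant by letting $t\to\infty$ directly for arbitrary $m_0$, whereas the paper first treats $m_0=\bar m$ via Proposition \ref{prop.key} and then transfers to general $m_0$; both rest on the same facts.
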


\section{The discounted problem}\label{sec:discount}

In this section we investigate the behavior, as $\delta\to 0^+$, of the solution $U^\delta$ of  the discounted master equation \eqref{e.MasterDiscont}. 
 Our main result is: 

\begin{Theorem} \label{thmdisc} Let $U^\delta$ be the solution to the discounted master equation \eqref{e.MasterDiscont} and $(\bar \lambda,\bar u,\bar m)$ the solution of the ergodic problem \eqref{e.MFGergo}. Then, as $\delta\to 0^+$, $U^\delta-\bar \lambda/\delta$ converges uniformly to the solution $\chi$ to the master cell problem \eqref{CellMAster} such that $\chi(x,\bar m)= \bar u(x) + \bar \theta$, where  $\bar \theta$ is the unique constant   for which the following linearized ergodic problem has a solution    $(\bar v,\bar \mu)$: 
\be\label{eq.LMFGergo}
\left\{\begin{array}{l}
\ds \bar u +\bar \theta -\Delta \bar v +H_p(x,D\bar u).D\bar v = \frac{\delta F}{\delta m}(x, \bar m)(\bar \mu)\qquad {\rm in}\; \T^d,\\
\ds -\Delta \bar \mu -\dive(\bar \mu H_p(x,D \bar u))-\dive (\bar m H_{pp}(x,D\bar u)D\bar v)= 0\qquad {\rm in}\; \T^d,\\
\inte \bar \mu= \inte \bar v=0.
\end{array}\right.
\ee
\end{Theorem}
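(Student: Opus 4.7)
The plan is to combine three ingredients: the uniform-in-$\delta$ Lipschitz bounds on $U^\delta$ from Proposition \ref{prop:UdeltaLip}, the uniqueness up to an additive constant of weak solutions of the ergodic master equation (Corollary \ref{cor:uniquergo}), and a sharp asymptotic expansion of the stationary pair $(\bar u^\delta, \bar m^\delta)$ which pins down that constant via the identity $U^\delta(x, \bar m^\delta) = \bar u^\delta(x)$ coming from \eqref{rep.Udelta}.

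First I would establish that there exists a unique $\bar\theta \in \R$ for which \eqref{eq.LMFGergo} admits a solution $(\bar v, \bar \mu)$ with $\inte \bar v = \inte \bar \mu = 0$. The argument is of Fredholm type for the stationary linearization around $(\bar u, \bar m)$. On the one hand, the duality/monotonicity computation---multiply the $\bar v$-equation by $\bar\mu$, the $\bar\mu$-equation by $\bar v$, subtract, and use \eqref{hyp:mono} together with $\bar m \geq c > 0$ and \eqref{cond:Hcoercive}---shows that the associated homogeneous problem admits only the trivial zero-mean solution. On the other hand, the adjoint of the elliptic operator $-\Delta + H_p(x, D\bar u)\cdot D$ on $\T^d$ has a one-dimensional kernel spanned by $\bar m$; integrating the $\bar v$-equation against $\bar m$ therefore yields a scalar compatibility condition which, coupled with the fixed-point problem for $\bar\mu$, uniquely determines $\bar\theta$.

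Next I would rigorously justify the expansion $\bar u^\delta = \delta^{-1}\bar\lambda + \bar u + \bar\theta + \delta r^\delta$, $\bar m^\delta = \bar m + \delta s^\delta$, with $(r^\delta, s^\delta)$ uniformly bounded in, say, $C^{2+\alpha}\times C^\alpha$. Substituting into \eqref{barudeltamdelta} and using the identity $-\Delta \bar u + H(x, D\bar u) = -\bar\lambda + F(x, \bar m)$ to cancel the leading terms, one finds that $(r^\delta, s^\delta)$ satisfies \eqref{eq.LMFGergo} up to an $O(\delta)$ source arising from the Taylor remainders in $H, H_p$ and $F$ at $(D\bar u, \bar m)$. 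Fredholm invertibility of \eqref{eq.LMFGergo} from the previous step (after absorbing a scalar $O(\delta)$ correction to $\bar\theta$ in the compatibility condition) then yields the uniform bound on $(r^\delta, s^\delta)$; in particular $\|\bar u^\delta - \delta^{-1}\bar\lambda - \bar u - \bar\theta\|_\infty + \|\bar m^\delta - \bar m\|_\infty \to 0$ as $\delta \to 0$. This is the content announced in Proposition \ref{prop.cvbarudelta}.

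Finally, by Proposition \ref{prop:UdeltaLip} the family $U^\delta$ is equi-Lipschitz in $(x,m)$; combined with the expansion of $\bar u^\delta$ and the identity $U^\delta(\cdot, \bar m^\delta) = \bar u^\delta$, this makes $\{U^\delta - \delta^{-1}\bar\lambda\}_\delta$ uniformly bounded and equi-continuous on $\T^d\times \Pk$. Along any sequence $\delta_n \to 0$, Arzel\`a--Ascoli extracts a subsequential uniform limit $\chi$. That $\chi$ is a weak solution of \eqref{CellMAster} in the sense of Definition \ref{defweakmaster} is obtained by passing to the limit in the characteristic representation: for any $T>0$ and $m_0\in \Pk$, the restriction of $(u^{\delta_n}, m^{\delta_n})$ to $[0,T]$ converges by Lemma \ref{lem.udeltamdelta} to the solution $(u,m)$ of \eqref{MFGmasterCell} with constant $\bar\lambda$ and terminal condition $\chi(\cdot, m(T))$, whence $\chi(x,m_0)=u(0,x)$; monotonicity of $\chi$ passes to the limit from Lemma \ref{lem.Udeltamonotone}. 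Evaluating at $m=\bar m^{\delta_n}$ and using the expansion gives $\chi(x,\bar m)=\bar u(x)+\bar\theta$, which by Corollary \ref{cor:uniquergo} determines $\chi$ uniquely, so the whole family converges. The main obstacle will be the asymptotic expansion of step two: quantitative Fredholm invertibility of the stationary linearized system hinges on the coupling between $\bar v$ and $\bar\mu$ through the nonlocal off-diagonal term $\frac{\delta F}{\delta m}(\cdot, \bar m)$, which is precisely what makes the new linearized ergodic system \eqref{eq.LMFGergo} nontrivial.
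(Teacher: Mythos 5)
Your overall architecture is the paper's: uniform Lipschitz bounds on $U^\delta$ (Lemma \ref{lem:StandEstiUdelta}, Proposition \ref{prop:UdeltaLip}), compactness, the fact that any subsequential limit is a weak solution of \eqref{CellMAster}, uniqueness up to an additive constant (Corollary \ref{cor:uniquergo}), and the identification of the constant through $U^\delta(\cdot,\bar m^\delta)=\bar u^\delta$ together with the expansion of $(\bar u^\delta,\bar m^\delta)$ --- this last step being exactly Proposition \ref{prop.cvbarudelta}. Your first step differs mildly in route: you propose a Fredholm-alternative construction of $(\bar\theta,\bar v,\bar\mu)$, whereas the paper constructs the solution of \eqref{eq.LMFGergo} as the vanishing-discount limit of \eqref{eq.DiscLMFGergo} (Lemma \ref{lem.DiscLMFGergo} and Proposition \ref{prop.LMFGergo}); both rest on the same duality/monotonicity coercivity, and your injectivity computation is the right one, but the paper's discounted approximation has the advantage that its uniform-in-$\delta$ estimates are reused verbatim in the next step.

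The genuine weak point is your second step. As written, the claim that $(r^\delta,s^\delta)$ solves \eqref{eq.LMFGergo} ``up to an $O(\delta)$ source'' is circular: after dividing by $\delta$ the Taylor remainders are of size $\delta\,(|Dr^\delta|^2+\|s^\delta\|^2)$, so calling them $O(\delta)$ presupposes the very bound on $(r^\delta,s^\delta)$ you are trying to prove (the available a priori information, Proposition \ref{prop.Cvbarudelta}, only gives $\|D(\bar u^\delta-\bar u)\|_{L^2}\le C\delta^{1/2}$, hence $|Dr^\delta|$ could a priori be of order $\delta^{-1/2}$). Moreover the remainder equation carries the zeroth-order term $\delta r^\delta$, so what you need to invert is not \eqref{eq.LMFGergo} but the discounted linearized operator, with constants uniform in $\delta$ --- precisely the content of Lemma \ref{lem.DiscLMFGergo}. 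The paper closes this loop differently: it runs a fixed-point argument in a ball of $(v,\mu)$ with $\|\delta v\|_\infty+\|Dv\|_\infty+\|\mu\|_\infty\le \hat C$, producing a solution of \eqref{barudeltamdelta} of the form $(\delta^{-1}\bar\lambda+\bar u+\delta v^\delta,\bar m+\delta\mu^\delta)$, and then identifies it with $(\bar u^\delta,\bar m^\delta)$ by the uniqueness statement of Proposition \ref{P31} --- an identification step your sketch omits but which is essential if the expansion is constructed rather than derived. (Alternatively you could absorb the quadratic term by first upgrading $D\bar u^\delta-D\bar u\to0$ to the uniform norm by interpolation with the uniform $C^{1+\alpha}$ bounds, but that needs to be said.) Note also that your claimed uniform $C^{2+\alpha}\times C^\alpha$ bound on $(r^\delta,s^\delta)$, i.e.\ an $O(\delta)$ rate, is stronger than what the paper proves or needs: Proposition \ref{prop.cvbarudelta} only yields $\|\bar u^\delta-\delta^{-1}\bar\lambda-\bar u-\bar\theta\|_\infty+\|\bar m^\delta-\bar m\|_\infty\to0$, via $\delta\lg v^\delta\rg\to\bar\theta$ from Proposition \ref{prop.LMFGergo}, and this suffices for the final compactness-and-identification step, which you carry out exactly as in the paper.
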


Let us comment  a bit more on the normalization condition $\chi(x,\bar m)= \bar u(x) + \bar \theta$ which selects the unique limit of the discounted master equation \eqref{e.MasterDiscont}, according to the above result.  As we shall see in the next section, given any (not necessarily normalized with zero average) solution $\bar u$ to 
\be\label{baru}
\bar \lambda -\Delta \bar u + H(x,D\bar u) = F(x,\bar m)\qquad {\rm in}\; \T^d\,,
\ee
 there is a unique constant $\bar \theta$ for which \eqref{eq.LMFGergo}  admits a  solution. However, since $\bar u$ is unique up to addition of a constant,  the sum  $\bar u  + \bar \theta$ will be uniquely determined. Indeed, by changing $\bar u$ through addition of a constant, the value $\bar \theta$ will be translated accordingly. 
In other words, one can say that the limit of $U^\delta-\bar \lambda/\delta$ is the solution $\chi$ of the master cell problem \eqref{CellMAster} such that $\chi(x,\bar m)$ coincides with the unique solution of \eqref{baru} for  which the constant $\bar \theta$ vanishes. 

Exactly as for the time dependent problem, we can infer from   Theorem \ref{thmdisc} the limit behavior of the solution of the discounted MFG system: 

\begin{Corollary}\label{cor.discount}
Let $m_0\in \Pk$ and, for $\delta>0$, $(u^\delta,m^\delta)$ be the solution to the  discounted MFG system \eqref{e.MasterDiscont}. Then 
$$
\lim_{\delta \to 0} u^\delta(0,x)-\bar \lambda/\delta= \chi(x,m_0),
$$
uniformly with respect to $x$, where $\chi$ is the solution of the ergodic  cell problem \eqref{CellMAster}  given in Theorem \ref{thmdisc}. 
\end{Corollary}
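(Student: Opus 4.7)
The proof is essentially a direct consequence of Theorem \ref{thmdisc} combined with the representation formula \eqref{rep.Udelta}, which asserts that the solution $U^\delta$ of the discounted master equation \eqref{e.MasterDiscont} satisfies
\[
U^\delta(x, m_0) = u^\delta(0, x) \qquad \forall x \in \T^d,
\]
whenever $(u^\delta, m^\delta)$ solves the discounted MFG system \eqref{e.MFGih} with initial measure $m_0$. This is the key bridge between the MFG system and the master equation.

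The plan is as follows. First, I would invoke Theorem \ref{thmdisc} which guarantees that $U^\delta - \bar{\lambda}/\delta$ converges uniformly on $\T^d \times \Pk$ to the selected weak solution $\chi$ of the master cell problem. In particular, evaluating the convergence at the fixed measure $m_0 \in \Pk$, one obtains
\[
\sup_{x \in \T^d} \left| U^\delta(x, m_0) - \bar{\lambda}/\delta - \chi(x, m_0) \right| \longrightarrow 0 \qquad \text{as } \delta \to 0^+.
\]
Second, I would substitute the representation formula $U^\delta(x, m_0) = u^\delta(0, x)$ to rewrite the left-hand side as
\[
\sup_{x \in \T^d} \left| u^\delta(0, x) - \bar{\lambda}/\delta - \chi(x, m_0) \right| \longrightarrow 0,
\]
which is the claimed uniform convergence.

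There is no real obstacle here: once Theorem \ref{thmdisc} is established and one recalls that $U^\delta$ encodes the value $u^\delta(0, \cdot)$ of the MFG system for arbitrary initial condition $m_0$, the corollary follows by specialization. The only point requiring a brief remark is that the uniformity in $x$ indeed follows from the uniform convergence of $U^\delta - \bar\lambda/\delta$ on the compact set $\T^d \times \{m_0\}$, which is a direct consequence of the uniform convergence on all of $\T^d \times \Pk$.
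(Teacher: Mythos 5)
Your proof is correct and is exactly the argument the paper intends: the corollary is an immediate consequence of the representation formula \eqref{rep.Udelta}, $U^\delta(x,m_0)=u^\delta(0,x)$, combined with the uniform convergence of $U^\delta-\bar\lambda/\delta$ to $\chi$ from Theorem \ref{thmdisc}. (You also correctly read the reference to \eqref{e.MasterDiscont} in the statement as the discounted MFG system \eqref{e.MFGih}.)
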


\subsection{An additional ergodic system}

Given a solution $\bar u$ of the MFG ergodic problem \eqref{e.MFGergo}, we investigate the   ergodic problem \eqref{eq.LMFGergo}. The heuristic justification of \eqref{eq.LMFGergo} is that we expect the solution $(\bar u^\delta,\bar m^\delta)$ of \eqref{barudeltamdelta} to be of the form 
\be\label{heu.udelta}
\bar u^\delta\sim \frac{\bar \lambda}\de+ \bar u+\bar \theta +\delta \bar v, \qquad \bar m^\delta\sim \bar m+\delta \bar \mu,
\ee
and, in view of \eqref{barudeltamdelta}, the equation satisfied by $(\bar \theta, \bar v, \bar \mu)$ should be \eqref{eq.LMFGergo}.

We start the proof of the existence for \eqref{eq.LMFGergo} as usual, by a discounted problem:

\begin{Lemma}\label{lem.DiscLMFGergo} Let $A, B\in L^\infty(\T^d)$. For $\delta>0$ small, there is a unique solution $(v^\delta, \mu^\delta)\in W^{1,\infty}(\T^d)\times L^\infty(\T^d)$ to the discounted system 
\be\label{eq.DiscLMFGergo}
\left\{\begin{array}{l}
\ds \bar u +\delta v^\delta -\Delta v^\delta +H_p(x,D\bar u).Dv^\delta = \frac{\delta F}{\delta m}(x, \bar m)(\mu^\delta)+A\qquad {\rm in}\; \T^d,\\
\ds -\Delta \mu^\delta -\dive(\mu^\delta H_p(x,D \bar u))-\dive (\bar m H_{pp}(x,D\bar u)Dv^\delta)= \dive (B)\qquad {\rm in}\; \T^d,\\
\end{array}\right.
\ee
with $\inte \mu^\de=0$. 
Moreover, there is a constant $C>0$ (independent of $\delta$, $A$ and $B$) such that 
$$
\|\delta v^\delta\|_\infty+ \|Dv^\delta\|_\infty+\|\mu^\delta\|_\infty\leq C\left(1+\|A\|_\infty+\|B\|_\infty\right).
$$
\end{Lemma}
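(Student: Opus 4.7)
The plan is to prove existence, uniqueness, and the estimate together by combining a standard fixed-point argument with the duality/monotonicity identity that is the workhorse of this paper. The structure is the same as for \eqref{MFGlili}, but now in the stationary elliptic setting.

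First, I would set up a compact mapping. Given $\mu \in L^2(\T^d)$ with $\int \mu =0$, the first equation in \eqref{eq.DiscLMFGergo} is a linear elliptic equation with a strictly positive zero-order coefficient $\delta$, so by the maximum principle and Schauder regularity it admits a unique $C^{2,\alpha}$ solution $v$, and the map $\mu\mapsto v$ is continuous (in fact compact) from $L^2$ to $W^{1,\infty}$ due to $\frac{\delta F}{\delta m}(x,\bar m,\cdot)$ being regular. Then the second equation is a Fokker-Planck type equation in divergence form with smooth drift $H_p(x,D\bar u)$ and source $\dive(B+\bar m H_{pp}(x,D\bar u)Dv)\in H^{-1}$; by Corollary \ref{cor.StaMeas} (applied with a shift to enforce zero average) there is a unique $L^2$ solution $\mu'$ with $\int\mu'=0$, and $\mu\mapsto \mu'$ is linear and compact on $\{\mu\in L^2:\int\mu=0\}$. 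Schaefer's fixed point theorem then produces a solution as soon as a uniform a priori bound is in place.

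The core step is the a priori estimate. Multiplying the first equation by $\mu^\delta$ and the second by $v^\delta$, integrating by parts, and subtracting, one gets the duality identity
\be\label{dualityDisc}
\int \bar m\,H_{pp}(x,D\bar u)Dv^\delta\!\cdot Dv^\delta \;+\; \iint \tfrac{\delta F}{\delta m}(x,\bar m,y)\,\mu^\delta(x)\mu^\delta(y)\,dxdy
\;=\; \int(\bar u-A)\mu^\delta\;+\;\delta\!\int v^\delta\mu^\delta\;-\;\int B\cdot Dv^\delta.
\ee
By \eqref{cond:Hcoercive}, the monotonicity assumption \eqref{hyp:mono}, and the lower bound $\bar m\ge C^{-1}>0$, the left-hand side controls $c\|Dv^\delta\|_{L^2}^2$. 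For the right-hand side, I use $\int\mu^\delta=0$ to replace $\bar u-A$ and $v^\delta$ by their zero-mean parts and Poincar\'e to get $|\int v^\delta\mu^\delta|\le C\|Dv^\delta\|_{L^2}\|\mu^\delta\|_{L^2}$. Meanwhile Corollary \ref{cor.StaMeas} applied to the second equation yields
$$
\|\mu^\delta\|_{L^2}\le \|\mu^\delta\|_{H^1}\le C\bigl(\|Dv^\delta\|_{L^2}+\|B\|_{L^2}\bigr),
$$
so plugging these into \eqref{dualityDisc} and using Young's inequality gives
$$
c\|Dv^\delta\|_{L^2}^2 \le C\bigl(1+\|A\|_\infty+\|B\|_\infty\bigr)\bigl(\|Dv^\delta\|_{L^2}+\|B\|_\infty\bigr)+C\delta\|Dv^\delta\|_{L^2}^2+C\delta\|Dv^\delta\|_{L^2}\|B\|_\infty.
$$
For $\delta<\delta_0$ small enough, the $C\delta\|Dv^\delta\|_{L^2}^2$ term is absorbed on the left, and we conclude $\|Dv^\delta\|_{L^2}+\|\mu^\delta\|_{L^2}\le C(1+\|A\|_\infty+\|B\|_\infty)$.

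The last two points are then routine. To upgrade to $L^\infty$, I would bootstrap: once $\mu^\delta\in L^2$, elliptic regularity applied to the second equation puts $\mu^\delta\in W^{1,p}$ for $p$ arbitrarily large, hence in $L^\infty$; inserting this into the right-hand side of the first equation together with the bound on $A-\bar u$, the maximum principle gives $\delta\|v^\delta\|_\infty\le C(1+\|A\|_\infty+\|B\|_\infty)$, and Schauder regularity promotes this to the claimed bound $\|Dv^\delta\|_\infty\le C(1+\|A\|_\infty+\|B\|_\infty)$. Uniqueness follows by applying \eqref{dualityDisc} to the difference $(w,\nu)$ of two solutions (which corresponds to the system with $\bar u\equiv A\equiv B\equiv 0$): the identity forces $Dw\equiv 0$ and $\nu\equiv0$ for $\delta$ small, and then the first equation yields $\delta w=0$, so $w\equiv 0$. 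The main obstacle is the presence of the nondissipative term $\delta\int v^\delta\mu^\delta$ on the right of \eqref{dualityDisc}; this is exactly where the smallness of $\delta$ is needed in order to close the energy estimate using Poincar\'e and the zero-mean condition $\int\mu^\delta=0$.
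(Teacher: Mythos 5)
Your proposal follows essentially the same route as the paper: the duality identity combined with the uniform convexity of $H$, the monotonicity of $F$, the positivity of $\bar m$, Poincar\'e's inequality and Corollary \ref{cor.StaMeas} to close the $L^2$ estimate for $\delta$ small, then maximum principle and elliptic regularity to reach the stated $L^\infty$ bounds; the existence by a fixed point and the uniqueness by duality are also as in (or consistent with) the paper. One small slip in the bootstrap: from $\mu^\delta\in L^2$ alone you cannot conclude $\mu^\delta\in W^{1,p}$ for arbitrarily large $p$ from the second equation, because its source contains $\bar m H_{pp}(x,D\bar u)Dv^\delta$ and at that stage $Dv^\delta$ is only controlled in $L^2$. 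The correct order (and the paper's) is to upgrade $v^\delta$ first: since $\|\frac{\delta F}{\delta m}(\cdot,\bar m)(\mu^\delta)\|_\infty\leq C\|\mu^\delta\|_{L^2}$, the right-hand side of the $v^\delta$-equation is already bounded, so the maximum principle gives $\|\delta v^\delta\|_\infty$ and elliptic regularity gives $\|Dv^\delta\|_\infty\leq C(1+\|A\|_\infty+\|B\|_\infty)$; only then does De Giorgi--Nash--Moser regularity for the divergence-form equation, with source now in $L^\infty$, yield the $L^\infty$ (indeed $C^\alpha$) bound on $\mu^\delta$. Since you already invoke the smoothing of $\frac{\delta F}{\delta m}$ in your fixed-point setup, this is a reordering rather than a missing idea.
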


\begin{proof} Existence of a solution runs   with  a standard fixed point, so we omit it. 
The duality relation gives (using Poincar\'e's inequality)
\begin{eqnarray*}
C^{-1}\|Dv^\delta\|_{L^2}^2 & \leq & \inte (\bar u +\delta v^\delta-A)\mu^\delta + B\cdot Dv^\delta\\
& \leq & (\|D\bar u\|_{L^2} +\delta \|Dv^\delta\|_{L^2}+\|A\|_{L^2})\|\mu^\delta\|_{L^2} + \|B\|_{L^2}\|Dv^\delta\|_{L^2}, 
\end{eqnarray*}
so that 
$$
\|Dv^\delta\|_{L^2}\leq C\left((\|D\bar u\|_{L^2}^{1/2}+\|A\|_{L^2}^{1/2})\|\mu^\delta\|_{L^2}^{1/2} + \delta \|\mu^\delta\|_{L^2}+ \|B\|_{L^2} \right).
$$
By Corollary \ref{cor.StaMeas}, we have
$$
\|\mu^\delta\|_{L^2}\leq C(\|Dv^\delta\|_{L^2}+\|B\|_{L^2})\leq C\left((\|D\bar u\|_{L^2}^{1/2}+\|A\|_{L^2}^{1/2})\|\mu^\delta\|_{L^2}^{1/2} + \delta \|\mu^\delta\|_{L^2}+ \|B\|_{L^2} \right).
$$
So, for $\delta>0$ small enough, we obtain
$$
\|\mu^\delta\|_{L^2}\leq C\left(\|D\bar u\|_{L^2}+\|A\|_{L^2} + \|B\|_{L^2} \right).
$$
This implies the same bound for $Dv^\delta$ and, by the maximum principle, the estimate 
$$
\|\delta v^\delta\|_\infty\leq  C (\|\bar u\|_{L^\infty}+ \|D\bar u\|_{L^2}+ \|B\|_{L^2}+\|A\|_{L^\infty}).
$$
Moreover, considering the equation satisfied by $w:=v^\delta-\lg v^\delta\rg$, we have by local regularity for weak solutions (Theorem 8.17 of \cite{GT}) and Poincar\'e inequality: 
$$
\|v^\delta-\lg v^\delta\rg\|_\infty\leq C (1+\|v^\delta-\lg v^\delta\rg\|_{L^2})\leq C(1+\|Dv^\delta\|_{L^2})\leq  C\left(1+\| \bar u\|_{W^{1,\infty}}+\|A\|_{L^\infty} + \|B\|_{L^2} \right).
$$
Then by classical elliptic regularity (Theorem 8.32 of \cite{GT}), we have, for any $\alpha\in (0,1)$,  
$$
\|v^\delta-\lg v^\delta\rg \|_{C^{1+\alpha}}\leq C \left(1+\| \bar u\|_{W^{1,\infty}}+\|A\|_{L^\infty} + \|B\|_{L^2} \right).
$$
We can now apply the  local regularity for weak solutions to $\mu^\de$ (Theorem 8.17 of \cite{GT}) and infer that 
$$
\|\mu^\delta\|_{C^{\alpha}}\leq C(\|Dv^\delta\|_\infty+ \|B\|_\infty)\leq C \left(\| \bar u\|_{W^{1,\infty}}+\|A\|_{L^\infty} + \|B\|_{L^\infty} \right).
$$
\end{proof}

\begin{Proposition}\label{prop.LMFGergo}  Let  $(\bar \lambda, \bar u,\bar m)$ be a solution of the ergodic system \eqref{e.MFGergo} and $(v^\delta,\mu^\delta)$ be the solution to \eqref{eq.DiscLMFGergo} for $A$ and $B$ satisfying 
$$
\|A\|_\infty+\|B\|_\infty \leq C\delta, 
$$
for some constant $C$. Then, as $\delta\to 0^+$,  
$$
\delta \lg v^\delta\rg \to\bar \theta, \qquad (v^\delta-\lg v^\delta)\rg \xrightarrow{L^\infty}  \bar v , \qquad \mu^\delta\xrightarrow{L^\infty} \bar \mu
$$
where $(\bar \theta, \bar v, \bar \mu)$ is the unique solution to \eqref{eq.LMFGergo}.
\end{Proposition}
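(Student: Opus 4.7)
The plan is a standard compactness argument: obtain uniform estimates, extract a limit along subsequences, identify the limit as the unique solution of \eqref{eq.LMFGergo}, and then conclude convergence of the whole family.

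First, Lemma \ref{lem.DiscLMFGergo} applied with the hypothesis $\|A\|_\infty+\|B\|_\infty\leq C\delta$ yields the uniform bound
$$
\|\delta v^\delta\|_\infty+\|Dv^\delta\|_\infty+\|\mu^\delta\|_\infty\leq C,
$$
and, upon inspection of its proof, also $C^{1+\alpha}$ bounds on $v^\delta-\lg v^\delta\rg$ and $C^\alpha$ bounds on $\mu^\delta$ (independent of $\delta$). By Arzel\`a--Ascoli, along a subsequence we may assume $\delta\lg v^\delta\rg\to \bar\theta$ for some real $\bar\theta$, $v^\delta-\lg v^\delta\rg\to \bar v$ in $C^1(\T^d)$ and $\mu^\delta\to \bar\mu$ uniformly, with $\inte \bar v=\inte\bar \mu=0$. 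Passing to the limit in \eqref{eq.DiscLMFGergo} (in the distributional sense, then upgrading regularity by Schauder), and using that $A,B\to 0$ in $L^\infty$, gives that $(\bar\theta,\bar v,\bar \mu)$ is a solution of \eqref{eq.LMFGergo}.

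The main step will be the uniqueness of the solution of \eqref{eq.LMFGergo}: this is exactly the obstacle that identifies the limit independently of the subsequence and of $(A,B)$. Suppose $(\theta_1,v_1,\mu_1)$ and $(\theta_2,v_2,\mu_2)$ both solve \eqref{eq.LMFGergo}, and set $(\tilde\theta,\tilde v,\tilde \mu)$ to be their difference. Then $\tilde\theta$ is a constant, $\inte\tilde v=\inte\tilde\mu=0$, and
$$
\tilde\theta -\Delta \tilde v +H_p(x,D\bar u)\cdot D\tilde v=\frac{\delta F}{\delta m}(x,\bar m)(\tilde\mu),\qquad
-\Delta\tilde\mu-\dive(\tilde\mu H_p(x,D\bar u))-\dive(\bar m H_{pp}(x,D\bar u)D\tilde v)=0.
$$
Testing the first equation against $\tilde\mu$, using $\inte\tilde\mu=0$ to kill the $\tilde\theta$-term, and integrating by parts (transferring the derivatives on $\tilde v$ to the Fokker--Planck operator acting on $\tilde\mu$) produces the duality identity
$$
\inte \bar m\, H_{pp}(x,D\bar u)D\tilde v\cdot D\tilde v + \inte\inte\frac{\delta F}{\delta m}(x,\bar m,y)\tilde\mu(x)\tilde\mu(y)\,dxdy=0.
$$
By the monotonicity condition \eqref{hyp:mono} and the uniform convexity \eqref{cond:Hcoercive}, each term is nonnegative, forcing $D\tilde v\equiv 0$, hence $\tilde v\equiv 0$ since $\inte\tilde v=0$. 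Inserting this back in the second equation gives $-\Delta\tilde\mu-\dive(\tilde\mu H_p(x,D\bar u))=0$, whose only solution with $\inte\tilde\mu=0$ is $\tilde\mu\equiv 0$ (the kernel of this Fokker--Planck operator is spanned by $\bar m$, which has mean $1$). Finally, the first equation collapses to $\tilde\theta=\frac{\delta F}{\delta m}(x,\bar m)(\tilde\mu)=0$, so $\tilde\theta=0$.

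Existence of a solution of \eqref{eq.LMFGergo} follows from the compactness step applied to any particular $(A,B)$ satisfying the hypothesis (e.g.\ $A=B=0$), so $(\bar\theta,\bar v,\bar\mu)$ is well-defined and unique. Since every subsequence of $\{(\delta\lg v^\delta\rg, v^\delta-\lg v^\delta\rg,\mu^\delta)\}$ admits a further subsequence converging to this unique limit, the whole family converges as $\delta\to 0^+$ in the topologies stated, finishing the proof.
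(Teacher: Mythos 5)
Your proof is correct and follows essentially the same route as the paper: uniform bounds from Lemma \ref{lem.DiscLMFGergo}, subsequential compactness and passage to the limit in \eqref{eq.DiscLMFGergo}, uniqueness of $(\bar\theta,\bar v,\bar\mu)$ by the duality argument (which the paper only alludes to as ``standard'' but you write out, correctly using the monotonicity of $F$, the uniform convexity of $H$, and the positivity of $\bar m$, with the $\tilde\mu$-step also obtainable directly from Corollary \ref{cor.StaMeas}), and full convergence by uniqueness of the limit.
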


\begin{proof}[Proof of Proposition \ref{prop.LMFGergo}] Passing to the limit in \eqref{eq.DiscLMFGergo} (up to a subsequence) provides a constant $\bar \theta$ (limit of $\delta \lg v^\delta\rg$), a map $\bar v\in W^{1,\infty}$ (limit of $v^\delta-\lg v^\delta\rg$) and a map  $\bar \mu\in L^\infty$ (limit of $\mu^\delta$) which solve \eqref{eq.LMFGergo}. Uniqueness of $D\bar v$ (and hence of $\bar v$) and of $\bar \mu$  can be established by a standard duality argument. Then $\bar \theta$ is unique by the equation. The full convergence of $(\delta \lg v^\delta\rg, v^\delta-\lg v^\delta\rg, \mu^\delta)$ holds  by uniqueness of the limit. 
\end{proof}

\subsection{Proof of Theorem \ref{thmdisc}}

The proof of Theorem \ref{thmdisc} consists mostly in showing that the heuristic relation \eqref{heu.udelta} holds. 

\begin{Proposition}\label{prop.cvbarudelta}
 Let $(\bar \lambda, \bar u,\bar m)$, $(\bar u^\delta, \bar m^\delta)$ and  $(\bar \theta, \bar v, \bar \mu)$ be respectively solutions to \eqref{e.MFGergo}, \eqref{barudeltamdelta} and  \eqref{eq.LMFGergo}. Then
$$
\lim_{\delta \to 0^+} \|\bar u^\delta-\frac{\bar \lambda}\de-\bar u-\bar \theta\|_{\infty}+\|\bar m^\delta-\bar m\|_\infty=0.
$$
\end{Proposition}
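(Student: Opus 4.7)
The plan is to realize $(\bar u^\delta-\bar\lambda/\delta-\bar u,\,\bar m^\delta-\bar m)$, after dividing by $\delta$, as the solution of the discounted linearized system \eqref{eq.DiscLMFGergo} with small perturbations, and then to invoke Proposition \ref{prop.LMFGergo}. Concretely, set $v^\delta:=\bar u^\delta-\bar\lambda/\delta-\bar u$ and $\mu^\delta:=\bar m^\delta-\bar m$. Subtracting \eqref{e.MFGergo} from \eqref{barudeltamdelta} and expanding $H(x,\cdot)$ around $D\bar u$ and $F(x,\cdot)$ around $\bar m$ to first order, a direct computation shows that the rescaled pair $(\tilde v^\delta,\tilde\mu^\delta):=(v^\delta/\delta,\,\mu^\delta/\delta)$ is the solution of \eqref{eq.DiscLMFGergo} with forcings
$$
A^\delta=\frac{R_F^\delta-R_H^\delta}{\delta},\qquad B^\delta=\frac{\mu^\delta\bigl(H_p(x,D\bar u^\delta)-H_p(x,D\bar u)\bigr)+\bar m\,R_{H_p}^\delta}{\delta},
$$
where the Taylor remainders satisfy $|R_H^\delta|+|R_{H_p}^\delta|\leq C|Dv^\delta|^2$ pointwise and, by the Lipschitz dependence of $\delta F/\delta m$ on $m$, $|R_F^\delta|\leq C\dk(\bar m^\delta,\bar m)\|\mu^\delta\|_{L^1}\leq C\|\mu^\delta\|_{L^2}^2$; mass conservation yields $\inte\tilde\mu^\delta=0$. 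The two statements of the proposition thus amount precisely to what Proposition \ref{prop.LMFGergo} provides for $(\tilde v^\delta,\tilde\mu^\delta)$, once the forcings are shown to be $O(\delta)$ in $L^\infty$.

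The input is the $L^2$-smallness $\|Dv^\delta\|_{L^2}+\|\mu^\delta\|_{L^2}=O(\delta^{1/2})$ from Proposition \ref{prop.Cvbarudelta}, and the argument proceeds in two bootstrap rounds. First I would upgrade this $L^2$-smallness to an $L^\infty$-smallness of the same order: elliptic regularity applied to the equation for $v^\delta-\lg v^\delta\rg$, whose source $F(\cdot,\bar m^\delta)-F(\cdot,\bar m)-\delta\lg v^\delta\rg$ is $O(\delta^{1/2})$ in $L^\infty$ (via the Lipschitz dependence of $F$ on $m$ through $\dk$ and the bound $\|\delta\lg v^\delta\rg\|_\infty=O(\delta^{1/2})$ coming from Proposition \ref{prop.Cvbarudelta}), combined via a standard $W^{2,p}$-iteration with Corollary \ref{cor.StaMeas} applied to the divergence-form equation for $\mu^\delta$, yields $\|Dv^\delta\|_\infty+\|\mu^\delta\|_\infty=O(\delta^{1/2})$. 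Inserting this into the formulas for $A^\delta,B^\delta$ gives $\|A^\delta\|_\infty+\|B^\delta\|_\infty\leq C$ uniformly in $\delta$, so Lemma \ref{lem.DiscLMFGergo} bounds $\|D\tilde v^\delta\|_\infty+\|\tilde\mu^\delta\|_\infty$ uniformly in $\delta$, which translates into the sharper estimate $\|Dv^\delta\|_\infty+\|\mu^\delta\|_\infty=O(\delta)$. Re-inserting this into $A^\delta,B^\delta$ now yields $\|A^\delta\|_\infty+\|B^\delta\|_\infty=O(\delta)$, placing $(\tilde v^\delta,\tilde\mu^\delta)$ exactly in the scope of Proposition \ref{prop.LMFGergo}. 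It provides $\delta\lg\tilde v^\delta\rg\to\bar\theta$, $\tilde v^\delta-\lg\tilde v^\delta\rg\to\bar v$ in $L^\infty$ and $\tilde\mu^\delta\to\bar\mu$ in $L^\infty$; translating back, $\lg v^\delta\rg\to\bar\theta$ while $\|v^\delta-\lg v^\delta\rg\|_\infty=\delta\|\tilde v^\delta-\lg\tilde v^\delta\rg\|_\infty\to 0$, whence $\|\bar u^\delta-\bar\lambda/\delta-\bar u-\bar\theta\|_\infty\to 0$, and similarly $\|\bar m^\delta-\bar m\|_\infty=\delta\|\tilde\mu^\delta\|_\infty\to 0$.

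The most delicate step will be the initial $L^2\to L^\infty$ upgrade: the Taylor remainders are quadratic in $(Dv^\delta,\mu^\delta)$, whose $L^\infty$-norms are only \emph{a priori} bounded (by Lemma \ref{lem.udeltamdelta} and Proposition \ref{P31}), not small, so the source estimates for $(A^\delta,B^\delta)$ and the solution bounds delivered by Lemma \ref{lem.DiscLMFGergo} are initially entangled. Breaking this circularity requires treating the HJ and Fokker--Planck sides independently---exploiting the coercivity of $\delta-\Delta+b\cdot\nabla$ on zero-average functions and Corollary \ref{cor.StaMeas} to transfer smallness back and forth. Once the preliminary $O(\delta^{1/2})$ rate in $L^\infty$ is secured, the two-step scheme---Lemma \ref{lem.DiscLMFGergo} to gain one order of $\delta$, then Proposition \ref{prop.LMFGergo} to identify the limit---closes the argument.
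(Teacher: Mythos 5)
Your proposal is correct, but it takes a genuinely different route from the paper. The paper never estimates $(\bar u^\delta,\bar m^\delta)$ directly: it runs a fixed point argument on the set $E=\{(v,\mu):\ \|\delta v\|_\infty+\|Dv\|_\infty+\|\mu\|_\infty\le \hat C\}$, sending $(v,\mu)$ to the solution of \eqref{eq.DiscLMFGergo} with exactly your remainder forcings $A,B$ (which are $O(\hat C^2\delta)$ on $E$), so that Lemma \ref{lem.DiscLMFGergo} allows one to choose $\hat C$ with $E$ stable for $\delta$ small; the fixed point $(v^\delta,\mu^\delta)$ then yields a solution $(\bar\lambda/\delta+\bar u+\delta v^\delta,\ \bar m+\delta\mu^\delta)$ of \eqref{barudeltamdelta}, which must coincide with $(\bar u^\delta,\bar m^\delta)$ by the uniqueness of Proposition \ref{P31}, and Proposition \ref{prop.LMFGergo} concludes. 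You instead identify $((\bar u^\delta-\bar\lambda/\delta-\bar u)/\delta,(\bar m^\delta-\bar m)/\delta)$ as the solution of \eqref{eq.DiscLMFGergo} and bootstrap the smallness of $A^\delta,B^\delta$ from the $O(\delta^{1/2})$ rates of Proposition \ref{prop.Cvbarudelta}. This works, and the $L^2\to L^\infty$ upgrade you flag as delicate is in fact not circular: the Hamilton--Jacobi side sees $\mu^\delta$ only through the smoothing coupling $F$, so $\dk(\bar m^\delta,\bar m)\le C\|\mu^\delta\|_{L^2}=O(\delta^{1/2})$ makes its source small in $L^\infty$ (together with $\|\delta v^\delta\|_\infty=O(\delta^{1/2})$ from Proposition \ref{prop.Cvbarudelta}); then $\|Dv^\delta\|_\infty=O(\delta^{1/2})$ follows from interior $W^{2,p}$ estimates (absorbing the bounded drift term by interpolation) and Morrey embedding, and $\|\mu^\delta\|_\infty=O(\delta^{1/2})$ from the divergence-form equation by De Giorgi--Nash--Moser, after which your two passes through Lemma \ref{lem.DiscLMFGergo} and the final appeal to Proposition \ref{prop.LMFGergo} close the argument exactly as you describe. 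What each approach buys: the paper's fixed point avoids any regularity upgrade and does not even use Proposition \ref{prop.Cvbarudelta}, at the price of a compactness argument and of invoking uniqueness for \eqref{barudeltamdelta}; your route is more direct and more quantitative (it gives $O(\delta)$ bounds for $\|Dv^\delta\|_\infty$ and $\|\mu^\delta\|_\infty$ along the way), but the elliptic-regularity step, only sketched in your write-up, would have to be written out in full for the proof to be complete.
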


\begin{proof} The argument is very close to the proof of the exponential rate (see Theorem \ref{thm:CvExpMFG}). Let 
$$
E=\{ (v,\mu)\in W^{1,\infty}(\T^d)\times L^\infty(\T^d), \; \| \delta  v\|_\infty+\|Dv\|_\infty+\|\mu\|_\infty\leq \hat C\},
$$
where $\hat C$ is to be chosen below. 
For 
$(v,\mu)\in E$, we consider the solution  $(\hat v,\hat \mu)$ to \eqref{eq.DiscLMFGergo} with 
$$
\begin{array}{rl}
\ds A(x)\; := & \ds \delta ^{-1}\left(-(H(x,D(\bar u+\delta v))-H(x,D\bar u)-\delta H_p(x,D\bar u)\cdot Dv)\right.\\
& \qquad \qquad \left. + F(x,\bar m+\delta \mu)-F(x,\bar m)-
\delta\frac{\delta F}{\delta m}(x,\bar m)(\mu)\right),
\end{array}
$$
$$
 B(x):= \delta^{-1} \left((\bar m+\delta \mu)H_p(x,D(\bar u+\delta v))-\bar mH_p(x,D\bar u)-\delta \mu H_p(x,D\bar u)-\delta \bar m H_{pp}(x,\bar m)Dv\right).
$$
As 
$$
\|A\|_\infty+\|B\|_\infty\leq C\hat C^2\delta, 
$$
we have, by Lemma \ref{lem.DiscLMFGergo} (and for $\delta$ small enough), 
$$
\|\delta \hat v\|_\infty+ \|D\hat v\|_\infty+\|\hat  \mu\|_\infty\leq C\left(1+\|A\|_\infty+\|B\|_\infty\right)\leq C(1+\hat C^2\delta).
$$
We can choose $\hat C$ such that, for $\delta$ small enough, the right-hand side is less than $\hat C$. Then we can easily conclude that the map 
$(v,\mu)\to (\hat v,\hat \mu)$ has a fixed point $(v^\delta, \mu^\delta)$.  Note that $(\frac{\bar \lambda}\de+\bar u+\delta v^\delta, \bar m+\delta \mu^\delta)$ solves \eqref{barudeltamdelta} and therefore is equal to $(\bar u^\delta, \bar m^\delta)$. Hence, by  Proposition \ref{prop.LMFGergo}, we deduce
\begin{align*}
\|\bar u^\delta-\frac{\bar \lambda}\de-\bar u-\bar \theta\|_{\infty}& = \| \delta v^\de-\bar \theta\|_{\infty}
\\
& \leq \| \delta (v^\de-\lg v^\de \rg )\|_\infty + | \delta \lg v^\de \rg-\bar \theta| \to 0
\qquad\mbox{\rm as $\delta\to 0$},
\end{align*}
which completes the proof. 
\end{proof}

\begin{proof}[Proof of Theorem \ref{thmdisc}] Recall that we have uniform Lipschitz estimates on $U^\delta$ and on $D_xU^\delta$ (Lemma \ref{lem:StandEstiUdelta} and Proposition \ref{prop:UdeltaLip}) and that any converging subsequence is a weak solution of the ergodic master equation (proof of Theorem \ref{thm.MasterCell}). Therefore, we only need to show that $U^\delta-\de^{-1} \bar \lambda$ has a limit when evaluated 
at some value. For this, let $(\bar u^\delta,\bar m^\delta)$ be the solution to \eqref{barudeltamdelta}. As $(\bar u^\delta,\bar m^\delta)$ is also a stationary solution to \eqref{e.MFGih}, we have  
$$
U^\delta(x,\bar m^\delta)= \bar u^\delta(x)\qquad \forall x\in \T^d. 
$$
We have seen in Proposition \ref{prop.cvbarudelta} that, as $\delta\to 0$, $\bar m^\delta$ converges to $\bar m$ while $\bar u^\delta-\de^{-1}\bar \lambda$ converges to $\bar u+\bar \theta$. This completes the proof. 
\end{proof}

%
%

 \end{document}